\newcommand{\N}{\mathbb{N}}
\newcommand{\Z}{\mathbb{Z}}
\newcommand{\R}{\mathbb{R}}
\newcommand{\E}{\mathbb{E}}
\newcommand{\ud}{\,\mathrm{d}}
\newcommand{\dist}{\mathsf{d}}
\newcommand{\e}{{\mathrm{e}}}
\newcommand{\pp}{\mathbb{P}}
\newcommand{\PP}{\mathbf{P}}
\newcommand{\A}{{\mathbf{a}}}
\newcommand{\oA}{{\overline{\A}}}
\newcommand{\V}{\mathcal{V}}
\newcommand{\W}{\mathcal{W}}
\newcommand{\CC}{\mathcal{C}}
\newcommand{\F}{\mathcal{F}}
\newcommand{\ZZ}{\mathcal{Z}}
\newcommand{\Ev}{\mathcal{E}}
\newcommand{\I}{\mathbbm{1}}
\newcommand{\T}{\mathsf{T}}
\newcommand{\Rone}{\mathsf{R}^{(1)}}
\newcommand{\Rtwo}{\mathsf{R}^{(2)}}
\newcommand{\Rscale}{\mathsf{R}}
\newcommand{\ep}{\varepsilon}
\DeclareMathOperator{\Cov}{Cov}
\DeclareMathOperator{\Var}{Var}
\DeclareMathOperator{\argmax}{argmax}
\numberwithin{equation}{section}
\numberwithin{figure}{section}
\renewcommand{\notin}{\not\in}
\newtheorem{theorem}{Theorem}[section]
\newtheorem{lemma}[theorem]{Lemma}
\newtheorem{assumption}[theorem]{Assumption}
\newtheorem{question}[theorem]{Question}
\newtheorem{remark}[theorem]{Remark}
\theoremstyle{definition}
\newcommand{\corO}{\textcolor{purple}}
\begin{document}

\title{The maximum of log-correlated Gaussian fields in random environment}
\author{Florian Schweiger
\thanks{
Department of Mathematics, 
Weizmann Institute of Science,
Rehovot 76100, Israel.\newline
Email \texttt{florian.schweiger@weizmann.ac.il}}
\and 
Ofer Zeitouni
\thanks{
Department of Mathematics, 
Weizmann Institute of Science,
Rehovot 76100, Israel,\newline
and Courant Institute,
New York University,
251 Mercer Street, New York, NY 10012, USA.\newline
Email \texttt{ofer.zeitouni@weizmann.ac.il}}
}
\date{\today}
\maketitle
\begin{abstract}
We study the distribution of the maximum of 
a large class of Gaussian fields indexed by a box $V_N\subset\Z^d$ 
and possessing
logarithmic correlations up to local defects that are sufficiently rare. 
Under appropriate assumptions that generalize those in Ding, Roy and Zeitouni 
\cite{DRZ17}, we show that asymptotically,
the centered maximum of the field has a randomly-shifted Gumbel distribution. We prove that
the two dimensional Gaussian free field on a super-critical
bond percolation cluster with $p$
close enough to $1$, as well as the Gaussian free field in i.i.d. bounded conductances, fall under the assumptions of our general theorem.
\end{abstract}

\section{Introduction and statement of results}
\subsection{Overview}
The study of logarithmically correlated Gaussian fields (LCGFs) has its roots in 
two-dimensional quantum field theory and the study of Gaussian multiplicative
chaos \cite{K85, KPZ88}. The last decade saw rapid development,
driven both by these motivations \cite{S07,DS11,RV14}
as well as the fact that those Gaussian field
arise as natural limits in a variety of contexts including
random matrix theory
\cite{BMP21,CN18,CFLW19}, planar random walks and Brownian motion \cite{DPRZ04,J20} and number theory \cite{FHK12, ABR20}.

Within the theory of LCGFs,
the study of extremes 
plays an important role and 
highlights the link with the study of Branching
Random Walks (BRW); this link is already made in the seminal \cite{K85},
and played an important role in
the study of the maxima of LCGFs,
see \cite{BZ12, BDZ16b, BL18}
and the lecture notes \cite{B20,Z16}.

A general framework for the study of the maximum of (discretely indexed)
LCGFs is put forward in \cite{DRZ17}.
Building on the approach of \cite{BDZ16}, it provided an axiomatic framework that
ensures that the maximum of such a field, properly centered, converges 
in distribution as the size of the box on which it is defined increases.
Checking the hypotheses for that framework is often a non-trivial task, 
and a recent success is the evaluation of the limit for the 4D membrane model,
see \cite{S20}.

In all of the above mentioned examples, 
the covariance of the LCGF is uniformly
controlled by the logarithm. We introduce notation to explain this point. 
Consider the lattice $\Z^d$ with nearest neighbor edges,
and let $E(\Z^d)$ denote the collection of edges. 
For $w\in\Z^d$ let $V_N(w)=(w+[0,N-1]^d)\cap\Z^d$ be the lattice cube of sidelength $N$ with lower left corner $w$, and 
for $\delta>0$ set 
\begin{equation}
\label{eq:VNd}
V^\delta_N(w):=\left\{v\in  V_N(w)\colon\dist(v,\partial^+ V_N(w)\ge\delta N\right\},
\end{equation}
where $\partial^+ V_N(w)$ is the outer boundary of $V_N(w)$, that is those points $v\in \Z^d\setminus V_N(w)$ at  distance $1$ from $V_N(w)$.

For $N\in\N$ and $w\in\Z^d$, we consider a centered Gaussian field $\varphi^{N,w}=(\varphi^{N,w}_v)_{v\in V_N(w)}$ on $V_N(w)$ with variance and
covariance 
\begin{equation}
\label{eq-cov}
\Cov^{N,w}(v,u)=\E\varphi^{N,w}_v\varphi^{N,w}_u, \quad v,u\in V_N, 
\quad \Var^{N,w}(u)=\Cov^{N,w}(u,u),
\end{equation}
and
law $\pp^{N,w}$. Throughout,  when $w=0$ we
omit it from the notation. We say that  $(\varphi^{N}_v)_{v\in V_N}$
is a LCGF if for any $\delta>0$ there exists $C_\delta<\infty$ so that
\begin{equation}
  \label{e:LCGF}\left|\Cov^N(v,u)
  -\log N+\log_+|u-v|\right|\le C_\delta, \quad 
  u,v\in V_N^\delta,
\end{equation}
  where $\log_+ x:=\max(\log x,0)$.

  Our goal in this paper is to extend the theory of the extrema of LCGFs in the direction of Gaussian
  fields defined in a \textit{random environment}, for which 
  \eqref{e:LCGF} does not hold uniformly. We describe below in
  Theorem \ref{t:mainthm} a general result in that direction, but as the result is rather technical, we present first two main applications. The first application is the Gaussian free field with i.i.d uniformly elliptic conductances, i.e. the Gaussian field whose covariance corresponds to random walk in i.i.d. bounded random conductances. Throughout, we consider $\Z^2$ as a graph with nearest-neighbour edges, and write $E(G)$ for the edges of a graph $G$.
  \begin{theorem}
    \label{t:random_conductances}
Fix  $0<\Lambda^-\le\Lambda^+$ and  let $\PP$ be an i.i.d.
probability measure on $[\Lambda^-,\Lambda^+]^{E(\Z^2)}$.

For each sample $\A$ from $\PP$  construct the family of Gaussian fields $\varphi^{\A,N}$ on $V_N$ with law 
\begin{align*}
	&\pp^{\A,N}(\mathrm{d}\varphi^{\A,N})\\
	&=\frac{1}{Z_\A^{V_N}}\exp\left(-\frac12\sum_{\{u,v\}\in E(\Z^2)}\A(\{u,v\})(\varphi^{\A,N}_u-\varphi^{\A,N}_v)^2\right)\prod_{v\in V_N}\ud\varphi^{\A,N}_v\prod_{v\notin V_N}\ud\delta_0(\mathrm{d}\varphi^{\A,N}_v).
\end{align*}
Then there is a deterministic constant $\oA\in[\Lambda^-,\Lambda^+]$ such that 
for $\PP$-almost every $\A$, the sequence of random variables
\[\max_{v\in V_N}\varphi^{\A,N}_v-\sqrt{\frac{2}{\pi\oA}}\left(\log N-\frac{3}{8}\log\log N\right)\]
converges in law as $N\to\infty$ to a deterministic limit.
\end{theorem}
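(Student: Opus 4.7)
The plan is to verify, for $\PP$-almost every environment $\A$, the axiomatic hypotheses of the general result Theorem~\ref{t:mainthm}, thereby reducing Theorem~\ref{t:random_conductances} to a (quenched) application of that theorem. The starting point is the identification of the covariance as a Green function: writing $L_\A$ for the Dirichlet weighted Laplacian on $V_N$ with conductances $\A$, one has
\[\Cov^{\A,N}(u,v)=[L_\A^{-1}]_{uv}=:G^\A_{V_N}(u,v),\]
and $G^\A_{V_N}$ is also the Green function of the variable-speed random walk (VSRW) in environment $\A$ killed upon exiting $V_N$.

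The first step would be to identify the constant $\oA$ and to establish a quenched logarithmic asymptotic for $G^\A_{V_N}$. By classical stochastic homogenization for uniformly elliptic divergence-form operators with i.i.d.\ coefficients (Kozlov; Papanicolaou--Varadhan), $-\diverg(\A\nabla)$ homogenizes to a deterministic operator $-\oA\Delta$, where the effective matrix collapses to a scalar $\oA\in[\Lambda^-,\Lambda^+]$ because the i.i.d.\ product law is invariant under the dihedral symmetries of $\Z^2$. Combining this with the quenched quantitative Green function estimates available for the VSRW in i.i.d.\ bounded conductances (Barlow; Andres--Deuschel--Slowik; Bella--Fehrman--Otto, among others) yields, for $\PP$-a.e.\ $\A$ and for every $\delta>0$,
\[\Bigl|G^\A_{V_N}(u,v)-\tfrac{1}{2\pi\,\oA}\bigl(\log N-\log_+|u-v|\bigr)\Bigr|\le C_\delta(\A),\qquad u,v\in V_N^\delta,\]
away from a sparse random set of ``environmental defects''. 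The prefactor $\tfrac{1}{2\pi\oA}$ is precisely what generates the centering $\sqrt{2/(\pi\oA)}(\log N-\tfrac{3}{8}\log\log N)$, since an LCGF with variance $\alpha\log N+\mathcal O(1)$ has centered maximum $2\sqrt{\alpha}\log N-\tfrac{3\sqrt{\alpha}}{4}\log\log N$, which with $\alpha=1/(2\pi\oA)$ matches the stated constants.

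Next I would verify the remaining hypotheses of Theorem~\ref{t:mainthm}: a multi-scale Gibbs--Markov decomposition, control of the harmonic extension across sub-boxes, and the rarity of defects. The set of defects, taken to be vertices lying in atypical local conductance clusters, has $\PP$-sub-polynomial density in $V_N$ by the product structure of $\PP$ and Borel--Cantelli, consistent with the rareness axiom. The Gaussian Markov property of the GFF yields, on any sub-box $B\subset V_N$, the decomposition of $\varphi^{\A,N}$ into an independent GFF in $B$ plus the $L_\A$-harmonic extension of its trace on $\partial B$; quenched elliptic regularity for $L_\A$-harmonic functions (Moser/De Giorgi/Nash type, as developed in the random conductance setting) together with homogenization show that this harmonic extension is close, at large scales, to the harmonic extension for the homogenized operator $\oA\Delta$, which is the kernel entering the DRZ17 framework. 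A standard ergodicity argument then ensures that the a.s.\ quenched limit distribution does not depend on $\A$, yielding a deterministic limit as asserted.

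The principal obstacle will be to promote the standard quantitative homogenization estimate, which typically controls the Green function pointwise only up to a sub-polynomial error, to the $\mathcal O(1)$ precision required for the $\log\log N$ correction. I would deal with this via a dyadic bootstrap: at all scales $2^k$ with $k$ larger than a random but $\PP$-a.s.\ finite threshold $k_0(\A)$ the homogenized approximation is sharp to within a universal constant, while the fluctuations at the finite set of smaller scales $k<k_0(\A)$ contribute a perturbation of the covariance uniformly bounded in $N$, which is exactly the kind of environment-dependent bounded shift that is absorbed into the random shift of the limiting Gumbel law delivered by Theorem~\ref{t:mainthm}.
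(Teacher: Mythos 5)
Your high-level plan --- identify the covariance with the killed Green function, identify $\oA$ by homogenization, and verify the hypotheses of Theorem \ref{t:mainthm} for $\PP$-a.e.\ $\A$ --- is indeed the paper's route (Theorem \ref{t:random_conductances} is the case $p=1$ of Theorem \ref{t:percolation_cluster}, whose proof is exactly such a verification, using the quantitative homogenization theory of Armstrong--Dario and Dario--Gu). But as written your sketch has a genuine gap at the decisive step. You aim only at an $O(1)$-precision log-correlation bound and then assert that the environment-dependent contributions from scales below the random threshold $k_0(\A)$ are a bounded perturbation ``absorbed into the random shift'' of the limiting Gumbel law. Theorem \ref{t:mainthm} does not work that way: the random shift $Z$ is an output of the theorem, available only after all of Assumption \ref{as:main} has been checked, and the assumptions require much more than $O(1)$ control. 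Assumptions \ref{a:micro} and \ref{a:macro} demand covariance asymptotics with error $o(1)$ on microscopic and macroscopic scales, with the below-$\Rone_v$-scale contribution isolated into a local kernel $g(u,v)$ and boundary profiles $f,h$, and Assumption \ref{a:lln} demands a spatial law of large numbers for the local laws; the site-dependent local variance corrections are unbounded over $V_N$ and vary from point to point, so they are not one bounded shift, and with only $O(1)$ covariance control one gets tightness (\`a la Ding--Zeitouni) but not convergence in law. This is precisely why the paper must upgrade the $L^2$ homogenization error for the Dirichlet problem to pointwise estimates via the large-scale $C^{0,1}$ regularity theory (Lemma \ref{l:homog_dirich_improv}) and why the approximating field carries correction terms at both microscopic and macroscopic levels; your proposal never engages with \ref{a:micro}, \ref{a:macro} or \ref{a:lln}, which are the parts that make the limit exist at all.

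A second concrete gap concerns the sparsity hypotheses. Assumption \ref{a:sparseR} (and the quantitative \ref{a:sparseT}) are quenched statements, uniform over the $O(L^d)$ sub-boxes of $V_N$ whose positions move with $N$; annealed tail bounds on the random scales plus ``product structure and Borel--Cantelli'' do not suffice, because the events $\{\Rone_v>R\}$ are defined through the full-plane Green function and global homogenization and are therefore not local, so one cannot directly exploit independence to concentrate the counts of bad points. The paper has to construct local events approximating them (Lemma \ref{l:green_asympt_improved} and its analogues) before applying Hoeffding's inequality and Borel--Cantelli, and it flags this decorrelation issue as not having been addressed in the homogenization literature. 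Finally, you never verify the variance upper bound \ref{a:logupp} together with the exponential tail \ref{a:sparseT} for $\T$; in the uniformly elliptic case this is comparatively easy (Rayleigh monotonicity controls the contribution of scales below $\Rone_v$, as discussed at the start of Section \ref{sec-4.4}), but it is part of the hypotheses, it dictates the choice of $\T_v$, and it must be spelled out rather than subsumed into the ``random shift''.
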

Theorem \ref{t:random_conductances} resolves a version of \cite[Problem 1.23]{B11}, where the conductances $\kappa$ in the notation there are i.i.d.

Our second application is concerned with the Gaussian free field on the infinite cluster of supercritical Bernoulli percolation with parameter sufficiently close to 1. Recall, see \cite{K80},
that for every 
$p> 1/2$, under the the law $\PP_p$ of Bernoulli bond percolation on $\Z^2$,
there is almost surely
a unique infinite cluster $\CC_\infty$ of open edges.
\begin{theorem}\label{t:perc_unit_cond}
There is $1/2\le p_*<1$ with the following property. For $p> p_*$,
construct a family of Gaussian fields $\varphi^{\CC_\infty,N}$ on $V_N\cap \CC_\infty$ with law 
\begin{align*}
	&\pp^{\CC_\infty,N}(\mathrm{d}\varphi^{\CC_\infty,N})\\
	&=\frac{1}{Z_{\CC_\infty}^{V_N}}\exp\left(-\frac12\sum_{\{u,v\}\in E(\CC_\infty)}(\varphi^{\CC_\infty,N}_u-\varphi^{\CC_\infty,N}_v)^2\right)\prod_{v\in V_N(w)\cap\CC_\infty}\ud\varphi^{\CC_\infty,N}_v\prod_{v\notin V_N(w)\cap\CC_\infty}\ud\delta_0(\mathrm{d}\varphi^{\CC_\infty,N}_v).
\end{align*}
Then there is $ a_p $ depending on $p$ only such that for $\PP_p$-almost every realization of $\CC_\infty$, the sequence of random variables
\[\max_{v\in V_N}\varphi^{\CC_\infty,N}_v-\sqrt{\frac{2}{\pi a_p }}\left(\log N-\frac{3}{8}\log\log N\right)\]
converges in law as $N\to\infty$ to a deterministic limit.
\end{theorem}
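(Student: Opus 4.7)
The plan is to verify the hypotheses of Theorem~\ref{t:mainthm} for the family $\varphi^{\CC_\infty,N}$. This requires two ingredients: a pointwise logarithmic covariance estimate outside a sparse random defect set, and a quantitative description of $\CC_\infty \cap V_N$ showing that this defect set satisfies the sparsity assumptions of Theorem~\ref{t:mainthm} for $p$ close enough to $1$. The strategy will closely parallel the argument for Theorem~\ref{t:random_conductances}, with the extra complication that here the underlying graph is not $\Z^2$ but has geometric holes.

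For the first ingredient, the covariance of $\varphi^{\CC_\infty,N}$ equals the Green function $G_{V_N\cap\CC_\infty}(u,v)$ of the discrete-time simple random walk on $\CC_\infty$ killed on exiting $V_N$. The quenched invariance principle on the supercritical percolation cluster (Berger--Biskup, Mathieu--Piatnitski, Sidoravicius--Sznitman) identifies the scaling limit of this walk as a Brownian motion with covariance $a_p \cdot I$; the constant is deterministic and isotropic by ergodicity and the $\Z^2$-symmetry of $\PP_p$, and this is how $a_p$ is defined. Combining the invariance principle with Barlow's Gaussian heat kernel estimates for the walk on $\CC_\infty$ and the large-scale elliptic regularity theory for random-conductance Laplacians yields, $\PP_p$-almost surely, a uniform bound of the form
\[ G_{V_N \cap \CC_\infty}(u,v) = \frac{1}{2\pi a_p}\bigl(\log N - \log_+ |u-v|\bigr) + O(1), \]
for $u,v \in V_N^\delta \cap \CC_\infty$ at macroscopic distance from local defects of the cluster, with the constant matching the centering in the theorem statement. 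A boundary-layer/reflection argument extends the estimate up to $\partial V_N$.

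For the second ingredient, one performs a block renormalization of $\Z^2$ at some scale $K$ in the spirit of Antal--Pisztora and Penrose--Pisztora: call a block \emph{good} if the piece of $\CC_\infty$ in its neighbourhood is ``crossable'' in a quantitative sense and contains no abnormal bottlenecks, dead ends, or isolated small components. For $p > p_*$ with $p_*$ close enough to $1$, the law of good blocks stochastically dominates Bernoulli site percolation at a parameter arbitrarily close to $1$, so that bad blocks are rare and spatially well separated. The defect set entering the hypotheses of Theorem~\ref{t:mainthm} is then taken to be the union of bad blocks in $V_N$, and its sparsity, diameter, and decoupling properties can be verified by choosing $K$ large and $p$ sufficiently close to $1$.

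The main obstacle is the pointwise $O(1)$ Green-function asymptotic: the quenched invariance principle alone yields only weak convergence on macroscopic scales, whereas the axioms of Theorem~\ref{t:mainthm} require uniformity up to an additive constant on all dyadic subscales (a feature inherited from the branching structure of the extremes). Bridging this gap seems to require the Armstrong--Kuusi--Mourrat / Dario--Gu large-scale $C^{0,1}$ regularity theory for the percolation Laplacian together with control of the corrector, and it is here that the restriction to $p > p_*$ is used most sharply; the secondary difficulty is the irregular boundary $\partial(V_N\cap\CC_\infty)$, which should be absorbable into the defect set by declaring all blocks near $\partial V_N$ that touch bad regions to be defective.
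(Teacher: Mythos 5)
Your high-level route --- checking the hypotheses of Theorem \ref{t:mainthm} for the cluster GFF by combining quantitative homogenization on $\CC_\infty$ with a percolation renormalization controlling defects --- is indeed the paper's route (via Theorem \ref{t:percolation_cluster}). The genuine gap lies in what your renormalization step is asked to deliver, and in where the restriction $p>p_*$ actually enters. The homogenization input (full-plane Green asymptotics, Dirichlet-problem error estimates, large-scale $C^{0,1}$ regularity from \cite{AD18,DG21}) verifies \ref{a:logbd}, \ref{a:micro}, \ref{a:macro}, \ref{a:sparseR} and \ref{a:lln} for \emph{every} $p>1/2$; contrary to your closing paragraph, no restriction on $p$ is needed to upgrade the invariance principle to pointwise $O(1)$ (indeed $o(1)$) Green-function asymptotics away from defects. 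What forces $p$ close to $1$ is the pair \ref{a:logupp}--\ref{a:sparseT}: one must bound the variance $2\pi a_p\,G^\A_{V_N}(v^*,v^*)$ at \emph{every} point, including defective ones, by $\log N+\T_v$ with the exceedance counts in sub-boxes decaying like $\e^{-(2+\ep)T}$ times the volume. Declaring bad blocks rare and well separated in the Antal--Pisztora sense gives only the qualitative sparsity already used for \ref{a:sparseR}; it does not couple the \emph{size} of the variance excess at a bad point (for instance of order the length of a pipe hanging off the cluster) to its probability at an exponential rate strictly larger than $d=2$, which is what \ref{a:sparseT} demands. Nor can this be repaired by taking $\T_v$ to be a power of the homogenization scale $\Rone_v$: the deterministic resistance bound in a box of radius $R$ is only $O(R^2)$, while the random scales from \cite{AD18,DG21} have stretched-exponential tails with exponent at most $1$ (the pipe obstruction), so one would only get $\PP(\T_v\ge T)\le C\e^{-T^{1/2}/C}$, far too weak for \ref{a:sparseT}.

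The missing ingredient is a quantitative upper-tail bound for the Green function itself: with probability at least $1-C\e^{-\kappa\tau}$ one has $\max_u G^\A_{Q_R(v)}(u,u)\le \tau/(2\pi\Lambda^+)$ for all relevant $R$, with a rate $\kappa$ that can be made arbitrarily large by pushing $p\to1$ (Lemma \ref{l:tail_bound_green}). The paper proves this via the Benjamini--Kozma multiscale resistance/isoperimetry argument, fed by large-deviation estimates --- with rates diverging as $p\to1$ --- for the cluster density, the isoperimetric profile, and the chemical distance; this is the only place where $p>p_*$ is used, and nothing in your sketch plays its role. Two smaller points: \ref{a:logupp} must hold up to the boundary, and boundary points cannot simply be absorbed into the defect set; the paper controls them by reflection through half-space Green-function asymptotics (Lemma \ref{l:green_asympt_halfspace}), which is what your ``boundary-layer'' remark should be made to mean (it is the one-sided upper bound, not the two-sided $O(1)$ log-correlation, that must be pushed to $\partial V_N$). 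Finally, the $O(1)$ estimate you state corresponds only to \ref{a:logbd}; convergence in law, rather than mere tightness, also requires the $o(1)$ micro- and macroscopic approximations \ref{a:micro}, \ref{a:macro} and the ergodic-averaging assumption \ref{a:lln}, which your sketch does not address.
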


As in the case of LCGF's, we also have a description of the limits in Theorem \ref{t:perc_unit_cond} and \ref{t:perc_unit_cond} as a randomly shifted Gumbel distribution.


Before proceeding, we explain the emergence of the constants $\oA$ and $ a_p $ in Theorems \ref{t:random_conductances} and \ref{t:perc_unit_cond}, as
well as the intuition why should one expect these results. 
Recall that the covariance of the Gaussian fields $\varphi^{\A,N}$ in the theorems
is given by the Green function of a random walk on the random-conductance 
graph given by $\A$, killed when hitting $\partial^+ V_N$. It is natural then to
ask for the scaling limit (on the infinite lattice) of the random
walk, and homogenization theory gives that scaling limit. More 
precisely, building on the Kipnis-Varadhan theory, it is 
proved in \cite{DMFGW88} that the random walk converges (under the
so called \textit{averaged} law, that is under the law which is the
semidirect product of $\PP$ (or $\PP_p$) 
with the law of the random walk), to a Brownian
motion of diffusivity $\oA$ (or $ a_p $). 
Convergence to Brownian motion in the random conductance model of Theorem
\ref{t:random_conductances}
under the
\textit{quenched} law (that is, for $\PP$ almost every $\A$) appears 
(with some restrictions) in \cite{Ko85}, and in full generality 
in \cite{SS04}, where the percolation case is also handled, but only
in dimension $d\geq 4$; the analogous result for percolation clusters in all
dimensions $d\geq 2$
was settled in \cite{BB07, MP07}, building on earlier heat kernel estimates
in \cite{B04} and \cite{MR04}. See \cite{B11} for a review. 

With the invariance principle (and related heat kernel estimates) in place, 
it is then natural to expect the scaled Green function for the random walk to
be related to that for Brownian motion. Indeed, 
the (quenched) invariance principle is enough for proving that macroscopic averages of the Green function behave like macroscopic
averages of the covariance of a LCGF (cf. \cite{ADS20}); in particular, this makes the statements
in Theorems 
\ref{t:random_conductances} and \ref{t:perc_unit_cond} plausible.
However, for our needs much more \textit{quantitative} estimates are needed. 
For these, we rely on recently developed versions of homogenization theory, such as developed in 
\cite{GNO15, AKM17, AKM19}.
 We note that convergence at the level
of the associated Gaussian fields is proved (in the continuous setup) in
\cite{CR22}. Further details are provided below in Section \ref{sec-4}.

Theorems 
\ref{t:random_conductances} and \ref{t:perc_unit_cond} both are consequence of
Theorem
\ref{t:percolation_cluster} below, which utilizes quantitative homogenization
in checking the general hypotheses of our main result,
Theorem \ref{t:mainthm}. We note that the latter is expected to be useful also
in other contexts, and we list some examples in Section \ref{sec-applications}.

\subsection{Maxima of Gaussian fields under non-uniform log-correlation assumptions}
We next turn to the general result alluded to above, and begin by introducing notation and assumptions. For $L\le N$ set
\begin{equation}
  \label{eq:calW}
  \W_{N,L}(w)=\left\{w'\in\Z^d: w'-w\in L\Z^d,V_L(w')\subset V_N(w)\right\}.
\end{equation}

For any Polish space $\mathcal{X}$, we let $\mathcal{M}_1(\mathcal{X})$ denote the space of probability measures on $\mathcal{X}$, equipped with the topology of weak convergence. For $w\in \Z^d$, we let $\tau_w$ denote the shift by $w$ on $\Z^d$ (i.e. $\tau_w(v)=v+w)$, and also the shift on $\R^{\Z^d}$ (i.e. $\tau_w F(\cdot)=F(\tau_w(\cdot))$).

For $N\in\N$ and $w\in\Z^d$, we consider a centered Gaussian field $\varphi^{N,w}=(\varphi^{N,w}_v)_{v\in V_N(w)}$ on $V_N$ with 
law $\pp^{N,w}$.

\begin{assumption}
\label{as:main}
There exist functions $\T,\Rone,\Rtwo\colon\Z^d\to[0,\infty)$ so that the variances and covariances $\Var^{N,w}(\cdot)$, $\Cov^{N,w}(\cdot,\cdot)$ of $\varphi^{N,w}$ satisfy the following.
\begin{enumerate}[label=(A.\arabic*)]
	\item\label{a:logupp} \textbf{(Logarithmic upper bounds on the covariances)} For all $N$, all $w\in\Z^d$ and for all $u,v\in V_N(w)$,
	\[\Var^{N,w}(v)\le\log N+\T_v\]
	and
	\[\Var^{N,w}(v)-\Cov^{N,w}(u,v)\le \log_+|u-v|+\T_u+\T_v.\]
	\item\label{a:logbd} \textbf{(Logarithmic bounds on the covariances away from the boundary)} For every $\delta>0$ there is an increasing function $\alpha_{\delta}\colon[0,\infty)\to[0,\infty)$ such that for all $N$, all $w\in\Z^d$ and for all $u,v\in V^\delta_N(w)$,
	  \[\left|\Cov^{N,w}(u,v)-\log N+\log_+|u-v|\right|\le \alpha_{\delta}(\Rone_u)+\alpha_{\delta}(\Rone_v).\]
	\item\label{a:micro} \textbf{(Approximation on a microscopic scale near the diagonal)} There are a continuous function $f\colon(0,1)^d\to\R$ which is bounded from above and a function $g\colon\Z^d\times\Z^d\to\R$ such that the following holds. For all $L,\delta,\ep>0$ there is $N_{A.3}$ such that for $N\ge N_{A.3}$, $w\in\Z^d$ and for all $u,v\in V^\delta_N(w)$ with $|u-v|_\infty\le L$ and $\Rone_u\vee\Rone_v\le \delta N$, $\Rtwo_w\le N$ we have
	  \[\left|\Cov^{N,w}(u,v)-\log N-f\left(\frac{v-w}{N}\right)-g(u,v)\right|\le\ep.\]
	\item\label{a:macro} \textbf{(Approximation on a macroscopic scale off the diagonal)} There is a continuous function $h\colon(0,1)^d\times(0,1)^d\setminus\{(x,x)\colon x\in(0,1)^d\}\to\R$ such that the following holds. For all $L,\delta,\ep>0$ there is $N_{A.4}$ such that for $N\ge N_{A.4}$, $w\in\Z^d$ and for all $u,v\in V^\delta_N(w)$ with $|u-v|_\infty\ge {N}/{L}$ and $\Rone_u\vee\Rone_v\le (\delta N)\wedge({N}/{L})$, $\Rtwo_w\le N$ we have
	  \[\left|\Cov^{N,w}(u,v)-h\left(\frac{u-w}{N},\frac{v-w}{N}\right)\right|\le\ep.\]
\end{enumerate}
Additionally, the following holds for $w=0$.

\begin{enumerate}[label=(B.\arabic*)]
	\item\label{a:sparseT} \textbf{(Quantitative sparsity of large values of $\T$)} There are $\ep>0$, $C>0$ such that the following holds. For every $L,N\in\N$, for any $T\ge 1$ and for any $w'\in\W_{N,\left\lfloor\frac{N}{L}\right\rfloor}$ we have that
	\[\left|\left\{v\in V_{\left\lfloor\frac{N}{L}\right\rfloor}(w')\colon \T_v\ge T\right\}\right|\le C\left(\frac{N}{L}\right)^d\e^{-(d+\ep)T}.\]
	\item\label{a:sparseR} \textbf{(Qualitative sparsity of large values of $\Rone$ and $\Rtwo$)}  We have that
\begin{align*}
\limsup_{R\to\infty}\sup_{L\in\N}\limsup_{N\to\infty}\left(\frac{L}{N}\right)^d\max_{w'\in \W_{N,\left\lfloor \frac{N}{L}\right\rfloor}(w_N)}\left|\left\{v\in V_{\left\lfloor \frac{N}{L}\right\rfloor}(w'): \Rone_{v}> R\right\}\right|&=0,\\
\sup_{L\in\N}\limsup_{L'\to\infty}\limsup_{N\to\infty}\left(\frac{LL'}{N}\right)^d\max_{w'\in \W_{N,\left\lfloor \frac{N}{L}\right\rfloor}(w_N)}\left|\left\{w''\in \W_{\left\lfloor \frac{N}{L}\right\rfloor,L'}(w'): \Rtwo_{w''}> L'\right\}\right|&=0.
\end{align*}
	\item\label{a:lln} \textbf{(Law of large numbers for the local behaviour)} For every $L'\in\N$ there is a probability measure $\nu_{L'}\in \mathcal{M}_1(\mathcal{M}_1(\R^{V_{L'}(0)}))$ such that for any $L\in\N$, for every $a\in V_L$, setting $w_{a,N}'=a\left\lfloor \frac{N}{L}\right\rfloor$ and
	\[\mu_{N,a}=\frac{1}{\left|\W_{\left\lfloor \frac{N}{L}\right\rfloor,L'}\right|}\sum_{w''\in\W_{\left\lfloor \frac{N}{L}\right\rfloor,L'}(w_{N,a}')}\delta_{\pp^{L',w''}(\tau_{w''}(\cdot))}\]
	then the sequence $\mu_{N,a}$ converges weakly to $\nu_{L'}$.
\end{enumerate}
\end{assumption}
In Assumption \ref{as:main}, one thinks of the variables $\Rone_v$ as
the scale at which covariance estimates for $\Cov^{N,w}(v,\cdot)$ 
hold, $\Rtwo_w$ as the scale at which homogenization applies for random walk
started at $w$, and $\T_v$ as the local maximal 
error for estimates on the covariance $\Cov^{N,w}(v,\cdot)$.

\begin{theorem}\label{t:mainthm}
Let Assumption \ref{as:main} hold. Set 
$M_{N}:=\max_{v\in V_N}\varphi^{N}_v$, 
and define the sequence \[m_N:=\sqrt{2d}\log N-\frac{3}{2\sqrt{2d}}\log\log N.\]
Then 
$M_{N}-m_N$ 
converges in distribution to a limit that is given as a randomly shifted Gumbel distribution. That is, there exists a 
positive number $\beta^*$ and a random variable $Z$ so that,
for any $t\in \R$, 
\[ \lim_{N\to\infty}\pp^{N}(M_{N}-m_N\leq t)= \E\left( \e^{- \beta^*Z \e^{-\sqrt{2d}t}}\right).\]
Moreover, $Z$ is the weak limit of the sequence $Z_N$ defined by
\[Z_N=\sum_{v\in V_N}(\sqrt{2d}\log N-\varphi^N_v)\e^{-2d\log N+\sqrt{2d}\varphi^N_v}.\]
\end{theorem}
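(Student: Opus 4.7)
The plan is to follow the \cite{DRZ17} template while accommodating the random-environment features encoded by $\T, \Rone, \Rtwo$. The three main building blocks are (i) tightness of $M_N - m_N$, (ii) a multi-scale sub-box decomposition that exposes an approximate branching structure, and (iii) convergence of the derivative-type sum $Z_N$ together with a Poisson-type limit for near-maxima, which combine into the randomly shifted Gumbel. For tightness, the upper bound $M_N \le m_N + O(1)$ w.h.p.\ follows from a Fernique/Slepian-type comparison: \ref{a:logupp} gives $\pp(\varphi^N_v \ge m_N + t) \le \e^{-(m_N+t)^2/(2(\log N + \T_v))}$, and summing over $v$ using the quantitative sparsity \ref{a:sparseT} (whose gain $d+\ep$ is what beats the union-bound loss from defect vertices) yields the required bound. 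The matching lower bound $M_N \ge m_N - O(1)$ comes from a modified second-moment argument restricted to ``good'' points $\{v\colon \T_v \le T,\, \Rone_v \le R\}$, which form a positive-density set by \ref{a:sparseT} and \ref{a:sparseR}; on this set \ref{a:logbd} provides the log-correlated covariance needed for the standard BRW-type lower bound.

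Next I would install the multi-scale decomposition. For intermediate scales $L, L' \ll N$, tile $V_N$ by sub-boxes of side $\lfloor N/L \rfloor$ indexed by $\W_{N, \lfloor N/L \rfloor}$ and subdivide each further by sub-sub-boxes of side $L'$. Assumption \ref{a:macro} says that off-diagonal covariances between different sub-boxes are close to the continuous kernel $h((u-w)/N, (v-w)/N)$, so the coarse component of $\varphi^N$ behaves like a Gaussian field on $(0,1)^d$ with covariance $h$ (which has the required logarithmic singularity on the diagonal). Combined with \ref{a:micro}, which writes near-diagonal covariances as $\log N + f(\cdot) + g(\cdot,\cdot) = \log(N/L) + [\log L + g(\cdot,\cdot)] + f(\cdot)$, this produces the BRW-type conditional picture: a macroscopic ``coarse'' field with covariance $h$, plus conditionally independent ``inner'' fields on each sub-box whose laws are those of $\pp^{L,w'}$ and whose averaged behaviour is controlled by the LLN \ref{a:lln}.

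With the decomposition in place, the proof proceeds in two coupled steps. First, $Z_N \to Z$: split $Z_N$ into a sum over sub-boxes of local derivative-martingale-type quantities $\sum_{v \in V_L(w')} (\sqrt{2d}\log L - \varphi^{L,w'}_v)\e^{-2d\log L + \sqrt{2d}\varphi^{L,w'}_v}$, weighted multiplicatively by the exponential of $\sqrt{2d}$ times the coarse field at $w'$. Assumption \ref{a:lln} identifies the sub-box average of these local quantities as a deterministic constant (expressible through $\nu_{L'}$), while the macroscopic weighted sum converges, as $L\to\infty$, to the derivative martingale of the limiting coarse field with covariance $h$; the two scale limits $L, L' \to \infty$ then combine to yield $Z$. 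Second, conditional on the coarse field, the near-maxima of $\varphi^N$ in different sub-boxes form asymptotically independent point processes whose superposition, after centering by $m_N$, converges to a Poisson point process with intensity $\beta^* Z \e^{-\sqrt{2d} t}\,\ud t$; the Laplace functional at $t$ then gives $\E\bigl[\e^{-\beta^* Z \e^{-\sqrt{2d} t}}\bigr]$. The main obstacle is the derivative-martingale step: since \ref{a:lln} is only a law-of-large-numbers statement in the environment, stationarity cannot be invoked directly as in \cite{DRZ17}, and considerable care is needed to quantify the error in replacing local sub-box quantities by their environment-averaged means while simultaneously passing the coarse field to its continuous log-correlated limit.
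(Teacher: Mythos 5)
Your outline captures the general multi-scale philosophy, but its central step is not justified and would fail as stated. You claim that \ref{a:micro} and \ref{a:macro} ``produce the BRW-type conditional picture: a macroscopic coarse field with covariance $h$, plus conditionally independent inner fields on each sub-box whose laws are those of $\pp^{L,w'}$.'' Assumption \ref{as:main} only controls covariances; $\varphi^N$ is not assumed to have any Markov or Gibbs structure (the paper stresses that no domain Markov property is used), so the conditional law of $\varphi^N$ on a sub-box given a coarse component is neither independent across sub-boxes nor given by $\pp^{L',w''}$. The workable route---and the one the paper takes---is to build a \emph{separate} comparison field $\xi=\xi^{\mathrm{mac}}+\xi^{\mathrm{mes}}+\xi^{\mathrm{mic}}$ from genuinely independent ingredients (a rescaled copy of $\varphi^{J}$, a modified BRW, independent copies of $\varphi^{J',w''}$, plus independent Gaussian correction terms at both microscopic and macroscopic level chosen so that variances match, see \eqref{eq-approxfield}), and to transfer the law of the maximum by Slepian-type comparison combined with the tail bounds, the bad-set estimates (Lemma \ref{l:upper_right_tail_badset}) and the separation of near-maximizers (Lemma \ref{l:geom_nearmax}, Lemma \ref{l:approx_fields}). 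Your two-scale tiling also leaves the mesoscopic range between $L'$ and $N/L$ unaddressed: there \ref{a:logbd} gives only $O(1)$ control, and it is precisely the MBRW component together with the trick of adding small independent Gaussians that renders an $O(1)$ mesoscopic mismatch harmless. Moreover, in random environment the pointwise variances fluctuate (through $f$, $g$, $\T$, $\Rone$), and Gaussian comparison requires exact variance matching; the uniform corrections $b^{J',\mathrm{mic}}$ and $b^{J,R,\delta,\mathrm{mac}}$ exist precisely for this purpose, and your proposal offers no substitute.

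Beyond the decomposition, the step you yourself flag as the main obstacle is where the real work lies, and your sketch of it is not a proof. The paper's key technical input is the matching right-tail asymptotics for the fine field over the \emph{good} set (Lemma \ref{l:right_tail}), proved by barrier events and a first/second moment argument in which the restriction to good points must be carefully compared with the maximum over all points of the microscopic boxes, since \ref{a:lln} only concerns the unrestricted local laws $\pp^{L',w''}$; the randomly shifted Gumbel form and the identification $Z=\lim Z_N$ are then obtained through the coupling of Lemma \ref{l:coupling_max} and the argument of \cite{DRZ17}, not through a direct proof of convergence of $Z_N$ or a conditional Poisson process limit. A direct derivative-martingale/Poisson route in the spirit of Biskup--Louidor might be salvageable, but as written it rests on the unjustified conditional independence, ignores the variance-matching and mesoscopic issues, and does not engage the good-set/bad-set bookkeeping needed to control points with large $\T_\cdot$, $\Rone_\cdot$ or $\Rtwo_\cdot$.
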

As discussed in \cite{DRZ17}, $Z_N$ resembles the derivative martingale which occurs in the study of various branching processes. In our setting, $Z_N$ is not necessarily a martingale. Nonetheless we expect that many of the properties of $Z_N$ and its limit
in the case of the two-dimensional Gaussian free field carry over to our setting. In particular, in analogy to \cite{BL16}, there should actually exist a random measure $\ZZ$ on $[0,1]^d$ with $Z=\ZZ([0,1]^d)$ that encodes the locations of near-maximizers of the fields and has a number of other interesting properties. However, we do not pursue this description in the current paper.

We next explain in more detail the role of the different parts of
Assumption \ref{as:main}. First, one should compare our assumptions to those in \cite{DRZ17}, which are similar to those in part A of 
Assumption \ref{as:main} (we kept similar notation to allow for easy comparison). In fact, as we explain in Remark \ref{r:gen_of_DRZ} below, one can check that Theorem \ref{t:mainthm} is a strict generalization of the main results of \cite{DRZ17}.

The assumptions in part A are on $\varphi^{N,w}$ for any $w\in\Z^d$, i.e. on boxes of any size and position. One can think of them as providing an implicit definition of $\T_\cdot$, $\Rone_\cdot$ and $\Rtwo_\cdot$. The assumptions in part B are only on $\varphi^N=\varphi^{N,0}$, i.e. on our domain of interest $V_N=V_N(0)$. They encode the fact that on $V_N$ the random scales $\Rone_\cdot$ and $\Rtwo_\cdot$ and the error term $T_\cdot$ are well-behaved.

In Assumption \ref{a:logupp} we state upper bounds on the variances in terms of the random field $\T_\cdot$. It is clear that if there were many points where the variances are atypically large, then these could
influence the maximum of the field in a non-negligible way. So we accompany Assumption \ref{a:logupp} with Assumption \ref{a:sparseT}, which provides a quantitative tail bound on the number of points in $V_N$ where $T_\cdot$ is large. This assumption is close to optimal, as the following heuristic 
back-of-the-envelope calculation shows.
Suppose there are $\kappa_T N^d$ many points in $V_N$ with variance $\ge \log N+T$, where $1\ll T\ll\log N$, $1\ll |\log\kappa_T|\ll\log N$, and such that
the field at these points is still logarithmically correlated. 
Then the maximum of the field over those points should be at least of order
\begin{align*}
\sqrt{\frac{\log N+T}{\log(\kappa_T^{1/d}N)}}m_{\kappa_T^{1/d}N}&=\sqrt{2d}\sqrt{(\log N+T)\log(\kappa_T^{1/d}N)}-\frac{3}{2\sqrt{2d}}\sqrt{\frac{\log N+T}{\log(\kappa_T^{1/d}N)}}\log\log(\kappa_T^{1/d}N)\\
&=m_N+\sqrt{\frac{d}{2}}(T+\log\kappa_T^{1/d})+o(1).
\end{align*}
Therefore, if the maximum of the entire field is supposed to be of order $m_N$, we need that $T+\log\kappa_T^{1/d}$ is bounded above for $T$ large enough. That is, we need $\kappa_T\le C\e^{-dT}$ for $T$ large enough. Our assumption \ref{a:sparseT} essentially implies that $\kappa_T\le C\e^{-(d+\ep)T}$ for some $\ep>0$, and so we see that this is close to optimal.

Assumption \ref{a:logbd} provides error bounds with error $O(1)$ away from the boundary, and Assumptions \ref{a:micro} and \ref{a:macro} provide even sharper estimates, with error $o(1)$, on microscopic and macroscopic scales. These three assumptions are all in terms of the random scales $\Rone_\cdot$ and $\Rtwo_\cdot$. For the conclusion of Theorem \ref{t:mainthm} we need to have these bounds on most points of $V_N$, and here the qualitative assumption \ref{a:sparseR} is sufficient.

While assumption \ref{a:micro} ensures that the covariances of the field are locally described by a function $g\colon\Z^d\times\Z^d$, it requires nothing of $g$. We still need to make sure that $g$ behaves roughly the same on the points of $V_N$. Our rather weak, qualitative way to do so is assumption \ref{a:lln}. It makes sure the law of the field on small boxes satisfies a law of large numbers.

\begin{remark}\label{r:variable_wN}
As mentioned above,
the assumptions in part A are on $\varphi^{N,w}$ for any $w\in\Z^d$, while the assumptions in part $B$ are only on $\varphi^N=\varphi^{N,0}$, i.e. on our domain of interest $V_N=V_N(0)$.
It is also possible to prove a variant of Theorem \ref{t:mainthm}, where instead of considering $V_N(0)$ one considers $V_N(w_N)$ for some sequence $(w_N)_{N\in\N}$ of points in $\Z^d$. In this case one needs to modify the assumptions in part B so that they hold for $\W_{N,\left\lfloor{N}/{L}\right\rfloor}(w_N)$ instead of $\W_{N,\left\lfloor{N}/{L}\right\rfloor}$, and one also needs to make the additional assumption that for $N$ large enough $\Rtwo_{w_N}\le N$ (if $w_N=0$ for all $N$, that is trivially true, so we did not list this in Assumption \ref{as:main}).
The proof of this slightly more general statement is exactly the same as the one we give below.
\end{remark}

\begin{remark}\label{r:gen_of_DRZ} As mentioned above, Theorem \ref{t:mainthm} is a generalization of the main result of \cite{DRZ17}. To see this, it suffices to check that any Gaussian field satisfying the assumptions of \cite{DRZ17} also satisfies Assumption \ref{as:main}
for a suitable choice of $\T,\Rone,\Rtwo$.

Indeed we can choose $\T_v=\alpha_0$ with the $\alpha_0$ from \cite[Assumption (A.0)]{DRZ17}, $\Rone_v=\Rtwo_v=1$, and $\alpha_\delta(\cdot)={\alpha^{(\delta)}}/{2}$ with the $\alpha^{(\delta)}$ from \cite[Assumption (A.1)]{DRZ17}.
With these choices $\T,\Rone,\Rtwo$ are all bounded, so that \ref{a:sparseT} and \ref{a:sparseR} hold trivially. 
Assumption \ref{a:lln} is immediate since the Gaussian laws $\pp^{L',w''}(\tau_{w''}(\cdot))$ do not depend, in the setup of \cite{DRZ17}, on $w''$.
Next, assumptions \ref{a:logupp}, \ref{a:logbd} and \ref{a:macro} all follow from straightforward calculations which we omit. For \ref{a:micro} things are less obvious, so let us give some details:

To avoid clashes in notation, rename the functions $f$ and $g$ from \cite[Assumption (A.2)]{DRZ17} to $\tilde f$, $\tilde g$. The function $\tilde g$ must be translation-invariant (to see this, use \cite[Assumption (A.2)]{DRZ17} with $x=\bar x:=\left(\frac12,\ldots,\frac12\right)^d$ and $(u,v)=(\bar u,\bar v)$ arbitrary, and then also with $x=\bar x+\frac{\bar u}{N}$ and $(u,v)=(0,\bar v-\bar u)$). Thus $\tilde g(u,v)$ is actually a function of $u-v$. The function $\tilde f$ must be bounded above, because otherwise Assumption (A.2) with $(u,v)=(0,0)$ would contradict \cite[Assumption (A.0)]{DRZ17}.
These considerations show that if we choose $f=\tilde f$ and $g=\tilde g$ then \ref{a:micro} is satisfied as well.
\end{remark}

\begin{remark}
In Assumption \ref{a:micro} the condition that $f$ is bounded from above can be relaxed. In fact, if we set
\[\Gamma_\delta:=\sup_{x\in[\delta,1-\delta]^d}f(x)\]
then the condition 
\[\Gamma_\delta\le\frac{1-\ep}{d}|\log \delta|\]
for some $\ep>0$ is sufficient. To show this slightly stronger result, one can no longer take the limit $\delta\to0$ before $T\to\infty$ in the results below, but needs to choose $\delta$ as a suitable function of $T$.
However, in all applications we are aware of, $f$ will be bounded from above, and so we chose to give the detailed proof only in that case.
\end{remark}

 \begin{remark}
In all examples that we have in mind, Assumption \ref{a:lln}
   is checked using an appropriate ergodic theorem, and provides information concerning averages of local functions. This forces us, in various places in the proof and in particular in applications to the percolation problem, to 
   approximate general translation invariant functions by local functions.
   One could replace assumption \ref{a:lln} by a stronger ergodicity assumption involving the law of the field jointly with $\T_\cdot$, $\Rone_\cdot$ and $\Rtwo_\cdot$. This stronger assumption would still be satisfied in our examples of interest, and some proofs (in particular step 4 in the proof of Lemma \ref{l:right_tail}) could be shortened slightly. However, we found it worthwhile to showcase that we only require a very weak ergodicity assumption like \ref{a:lln}, and so we chose the current formulation.
 \end{remark}

\subsection{Applications to log-correlated fields in random environment}\label{sec-applications}
After our discussion of Theorem \ref{t:mainthm}, we 
now explain how it can be used. We already mentioned that our main application is to establish Theorems \ref{t:random_conductances} and \ref{t:perc_unit_cond}. These theorems follow from Theorem \ref{t:mainthm} once we show that for almost every realization of the environment the corresponding Gaussian measures satisfy Assumptions \ref{as:main}. In fact, we prove that this is the case for a common generalization of the setting of both Theorem \ref{t:random_conductances} and Theorem \ref{t:perc_unit_cond}
where we allow conductances in $(\{0\}\cup[\lambda,\Lambda])$ under the assumption that $p:=\PP(\A(e)>0)>p_c=1/2$. We also allow boxes with locations varying with $N$, i.e. instead of considering the fields on $V_N(0)$ we consider them on $V_N(w_N)$ for some deterministic sequence $(w_N)_{N\in\N}$. 
This is the content of the following theorem, which 
is the second main result of the present paper.

\begin{theorem}\label{t:percolation_cluster}
Let $1/2<p\le1$ and $0<\Lambda^-\le\Lambda^+$. 
Let $\PP$ be an i.i.d. Borel probability measure on 
$(\{0\}\cup[\Lambda^-,\Lambda^+])^{E(\Z^2)}$,
with $\PP(\A(e)>0)=p$. For $\PP$-almost every sample $\A$,
let $\CC_\infty$ denote the
(unique) infinite cluster of edges $e$ where $\A(e)>0$, see \cite{K80}.
With $\tilde V_N(w)=V_N(w)\cap \CC_\infty$,
construct a family of Gaussian fields $\varphi^{\A,N,w}$ on $V_N(w)\cap C_\infty$ for $N\in\N$ with law 
\begin{align*}
	&\pp^{\A,N}(\mathrm{d}\varphi^{\A,N,w})\\
	&=\frac{1}{Z_\A^{V_N(w)}}\exp\Big(-\frac12\sum_{\{u,v\}\in E(\Z^2)}\A(\{u,v\})(\varphi^{\A,N,w}_u-\varphi^{\A,N,w}_v)^2\Big)\prod_{v\in \tilde V_N(w)}\ud\varphi^{\A,N,w}_v\!\!\!
	\prod_{v\notin \tilde V_N(w)}\delta_0(\mathrm{d}\varphi^{\A,N,w}_v)
\end{align*}
and extend this to a family of Gaussian fields $\varphi'^{\A,N,w}$ on 
$V_N(w)$ by setting $\varphi'^{\A,N,w}_v=\varphi^{\A,N,w}_{v^*}$, 
where $v^*\in\Z^2$ is the point in $\CC_\infty$ closest to $v$ 
(with ties broken by taking the lexicographically first point).

Then there is a deterministic constant $\oA$ with the following property. 
For $\PP$-almost every $\A$ the collection of Gaussian fields 
$\sqrt{2\pi\oA}\varphi'^{\A,N,w}$ on $V_N(w)$ satisfy 
assumptions \ref{a:logbd}, \ref{a:micro}, \ref{a:macro}, \ref{a:sparseR} and \ref{a:lln} of Theorem \ref{t:mainthm}. Furthermore, 
there is a constant $\frac12\le p_{\Lambda^+/\Lambda^-}<1$ 
which depends on ${\Lambda^+}/{\Lambda^-}$ only, 
such that if $p>p_{\Lambda^+/\Lambda^-}$ then the collection of Gaussian fields also satisfies, for $\PP$-almost every $\A$,
assumptions \ref{a:logupp} and \ref{a:sparseT}.
In particular, if $p>p_{\Lambda^+/\Lambda^-}$, then for $\PP$-almost every $\A$ the fields $\sqrt{2\pi\oA}\varphi'^{\A,N,w}$ satisfy all assumptions of Theorem \ref{t:mainthm}, and consequently 
\[\max_{v\in V_N}\varphi'^{\A,N}-\sqrt{\frac{1}{2\pi\oA}}m_N=\max_{v\in V_N\cap\CC_\infty}\varphi^{\A,N}_v-\sqrt{\frac{1}{2\pi\oA}}m_N\]
converges in distribution to a limit 
that is a randomly shifted Gumbel distribution.
\end{theorem}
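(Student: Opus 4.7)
The covariance of the rescaled field $\sqrt{2\pi\oA}\,\varphi'^{\A,N,w}$ equals $2\pi\oA\,G^{\A,V_N(w)}(u^{*},v^{*})$, where $G^{\A,V_N(w)}$ is the Green's function of the random walk with conductances $\A$ on $\CC_\infty$ killed on hitting $\partial^{+}V_N(w)$, and $u^{*},v^{*}$ are the closest cluster points to $u,v$. The constant $\oA$ is taken to be the isotropic effective diffusivity produced by Kipnis--Varadhan homogenization (isotropy follows from the $\Z^2$-symmetry of $\PP$). The plan is to compare $G^{\A,V_N(w)}$ to the Green's function $\bar G_{V_N(w)}$ of a Brownian motion on $\R^2$ with diffusivity $\oA$ killed on exiting the continuum box; scale invariance of 2D Brownian motion then yields
\[
	2\pi\oA\,\bar G_{V_N(w)}(x,y)=\log N-\log|x-y|+\tilde H\!\left(\tfrac{x-w}{N},\tfrac{y-w}{N}\right),
\]
with $\tilde H$ smooth on $(0,1)^2\times(0,1)^2$ off the diagonal and bounded from above on the diagonal. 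From this expansion one reads off candidate functions $f,g,h$ in Assumption \ref{as:main}, and the random scales $\Rone_v$ and $\Rtwo_w$ are defined as the thresholds beyond which recent quantitative homogenization on $\CC_\infty$ (in the spirit of \cite{AKM17,AKM19} and their percolation extensions) makes the comparison quantitatively $o(1)$.

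\textbf{Soft assumptions (A.2--A.4, B.2, B.3) for every supercritical $p$.} Take $\Rtwo_w$ to be the smallest integer beyond which the discrete Green's function from $w$ in any sub-box containing $w$ is within $o(1)$ of its continuum counterpart, and $\Rone_v$ the corresponding local regularity scale (of the same order as the Barlow-type good-ball scale at $v$). The stretched-exponential tails on these scales supplied by quantitative homogenization yield \ref{a:sparseR} by Markov's inequality on each sub-box of side $\lfloor N/L\rfloor$. The same homogenization inputs translate directly into \ref{a:logbd} (with $\alpha_\delta$ coming from continuum boundary regularity), into \ref{a:micro} (with $g$ the corrector-corrected lattice Green's function on the full plane and $f=\tilde H(\cdot,\cdot)$ read off the diagonal), and into \ref{a:macro} (with $h(\xi,\eta)=\tilde H(\xi,\eta)-\log|\xi-\eta|$). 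For \ref{a:lln}, the map $\A\mapsto\pp^{L',w''}(\tau_{w''}(\cdot))$ is a measurable, shift-covariant function of the restriction of $\A$ to a neighbourhood of $w''$, so the multidimensional ergodic theorem under $\PP$ identifies the weak limit $\nu_{L'}$.

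\textbf{Hard assumptions (A.1, B.1) for $p$ close to $1$.} At sites lying in thin tendrils of $\CC_\infty$ or isolated by narrow bottlenecks the variance can exceed $\log N$ by a non-trivial amount, so the combinatorics of the cluster now enters in an essential way. Set $\T_v$ to be essentially the logarithm of the inverse effective resistance from $v$ to a well-connected mesoscopic core of the cluster around $v$, padded by a local graph-distance correction. A network/path-decomposition argument then produces deterministic bounds $\Var^{N,w}(v)\le\log N+\T_v$ and the corresponding covariance bound, proving \ref{a:logupp}. For \ref{a:sparseT}, the event $\{\T_v\ge T\}$ is contained in the event that $v$ carries a local geometric defect of "size" $T$; a standard percolation renormalization bounds the probability of the latter by $q_p^{T}$ for some $q_p=q_p(\Lambda^{+}/\Lambda^{-})$ with $q_p\to 0$ as $p\to 1$. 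Choosing $p_{\Lambda^{+}/\Lambda^{-}}$ so that $q_p<\e^{-(2+\ep)}$ gives the required density bound by a first-moment estimate on each sub-box.

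\textbf{Main obstacle.} The delicate step is the calibration of $\T_v$: it must simultaneously (i) admit a deterministic local-to-global control of $\Var^{N,w}(v)$ valid for \emph{all} $N$, and (ii) satisfy a sharp exponential tail with prefactor strictly larger than $d=2$. Bridging these two requirements forces one to glue a percolation-geometric estimate on scales below $\Rone_v$ with the homogenization-based bulk bound above that scale, and it is this gluing, together with the need to keep the constants compatible with \ref{a:sparseT}, that dictates how close to $1$ the threshold $p_{\Lambda^{+}/\Lambda^{-}}$ must be chosen. The soft part of the argument is expected to go through for all $p>1/2$, so the constraint on $p$ is genuinely concentrated in this final calibration.
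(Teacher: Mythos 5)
Your overall architecture (compare the discrete Green's function to the continuum one via quantitative homogenization for the soft assumptions; control cluster geometry for \ref{a:logupp}/\ref{a:sparseT} when $p$ is near $1$) matches the paper, but two steps you treat as routine are exactly where the paper has to work, and as written they fail. First, \ref{a:sparseR} and \ref{a:sparseT} are \emph{quenched} statements: for $\PP$-a.e.\ $\A$ the counting bounds must hold simultaneously for all $N$, $L$ (and $T$), with a $\limsup_{N\to\infty}$ in \ref{a:sparseR}. A first-moment/Markov bound on each sub-box gives $\PP\bigl(\#\{v:\Rone_v>R\}\ge \ep (N/L)^2\bigr)\le C\e^{-R^s/C}/\ep$, which does not decay in $N$, so Borel--Cantelli cannot be applied and no almost-sure conclusion follows; the same objection applies verbatim to your ``first-moment estimate'' for \ref{a:sparseT}. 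The paper identifies this as the genuinely new issue: one needs decorrelation of the random scales, obtained by approximating $\{\Rone_v>R\}$, $\{\Rtwo_w>R\}$ and $\{\T_v>T\}$ by \emph{local} events (Lemmas \ref{l:green_asympt_improved}, \ref{l:homog_dirich_improv}, \ref{l:green_asympt_halfspace}, \ref{l:tail_bound_green}), partitioning each sub-box into finitely many classes of mutually independent sites, applying Hoeffding's inequality on each class plus a union bound for the tiny nonlocal remainder, and only then Borel--Cantelli in $N$. Note also that the full-plane scale $\Rscale^{\mathrm{Green}}_v$ of Theorem \ref{t:green_asympt} is genuinely nonlocal (it refers to all of $\CC_\infty$), so even constructing the local surrogate events requires an argument (finite-volume Green's functions on balls plus the maximum principle in Lemma \ref{l:green_asympt_improved}); your proposal does not address this.

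Second, for the hard part your reduction ``$\{\T_v\ge T\}\subset\{\text{local geometric defect of size }T\}$ with probability $q_p^T$'' is precisely the crux, and ``standard percolation renormalization'' does not deliver it. The only deterministic control of the Green's function is through the chemical distance, $G^\A_{Q_R(v)}(v,v)\le \dist_{\CC_\infty}(v,\partial Q_R(v))/\Lambda^-$, and this chemical distance can be as large as $CR^2$; hence a spatial defect of scale only $\sim\sqrt{T}$ could a priori produce $\T_v\ge T$, and its probability is of order $\e^{-c\sqrt{T}}$, a stretched exponential far weaker than the required $\e^{-(2+\ep)T}$ in \ref{a:sparseT}. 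The missing ingredient is the paper's Lemma \ref{l:tail_bound_green}: a Benjamini--Kozma-type argument relating the Green's function to the isoperimetry of its superlevel sets, glued to a chemical-distance large-deviation bound (Lemma \ref{l:chemical_distance}) and a density bound (Lemma \ref{l:density_cluster_highlysc}) on scales proportional to $T\Lambda^-/\Lambda^+$, all with rate constant $\kappa$ that can be made large by pushing $p$ toward $1$; this is what yields a tail for $\T_v$ that is genuinely exponential in $T$ with rate beating $d=2$ after the $\Lambda^+/\Lambda^-$ factor is accounted for, and it is where $p_{\Lambda^+/\Lambda^-}$ is actually determined. You correctly locate this calibration as the main obstacle, but you give no mechanism ruling out the quadratic-resistance scenario, so the claimed bound $q_p^T$ is unsupported. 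Finally, \ref{a:logupp} must hold up to the boundary of $V_N(w)$, where the full-plane comparison function is useless; the paper handles this with half-space Green's functions obtained by reflection (Lemma \ref{l:green_asympt_halfspace}), a step absent from your outline.
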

In view of Remark \ref{r:variable_wN}, Theorem \ref{t:percolation_cluster} also holds with $V_N=V_N(0)$ replaced by $V_N(w_N)$ for a deterministic sequence $(w_N)_{N\in\N}$ (but note that then the $\PP$-null set of environments $\A$ for which the theorem fails will in general depend on $(w_N)_{N\in\N}$). In particular, our proof of Theorem \ref{t:percolation_cluster} does not use in any way that the boxes $V_N(0)$ are nested, but rather uses Borel-Cantelli-type arguments to show that almost surely the assumptions in part B of Assumption \ref{as:main} are satisfied.

Our results for percolation clusters are restricted to the highly supercritical regime where $p$ is close to 1. A natural question is what happens for other $p>1/2$. Of course, Theorem \ref{t:mainthm} implies that we can take the constant $p_*$ in Theorem \ref{t:perc_unit_cond} to be equal to the constant $p_1$ in Theorem \ref{t:percolation_cluster}. This raises the following question.
\begin{question}\label{q:nearcritical}
How does $p_{\Lambda^+/\Lambda^-}$ depend on ${\Lambda^+}/{\Lambda^-}$? 
In particular, can one take $p_{\Lambda^+/\Lambda^-}=1/2$, at least if 
${\Lambda^+}={\Lambda^-}$?
If that is not possible, can one nonetheless take $p_*=1/2$ in Theorem \ref{t:perc_unit_cond}?
\end{question}
We think that this question is very interesting, but possibly challenging. 
Our proof of Theorem \ref{t:percolation_cluster}
relies on the fact that $p_{\Lambda^+/\Lambda^-}$ is close to 1
(as explained in Remark \ref{r:nearcritical} below). 
So a positive answer to question \ref{q:nearcritical} would require some new ideas.

While in this work we focus on the application in Theorem \ref{t:percolation_cluster}, the general result in Theorem \ref{t:mainthm} should also be applicable in various other settings.
\begin{enumerate}
\item In Theorem \ref{t:percolation_cluster} we assumed that the conductances $\A$ are independent. It should be possible to relax this assumption and to replace it, for example, by a sufficiently strong mixing assumption. However, we certainly need a quantitative assumption, and mere ergodicity is not sufficient. In fact, in \cite{ADS20} under the assumption of ergodicity error bounds on the covariances with error $o(\log N)$ were shown, and this is best possible.

One example where such extension is
relevant might be when one replaces the percolation model by the two-dimensional
interlacement model of \cite{CPV16}.

Another interesting example with non-i.i.d. environment
are the gradient interface models introduced in \cite{BK07,BS11}. These are non-Gaussian, but can be represented as a mixture of Gaussian fields with ergodic random conductances. Our framework might be useful to study the maximum of the latter fields, see for example \cite[Problem 1.23]{B11}.

  \item As mentioned above, another important example of a LCGF is the membrane 
    model in the critical dimension $d=4$, see \cite{S20} for the convergence
    in law of its centered maximum. 
    One can define disordered versions of this model in various ways. 
    A natural one is to take i.i.d. positive random variables 
    $\mathbf{b}$ for each vertex of $\Z^4$, and then 
    consider the probability measure
    \begin{align*}
	&\pp^{\mathbf{b},N}(\mathrm{d}\varphi^{\mathbf{b},N})\\
	&=\frac{1}{Z_\mathbf{b}^{V_N}}\exp\left(-\frac12\sum_{v\in \Z^4}\mathbf{b}(v)(\Delta\varphi^{\mathbf{b},N}_v)^2\right)\prod_{v\in V_N}\ud\varphi^{\mathbf{b},N}_v\prod_{v\notin V_N}\ud\delta_0(\mathrm{d}\varphi^{\mathbf{b},N}_v).
\end{align*}
    Checking that Theorem \ref{t:mainthm} applies in that setting
    would require the development of a suitable quantitative homogenization
    theory for fourth-order elliptic equations, 
    and bypassing various comparison theorems we used where the
    maximum principle was invoked. Nonetheless, we do expect the analogue of Theorem
    \ref{t:percolation_cluster} to hold in this context.
    
    Alternatively, one could also take i.i.d conductances $\A$ on the edges of $\Z^4$, and define a Gaussian field
    on $V_N$ as the preimage of discrete white noise (i.e. i.i.d standard Gaussians) under the operator $-\nabla\cdot\A\nabla$. This is the approach used in \cite{CR22}. Again, we would expect that the analogue of Theorem
    \ref{t:percolation_cluster} holds in this context.
   \item The disordered GFF in Theorem \ref{t:percolation_cluster} and the disordered membrane model all only involve finite range
     interactions in the Hamiltonian, and so have some domain Markov property. However, this is not used in Theorem \ref{t:mainthm}. In particular, the theorem should also be applicable when considering independent random conductances $\A(u,v)$ for non-neighboring $u,v$, provided that these conductances decay sufficiently fast as $|u-v|\to\infty$.  
   
  \item As in \cite{DRZ17}, we focused in this paper on lattice models.
    One expects similar results in the continuum, where one uses 
    mollifications of the continuous GFF defined using a disordered reversible second order
    operator. The quantitative homogenization results for such operators
    are available in \cite{AKM19}, while the convergence in law of the maximum
    in the non-disordered case was derived in \cite{A16}. 
\end{enumerate}

\subsection{Outline of the proofs}
The proof of Theorem \ref{t:mainthm} builds on the argument of \cite{DRZ17}. We recall the latter. Starting with a LCGF $\varphi_v$ satisfying a uniform version of Assumptions \ref{as:main}, one first proves the tightness of the maximum (shifted by $m_N$) and the following crucial property: local maxima of the field that are within
a bounded distance from $m_N$ must be macroscopically separated. Equipped with this fact, one  
constructs approximating fields $\xi_v$ such so that the variance  of $\xi_v$ matches that of $\phi_v$, and the covariances match at microscopic and macroscopic scales (while remaining a bounded distance away in mesoscopic scales). This is achieved by dividing $V_N$ into macroscopic boxes (of side length $N/J$) and microscopic boxes (of side length $J'$). One then constructs 
three Gaussian fields, denoted $\xi_\cdot^{ \rm mac}$, $\xi_\cdot^{\rm meso}$, and $\xi_\cdot^{ \rm mic}$, called the macroscopic, mesoscopic and microscopic fields,
which are independent of each other and have the following properties:
\begin{itemize} 
\item The field $\xi_\cdot^{ \rm mac}$ is piecewise constant on macroscopic boxes, and its covariance matches the function $h$ in assumption \ref{a:macro}.
\item The field $\xi_\cdot^{ \rm meso}$ is a modified branching random walk (see \eqref{eq-MBRW} below for a definition) with covariance adjusted 
to match the mesoscopic scale of the LCGF, up to additive error.
\item The field $\xi_\cdot^{ \rm mic}$ consists of independent copies (between microscopic boxes) of a field whose covariance matches the local covariance of the field determined by the functions $f,g$ in assumption \ref{a:micro}. 
\item The field $\xi_\cdot= \xi_\cdot^{ \rm mac}+\xi_\cdot^{\rm meso}+\xi_\cdot^{ \rm mic}$ matches the covariance of $\phi_\cdot$ at microscopic and macroscopic 
scales.
\end{itemize}
(The actual construction decomposes $J=KL$ and $J'=K'L'$, in order to obtain good matching
of covariances, but we overlook this detail in this high level description.)
We now describe how the proof is adapted to the non-homogeneous setup of the field $\varphi^N_\cdot$.
First, one controls the maximum of the field over  the ``bad points'', i.e. those vertices $v$ with large values of $\T_v$, $\Rone_v$ or which lie in boxes with large $\Rtwo_w$, and shows that those do not contribute, see Lemma
\ref{l:upper_right_tail_badset}, whose proof (together with some auxiliary results) takes up Section \ref{subsec-badset}; the statement and its proof slightly sharpen the corresponding ones in \cite{DRZ17}. Having controlled the bad points, one controls the tails of the distribution of the maximum following up the recipe of \cite{DRZ17}, culminating with the proof of tightness of the centered maximum (Theorem \ref{t:tightness}) and the macroscopic separation between local maxima (Theorem \ref{t:geom_nearmax}). The fact that the good points now form a cube with small ``holes'' and thus have a rather complicated geometry causes some additional difficulty, but generally the arguments are very similar to \cite{DRZ17}.

The main point of departure from the analysis of \cite{DRZ17} comes in constructing our approximating fields, see \eqref{eq-approxfield}. The local inhomogeneity of $\varphi^N_\cdot$
and the need to match variances forces one to introduce local correction terms in the form of additive independent Gaussians; using the construction of \cite{DRZ17} would have led us to
a lack of uniform control on these corrections. Instead, we use corrections at  both the microscopic and macroscopic levels, which possess a uniform control. The main technical step is then to prove matching asymptotics for the right tail of the maximum over the good points (Lemma \ref{l:right_tail}). Here one needs extra care to compare the maximum over the good points with the maximum over all points (to which one can apply Assumption \ref{a:lln}). Having established these asymptotics, Theorem \ref{t:mainthm} follows by a coupling argument similar to the one in \cite{DRZ17}.
The details of the construction
are provided in Section \ref{sec-3}. 

\medskip

The proof of Theorem \ref{t:percolation_cluster} (which implies 
Theorems
\ref{t:random_conductances} and \ref{t:perc_unit_cond})
consists of checking the hypotheses of Theorem \ref{t:mainthm}. A major tool in
doing that is quantitative homogenization theory on the percolation cluster, 
which we use in the version developed in \cite{AD18,DG21} 
and review in Section \ref{sec-4.1}. This theory provides us with asymptotics for the (full-space) Green's function (Theorem \ref{t:green_asympt}) as well as estimates for the difference between the solution of a discrete Dirichlet problem and its continuous counterpart (Theorem \ref{t:homog_dirich}). Both these results hold on lengthscales beyond some random lengthscale (and these two random lengthscales will reoccur in $\Rone$ and $\Rtwo$, respectively). While in Theorem \ref{t:homog_dirich} the error term is only estimated in an $L^2$-sense, we can upgrade this to pointwise control by also using the large-scale $C^{0,1}$-regularity of $\A$-harmonic functions of \cite{AD18}. 
Assumptions \ref{a:logbd}, \ref{a:macro}, \ref{a:micro} then follow from these results with some work, relying mainly on the maximum principle with well-chosen comparison functions.

Regarding Assumption \ref{a:sparseR}, the results from \cite{AD18,DG21} come with a stretched-exponential tail bound on the random scales. This tail-bound allows us to control the first moment of the number of points with large $\Rone$ or $\Rtwo$. To establish \ref{a:sparseR}, however, such an annealed estimate is not good enough. Instead we also need an estimate on the second moment of the number of points with large $\Rone$ or $\Rtwo$. That is, we need to quantify that the random scales arising from \cite{AD18,DG21} are decorrelated on large scales. This issue has not been adressed in the previous literature. One way to establish such decorrelation would be to go through \cite{AD18,DG21}, keeping track of the dependence of the various scales there as they arise. Fortunately for us, there is an easier way, and we can explicitly write down local events that approximate the events that $\Rone$ or $\Rtwo$ are large. For Theorem \ref{t:homog_dirich} defining such an event is easy; for Theorem \ref{t:green_asympt} this is more tricky (see Lemma \ref{l:green_asympt_improved}). Finally Assumption \ref{a:lln} follows directly from the translation-invariance of the law of $\PP$. All this holds for any $p>p_c=1/2$.

Extra work is needed to verify Assumptions \ref{a:logupp} 
and \ref{a:sparseT}.
One part requires us to study boundary behavior of the Green
function; this is handled by reflections and an a-priori 
estimate in half-spaces, see Lemma \ref{l:green_asympt_halfspace}. More 
importantly, we need to check that points with exceptionally high variance (such as points in the bulk and end of long ``pipes'' connected to the percolation cluster)
are sparse enough  so that \ref{a:logupp} and \ref{a:sparseT} still hold. This we can only do if $p$ is sufficiently close to 1. The argument builds on a combination of isoperimetric control on
the percolation cluster, large deviation estimates for the chemical distance on the cluster (going back to \cite{AP96}), a-priori Peierls-like 
large deviations estimates for the size of the cluster (all recalled
in Section \ref{sec-4.3}) and a multiscale argument due to \cite{BK05},
see Lemma \ref{l:tail_bound_green} and its proof. Once these are established,
it is routine to verify Assumptions \ref{a:logupp} 
and \ref{a:sparseT}, and complete the proof of Theorem \ref{t:percolation_cluster}.

\subsection{Additional notation and conventions}
In all the following, $c$ and $C$ will denote generic constants whose precise value may change from line to line. If we want to emphasize that the value of $C$ depends on other quantities, we add them as index to $C$. 

We use $|\cdot|_p$ to denote the $\ell^p$ norm ($p\in [1,\infty]$)
on $\R^d$ or $\Z^d$, and use $|\cdot|=|\cdot|_2$. For a set $A$ we denote by $|A|$ its cardinality.

\section{Properties of the maximum}

We want to deduce estimates on the maximum of $\varphi$ by Gaussian comparison inequalities. To do so, let us introduce the objects that we will use for comparison, following \cite{DRZ17}.
Recall \eqref{eq:calW} and define
\begin{align}
  \label{eq:calV}
\V_{N,L}(w)&=\left\{V_L(w')\colon w'\in\W_{N,L}(w)\right\}, \quad 
\tilde\V_{N,L}(w)=\left\{V_L(w')\colon w'\in V_N(w)\right\}.
\end{align}

Assume that $N=2^n$ is a power of two, and that $w\in\Z^d$. Take a collection of i.i.d. Gaussians $X_{j,Q_j}$ of variance $\log2$, where $j\in\{0,\ldots,n\}$ and $Q_j\in\V_{N,2^j}$. Then the branching random walk $\theta^{N,w}=(\theta^{N,w}_v)_{v\in V_N(w})$ is defined as
\begin{equation}\label{eq-BRW}\theta^{N,w}_v=\sum_{j=0}^n\sum_{\substack{Q_j\in \V_{N,2^j}(w)\\v\in Q_j}}X_{j,Q_j}.\end{equation}
Similarly, take a collection of i.i.d. Gaussians 
$Y_{j,Q_j}$ of variance $2^{-jd} \log2$, where $j\in\{0,\ldots,n\}$ and $Q_j\in\tilde\V_{N,2^j}(w)$, and define the modified branching random walk $\tilde\theta^{N,w}=(\tilde\theta^{N,w}_v)_{v\in V_N(w})$ as
\begin{equation}
\label{eq-MBRW}\tilde\theta^{N,w}_v=\sum_{j=0}^n\sum_{\substack{Q_j\in \tilde\V_{N,2^j}(w)\\v\in Q_j+N\Z^d}}Y_{j,Q_j}.\end{equation}
It is well known (see e.g. \cite[Lemma 2.3]{DRZ17}) that for the BRW we have the estimates
\begin{align*}
\left|\E(\theta^{N,w}_v)^2-\log N\right|&\le C,\\
\E\theta^{N,w}_v\theta^{N,w}_u-\log N+\log_+|u-v|&\le C,
\end{align*}
and that for the MBRW we have
\[\left|\E\tilde\theta^{N,w}_v\tilde\theta^{N,w}_u-\log N+\log_+|u-v|_{\sim,N}\right|\le C,\]
where $|x|_{\sim,N}=\inf_{z\in N\Z^d}|x+z|$ is the quotient norm. This implies in particular that for any $\delta<\frac12$ and any $u,v\in V_N^\delta(w)$ we have
\begin{equation}\label{e:log_corr_MBRW}
\left|\E\tilde\theta^{N,w}_v\tilde\theta^{N,w}_u-\log N+\log_+|u-v|_{\sim,N}\right|\le C_\delta.
\end{equation}

\subsection{Tailbounds on the "bad set"}
\label{subsec-badset}

For the study of the maximum we will need to argue that the maximum of $\varphi^{N}$ is unlikely to occur on points that are exceptional in the sense that e.g. $\T_\cdot$ is large, $\Rone_\cdot$ is large, or the points are too close to the boundary of $V_N$. Thus, in this subsection we give upper bounds on the maximum of $\varphi^{N}$ in such exceptional sets.
The main technical result is the following.
\begin{lemma}\label{l:upper_right_tail_badset}
Under Assumptions \ref{a:logupp} and \ref{a:sparseT} there are $C,c>0$ such that for $J,N\in\N$ with $J\le\frac{N}{2}$, $w'\in\W_{N,\left\lfloor\frac{N}{J}\right\rfloor}$, $z\in\R$, $T$ sufficiently large, and $A\subset V_{\left\lfloor\frac{N}{J}\right\rfloor}(w')$,
\begin{equation}\label{e:upper_right_tail_badset}
\pp^{N}\bigg(\max_{\substack{v\in V_{\left\lfloor\frac{N}{J}\right\rfloor}(w'):\\\T_v\ge T\textrm{, or }v\in A}}\varphi^{N}_v\ge m_N+z
\bigg)\le C\!\Big(\e^{dT}\frac{|A|}{N^d}\Big(1+T+\log\Big(\frac{N^d}{|A|}\Big)\Big)^{19/8}+\e^{-cT}\!\Big)(z\vee 1)\e^{-\sqrt{2d}z}.
\end{equation}
\end{lemma}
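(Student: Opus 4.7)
My plan is to decompose the bad set into the two pieces suggested by the two terms in the bound: $B_T := \{v \in V_{\lfloor N/J \rfloor}(w') : \T_v \geq T\}$ (which should produce the $e^{-cT}$ term) and $A \setminus B_T$ (producing the $e^{dT}|A|/N^d(\cdots)^{19/8}$ term). After a union bound over these two pieces, I would handle each by a Slepian-type comparison to a modified branching random walk (MBRW) whose variance matches the highest local variance of $\varphi^N$ on the piece in question, followed by a Bramson-type right-tail estimate for the MBRW maximum restricted to a subset.

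For the piece $A \setminus B_T$, the variance satisfies $\Var^N(v) \leq \log N + T$ by \ref{a:logupp}. I would add independent mean-zero Gaussians $Z_v$ of variance $\log N + T - \Var^N(v)$ to equalize variances to $\log N + T$. The resulting field $\tilde\varphi$ has constant variance and covariances that, by \ref{a:logupp}, are dominated by those of an MBRW $\psi$ of variance $\log N + T$ (the additive slack in \ref{a:logupp} is absorbed by adjusting the MBRW constant), so Slepian's lemma gives $\pp(\max_{v \in A\setminus B_T}\varphi^N_v \geq m_N + z) \leq \pp(\max_{v \in A\setminus B_T}\psi_v \geq m_N + z)$. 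For the MBRW right tail over a subset, I would invoke (or establish, by adapting the ballot/second-moment estimate of \cite{DRZ17}) the bound
\[
\pp(\max_{v\in S}\psi_v \geq m_N + z) \leq C\frac{|S|}{N^d} e^{dT}\bigl(1+T+\log(N^d/|S|)\bigr)^{19/8}(z\vee 1)e^{-\sqrt{2d}z},
\]
where the $e^{dT}$ factor arises from the scaling $\psi \approx \sqrt{1 + T/\log N}\,\tilde\theta$ reducing the effective level from $m_N + z$ to $m_N + z - \sqrt{d/2}\,T$, and the $19/8$ log factor comes from the Bramson correction for the density of near-maximizers on a sparse subset.

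For the piece $B_T$, I would dyadically decompose into slabs $S_k := \{T+k \leq \T_v < T+k+1\}$ for $k \geq 0$. By Assumption \ref{a:sparseT}, $|S_k| \leq C(N/J)^d e^{-(d+\ep)(T+k)}$. Applying the MBRW estimate above to each slab with effective variance $\log N + T + k + 1$ yields a contribution of order
\[
C\frac{|S_k|}{N^d}e^{d(T+k)}\bigl(1+T+k+\log(N^d/|S_k|)\bigr)^{19/8}(z\vee 1)e^{-\sqrt{2d}z},
\]
and the sparsity bound makes the prefactor $\leq C J^{-d} e^{-\ep(T+k)}(T+k+\log(JN)^d)^{19/8}$. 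Summing over $k \geq 0$ produces a geometric series dominated by $Ce^{-\ep T/2}(z\vee 1)e^{-\sqrt{2d}z}$ after using the polynomial growth of the log factor, yielding the $e^{-cT}$ contribution with $c = \ep/2$.

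The main technical obstacle is the sharp subset MBRW right-tail estimate with the correct $(1 + T + \log(N^d/|S|))^{19/8}$ exponent and the linear factor $(z\vee 1)$; a naive first-moment bound using only the Gaussian tail and \ref{a:logupp} would produce a spurious $(\log N)$ factor and miss the ballot correction. This requires a Bramson-type argument conditioning the MBRW at intermediate scales — most cleanly by carrying through the estimate in the style of \cite[Lemma 3.8 and Proposition 3.6]{DRZ17}, keeping careful track of how the inflated variance $\log N + T$ propagates through all the scale computations, and verifying that the resulting ballot correction stays polynomial in $T + \log(N^d/|S|)$ rather than degenerating. A secondary point to be careful about is that when $|S|$ is extremely small (e.g., near $1$), the log factor $\log(N^d/|S|) \sim d\log N$ becomes large, but the prefactor $|S|/N^d$ compensates, and monotonicity considerations ensure the bound remains useful.
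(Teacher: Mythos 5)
Your overall plan coincides with the paper's proof: the paper also splits the event into the set $\{\T_v\ge T\}$ and the set $A$ intersected with $\{\T_v\le T\}$, proves exactly your ``subset MBRW right-tail estimate'' as Lemma \ref{l:upper_right_tail_sparse} (comparison with a branching random walk at an inflated scale followed by a Bramson-type barrier computation in the style of \cite{BDZ16}, producing the $(1+T+\log(N^d/|A|))^{19/8}$ correction), and handles the piece $\{\T_v\ge T\}$ by the same slab decomposition over unit intervals of $\T$ summed against the sparsity bound \ref{a:sparseT} (Lemma \ref{l:tail_max_bad_points}), so your identification of the key lemma, of where the $\e^{dT}$ comes from, and of the geometric summation giving $\e^{-cT}$ with $c\sim\ep/2$ is all on target.

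One step is stated in a way that would not go through literally: you say that after equalizing variances the covariances of $\tilde\varphi$ are \emph{dominated by} those of an MBRW of variance $\log N+T$ and then conclude from Slepian that the maximum of $\varphi$ is stochastically dominated by that of the MBRW. Slepian's inequality runs the other way: with equal variances, the field with the \emph{smaller} covariances has the stochastically \emph{larger} maximum, so to upper bound $\max\varphi$ you need the covariances of (the variance-equalized) $\varphi$ to dominate those of the comparison field. That is in fact what \ref{a:logupp} supplies --- its second inequality is a lower bound on $\Cov^{N}(u,v)$ relative to $\Var^{N}(v)$, not an upper bound --- but note a further subtlety: since \ref{a:logupp} gives no lower bound on $\Var^{N}(v)$ itself, after adding site-wise independent Gaussians to bring every variance up to $\log N+T$ the resulting covariance lower bound is relative to the (possibly small) $\Var^{N}(v)$, and pointwise domination over an MBRW of variance $\log N+T$ need not hold. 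The paper avoids this by working with increments and upscaling: following \cite[Lemma 2.5]{DRZ17} one embeds $A$ via the map $\Psi^{w,w}_{N,N'}$ into a box of side $N'=2^{n'}$ with $\log N'-\log N\le T+C$ and compares with the BRW $\theta^{N',w}$ there (at the cost of a factor $2$), after which the barrier estimate at the scale $a$ with $2^{da}\ge|A|$ yields \eqref{e:upper_right_tail_sparse}. With that comparison step done as in \cite{DRZ17} rather than via the direct MBRW domination you describe, your argument is the paper's.
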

\noindent
In particular,  choosing $J=1$ and $w'=0$ yields
an estimate on all of $V_N$.

To see why this estimate is useful, note that, for example, under assumptions \ref{a:logbd} and \ref{a:sparseR}  the set $A_{N,R,\delta}$ of points in $V_N$ where $\dist(v,\partial^+ V_N)\le \delta N$ or $\Rone_v\ge R$ satisfies
\[\lim_{\substack{\delta\to0\\R\to\infty}}\frac{|A_{N,R,\delta}|}{N^d}=0\]
(where here and in the following we write $\limsup_{\substack{\delta\to0\\R\to\infty}}$ to denote that we take $R\to\infty$ and $\delta\to0$ while the order of the limits does not matter).
So Lemma \ref{l:upper_right_tail_badset} (with $J=1$) implies that under assumptions \ref{a:logupp}, \ref{a:logbd}, \ref{a:sparseT} and \ref{a:sparseR} we have
\[\limsup_{T\to\infty}\limsup_{\substack{\delta\to0\\R\to\infty}}\frac{\e^{\sqrt{2d}z}}{z\vee 1}\pp^{N}\bigg(\max_{\substack{v\in V_N\\\T_v\ge T\textrm{, or }\Rone_v\ge R\text{, or }\dist(v,\partial^+ V_N)\le \delta N}}\varphi^{N}_v\ge m_N+z\bigg)=0.\]
That is, the maximum of $\varphi^N_v$ over the exceptional points in $V_N$ is indeed unlikely to be large, with a quantitative estimate on the right tail. 
We will use this and similar estimates repeatedly in the following. Combined with a lower bound on the maximum (that we will show in the next section), this estimate allows us to conclude that the maximum is unlikely to occur at an exceptional point.

We will later also need a tail bound for the maximum of the fields on the microscopic cubes $V_{J'}(w'')$ contained in $V_{\left\lfloor{N}/{J}\right\rfloor}(w')$. Of course, such an estimate cannot hold for each microscopic cube, as we have no control over $\Rone$ and $T$ in all cubes. However, the estimate does hold when averaging over all cubes.

\begin{lemma}\label{l:upper_right_tail_badset_micro}
Under Assumptions \ref{a:logupp} and \ref{a:sparseT} there are $C,c>0$ such that for $J,J',N\in\N$ with $J\le{N}/{2}$, $J'\le{N}/({2J})$, $w'\in\W_{N,\left\lfloor{N}/{J}\right\rfloor}$, $z\in\R$, $T$ sufficiently large, and $A\subset V_{\left\lfloor{N}/{J}\right\rfloor}(w')$ we have
\begin{equation}\label{e:upper_right_tail_badset_micro}
\begin{split}
&\sum_{w''\in\W_{\left\lfloor\frac{N}{J}\right\rfloor,J'}(w')}
\pp^{J',w''}\bigg(\max_{\substack{v\in V_{J'}(w''):\\\T_v\ge T\textrm{, or }v\in A}}\varphi^{J',w''}_v\ge m_{L'}+z\bigg)\\
&\quad\le C\left(\frac{N}{JJ'}\right)^d\left(\e^{dT}\frac{|A|}{N^d}\left(1+T+\log\left(\frac{N^d}{|A|}\right)\right)^{19/8}+\e^{-cT}\right)(z\vee 1)\e^{-\sqrt{2d}z}.
\end{split}
\end{equation}
\end{lemma}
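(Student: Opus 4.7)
The plan is to apply Lemma \ref{l:upper_right_tail_badset} separately to each microscopic cube $V_{J'}(w'')\subset V_{\lfloor N/J\rfloor}(w')$, and then sum the resulting bounds, controlling the sum by Jensen's inequality to express it in terms of the total cardinality $|A|$.

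More precisely, for each fixed $w''\in \W_{\lfloor N/J\rfloor,J'}(w')$, apply Lemma \ref{l:upper_right_tail_badset} with the parameter substitution $N\leftarrow J'$, $J\leftarrow 1$, $w'\leftarrow w''$, and with the ``$A$'' of that lemma replaced by $A_{w''}:=A\cap V_{J'}(w'')$ (note that the constraint ``$v\in A$'' in the $\max$ is equivalent to ``$v\in A_{w''}$'' since the maximum is taken over $V_{J'}(w'')$). This yields
\[\pp^{J',w''}\Bigl(\max_{v\in V_{J'}(w'')\colon\,\T_v\ge T\text{ or }v\in A}\varphi^{J',w''}_v\ge m_{J'}+z\Bigr)\le C\bigl(e^{dT}\tfrac{|A_{w''}|}{{J'}^d}(1+T+\log({J'}^d/|A_{w''}|))^{19/8}+e^{-cT}\bigr)(z\vee 1)e^{-\sqrt{2d}z}.\]
Summing the $e^{-cT}$ term over the $M:=|\W_{\lfloor N/J\rfloor,J'}(w')|\asymp (N/(JJ'))^d$ microscopic cubes immediately produces the trivial piece of the desired right-hand side.

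For the remaining contribution, I would introduce $f(x):=x(1+T+\log(1/x))^{19/8}$ and note $\sum_{w''}|A_{w''}|={|A|}$. A direct computation of $f''$ shows that
\[f''(x)=-\tfrac{19}{8x}\Bigl[(1+T+\log(1/x))^{11/8}-\tfrac{11}{8}(1+T+\log(1/x))^{3/8}\Bigr]<0\]
on $(0,1]$ once $T$ exceeds an absolute constant. Hence Jensen's inequality, applied to the values $x_{w''}:=|A_{w''}|/{J'}^d\in[0,1]$, gives
\[\sum_{w''}f(x_{w''})\le M\cdot f\bigl(\tfrac{1}{M}\textstyle\sum_{w''}x_{w''}\bigr)= M\cdot f\bigl(\tfrac{|A|}{MJ'^d}\bigr).\]
Since $M{J'}^d$ equals (up to lattice rounding) the volume $\lfloor N/J\rfloor^d$ of the enclosing box, this evaluates, after absorbing the difference between $\log(N^d/|A|)$ and $\log((N/J)^d/|A|)$ into the exponent $19/8$, to the claimed expression $(N/(JJ'))^de^{dT}(|A|/N^d)(1+T+\log(N^d/|A|))^{19/8}(z\vee 1)e^{-\sqrt{2d}z}$.

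The only real technicality is the concavity of $f$: one needs to verify it for all $T$ sufficiently large and for $x\in(0,1]$, and then argue that the boundary regime of small $x$ (where $\log(1/x)$ is very large) is the one in which the bound is saturated; this is exactly the regime for which $f''<0$ holds unambiguously, so Jensen applies where it is needed. Everything else is bookkeeping of combinatorial factors.
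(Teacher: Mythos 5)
Your handling of the set $A$ (per-cube application of the tail bound from Lemma \ref{l:upper_right_tail_sparse} followed by Jensen's inequality for the concave function $x\mapsto x(1+T+\log(1/x))^{19/8}$) is essentially what the paper does for that part. The genuine gap is in your first step: you invoke Lemma \ref{l:upper_right_tail_badset} with $N\leftarrow J'$, $J\leftarrow 1$, $w'\leftarrow w''$ for each microscopic cube and then ``sum the $\e^{-cT}$ term''. That lemma is not available at this scale. Its $\e^{-cT}$ term is produced by Lemma \ref{l:tail_max_bad_points}, which rests on Assumption \ref{a:sparseT}; but \ref{a:sparseT} is a part-B assumption, in force only for the field anchored at $w=0$, and it controls the number of points with $\T_v\ge t$ only in the \emph{macroscopic} sub-boxes $V_{\left\lfloor N/L\right\rfloor}(w')$ of $V_N(0)$. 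It gives no bound on the count inside an individual microscopic box $V_{J'}(w'')$: such a box may consist almost entirely of points with huge $\T$, and then no per-cube bound of the form $C(z\vee1)\e^{-\sqrt{2d}z-cT}$ holds. This is exactly the point of the remark preceding the lemma (``such an estimate cannot hold for each microscopic cube \dots the estimate does hold when averaging over all cubes''). For the same reason Lemma \ref{l:upper_right_tail_badset} is a statement about $\pp^{N}=\pp^{N,0}$ and cannot be recycled verbatim for $\pp^{J',w''}$ with general $w''$; only results relying solely on \ref{a:logupp}, which holds for all $w$ — i.e. Lemma \ref{l:upper_right_tail_sparse} — transfer cube by cube.

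The paper's fix, which your argument must adopt for the $\{\T_v\ge T\}$ part, is Lemma \ref{l:tail_max_bad_points_micro}: slice the bad points by level, setting $A_{t,w''}=\{v\in V_{J'}(w'')\colon t\le\T_v<t+1\}$, apply Lemma \ref{l:upper_right_tail_sparse} in each cube with the cube-dependent cardinality $|A_{t,w''}|$, and only then invoke \ref{a:sparseT} at the macroscopic scale, which bounds $\sum_{w''}|A_{t,w''}|\le C(N/J)^d\e^{-(d+\ep)t}$; Jensen's inequality (your same concavity computation, now with parameter $t$) converts the sum of per-cube bounds into terms of size $C(N/(JJ'))^d\e^{-\ep t/2}$, summable over $t\ge T$, which yields the $\e^{-cT}$ contribution. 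After that, your treatment of the points $v\in A$ with $\T_v\le T$ goes through as the paper intends, though note that your Jensen step actually produces the average $|A|/(M(J')^d)\approx|A|/\left\lfloor N/J\right\rfloor^d$ rather than $|A|/N^d$; this is a normalization/bookkeeping wrinkle (the discrepancy is not absorbed by the logarithmic factor alone) rather than the core problem, but it should not be waved away as you do.
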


The main step in the proofs of Lemma \ref{l:upper_right_tail_badset} and Lemma \ref{l:upper_right_tail_badset_micro} is an estimate on the right tail of the maximum on subsets where $\T_\cdot$ is bounded. The same argument implies an estimate for subsets where $\Rone_\cdot$ is bounded, and the following lemma collects both these estimates.

\begin{lemma}\label{l:upper_right_tail_sparse}
Under Assumption \ref{a:logupp} let $N\ge 2$, $w\in\Z^d$ and suppose that $A\subset V_N(w)$. Let $T=\max_{v\in A}\T_v$. Then for any $z\in\R$,
\begin{equation}\label{e:upper_right_tail_sparse}
\pp^{N,w}\left(\max_{v\in A}\varphi^{N,w}_v\ge m_N+z\right)\le C\e^{dT}\frac{|A|}{N^d}\left(1+T+\log\left(\frac{N^d}{|A|}\right)\right)^{19/8}(z\vee 1)\e^{-\sqrt{2d}z}.
\end{equation}
\end{lemma}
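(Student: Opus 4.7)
The plan is to prove this right-tail bound by a first-moment / ballot argument, following the strategy of \cite{DRZ17} and adapting it to accommodate the variable variances bounded above by $\log N + \T_v$ on $A$.

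The first step is to dominate $\varphi^{N,w}|_A$ by a majorizing auxiliary Gaussian field. Define $\chi_v = \tilde\theta^{N,w}_v + \eta_v$ on $V_N(w)$, where $\tilde\theta^{N,w}$ is the MBRW from \eqref{eq-MBRW} and $(\eta_v)_{v\in V_N(w)}$ are independent centered Gaussians, independent of $\tilde\theta^{N,w}$, with $\Var\eta_v$ chosen as $\T_v$ plus a large constant $C_0$. Combining Assumption \ref{a:logupp} with the MBRW covariance estimate \eqref{e:log_corr_MBRW}, one verifies
\[
\E(\chi_u - \chi_v)^2 \ge \E(\varphi^{N,w}_u - \varphi^{N,w}_v)^2 \quad \text{for all } u, v \in A,
\]
so a Gaussian comparison of Slepian/Kahane type (interpolating to absorb the variance mismatch into a small additive shift of the threshold) yields
\[
\pp^{N,w}\bigl(\max_{v\in A}\varphi^{N,w}_v \ge m_N + z\bigr) \le \pp\bigl(\max_{v\in A}\chi_v \ge m_N + z - C T^{1/2}\bigr) + \e^{-cT}.
\]

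The second step estimates the right-hand side by a conditional ballot argument. Conditioning on $(\eta_v)$, I apply a first-moment estimate on $\tilde\theta^{N,w}$ restricted to $A$, with a ballot (barrier) event $B_v$ requiring the dyadic hierarchy of MBRW contributions to $\tilde\theta^{N,w}_v$ to stay below a linear barrier between scales $0$ and $n = \log_2 N$. Bramson's ballot estimate yields, for $z'$ not too negative,
\[
\pp\bigl(\max_{v\in A}\tilde\theta^{N,w}_v \ge m_N + z'\bigr) \le C\frac{|A|}{N^d}(z' \vee 1)(1 + \log(N^d/|A|))^{3/2}\e^{-\sqrt{2d}z'},
\]
where the $(1+\log(N^d/|A|))^{3/2}$ correction reflects the reduced effective system size when $A$ is sparse. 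Substituting $z' = z - CT^{1/2} - \eta_v$ and integrating $\eta_v$ against its Gaussian density, the moment generating function $\E \e^{\sqrt{2d}\eta_v}=\e^{d\Var\eta_v}$ produces the $\e^{dT}$ prefactor; convolving the ballot polynomial with the Gaussian density produces the polynomial correction $(1+T+\log(N^d/|A|))^{19/8}(z \vee 1)$ after absorbing sub-leading terms.

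The main obstacle lies in the Gaussian comparison of Step 1: Slepian's inequality in its standard form requires matching variances, while Assumption \ref{a:logupp} provides only upper bounds, and the natural choice of $\Var \eta_v \approx \T_v$ that yields the desired $\e^{dT}$ prefactor does not quite satisfy the strict Slepian hypothesis. A Kahane-type interpolation with a judicious variance-interpolating parameter bypasses this issue, at the cost of additional additive terms in the threshold that contribute at most $\e^{-cT}$ on the right. A secondary difficulty is obtaining the sharp exponent $19/8$ in the polynomial correction: this arises from carefully balancing the $3/2$ exponent from the MBRW ballot, the $1/2$ exponent from the Gaussian convolution with $\eta$, and a further $3/8$ correction from sub-leading terms in the expansion of $m_N^2/(2(\log N + T))$; less careful bookkeeping would yield a larger polynomial factor that still suffices for qualitative statements but falls short of what is needed in Lemmas \ref{l:upper_right_tail_badset} and \ref{l:upper_right_tail_badset_micro}. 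Finally, near-boundary points of $V_N(w)$, where \eqref{e:log_corr_MBRW} is cruder, must be handled separately using the full-box upper bound from \ref{a:logupp}.
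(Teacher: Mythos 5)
Your overall strategy (Gaussian comparison with a tree-like field, then a first-moment/ballot computation) is the right flavor, but Step 1 as written does not work, and it is needed in a form you cannot get. First, the claimed increment domination $\E(\chi_u-\chi_v)^2\ge\E(\varphi_u-\varphi_v)^2$ with $\chi=\tilde\theta^{N,w}+\eta$ and $\Var\eta_v=\T_v+C_0$ is false in general: by \eqref{e:log_corr_MBRW} the MBRW increments are of size $2\log_+|u-v|_{\sim,N}+O(1)$, governed by the \emph{torus} distance, while \ref{a:logupp} only gives $\E(\varphi_u-\varphi_v)^2\le 2\log_+|u-v|+2(\T_u+\T_v)$; for pairs that are Euclidean-far but torus-close (both within distance $|u-v|_{\sim,N}$ of the boundary, which is allowed since $A$ may sit entirely in the boundary strip) the deficit is of order $\log N$, and in addition the coefficient of $\T$ forces $\Var\eta_v\approx 2\T_v$, not $\T_v$. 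This is exactly why the paper compares with the BRW $\theta^{N',w}$ (whose covariance upper bound $\log N'-\log_+|u-v|+C$ holds for \emph{all} pairs, with no boundary or torus caveat), not the MBRW. Second, even granting increment domination, that hypothesis (Sudakov--Fernique/Kahane) controls $\E\max$, not tail probabilities; Slepian requires matched variances, and there is no standard interpolation giving your inequality with a $CT^{1/2}$ shift plus an additive $\e^{-cT}$. Third, and decisively, an additive error $\e^{-cT}$ that is uniform in $z$ cannot appear: the right-hand side of \eqref{e:upper_right_tail_sparse} tends to $0$ as $z\to\infty$ for fixed $N,A,T$, so any bound of the form $\pp(\cdot)\le(\dots)\e^{-\sqrt{2d}z}+\e^{-cT}$ is strictly weaker than the lemma. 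The paper's route avoids all of this: upscale to a box of side $N'=2^{n'}$ with $\log N'-\log N\asymp T$, add an independent Gaussian to $\varphi$ (not to the comparison field) to match variances exactly, apply Slepian and pay only a factor $2$; the $\e^{dT}$ then comes from $m_{N'}\le m_N+\sqrt{2d}(T+C)$ together with the volume ratio $|A|/N'^d$, not from the moment generating function of added Gaussians.

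In Step 2 you have also outsourced the actual technical core of the lemma. The ``ballot estimate restricted to $A$'' with factor $(1+\log(N^d/|A|))^{3/2}$ is precisely what has to be proved, and it is where all the work lies: in the paper one sets $2^{da}\approx|A|$, imposes the barrier only up to scale $a$ (event $G(\beta)$ with $\beta=z+\sqrt{d/2}(n'-a)\log 2$), uses the density estimate \eqref{e:upper_right_tail_sparse_4} for the walk at time $a$, and integrates a Gaussian over the remaining $n'-a$ scales. The exponent $19/8$ emerges there as $3/2$ (density prefactor) $+\,1/2$ (Gaussian integral) $+\,3/8$ (from the $\frac{3}{2\sqrt{2d}}(\log n'-\log a)$ term via $n'/a\le 2(n'-a)$), not from convolving with the $\eta_v$'s or from an expansion of $m_N^2/(2(\log N+T))$. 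So as it stands, the proposal replaces the lemma's two essential steps (a boundary-safe probability comparison that absorbs $\T$ multiplicatively, and the restricted-set barrier computation) by assertions, and the asserted versions are either false or too weak.
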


This estimate should be optimal (except possibly regarding the exponent of the logarithmic correction term).
Similar estimates can be found in various places in the literature, e.g. in \cite[Proposition 1.1]{DRZ17} (but with exponent $1/2$ instead of $1$ for $\frac{|A|}{N^d}$), or in \cite[Lemma 10.4]{B20} (but only for the case of the Gaussian free field). We need the almost optimal 
dependence on $\frac{|A|}{N^d}$, as it is directly related to the exponents admissible in \ref{a:sparseT}.

For the proof we proceed similary to \cite{DRZ17}, and use Gaussian comparison inequalities to reduce the problem to an estimate for BRW, for which explicit calculations are possible.

\begin{proof}
Note that by Assumption \ref{a:logupp} we have for any $u,v\in A$ that
\begin{align}
\Var\varphi^{N,w}_v\le\log N+T\label{e:upper_right_tail_sparse_5},\\
\Var\varphi^{N,w}_v-\E\varphi^{N,w}_v\varphi^{N,w}_u\le \log_+|u-v|+2T.\label{e:upper_right_tail_sparse_6}
\end{align}

Let $N'=2^{n'}\ge N$ be an integer to be chosen shortly. We can define an upscaling map $\Psi^{w,w'}_{N,N'}\colon V_N(w)\to V_{N'}(w')$ by
\begin{equation}\label{e:upscaling}
\Psi^{w,w'}_{N,N'}(v)=w'+\left\lfloor\frac{N'}{N}\right\rfloor(v-w).
\end{equation}

Proceeding as in the proof of \cite[Lemma 2.5]{DRZ17}, we see that there is $N'=2^{n'}$ such that $\log N'-\log N\le T+C$ and such that
\[\pp^{N,w}\left(\max_{v\in A}\varphi^{N,w}_v\ge m_N+z\right)\le 2\pp^{N',w}\left(\max_{v\in \Psi^{w,w}_{N,N'}(A)}\theta^{N',w}_v\ge m_N+z\right),\]
for any $z\in\R$\footnote{In \cite[Lemma 2.5]{DRZ17} $N'$ is not a power of 2, but $\frac{N'}{N}$ is. This does not matter for the argument.}.
 This reduces the proof to the study of $\theta^{N,w}$. As in \cite[Section 3.4]{BDZ16} we can associate to $\theta^{N',w}$ a $2d$-ary branching Brownian motion such that $\theta^{N',w}_v$ is the value at time $n'$ of the branch corresponding to $v$.

Let $a$ be the smallest integer such that $|A|\le2^{da}$. We claim that
\begin{equation}\label{e:upper_right_tail_sparse_1}
\pp^{N',w}\left(\max_{v\in \Psi^{w,w}_{N,N'}(A)}\theta^{N',w}_v\ge m_{N'}+z\right)\le \frac{C}{2^{d(n'-a')}}(1+(n'-a'))^{19/8}(z\vee 1)\e^{-\sqrt{2d}z}.
\end{equation}
Once we have shown this, the lemma easily follows. Indeed, we have that $m_{N'}\le m_N+\sqrt{2d}(T+C)$ and so \eqref{e:upper_right_tail_sparse_1} implies that
\begin{align*}
&\pp^{N,w}\left(\max_{v\in A}\varphi^{N,w}_v\ge m_N+z\right)\le 2\pp^{N',w}\left(\max_{v\in\Psi^{w,w}_{N,N'}(A)}\theta^{N',w}_v\ge m_N+z\right)\\
&\le\frac{C}{2^{d(n'-a')}}(1+(n'-a'))^{19/8}((z-(m_N'-m_N))\vee 1)\e^{-\sqrt{2d}(z-(m_N'-m_N))}\\
&\le\frac{C|A|^d}{N'^d}\left(1+\log\left(\frac{N'^d}{|A|}\right)\right)^{19/8}\e^{-\sqrt{2d}z}\e^{2dT},
\end{align*}
which implies \eqref{e:upper_right_tail_sparse}.

It remains to prove \eqref{e:upper_right_tail_sparse_1}. We can assume that $a\ge1$. As $2^a\le2N$, while $2^{n'}\ge2^9N$, we know that $n'>a+8$. For later use we note that for $1\le a\le n'-8$ we have 
\begin{equation}\label{e:upper_right_tail_sparse_3}
\frac{\log n'-\log a}{n'-a}\le\frac{\log8}{7}.
\end{equation}

If $z\le-\sqrt{\frac{d}{2}}(n'-a)\log2$ then the right-hand side of \eqref{e:upper_right_tail_sparse_1} is at least one (if $C$ is large enough), and so we can assume that $z>-\sqrt{\frac{d}{2}}(n'-a)\log2$. 
Let $\beta=z+\sqrt{\frac{d}{2}}(n'-a)\log2>0$, and consider the event
\[G(\beta)=\bigcup_{v\in \Psi_{N,N'}(A)}\bigcup_{0\le t\le a}\left\{\theta^{N',w}_v(t)\ge\beta+1+\frac{t m_{2^{a}}}{a}+10\log_+(t\wedge(a-t))\right\}\]
By \cite[Lemma 3.7]{BDZ16} (or rather its obvious extension to $d$ dimensions), we have that
\begin{align}
\pp(G(\beta))&\le C(\beta\vee1)\e^{-\sqrt{2d}\beta}
\le C\left((z\vee 1)+\frac{\sqrt{d}(n'-a)\log2}{\sqrt{2}}\right)\e^{-\sqrt{2d}z}\frac{1}{2^{d(n'-a)}}\nonumber\\
&\le C(z\vee 1)(1+(n'-a))\e^{-\sqrt{2d}z}\frac{1}{2^{d(n'-a)}},
\label{eq-190222}
\end{align}
which can be absorbed into the right side of 
\eqref{e:upper_right_tail_sparse_1}. So 
it remains to consider the case that $G(\beta)$ does not occur.
We have that
\begin{equation}\label{e:upper_right_tail_sparse_2}
\begin{split}
&\pp^{N',w}\left(\max_{v\in \Psi^{w,w}_{N,N'}(A)}\theta^{N',w}_v\ge m_{N'}+z,
G_N(\beta)^\complement\right) \\
&\leq
\sum_{v\in \Psi^{w,w}_{N,N'}(A)}\pp^{N',w}\Big(\theta^{N',w}_v(n')\ge m_{N'}+z,
\\
&\qquad \qquad \qquad \qquad\qquad 
\theta^{N',w}_v(j)\le\beta+1+\frac{j m_{2^{a}}}{a}+10\log_+(t\wedge(a-t))\ \forall j\in\{1,\ldots a\}\Big)\,.
\end{split}
\end{equation}
Denote by $\chi$ the density of 
\[\pp^{N',w}\left(\theta^{N',w}_v(j)\le\beta+1+\frac{jm_{2^{a}}}{a}+10\log_+(t\wedge(a-t))\ \forall j\in\{1,\ldots a\},\theta^{N',w}_v(a)-m_{2^{a}}\in\cdot\right)\]
with respect to one-dimensional Lebesgue measure. A calculation similar to those in the proofs of \cite[Lemma 3.7]{BDZ16} and \cite[Lemma 2.4]{BDZ16b} shows that
\begin{equation}\label{e:upper_right_tail_sparse_4}
\begin{split}
	\chi(x)&\le\frac{C}{2^{da}}(\beta+1)(\beta+1-x)\exp\left(-\left(\sqrt{2d}-C'\frac{\log n}{n}\right)x-\frac{x^2}{2(\log 2)a}\right)\\
	&\le\frac{C}{2^{da}}(\beta+1)(\beta+1-x)\e^{-\sqrt{2d}x}.
	\end{split}
\end{equation}

We can now rewrite \eqref{e:upper_right_tail_sparse_2} by conditioning on the value of $\theta^{N',w}_v(a)$, and using that $\theta^{N',w}_v(n')-\theta^{N',w}_v(a)$ is Gaussian with variance $(n'-a)\log2$ and independent of $\theta^{N',w}_v$ for times less than $a$. We obtain
\begin{align*}
&\pp^{N',w}\bigg(\max_{v\in\Psi^{w,w}_{N,N'}(A)}\theta^{N',w}_v\ge m_{N'}+z, G_N(\beta)^\complement\bigg)\\
&\le\sum_{v\in \Psi^{w,w}_{N,N'}(A)}\int_0^\infty\chi(\beta+1-x)\pp^{N',w}\left(\theta^{N',w}_v(n')-\theta^{N',w}_v(a)\ge m_{N'}+z-(m_{2^{a}}+\beta+1-x)\right)\ud x.
\end{align*}
Note that 
\begin{align*}
m_{N'}-m_{2^{a}}-\beta+z&=\sqrt{2d}(n'-a)\log2-\frac{3}{2\sqrt{2d}}(\log n'-\log a)-\sqrt{\frac{d}{2}}(n'-a)\log2\\
&=\sqrt{\frac{d}{2}}(n'-a)\log2-\frac{3}{2\sqrt{2d}}(\log n'-\log a)
\end{align*}
and that, using \eqref{e:upper_right_tail_sparse_3}, the right-hand side is bounded below by 1 for any $d$. In particular, $m_{N'}+z-(m_{2^{a}}+\beta+1-x)\ge x>0$ for any $x>0$.
Applying now \eqref{e:upper_right_tail_sparse_4} and standard Gaussian tail estimates, we see that
\begin{align*}
&\pp^{N',w}\bigg(\max_{v\in \Psi^{w,w}_{N,N'}(A)}\theta^{N',w}_v\ge m_{N'}+z, G_N(\beta)^\complement\bigg)\\
&\le2^{da}\int_0^\infty\frac{C}{2^{da}}(\beta+1)x\exp\left(-\sqrt{2d}(\beta+1-x)\right)\\
&\qquad\qquad \qquad 
\times \frac{\sqrt{(n'-a)\log2}}{m_{N'}+z-(m_{2^{a}}+\beta+1-x)}\exp\left(-\frac{(m_{N'}+z-(m_{2^{a}}+\beta+1-x))^2}{2(n'-a)\log2}\right)\ud x\\
&\le \int_0^\infty \frac{C\sqrt{n'-a}((n'-a)+z+1)x}{x}
\exp\Big(-\sqrt{2d}\big(z+\sqrt{\frac{d}{2}}(n'-a)\log2+1-x\big)-\\
&\qquad\qquad \qquad\qquad\qquad\qquad\qquad\qquad
\qquad 
\frac{\big(\sqrt{\frac{d}{2}}(n'-a)\log2-\frac{3}{2\sqrt{2d}}(\log n'-\log a)-1+x\big)^2}{2(n'-a)\log2}\Big)\ud x\\
&\le C(1\vee z)(n'-a)^{3/2}\e^{\sqrt{2d}z}\\
&\qquad\times\int_0^\infty\exp\Big(\sqrt{\frac{d}{2}}x-\frac{5d}{4}(n'-a)\log2+\frac{3(\log n'-\log a)}{4\sqrt{2d}(n'-a)\log2}x-\frac{x^2}{2(n'-a)\log2}\Big)\ud x,
\end{align*}
where we have absorbed various bounded error terms into the constant $C$. If the replace the integral over $[0,\infty)$ with one over $\R$, we recognize the integral of a Gaussian function. We find, after some computation,
\begin{align*}
&\pp^{N',w}\bigg(\max_{v\in \Psi^{w,w}_{N,N'}(A)}\theta^{N',w}_v\ge m_{N'}+z, G_N(\beta)^\complement\bigg)\\
&\le C(1\vee z)(n'-a)^2\e^{\sqrt{2d}z}\exp\left(-d(n'-a)\log2+\frac38(\log n'-\log a)\right)\\
&\le\frac{C}{2^{d(n'-a)}}(1\vee z)(n'-a)^{19/8}\e^{\sqrt{2d}z}
\end{align*}
where we used ${n'}/{a}\le2(n'-a)$ in the last step. Together with
\eqref{eq-190222}, this completes the proof of \eqref{e:upper_right_tail_sparse_1} and therefore yields \eqref{e:upper_right_tail_sparse}.
\end{proof}

As a consequence of Lemma \ref{l:upper_right_tail_sparse}, we can give an estimate of the right tail of the maximum on those points where $\T_\cdot$ is large.

\begin{lemma}\label{l:tail_max_bad_points}
Under Assumptions \ref{a:logupp} and \ref{a:sparseT} there are $C,c>0$ such that for $J,N\in\N$ with $J\le\frac{N}{2}$, any $w'\in\W_{N,\left\lfloor\frac{N}{J}\right\rfloor}$, $z\in\R$, and any $T$ sufficiently large
\begin{equation}\label{e:tail_max_bad_points}
\pp^{N}\bigg(\max_{\substack{v\in V_{\left\lfloor\frac{N}{J}\right\rfloor}(w')\\\T_v\ge T}}\varphi^{N}_v\ge m_N+z\bigg)\le C(z\vee1)\e^{-\sqrt{2d}z-cT}.
\end{equation}
\end{lemma}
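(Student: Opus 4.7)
\medskip

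\noindent\textbf{Proof plan for Lemma \ref{l:tail_max_bad_points}.}
The plan is to slice the bad set $\{v\in V_{\lfloor N/J\rfloor}(w'):\T_v\ge T\}$ into level sets on which $\T_\cdot$ is pinned to a dyadic-like window, apply Lemma \ref{l:upper_right_tail_sparse} to each slice, and sum. Concretely, for every integer $k\ge 0$ I would set
\[A_k:=\left\{v\in V_{\lfloor N/J\rfloor}(w'):T+k\le\T_v<T+k+1\right\},\]
so that the event in \eqref{e:tail_max_bad_points} is covered by the union $\bigcup_{k\ge 0}\{\max_{v\in A_k}\varphi^{N}_v\ge m_N+z\}$. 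Assumption \ref{a:sparseT}, applied at level $T+k$ on the cube $V_{\lfloor N/J\rfloor}(w')$, gives $|A_k|\le C(N/J)^d\e^{-(d+\ep)(T+k)}$, so in particular $|A_k|/N^d\le C\e^{-(d+\ep)(T+k)}$ (using $J\ge 1$).

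Next, I would apply Lemma \ref{l:upper_right_tail_sparse} with $w=0$, with $A=A_k\subset V_N$, and with the upper bound $T+k+1$ for $\max_{v\in A_k}\T_v$. Writing $B_k:=\e^{d(T+k+1)}|A_k|/N^d$, which by the sparsity bound satisfies $B_k\le C\e^{-\ep(T+k)}$, the lemma yields
\[\pp^{N}\!\Big(\max_{v\in A_k}\varphi^{N}_v\ge m_N+z\Big)\le C\,B_k\left(1+T+k+\log\!\frac{N^d}{|A_k|}\right)^{\!19/8}\!(z\vee 1)\e^{-\sqrt{2d}z}.\]
Since $\log(N^d/|A_k|)\le d(T+k+1)+\log(1/B_k)+C$, the bracketed factor is at most $C((T+k)+\log(1/B_k))^{19/8}$, so the whole right-hand side is bounded by $C\,B_k\bigl((T+k)+\log(1/B_k)\bigr)^{19/8}(z\vee 1)\e^{-\sqrt{2d}z}$.

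The main technical wrinkle is that $|A_k|$ may be much smaller than the upper bound coming from \ref{a:sparseT}, in which case $\log(1/B_k)$ is large. To absorb this I would split into the two cases $T+k\ge\log(1/B_k)$ and $T+k<\log(1/B_k)$: in the first case the bracket is at most $(2(T+k))^{19/8}$, giving $B_k(T+k)^{19/8}\le C\e^{-(\ep/2)(T+k)}$ for $T$ large; in the second case one uses $B_k(\log(1/B_k))^{19/8}\le C B_k^{1/2}$ (since $x^{1/2}(\log(1/x))^{19/8}$ is bounded on $(0,1]$), which again gives at most $C\e^{-(\ep/2)(T+k)}$. Either way,
\[\pp^{N}\!\Big(\max_{v\in A_k}\varphi^{N}_v\ge m_N+z\Big)\le C\e^{-c(T+k)}(z\vee 1)\e^{-\sqrt{2d}z}\]
for some $c>0$ and all $T$ sufficiently large.

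Finally, a union bound over $k\ge 0$ and summation of the geometric series in $k$ gives
\[\pp^{N}\!\Big(\max_{\substack{v\in V_{\lfloor N/J\rfloor}(w')\\\T_v\ge T}}\varphi^{N}_v\ge m_N+z\Big)\le C(z\vee 1)\e^{-\sqrt{2d}z-cT},\]
which is \eqref{e:tail_max_bad_points}. The main obstacle is the logarithmic correction $(1+T+\log(N^d/|A|))^{19/8}$ in Lemma \ref{l:upper_right_tail_sparse}, which a priori depends on $N$ through $|A_k|$; the two-case argument above turns it into an $N$-independent bound by sacrificing a factor of $\e^{-(\ep/2)(T+k)}$ from the sparsity estimate.
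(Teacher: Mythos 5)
Your proposal is correct and follows essentially the same route as the paper: slice the bad set by unit levels of $\T_\cdot$, bound each slice's cardinality via \ref{a:sparseT}, apply Lemma \ref{l:upper_right_tail_sparse} to each slice, and sum the resulting geometric series in the level. Your two-case treatment of the factor $\bigl(1+T+\log(N^d/|A_k|)\bigr)^{19/8}$ just makes explicit a point the paper handles implicitly (by inserting the cardinality upper bound into the monotone expression), so the arguments coincide in substance.
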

\begin{proof}
We can apply a union bound to estimate
\[
\pp^{N}\bigg(\max_{\substack{v\in V_{\left\lfloor\frac{N}{J}\right\rfloor}(w')\\\T_v\ge T}}\varphi^{N}_v\ge m_N+z\bigg)
\le\sum_{t=T}^\infty\pp^{N}\bigg(\max_{\substack{v\in V_{\left\lfloor\frac{N}{L}\right\rfloor}(w')\\t\le \T_v<t+1}}\varphi^{N}_v\ge m_N+z\bigg).
\]
By Assumption \ref{a:sparseT}, there is $\ep>0$ such that for $t$ sufficiently large the set of points $v\in V_{\left\lfloor\frac{N}{J}\right\rfloor}(w')$ where $t\le \T_v<t+1$ has cardinality at most $C\left(\frac{N}{J}\right)^d\e^{-(d+\ep)t}$. So an application of Lemma \ref{l:upper_right_tail_sparse} implies for any $r$ sufficiently large that
\begin{align*}
\pp^{\corO{N}}\bigg(\max_{\substack{v\in V_{\left\lfloor\frac{N}{J}\right\rfloor}(w')\\\T_v\geq T}}\varphi^{N}_v\ge m_N+z\bigg)
&\le C\sum_{t=T}^\infty\frac{\e^{d(t+1)}}{\e^{(d+\ep)t}L^d}\left(1+t+(d+\ep)t\log L^d\right)^{19/8}(z\vee 1)\e^{-\sqrt{2d}z}\\
&\le C_\ep(z\vee 1)\e^{-\sqrt{2d}z}\sum_{t=T}^\infty \e^{-\ep t/2}.
\end{align*}
The lemma follows.
\end{proof}

Lemma \ref{l:upper_right_tail_badset} is now a straightforward consequence of Lemma \ref{l:upper_right_tail_sparse} and Lemma \ref{l:tail_max_bad_points}.

\begin{proof}[Proof of Lemma \ref{l:upper_right_tail_badset}]
First use Lemma \ref{l:tail_max_bad_points} to estimate the maximum over the points $v$ where $\T_v\ge T$, and then use Lemma \ref{l:upper_right_tail_sparse} to estimate the maximum over the points $v\in A$ where $\T_v\le T$.
\end{proof}

For the proof of Lemma \ref{l:upper_right_tail_badset_micro} we need a version of Lemma \ref{l:tail_max_bad_points} where one takes the average over microscopic cubes.
\begin{lemma}\label{l:tail_max_bad_points_micro}
Under assumptions \ref{a:logupp} and \ref{a:sparseT} there are  $C,c>0$ such that for any $J,N\in\N$ with $J\le\frac{N}{2}$, any $w'\in\W_{N,\left\lfloor\frac{N}{J}\right\rfloor}$, $z\in\R$, and any $T$ sufficiently large,
\begin{equation}\label{e:tail_max_bad_points_micro}
\sum_{w''\in\W_{\left\lfloor\frac{N}{J}\right\rfloor,J'}(w')}
\pp^{J',w''}\bigg(\max_{\substack{v\in V_{J'}(w'')\\\T_v\ge T}}\varphi^{J',w''}_v\ge m_{J'}+z\bigg)\le C\left(\frac{N}{JJ'}\right)^d(z\vee1)\e^{-\sqrt{2d}z-cT}.
\end{equation}
\end{lemma}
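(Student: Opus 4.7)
The strategy mirrors the proof of Lemma \ref{l:tail_max_bad_points}: decompose $\{\T_v \ge T\}$ into integer-indexed level sets, apply Lemma \ref{l:upper_right_tail_sparse} to each level set on each microscopic cube, and then sum. The difference from the non-averaged statement is that one now sums tail bounds over the $|\W_{\lfloor N/J\rfloor,J'}(w')|\le (N/(JJ'))^d$ microscopic cubes, so one must trade the available aggregate sparsity against the logarithmic correction terms coming from Lemma \ref{l:upper_right_tail_sparse}.

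Concretely, set $A_{w'',t} = \{v \in V_{J'}(w'') : t \le \T_v < t+1\}$. For each $w''$ and $t$, Assumption \ref{a:logupp} holds for the field $\varphi^{J',w''}$ on $V_{J'}(w'')$ (taking $N,w$ in \ref{a:logupp} to be $J',w''$), so Lemma \ref{l:upper_right_tail_sparse} yields
\begin{equation*}
\pp^{J',w''}\Big(\max_{v \in A_{w'',t}} \varphi^{J',w''}_v \ge m_{J'} + z\Big) \le C\e^{dt}\, \frac{|A_{w'',t}|}{(J')^d} \Big(1+t+\log\frac{(J')^d}{|A_{w'',t}|}\Big)^{19/8} (z\vee1)\e^{-\sqrt{2d}z}.
\end{equation*}
Assumption \ref{a:sparseT}, applied with parameters chosen so that the relevant cube is $V_{\lfloor N/J\rfloor}(w')$, gives the aggregate bound
\begin{equation*}
\sum_{w''\in \W_{\lfloor N/J\rfloor,J'}(w')} |A_{w'',t}| \le \Big|\{v\in V_{\lfloor N/J\rfloor}(w'): t\le\T_v<t+1\}\Big| \le C\Big(\frac{N}{J}\Big)^d\e^{-(d+\ep)t}.
\end{equation*}
A direct bound on the log correction would introduce a stray $(\log J')^{19/8}$ factor, so instead I would split $(1+t+\log(1/x))^{19/8} \le C(1+t)^{19/8} + C(\log(1/x))^{19/8}$ and use the elementary inequality $x(\log(1/x))^{19/8}\le C_\eta\, x^{1-\eta}$ on $(0,1]$, followed by Hölder's inequality with exponents $1/(1-\eta)$ and $1/\eta$ applied to the at most $(N/(JJ'))^d$ summands. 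Choosing $\eta < \ep/(d+\ep)$ ensures that the exponent of $t$ in the resulting bound is strictly less than $-d$, so that after multiplying by the $\e^{dt}$ prefactor and summing the two pieces one obtains
\begin{equation*}
\sum_{w''} \pp^{J',w''}\Big(\max_{v\in A_{w'',t}}\varphi^{J',w''}_v \ge m_{J'}+z\Big) \le C\Big(\frac{N}{JJ'}\Big)^d \e^{-\ep' t}(z\vee1)\e^{-\sqrt{2d}z}
\end{equation*}
for some $\ep'>0$, uniformly in $J,J',N$ once $t$ is large. Summing a geometric series over $t\ge T$ then yields the claimed bound.

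The main obstacle is the combinatorial step of taming the log correction when summing over many microscopic cubes with heterogeneous $|A_{w'',t}|$; the $x^{1-\eta}$/Hölder trick above is what allows one to do this without losing any factor depending on $J'$. Beyond this point, the proof is a direct adaptation of the proof of Lemma \ref{l:tail_max_bad_points}, with the macroscopic cube $V_{\lfloor N/J\rfloor}(w')$ in place of $V_N$ and an additional outer sum over the microscopic scale.
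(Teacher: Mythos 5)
Your proposal is correct, and its skeleton (union bound over the level sets $A_{w'',t}$, Lemma \ref{l:upper_right_tail_sparse} on each microscopic cube, Assumption \ref{a:sparseT} applied to the macroscopic cube $V_{\lfloor N/J\rfloor}(w')$ for the aggregate, then a geometric sum over $t\ge T$) is exactly that of the paper. Where you diverge is the one genuinely delicate step, namely averaging the heterogeneous per-cube bounds $\frac{|A_{w'',t}|}{(J')^d}\bigl(1+t+\log\frac{(J')^d}{|A_{w'',t}|}\bigr)^{19/8}$ without picking up a $(\log J')^{19/8}$ factor. The paper does this in one line by applying Jensen's inequality to the concave function $x\mapsto x\bigl(1+t+\log(1/x)\bigr)^{19/8}$ on $[0,1]$: the average over the at most $(N/(JJ'))^d$ cubes is then controlled by the function evaluated at the mean, which by \ref{a:sparseT} is at most $C\e^{-(d+\ep)t}$, giving the full decay $\e^{-(d+\ep)t}(1+(d+\ep+1)t)^{19/8}$. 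Your route — splitting off $(1+t)^{19/8}$, bounding $x(\log(1/x))^{19/8}\le C_\eta x^{1-\eta}$, and then H\"older with exponents $1/(1-\eta)$ and $1/\eta$ against the counting measure on the cubes — costs an $\eta$-fraction of the exponential decay, yielding $\e^{-(d+\ep)(1-\eta)t}$, but since any $\eta<\ep/(d+\ep)$ beats the $\e^{dt}$ prefactor this still produces $\e^{-cT}$ (with a slightly smaller admissible $c$), which is all the lemma claims. So the two approaches buy essentially the same thing; Jensen is sharper and shorter, while your H\"older argument is more elementary and makes the trade-off between sparsity and the logarithmic correction explicit.
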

\begin{proof}
As in the proof of Lemma \ref{l:tail_max_bad_points} we can apply an union bound and then Lemma \ref{l:upper_right_tail_sparse} to obtain that
the left hand side of \eqref{e:tail_max_bad_points_micro} is bounded above by
\begin{equation}\label{e:tail_max_bad_points_micro1}
\begin{split}
&\sum_{w''\in\W_{\left\lfloor\frac{N}{J}\right\rfloor,J'}(w')}\sum_{t=T}^\infty\pp^{J',w''}\bigg(\max_{\substack{v\in V_{L'}(w'')\\t\le \T_v<t+1}}\varphi^{J',w''}_v\ge m_{J'}+z\bigg)\\
&\quad\le C\sum_{t=T}^\infty\sum_{w''\in\W_{\left\lfloor\frac{N}{J}\right\rfloor,J'}(w')} \e^{d(t+1)}\frac{|A_{j,w''}|}{L'^d}\Big(1+t+\log\big(\frac{J'^d}{|A_{t,w''}|}\big)\Big)^{19/8}(z\vee 1)\e^{-\sqrt{2d}z},
\end{split}
\end{equation}
where 
\[A_{t,w''}=\left\{v\in V_{L'}(w'')\colon t\le \T_v<t+1\right\}.\]
We have 
\[\sum_{w''\in\W_{\left\lfloor\frac{N}{J}\right\rfloor,J'}(w')}\frac{|A_{t,w''}|}{L'^d}\le C\left(\frac{N}{JJ'}\right)^d\e^{-(d+\ep)t}\]
by Assumption \ref{a:sparseT}, and trivially $0\le{|A_{t,w''}|}/{J'^d}\le1$ for any $w''$. Now we can apply Jensen's inequality to the function $x\mapsto x(1+t+\log x)^{19/8}$ (which is concave on the interval $[0,1]$ for any $t\ge1$) on the right-hand side of \eqref{e:tail_max_bad_points_micro1} to see that
the left hand side of \eqref{e:tail_max_bad_points_micro} is bounded above by
\[ C\left(\frac{N}{JJ'}\right)^d\sum_{t=T}^\infty\frac{\e^{d(t+1)}}{\e^{(d+\ep)t}}\left(1+t+(d+\ep)t\right)^{19/8}(z\vee 1)\e^{-\sqrt{2d}z},\]
which immediately implies the result.
\end{proof}

\begin{proof}[Proof of Lemma \ref{l:upper_right_tail_badset_micro}]
As in the proof of Lemma \ref{l:upper_right_tail_badset}, we first use Lemma \ref{l:tail_max_bad_points_micro} to estimate the maximum over the points $v$ where $\T_v\ge T$, and then use Lemma \ref{l:upper_right_tail_sparse} (combined with another application of Jensen's inequality) to estimate the maximum over the points $v\in A$ where $\T_v\le T$.
\end{proof}

\subsection{Further bounds on the tail of the maximum}
\label{subsec-further}

Lemma \ref{l:upper_right_tail_badset} allows us to show that with probability tending to 1 as $R\to\infty$ and $T\to\infty$, the maximum of $\varphi^{N,w_N}$ does not occur at points $v$ where $\T_v\ge T$ or $\Rone_v\ge R$. Thus, for later arguments we will be able to restrict attention to those points where $\T_v\le T$ and $\Rone_v\le R$ for some $R$ and $T$, taking the limit $R\to\infty$ and $T\to\infty$ at the very end. With this in mind, we next state and prove some estimates on the maximum restricted to the points where $\T_v\le T$ or $\Rone_v\le R$. For reasons that will become clear later, we need these estimates not just on $V_N$, but on all the macroscopic subcubes in $\V_{N,\left\lfloor{N}/{J}\right\rfloor}$.

We begin with an upper bound on the right tail, that is just a small variant of Lemma \ref{l:upper_right_tail_sparse}. This time we do not care about the precise dependency in $T$, but need to keep an additional factor $\exp(-z^2/(C\log N))$ on the right hand side.

\begin{lemma}\label{l:upper_right_tail}
Let $N\in\N$, $w\in\Z^d$ and suppose that $A\subset V_N(w)$. Under Assumption \ref{a:logupp} let $T=\max_{v\in A}\T_v$. Then for any $z\in\R$ we have
\begin{equation}\label{e:upper_right_tail}
\pp^{N,w}\Big(\max_{v\in A}\varphi^{N,w}_v\ge m_N+z\Big)\le C(z\vee 1)\e^{-\sqrt{2d}z-\frac{z^2}{C(\log N+T\vee1)}+C(T\vee1)}.
\end{equation}
If instead of \ref{a:logupp} we have \ref{a:logbd}, then for $\delta>0$ and $R=\max_{v\in A}\Rone_v$ we have
\begin{equation}\label{e:upper_right_tail_R}
\pp^{N,w}\Big(\max_{v\in A\cap V_N^\delta(w)}\varphi^{N,w}_v\ge m_N+z\Big)\le C(z\vee 1)\e^{-\sqrt{2d}z-\frac{z^2}{C(\log N+\alpha_{\delta}(R)\vee1)}+C(\alpha_{\delta}(R)\vee1)}.
\end{equation}
\end{lemma}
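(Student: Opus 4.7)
The proof follows the same overall structure as that of Lemma~\ref{l:upper_right_tail_sparse} (upscale via \cite[Lemma 2.5]{DRZ17} to obtain $N'=2^{n'}$ with $\log N'-\log N\le (T\vee1)+C$, then apply a comparison inequality to reduce to a BRW estimate), but with a coarser treatment of the set $A$ in exchange for capturing the Gaussian dependence on $z$. Concretely, the first step is to use \cite[Lemma 2.5]{DRZ17} to conclude
\[\pp^{N,w}\bigl(\max_{v\in A}\varphi^{N,w}_v\ge m_N+z\bigr)\le 2\,\pp\bigl(\max_{v\in V_{N'}}\theta^{N'}_v\ge m_N+z\bigr),\]
where we have discarded the inclusion $\Psi^{w,w}_{N,N'}(A)\subset V_{N'}$ since the precise cardinality dependence is not needed here.

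The key technical step is the sharpening of the BRW bound used in the proof of Lemma~\ref{l:upper_right_tail_sparse}: for all $z'\ge 0$,
\[\pp\bigl(\max_{v\in V_{N'}}\theta^{N'}_v\ge m_{N'}+z'\bigr)\le C(z'\vee 1)\,\e^{-\sqrt{2d}\,z'-(z')^2/(C\log N')}.\]
I would establish this in two ways, either of which suffices. The direct derivation repeats the ballot calculation \eqref{e:upper_right_tail_sparse_2}--\eqref{e:upper_right_tail_sparse_4} with the full-cube choice $a=n'$ and retains the factor $\e^{-x^2/(2(\log 2)n')}$ in \eqref{e:upper_right_tail_sparse_4} (which in the proof of Lemma~\ref{l:upper_right_tail_sparse} was dropped); completing the square in the resulting Gaussian integral produces the additional factor $\e^{-(z')^2/(C\log N')}$. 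Alternatively, one combines \cite[Lemma 3.7]{BDZ16} (which suffices when $(z')^2\lesssim \log N'$) with the Borell--TIS inequality applied to $\max \theta^{N'}$, using $\Var\theta^{N'}_v\le\log N'+C$ and the Sudakov--Fernique bound $\E\max\theta^{N'}\le m_{N'}+C$, to handle the complementary range $(z')^2\gtrsim \log N'$.

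Setting $z'=z-(m_{N'}-m_N)$, using $|m_{N'}-m_N|\le \sqrt{2d}(T\vee 1)+C$ and $\log N'\le\log N+(T\vee1)+C$, and absorbing the multiplicative discrepancies (from shifting $z'\to z$ inside $(z'\vee 1)$ and inside the quadratic) into a factor $\e^{C(T\vee 1)}$ yields \eqref{e:upper_right_tail}; for $z<0$ the bound is obtained from the $z=0$ case by monotonicity of the left-hand side. To prove \eqref{e:upper_right_tail_R} I would observe that Assumption~\ref{a:logbd} applied with $u=v$ gives $|\Var^{N,w}(v)-\log N|\le 2\alpha_\delta(\Rone_v)$, and together with the off-diagonal bound this implies that on $V^\delta_N(w)$ the field satisfies \ref{a:logupp} with $\T_v$ replaced by $2\alpha_\delta(\Rone_v)$; applying \eqref{e:upper_right_tail} with $T=2\alpha_\delta(R)$ then gives the result.

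The principal obstacle is the BRW bound with Gaussian correction. The direct approach requires careful bookkeeping when completing the square, since the cross-terms between the ballot exponent $-\sqrt{2d}\,x$ and the newly-retained $-x^2/(2(\log 2)n')$ need to be shown to collect into $-\sqrt{2d}\,z'-(z')^2/(C\log N')$ without spoiling the $(z'\vee 1)$ prefactor. In the interpolation approach, the analogous difficulty is to verify that the Bramson-type and Gaussian-concentration bounds agree at the crossover $(z')^2\asymp\log N'$ up to the permitted multiplicative slack.
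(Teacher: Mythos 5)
Your proposal is correct and follows essentially the same route as the paper: upscale via the comparison of \cite[Lemma 2.5]{DRZ17} to a BRW on a box of sidelength $N'$ with $\log N'-\log N\le T+C$, enlarge $\Psi^{w,w}_{N,N'}(A)$ to all of $V_{N'}(w)$, invoke the BRW right-tail bound with Gaussian correction (the paper simply cites the $d$-dimensional analogue of \cite[Lemma 3.8]{BDZ16}, which is the same estimate you propose to re-derive), and recenter, absorbing the shift $m_{N'}-m_N$ into the factor $\e^{C(T\vee1)}$. Your treatment of \eqref{e:upper_right_tail_R} — observing that under \ref{a:logbd} the bounds \eqref{e:upper_right_tail_sparse_5}--\eqref{e:upper_right_tail_sparse_6} hold on $A\cap V_N^\delta(w)$ with $\alpha_\delta(R)$ (up to a factor $2$) in place of $T$ and rerunning the argument — is exactly the paper's.
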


Note that in the lemma $N$ and $w$ are again arbitrary. So we can apply the lemma for $ V_{\left\lfloor{N}/{J}\right\rfloor}(w')$ and some subset $A$ of it to conclude an upper bound on the maximum on that (macroscopic) subbox of $V_N$. The analogous remark holds true for Lemma \ref{l:lower_right_tail} and \ref{l:upper_left_tail} below.

\begin{proof}
We begin with the proof of \eqref{e:upper_right_tail}. 
As in the proof of Lemma \ref{l:upper_right_tail_sparse} we find an integer $N'=2^{n'}$ such that $T-C\le\log N'-\log N\le T+C$ and such that
\[\pp^{N,w}\Big(\max_{v\in A}\varphi^{N,w}_v\ge m_N+z\Big)\le 2\pp^{N',w}\Big(\max_{v\in \Psi^{w,w}_{N,N'}(A)}\theta^{N',w}_v\ge m_N+z\Big).\]
Obviously $\Psi^{w,w}_{N,N'}(A)\subset V_{N'}(w)$ and so we see that
\begin{equation}\label{e:upper_right_tail1}
	\pp^{N,w}\Big(\max_{v\in A}\varphi^{N,w}_v\ge m_N+z\Big)\le2\pp^{N',w}\Big(\max_{v\in V_{N'}(w)}\theta^{N',w}_v\ge m_N+z\Big),
\end{equation}
and so we only need a right tail bound for the maximum of a BRW. Such an estimate can be shown as in the two-dimensional case in \cite[Lemma 3.8]{BDZ16}, and we find
\[\pp^{N',w}\Big(\max_{v\in V_{N'}(w)}\theta^{N',w}_v\ge m_{N'}+z\Big)\le C(z\vee 1)\e^{-\sqrt{2d}z-\frac{z^2}{C\log N'}}.\]
Combining this with \eqref{e:upper_right_tail1} and using $m_{N'}\le m_N+\sqrt{2d}T+C$ we can now calculate that
\begin{align*}
	&\pp^{N,w}\Big(\max_{v\in A}\varphi^{N,w}_v\ge m_N+z\Big)
	\le 2\pp^{N',w}\Big(\max_{v\in V_{N'}(w)}\theta^{N',w}_v\ge m_{N'}+z-\sqrt{2d}T-C\Big)\\
	&\le C((z-\sqrt{2d}T-C)\vee 1)\exp\Big(-\sqrt{2d}(z-\sqrt{2d}T-C)-(-\frac{(z-\sqrt{2d}T-C)^2}{C\log N'}\Big)\\
	&\le C(z\vee 1)\exp\Big(-\sqrt{2d}z-\frac{z^2}{C\log N'}+C\big(1+T+\frac{T^2}{\log N'}\big)\Big),
\end{align*}
and \eqref{e:upper_right_tail} follows when taking into account that $T-C\le\log N'-\log N\le T+C$.

The proof of \eqref{e:upper_right_tail_R} is analogous. In fact, we used \ref{a:logupp} only via the bounds \eqref{e:upper_right_tail_sparse_5} and \eqref{e:upper_right_tail_sparse_6}, and under Assumption \ref{a:logbd} those bounds hold on $A\cap V_N^\delta(w)$ with $\alpha_\delta(R)$ in place of $T$.
\end{proof}
These upper bounds on the right tail were straightforward. We state next the complementary lower bound on the right tail, which will be slightly more difficult to show. The difference is that for the upper bound it is only helpful to consider the maximum over a subset, while for the lower bound this might be harmful. However, we will assume that ${|A|}/{N^d}$ is large enough. By the pigeon-hole principle we can then find subcubes of $V_N(w)$ which contain many points of $A$, and this is enough to proceed with a Gaussian comparison argument as in \cite{DRZ17}.
\begin{lemma}\label{l:lower_right_tail}
Under Assumption \ref{a:logbd} there are constants $\gamma$, $c$ and $C$ (depending on $d$ only) with the following property: Let $N\in\N$, $w\in\Z^d$ and $A\subset V_N(w)$ with $|A|\ge(1-2^{-(d+1)})N^d$. Let $R=\max_{v\in A}\Rone_v$ and suppose that $N\ge\gamma\exp\left(\alpha_{(d2^{d+3})^{-1}}(R)\right)$. Then for any $z\in[1,\sqrt{\log N}-C(\alpha_{(d2^{d+3})^{-1}}(R)\vee 1)]$ we have
\begin{equation}\label{e:lower_right_tail}
\pp^{N,w}\Big(\max_{v\in A}\varphi^{N,w}_v\ge m_N+z\Big)\ge cz\e^{-\sqrt{2d}z-C\alpha_{(d2^{d+3})^{-1}}(R)}.
\end{equation}
\end{lemma}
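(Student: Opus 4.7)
Sketch: the plan is a Slepian-type Gaussian comparison that reduces the lower tail estimate to the corresponding (known) lower tail for the maximum of a modified branching random walk on a slightly smaller dyadic box. Set $\delta = (d\,2^{d+3})^{-1}$ and $\alpha = \alpha_\delta(R)$. First I would restrict attention to $A' := A \cap V_N^\delta(w)$; a volume count gives $|V_N(w) \setminus V_N^\delta(w)| \le 2d\delta\, N^d = 2^{-(d+2)} N^d$, so the hypothesis $|A| \ge (1 - 2^{-(d+1)})N^d$ yields $|A'| \ge (1 - 3 \cdot 2^{-(d+2)}) N^d$, which is still a macroscopic fraction of $V_N(w)$. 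By Assumption \ref{a:logbd},
\[\bigl|\Cov^{N,w}(u,v) - \log N + \log_+|u-v|\bigr| \le 2\alpha \quad \text{for all } u, v \in A'.\]

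Next, I would choose $N' = 2^{n'} \le N$ with $\log N - \log N' = C\alpha + O(1)$ for a suitable $C$ depending only on $d$; this is possible because $N \ge \gamma\,\e^\alpha$ for $\gamma$ large. Composing the upscaling map $\Psi := \Psi^{w,w}_{N',N}$ from \eqref{e:upscaling} with a shift chosen by a pigeonhole argument, I would pick $A'' \subset V_{N'}(w)$ with $\Psi(A'') \subset A'$ and $|A''| \ge c\, N'^d$: averaging over the $\lfloor N/N' \rfloor^d$ possible shifts shows that for some shift, a macroscopic fraction of $V_{N'}(w)$ has its image in $A'$. Under this upscaling the pulled-back covariance $\Cov^{N,w}(\Psi u, \Psi v)$ is controlled around $\log N' - \log_+|u-v|$ with error $2\alpha + O(1)$, while the MBRW covariance satisfies the same bound with error $O(1)$ by \eqref{e:log_corr_MBRW}. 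By the choice of $N'$, $\Var\varphi^{N,w}_{\Psi v}$ exceeds $\Var \tilde\theta^{N',w}_v$ by at least $C'\alpha$. I would then introduce the auxiliary field $Y_v := \tilde\theta^{N',w}_v + W + \sigma_v \xi_v$, where $W$ is a one-dimensional Gaussian of variance $\asymp \alpha + 1$, the $\xi_v$ are i.i.d.\ standard Gaussians, all independent of $\tilde\theta^{N',w}$, and $\sigma_v \ge 0$ is chosen so that $\Var Y_v = \Var \varphi^{N,w}_{\Psi v}$. The uniform shift $W$ adds the same constant to every off-diagonal covariance of $Y$, and combined with the bounds above forces $\Cov(Y_u, Y_v) \ge \Cov^{N,w}(\Psi u, \Psi v)$ for $u \ne v$ in $A''$. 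Slepian's lemma then yields
\[\pp^{N,w}\Bigl(\max_{v \in A'} \varphi^{N,w}_v \ge m_N + z\Bigr) \ge \pp\Bigl(\max_{v \in A''} Y_v \ge m_N + z\Bigr).\]

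To transfer this to a bound involving $\tilde\theta^{N',w}$ alone, I would integrate out $W$ and $\xi_v$: conditioning on $\{W \ge -C\sqrt{\alpha+1}\}$ (probability at least $1/2$) and controlling the $\sigma_v \xi_v$ contribution by standard Gaussian concentration, the right-hand side is at least $\tfrac12 \pp(\max_{v \in A''} \tilde\theta^{N',w}_v \ge m_{N'} + z')$ with $z' := z - (m_N - m_{N'}) - O(\sqrt\alpha) = z + O(\alpha)$. The assumed range of $z$ ensures $z' \in [1, \sqrt{\log N'}]$, so a standard right tail lower bound for the MBRW maximum on a constant-density subset of $V_{N'}(w)$, proved via a truncated second-moment / Paley--Zygmund argument with ballot-type barrier restriction in the spirit of \cite{BDZ16,DRZ17}, yields $\pp(\max_{v \in A''} \tilde\theta^{N',w}_v \ge m_{N'} + z') \ge c z' \e^{-\sqrt{2d}z'}$. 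Collecting the exponential factors $\e^{-\sqrt{2d}(z'-z)} = \e^{O(\alpha)}$ then gives \eqref{e:lower_right_tail}.

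The main obstacle will be setting up the Gaussian comparison so that Slepian's inequality points in the right direction despite the non-uniform $O(\alpha)$ error in the covariance of $\varphi^{N,w}$. The downscaling to $N' \approx N\e^{-C\alpha}$ (hence the hypothesis $N \ge \gamma\,\e^\alpha$), combined with the scalar shift $W$ and the per-vertex corrections $\sigma_v \xi_v$ used to match variances, is what turns a one-sided covariance bound into a pointwise inequality; it is also what produces the $\e^{-O(\alpha)}$ cost visible in \eqref{e:lower_right_tail}. Secondary but essential points are the pigeonhole selection of the shifted upscaling so that $|A''| \gtrsim N'^d$, and the fact that the MBRW lower tail estimate remains effective on positive-density subsets, not just on the full box.
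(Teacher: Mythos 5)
Your proposal is correct and follows essentially the same route as the paper: restrict to $A\cap V_N^\delta(w)$, use a Slepian-type comparison (with variance-matching independent Gaussians) against an MBRW at a dyadic scale $N'$ with $\log N-\log N'\asymp \alpha_\delta(R)$, extract a positive-density index set by pigeonhole, and invoke the truncated second-moment/Paley--Zygmund right-tail lower bound for the MBRW on dense subsets, absorbing $m_N-m_{N'}=O(\alpha_\delta(R))$ into the factor $\e^{-C\alpha_\delta(R)}$. The only differences are organizational (you pigeonhole over shifts of the dilated sublattice and make the comparison explicit via the common Gaussian $W$, where the paper pigeonholes over sub-boxes and cites \cite[Lemma 2.6]{DZ14}), plus harmless sign slips in your bookkeeping for $z'$.
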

\begin{proof}
As the function $\alpha_{\delta}$ is increasing, we know that $\alpha_{\delta}(\Rone_v)\le\alpha_{\delta}(R)$ for any $v\in A$. 
Fix $\delta={1}/{(d2^{d+3})}$. For any $u,v\in A\cap V_N^\delta(w)$ we know that
\begin{equation}\label{e:lower_right_tail1}
\left|\E\varphi^{N,w}_v\varphi^{N,w}_u-\log N+\log_+|u-v|\right|\le \alpha_{\delta}(R).\end{equation}

By a comparison argument similar to that in Lemma \ref{l:upper_right_tail} (see \cite[Lemma 2.6]{DZ14} for details,
 there is a constant $C_0$ (depending on $d$ only) with the following property: there is $N'=2^{n'}$ with $\log N-\log N'\le \alpha_{\delta}(R)+C_0$, such that if $w'\in V_N(w)$ and $\Psi^{w,w'}_{N,N'}$ is the upscaling map from the proof of Lemma \ref{l:upper_right_tail_sparse}, then
\begin{equation}\label{e:lower_right_tail2}
\pp^{N,w}\Big(\max_{v\in \Psi^{w,w'}_{N,N'}(A\cap V^{2\delta}_{N'}(w'))}\varphi^{N,w}_v\ge m_N+z\Big)\ge \frac12\pp^{N',w'}\Big(\max_{v\in A\cap V^{2\delta}_{N'}(w')}\tilde\theta^{N',w'}_v\ge m_N+z\Big).
\end{equation}
Here we used that $\Psi^{w,w'}_{N,N'}(V^{2\delta}_{N'}(w'))\subset V^\delta_N(w)$, and so \eqref{e:lower_right_tail1} applies to any $v\in\Psi^{w,w'}_{N,N'}(A\cap V^{2\delta}_{N'}(w'))$.

Choosing $\gamma\ge\e^{C_0}$ ensures that we find such an $N'$ with $N'\ge1$, for which \eqref{e:lower_right_tail2} holds. We want to choose $w'$ such that the right-hand side of \eqref{e:lower_right_tail2} is as large as possible. For that purpose note that
\[\Big|A\cup\bigcup_{w'\in\W_{N,N'}(w)}V_{N'}(w')\Big|\ge\Big(1-\frac{1}{2^{d+1}}\Big)N^d+\Big(N'\Big\lfloor\frac{N}{N'}\Big\rfloor\Big)^d-N^d\ge \frac{N^d}{2^{d+1}},\]
while $|\W_{N,N'}(w)|=\left\lfloor\frac{N}{N'}\right\rfloor^d\le\frac{N^d}{N'^d}$. Thus, by the pigeonhole principle there is some $w'$ such that
\[\left|A\cap V_{N'}(w')\right|\ge\frac{1}{2^{d+1}}(N')^d.\]
Our choice of $\delta={1}/{(d2^{d+3})}$ ensures that $V_{N'}^{2\delta}(w')$ contains at least $(N')^d-2d\delta (N')^{d-1}\ge$
$\big(1-\frac{1}{2^{d+2}}\big)(N')^d$ points for any $N\ge1$. Thus
\[\left|A\cap V_{N'}^{2\delta}(w')\right|\ge\frac{1}{2^{d+1}}(N')^d-\frac{1}{2^{d+2}}(N')^d=\frac{1}{2^{d+2}}(N')^d.\]

Let $A':=A\cap V_{N'}^{2\delta}(w')$. We have just seen that 
$|A'|\ge(N')^d/2^{d+2}$, and because of \eqref{e:lower_right_tail2} we only need to give a lower bound for the right tail of $\max_{v\in A'}\tilde\theta^{N',w'}_v$. To be precise, we claim that for any $y\in[1,\sqrt{\log N'}]$,
\begin{equation}\label{e:lower_right_tail3}
\pp^{N',w'}\left(\max_{v\in A'}\tilde\theta^{N',w'}_v\ge m_{N'}+y\right)\ge cy\e^{-\sqrt{2d}y}.
\end{equation}
If instead of $\max_{v\in A'}$ we had $\max_{v\in V_{N'}(w')}$, this would be 
analogous to the proof of \cite[Lemma 3.7]{DZ14}. Our $A'$ contains a positive fraction of the points in $V_{N'}(w')$, and so we can hope that \eqref{e:lower_right_tail3} still holds. Indeed, the proof of \cite[Lemma 3.7]{DZ14} uses a second-moment method, and for that it is only helpful if one considers fewer events, as long as the first moment is not too reduced. The details
follow.

As in \cite[Section 4.2]{BDZ16}, we associate to $\tilde\theta^{N',w'}$ a family of Brownian motions $\tilde\theta^{N',w'}(t)$
such that $\tilde\theta^{N',w'}_v=\tilde\theta^{N',w'}_v(n')$ and 
$\E^{N',w'} \tilde\theta^{N',w'}_v\tilde\theta^{N',w'}_u=\E^{N',w'} \tilde\theta^{N',w'}_v(n')\tilde\theta^{N',w'}_u(n')$.
Consider the events
\[\Ev_{F,v}^{N'}(y)=\left\{\tilde\theta^{N',w'}_v(t)\le y+\frac{tm_{N'}}{n'}\ \forall t\in [1,n'],
\tilde\theta^{N',w'}_v(n')\in [m_{N'}+y,m_{N'}+y+1]\right\}\]
and the random variable
\[\Lambda_F(y)=\sum_{v\in A'}\I_{\Ev_{F,v}^{N'}(y)}.\]
A calculation analogous to the proof of \cite[Lemma 3.7]{DZ14} shows that
\[\pp^{N',w'}\left(\Ev_{F,v}^{N'}(y)\right)\ge \frac{c}{2^{dn'}}z\e^{-\sqrt{2d}y},\]
and thus
\[
\E^{N',w'}\Lambda_F(y)\ge cy\e^{-\sqrt{2d}y}.
\]
The second moment of $\Lambda_F(y)$ is bounded by that of $\sum_{v\in V_{N'}(w')}\I_{\Ev_{F,v}^{N'}(y)}$, and the latter can be bounded again like in the proof of \cite[Lemma 3.7]{DZ14}. We obtain that
\[\E^{N',w'}\Lambda(y)^2\le Cy\exp\left(\frac{m_{N'}}{\log N'}y\right)\le Cy\e^{-\sqrt{2d}y}\]
From the Paley-Zygmund inequality we can thus conclude
\[\pp^{N',w'}\left(\max_{v\in A\cap V^{2\delta}_{N'}(w')}\tilde\theta^{N',w'}_v\ge m_{N'}+y\right)\ge\pp^{N',w'}(\Lambda(y)>0)\ge\frac{\left(\E^{N',w'}\Lambda_F(y)\right)^2}{\E^{N',w'}\Lambda_F(y)^2}\ge cy\e^{-\sqrt{2d}y}.\]
This shows \eqref{e:lower_right_tail3}. We now recall \eqref{e:lower_right_tail2} and use that $m_N-m_{N'}\le \sqrt{2d}\alpha_{\delta}(R)+C$ to see that for any $z\in[1,\sqrt{\log N'}-\sqrt{2d}\alpha_{\delta}(R)-C]$ we have
\begin{align*}
&\pp^{N,w}\Big(\max_{v\in \Psi^{w,w'}_{N,N'}(A\cap V^{2\delta}_{N'}(w'))}\varphi^{N,w}_v\ge m_N+z\Big)\\
&\ge \frac12\pp^{N',w'}\Big(\max_{v\in A\cap V^{2\delta}_{N'}(w')}\tilde\theta^{N',w'}_v\ge m_{N'}+z+\sqrt{2d}\alpha_{\delta}(R)+C\Big)\\
&\ge \frac12\pp^{N',w'}\Big(\max_{v\in A\cap V^{2\delta}_{N'}(w')}\tilde\theta^{N',w'}_v\ge m_{N'}+z+\sqrt{2d}\alpha_{\delta}(R)+C\Big)\\
&\ge c(z+\sqrt{2d}\alpha_{\delta}(R))\exp\left(-\sqrt{2d}(z+\sqrt{2d}\alpha_{\delta}(R)\right)
\ge cz\e^{-\sqrt{2d}z-C\alpha_{\delta}(R)},
\end{align*}
which implies \eqref{e:lower_right_tail}.
\end{proof}

We can use this lower bound on the right tail directly to deduce an upper bound on the left tail. Again, this result follows from Gaussian comparison arguments together with the pigeon-hole principle.

\begin{lemma}\label{l:upper_left_tail}
Under Assumption \ref{a:logbd} there are constants $\gamma'$, $c$ and $C$ (depending on $d$ only) with the following property: Let $N\in\N$, $w\in\Z^d$ and $A\subset V_N(w)$ with $|A|\ge(1-2^{-(d+3)})N^d$. Let $R=\max_{v\in A}\Rone_v$ and suppose that $N\ge\gamma'\exp\left(\alpha_{(d2^{d+2})^{-1}}(R)\right)$. Then for any $z\in[1,2\sqrt{2d}\log N-C(\alpha_{(d2^{d+2})^{-1}}(R)\vee 1)]$ we have
\begin{equation}\label{e:upper_left_tail}
\pp^{N,w}\left(\max_{v\in A}\varphi^{N,w}_v\le m_N-z\right)\le C\e^{-cz+C\alpha_{(d2^{d+2})^{-1}}(R)}.
\end{equation}
\end{lemma}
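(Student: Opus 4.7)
The plan is to reduce the left-tail upper bound, via a Gaussian comparison with an MBRW, to a left-tail estimate for the MBRW maximum restricted to a positive-density subset; the latter is then obtained by combining the already-established right-tail lower bound of Lemma \ref{l:lower_right_tail} on many sub-cubes with the near-independence of the MBRW across sub-cubes.

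Set $\delta=(d2^{d+2})^{-1}$. First I would use the upscaling/Slepian comparison from the proof of Lemma \ref{l:lower_right_tail} (based on \cite[Lemma 2.6]{DZ14}) to select $N'=2^{n'}$ with $|\log N-\log N'|\le\alpha_{\delta}(R)+C_0$ and a shift $w'\in V_N(w)$ such that, after absorbing $|m_N-m_{N'}|\le\sqrt{2d}\alpha_{\delta}(R)+C$ into the shift $z$,
\[
\pp^{N,w}\Big(\max_{v\in A}\varphi^{N,w}_v\le m_N-z\Big)\le 2\,\pp^{N',w'}\Big(\max_{v\in A'}\tilde\theta^{N',w'}_v\le m_{N'}-z'\Big),
\]
where $z'=z-C\alpha_{\delta}(R)\ge 1$ and $A'\subset V_{N'}^{2\delta}(w')$ satisfies $|A'|\ge c_d(N')^d$, which follows by the same pigeonhole step as in the proof of Lemma \ref{l:lower_right_tail} but using the stronger density hypothesis $|A|\ge(1-2^{-(d+3)})N^d$ to gain a constant fraction of points (instead of just $2^{-d-2}$).

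Next, choose a power of two $K=K(z')$ so that $m_{N'/K}$ exceeds $m_{N'}-z'$ by a bounded amount, and partition $V_{N'}(w')$ into $K^d$ sub-cubes of side $N'/K$. A second pigeonhole argument shows that at least, say, $3K^d/4$ of these sub-cubes are \emph{good} in the sense that they contain at least $(1-2^{-(d+1)})(N'/K)^d$ points of $A'$. Decompose $\tilde\theta^{N',w'}=\tilde\theta^{\mathrm{coarse}}+\tilde\theta^{\mathrm{fine}}$ into the sum of the top $\log_2 K$ and the bottom $n'-\log_2 K$ levels in \eqref{eq-MBRW}; a defining property of the MBRW is that $\tilde\theta^{\mathrm{fine}}$ restricted to distinct sub-cubes is independent, while $\tilde\theta^{\mathrm{coarse}}$ is essentially a BRW on $K^d$ leaves. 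On each good sub-cube, applying Lemma \ref{l:lower_right_tail} at scale $N'/K$ (no further upscaling is needed since $\tilde\theta^{\mathrm{fine}}$ is already an MBRW on $V_{N'/K}$) gives a right-tail lower bound
\[
\pp\Big(\max_{v\in A'\cap V_{N'/K}(w'_i)}\tilde\theta^{\mathrm{fine}}_v\ge m_{N'/K}+y_0\Big)\ge p_0>0
\]
for a fixed $y_0$ of order $1$, with $p_0$ a constant depending on $d$ only.

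Finally, combine the independence of $\tilde\theta^{\mathrm{fine}}$ across good sub-cubes with a standard left-tail estimate for the coarse BRW-on-$K^d$-leaves maximum (which decays as $\e^{-cz}$; cf.\ the argument of \cite[Lemma 3.8]{BDZ16}) via a second-moment/Paley--Zygmund method mirroring the one in the proof of Lemma \ref{l:lower_right_tail}. This yields
\[
\pp^{N',w'}\Big(\max_{v\in A'}\tilde\theta^{N',w'}_v\le m_{N'}-z'\Big)\le C\e^{-cz'},
\]
and re-substituting $z'=z-C\alpha_{\delta}(R)$ produces \eqref{e:upper_left_tail}. The main technical obstacle is the coupling between the coarse BRW fluctuations and the independent fine maxima across only a \emph{fraction} of the sub-cubes: one must choose $K(z')$ so that on the one hand the coarse-field level set $\{\tilde\theta^{\mathrm{coarse}}_i\ge m_K-z'+O(1)\}$ is non-trivial, and on the other hand the restriction to good sub-cubes degrades $p_0$ and the number $K^d$ of available trials by at most a constant factor, so that the product estimate $(1-p_0)^{3K^d/4}$ survives the conditioning on the coarse field.
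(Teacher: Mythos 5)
Your overall skeleton coincides with the paper's: reduce, via an upscaling/Slepian comparison, to a left-tail bound for the MBRW over a positive-density subset $A'\subset V^{2\delta}_{N'}(w')$ obtained by pigeonhole; then subdivide at a scale $N''\approx N'\e^{-cz'}$ chosen so that $m_{N'}-m_{N''}\lesssim z'$, apply a constant-probability right-tail lower bound (the analogue of Lemma \ref{l:lower_right_tail}) on every sub-box that carries a positive fraction of $A'$, and multiply over sub-boxes using independence. This is exactly the route taken in the paper.

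There is, however, a genuine gap in the step that is supposed to supply the independence. You split $\tilde\theta^{N',w'}$ into the top $\log_2 K$ levels (``coarse'') and the remaining levels (``fine'') and assert that the fine field is independent across the $K^d$ cells of the full partition, with the coarse part ``essentially a BRW on $K^d$ leaves''. That is a property of the BRW \eqref{eq-BRW}, whose levels use the disjoint grid $\V_{N,2^j}$, but not of the MBRW \eqref{eq-MBRW}: there each level $j$ sums over \emph{all} translates $Q_j\in\tilde\V_{N',2^j}$ (with periodic wraparound), so boxes of side $\le N'/K$ straddle the boundaries between adjacent cells and the fine field is correlated across neighbouring sub-cubes; moreover the coarse levels are not constant on the cells, so the coarse part is not a $K^d$-leaf BRW either. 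Consequently your final step (conditioning on the coarse field and running Paley--Zygmund over ``independent'' fine maxima on the good cells) is not set up correctly — and indeed you flag precisely this coarse/fine coupling as an unresolved obstacle. The paper sidesteps it: it keeps only the sub-boxes $V_{N''}(w'')$, $w''\in\W_{N',2N''}(w')$, which are pairwise separated by at least $N''$, and uses a Gaussian comparison as in \cite[Lemma 2.8]{DRZ17} to dominate the left tail of the single MBRW by that of \emph{independent} MBRWs on these separated boxes plus one additive Gaussian of variance $\log N'-\log N''\asymp y$; see \eqref{e:upper_left_tail4}. The product $(1-c_0)^{|\mathcal{S}|}$ is then doubly exponentially small and the additive Gaussian alone contributes the $\e^{-cy}$ term, so no left-tail estimate for a coarse field and no second-moment argument are needed at this stage. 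A smaller imprecision: your reduction cannot give the clean factor-$2$ inequality you state; matching variances forces an extra independent Gaussian, which produces an additive error term $\pp(X\le -cz)$ as in \eqref{e:upper_left_tail1} (harmless for the final bound, but it should appear).
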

\begin{proof}
We use a comparison argument very similar to the one of the proof of Lemma \ref{l:lower_right_tail}. Note that the comparison argument for the analogous result in \cite[Lemma 2.1]{DRZ17}, is set up somewhat differently.

As in Lemma \ref{l:lower_right_tail} we fix $\delta={1}/{(d2^{d+2})}$ 
and observe that for any $u,v\in A\cap V_N^\delta(w)$ we have the correlation bound \eqref{e:lower_right_tail1}.
We now compare $\varphi^{N,w}_{ \Psi^{w,w'}_{N,N'}(v)}$ with 
$\tilde\theta^{N',w'}_v+a_v X$, where $X$ is a standard Gaussian
independent of everything else and the $a_v$ are constants chosen so that
the variance of $\varphi^{N,w}_{ \Psi^{w,w'}_{N,N'}(v)}$ equals that of
$\tilde\theta^{N',w'}_v+a_v X$. Slepian's lemma then implies that there is a choice of $N'=2^{n'}$ with $C_0-1\le\log N-\log N'\le \alpha_{\delta}(R)+C_0$ such that for any $w'\in V_N(w)$,
\begin{equation}\label{e:upper_left_tail1}
\pp^{N,w}\Big(\max_{v\in \Psi^{w,w'}_{N,N'}(A\cap V^{2\delta}_{N'}(w'))}\!\!\varphi^{N,w}_v\le m_N-z\Big)
\le \frac12\pp^{N',w'}\Big(\max_{v\in A\cap V^{2\delta}_{N'}(w')}
\!\!\tilde\theta^{N',w'}_v\le m_N-\frac z2\Big)+\pp\big(X\le-\frac z2\big).
\end{equation}
We choose $\gamma=\e^{C_0}$ which ensures that a choice of $N'$ with $N'\ge1$ is possible.

The second summand in \eqref{e:upper_left_tail1} is obviously bounded by $Cz\e^{-cz}$, and so we focus on the first one. 
We are free to choose $w'$ in \eqref{e:upper_left_tail1}. For that purpose we use again the pigeonhole principle, however this time we need to be slightly more careful in the estimates. Because $\frac{N}{N'}\ge\e^{C_0-1}$, we can control $\left\lfloor\frac{N}{N'}\right\rfloor$ by $\frac{N}{N'}$ and $C_0$, and in fact, by making $C_0$ larger, if necessary, we can ensure that
\[\Big|\bigcup_{w'\in\W_{N,N'}(w)}V_{N'}(w')\Big|=(N')^d\Big\lfloor\frac{N}{N'}\Big\rfloor^d\ge\big(1-\frac{1}{2^{d+3}}\big)N^d.\]
Recall that we assumed $|A|\ge(1-2^{-(d+3)})N^d$. By the pigeon-hole principle, there is now $w'\in\W_{N,N'}(w)$ with 
\[\Big|A\cap V_{N'}(w')\Big|\ge\big(1-\frac{1}{2^{d+2}}\big)(N')^d,\]
and hence also
\[\Big|A\cap V^{2\delta}_{N'}(w')\Big|\ge\big(1-\frac{1}{2^{d+1}}\big)(N')^d.\]
We fix this choice of $w'$ for the remainder of the proof.

Defining $A'=A\cap V_{N'}^{2\delta}(w')$, we claim that for any $y\in[0,2\sqrt{2d}\log N']$ that
\begin{equation}\label{e:upper_left_tail2}
\pp^{N',w'}\left(\max_{v\in A'}\tilde\theta^{N',w'}_v\le m_{N'}-y\right)\le C\e^{-cy}
\end{equation}
To prove this, we proceed similarly to the proof of \cite[Lemma 2.8]{DRZ17} (where the analogous result with $\max_{v\in V_{N'}}$ instead of $\max_{v\in A'}$ was shown). That is, we pick an integer $N''=2^{n''}$ to be fixed later, and consider the boxes $V_{N''}(w'')$ for $w''\in\W_{N',2N''}(w')$. This is a collection of $2^{d(n'-n''-1)}$ boxes of sidelength $N''$ and with pairwise distance at least $N''$. We can now compare the maximum of $\tilde\theta^{N',w'}$ with the maxima of the (independent) MBRWs $\tilde\theta^{N'',w''}$ for $w''\in\W_{N',2N''}(w')$. As in the proof of \cite[Lemma 2.8]{DRZ17} we see that (for $X$ again a standard Gaussian independent of everything else)
\begin{equation}\label{e:upper_left_tail4}
\begin{split}
&\pp^{N',w'}\left(\max_{v\in A'}\tilde\theta^{N',w'}_v\le m_{N'}-y\right)\\
&\le\prod_{w''\in\W_{N',2N''}(w')}\pp^{N'',w''}\left(\max_{v\in A'\cap V_{N''}(w'')}\tilde\theta^{N'',w''}_v\le m_{N'}-\frac y2\right)+\pp\left(\sqrt{\log N'-\log N''}X\le -\frac y2\right).
\end{split}
\end{equation}
As $|A'|\ge\left(1-\frac{1}{2^{d+1}}\right)(N')^d$, we know that 
\[\Big|A'\cap\bigcup_{w''\in\W_{N',2N''}(w')}V_{N''}(w'')\Big|\ge\frac{1}{2^{d+1}}(N')^d=\frac12\Big|\bigcup_{w''\in\W_{N',2N''}(w')}V_{N''}(w'')\Big|.\]
Therefore, by the pigeon-hole principle for at least $\frac23$ of the possible $w''$ we must have 
\begin{equation}\label{e:upper_left_tail3}
\left|A'\cap V_{N''}(w'')\right|\ge\frac14(N'')^d.
\end{equation}
Denote by $\mathcal{S}$ the collection of $w''$ for which \eqref{e:upper_left_tail3} holds. We have just argued that $|\mathcal{S}|\ge\frac23 2^{d(n'-n''-1)}\ge c\left(\frac{N'}{N''}\right)^d$.

An argument analogous to that 
in the proof of Lemma \ref{l:lower_right_tail} 
shows that there is a universal constant $c_0$ such that 
\[\pp^{N'',w''}\left(\max_{v\in A'\cap V_{N''}(w'')}\tilde\theta^{N'',w''}_v\ge m_{N''}\right)\ge c_0\]
whenever $\left|A'\cap V_{N''}(w'')\right|\ge c\left|V_{N''}(w'')\right|$ and so in particular whenever $w''\in\mathcal{S}$.

If we choose $N''$ as the smallest power of 2 larger than $N'\exp\left(-\frac{y}{2\sqrt{2d}}\right)\ge1$ then $m_{N'}-m_{N''}\le\frac y2$ and so
\begin{align*}
	\pp^{N'',w''}\left(\max_{v\in A'\cap V_{N''}(w'')}\tilde\theta^{N'',w''}_v\le m_{N'}-\frac y2\right)&\le \pp^{N'',w''}\left(\max_{v\in A'\cap V_{N''}(w'')}\tilde\theta^{N'',w''}_v\le m_{N''}\right)\le 1-c_0.
\end{align*}
This allows to estimate the first term in \eqref{e:upper_left_tail4}. For the second term we just note $\log N'-\log N''$ is of order $y$, and so that term is exponentially small in $y$. In summary, we see that
\begin{align*}
	\pp^{N',w'}\left(\max_{v\in A'}\tilde\theta^{N',w'}_v\le m_{N'}-y\right)&\le(1-c_0)^{|\mathcal{S}|}+C\e^{-cy}
	\le C\e^{-c\e^{cy}}+C\e^{-cy}\le C\e^{-cy}.
\end{align*}
This establishes \eqref{e:upper_left_tail2}. We can now complete the proof of the lemma. Using \eqref{e:upper_left_tail2} to estimate the first summand in \eqref{e:upper_left_tail1}, we obtain for any $z$ such that $z-\sqrt{2d}\alpha_{\delta}(R)\le2\sqrt{2d}\log N'$
\begin{align*}
	&\pp^{N,w}\Big(\max_{v\in \Psi^{w,w'}_{N,N'}(A\cap V^{2\delta}_{N'}(w'))}\varphi^{N,w}_v\le m_N-z\Big)\\
	&\le \frac12\pp^{N',w'}\Big(\max_{v\in A\cap V^{2\delta}_{N'}(w')}\tilde\theta^{N',w'}_v\le m_{N'}+\sqrt{2d}\alpha_{\delta}(R)-\frac z2\Big)+C\e^{-cz}\\
	&\le C\e^{-c((z-\sqrt{2d}\alpha_{\delta}(R))\vee0)}+C\e^{-cz}
	\le C\e^{-cz+C\alpha_{\delta}(R)},
\end{align*}
which implies \eqref{e:upper_left_tail}.
\end{proof}

As the next theorem shows,
the tail bounds of this subsection 
imply the tightness of the maximum of $\varphi^{N}$.
\begin{theorem}
\label{t:tightness}
Suppose that Assumptions \ref{a:logupp}, \ref{a:logbd}, \ref{a:sparseT}, \ref{a:sparseR} hold. Then for each relatively open subset $U\subset[0,1]^d$,
the sequence of random variables 
$\max_{\substack{v\in V_N\\v/(N-1)\in U}}\varphi^{N}_v-m_N$
 is tight.
In particular, $\max_{v\in V_N}\varphi^{N}_v-m_N$ is tight.
\end{theorem}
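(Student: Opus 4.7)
The plan is to split Theorem \ref{t:tightness} into upper-tail and lower-tail tightness of the centred maximum $M_{N,U}:=\max\{\varphi^N_v-m_N:v\in V_N,\,v/(N-1)\in U\}$, and to treat each half separately.

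For the upper tail it is enough to treat $U=[0,1]^d$, since any other $U$ gives a smaller maximum. I would partition $V_N$ into a bad set $B_{R,T,\delta}:=\{v\in V_N:\T_v\ge T,\,\Rone_v\ge R,\text{ or }\dist(v,\partial^+V_N)\le\delta N\}$ and its complement. Applying Lemma \ref{l:upper_right_tail_badset} with $J=1$, $w'=0$, and $A=\{v\in V_N:\dist(v,\partial^+V_N)\le\delta N\text{ or }\Rone_v\ge R\}$, and using that $|A|/N^d\to 0$ as $R\to\infty$, $\delta\to 0$, $N\to\infty$ by Assumption \ref{a:sparseR}, gives
\[\pp^N\!\left(\max_{v\in B_{R,T,\delta}}\varphi^N_v\ge m_N+z\right)\le\eta(R,T,\delta,N)(z\vee 1)e^{-\sqrt{2d}z}\]
with $\limsup_N\eta\to 0$ upon sending first $R\to\infty$, $\delta\to 0$ and then $T\to\infty$. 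Lemma \ref{l:upper_right_tail} applied to the complement yields
\[\pp^N\!\left(\max_{v\in V_N\setminus B_{R,T,\delta}}\varphi^N_v\ge m_N+z\right)\le C(z\vee 1)e^{-\sqrt{2d}z+C(T\vee 1)},\]
which vanishes as $z\to\infty$ at fixed $T$. Choosing $T,R,\delta$ first and then sending $z\to\infty$ closes this half.

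For the lower tail, $U=\emptyset$ is vacuous, so assume $U\ne\emptyset$. Relative openness of $U\subset[0,1]^d$ provides $y_*\in (0,1)^d$ and $\epsilon\in(0,1)$ with $\{y\in\R^d:|y-y_*|_\infty<\epsilon\}\subset U$. For all $N$ large I can then choose $n'\in\N$ with $N':=2^{n'}\le\epsilon N<2N'$ and $w_*=w_*(N)\in\Z^d$ so that $V_{N'}(w_*)\subset\{v\in V_N:v/(N-1)\in U\}\cap V_N^{\delta_0}$ for some $\delta_0=\delta_0(U)>0$. Assumption \ref{a:sparseR}, applied with $L=\lceil 1/\epsilon\rceil$ and combined with a tiling over the (at most $2^d$) aligned sub-boxes that cover $V_{N'}(w_*)$, provides for any $\eta>0$ some $R=R(d,\eta)<\infty$ such that $A:=\{v\in V_{N'}(w_*):\Rone_v\le R\}$ satisfies $|A|\ge(1-\eta)(N')^d$ for all $N$ large. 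I would then adapt the Slepian argument in the proof of Lemma \ref{l:upper_left_tail}, but with $\varphi^N$ restricted to $V_{N'}(w_*)$ in place of $\varphi^{M,w}$ on $V_M(w)$. Since $\Var^N(v)\approx\log N$ while $\Var\tilde\theta^{N',w_*}_v\approx\log N'$, I compare $\varphi^N|_{V_{N'}(w_*)}$ with $\tilde\theta^{N',w_*}_v+aX$, where $X$ is an independent standard Gaussian and $a^2=\log N-\log N'+O(1)=\log(1/\epsilon)+O(1)$ is chosen to match variances. Assumption \ref{a:logbd} on $V_N^{\delta_0}\supset V_{N'}(w_*)$ makes the covariances of the two fields agree on $A\times A$ up to additive error $O(\alpha_{\delta_0}(R))$. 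Slepian's lemma with the splitting used in Lemma \ref{l:upper_left_tail} then gives, for a further independent standard Gaussian $Y$,
\[\pp^N\!\left(\max_{v\in A}\varphi^N_v\le m_N-z\right)\le\pp\!\left(\max_{v\in A}\bigl(\tilde\theta^{N',w_*}_v+aX\bigr)\le m_N-\tfrac{z}{2}+C\alpha_{\delta_0}(R)\right)+\pp\!\left(Y\le-\tfrac{z}{2}\right).\]
Conditioning on $X$ in the first term, using $m_N-m_{N'}=\sqrt{2d}\log(1/\epsilon)+O(1)$, and invoking the MBRW left-tail estimate from the proof of Lemma \ref{l:upper_left_tail}, I would arrive at a bound of the form $Ce^{-cz+C_\epsilon}+Ce^{-cz^2/a^2}$, which vanishes as $z\to\infty$ for each fixed $\epsilon$.

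The main technical obstacle is this Slepian step for the lower tail. Because $\varphi^N$ on a strict sub-box retains its ambient variance $\log N$ rather than $\log N'$, one cannot directly cite Lemma \ref{l:upper_left_tail} for an auxiliary field $\varphi^{N',w_*}$; instead the MBRW $\tilde\theta^{N',w_*}$ must be augmented by the independent Gaussian shift $aX$ of variance $\log(1/\epsilon)+O(1)$ to match variances before invoking Slepian's lemma. This shift is responsible for the lower tail of $M_{N,U}$ being centred at $m_N-C_U$ for a constant $C_U$ depending on $U$, rather than at $m_N$; it is an $O_P(1)$ perturbation at fixed $\epsilon$ and so does not affect tightness.
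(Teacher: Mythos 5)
Your proposal is correct and follows essentially the paper's own route: the upper tail via Lemma \ref{l:upper_right_tail_badset} combined with the right-tail bound on the bounded-$\T$ set, and the lower tail by locating a macroscopic sub-cube inside $U$, using Assumption \ref{a:sparseR} to discard the few points with large $\Rone$, and running the Slepian/MBRW left-tail comparison from Lemma \ref{l:upper_left_tail}. Your explicit variance-matching Gaussian $aX$ (with the $O(1)$ understood to absorb the $\alpha_{\delta_0}(R)$-sized covariance discrepancies, exactly as in that lemma's choice of the dyadic comparison scale, rather than by a bare threshold shift) is precisely the detail the paper leaves implicit when it applies Lemma \ref{l:upper_left_tail} to the sub-cube $V_{\lfloor N/J\rfloor}(w'_N)$ even though the field there is the restriction of $\varphi^N$ with ambient variance $\log N$.
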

\begin{proof}
By Lemma \ref{l:upper_right_tail_badset} there is a large $T$ such that for any sufficiently large $N$ we have the bound
\[\pp^{N}\Big(\max_{\substack{v\in V_N\\v/(N-1)\in U\\}}\varphi^{N,w}_v\ge m_N+z\Big)\le C(z\vee1)\e^{-\sqrt{2d}z-cT}+C(z\vee1)\e^{-\sqrt{2d}z+CT}.\]
Taking the limit $N\to\infty$ and then $z\to\infty$, we obtain the 
tightness of the upper tail.

For the lower tail we note that because $U$ is open, for every $J$ sufficiently large and for every $N\ge J$ at least one of the cubes $V_{\left\lfloor\frac{N}{J}\right\rfloor}(w')$ for $w'\in\W_{N,\left\lfloor\frac{N}{J}\right\rfloor}$ is such that $\frac{1}{N-1}V_{\left\lfloor\frac{N}{J}\right\rfloor}(w')\subset U$. We fix such a choice of $J$, and denote the corresponding sequence of points by $w'_N$. 
By Assumption \ref{a:sparseR} there exist $R>0$ such that for all $N$ sufficiently large at most $\frac{1}{2^{d+3}}\left(\left\lfloor\frac{N}{J}\right\rfloor\right)^d$ of the points in $V_{\left\lfloor\frac{N}{J}\right\rfloor}(w')$ satisfy $\Rone_\cdot\ge R$. Applying Lemma \ref{l:upper_left_tail} to the cube $V_{\left\lfloor\frac{N}{J}\right\rfloor}(w'_N)$ and the set of points where $\Rone_\cdot\ge R$, we obtain the tightness of the lower tail as well.
\end{proof}

\subsection{Geometry of the near-maximizers}
For the following arguments it is important to understand the
set of near-maximizers, i.e. those vertices $v$ where $\varphi^{N}_v$ is of the order of $m_N$. It turns out that, if we restrict attention to those points where $\T$ (or $\Rone$) stay bounded, then any two near-maximizers are either microscopically close (i.e. at distance of order 1 from each other), or macroscopically far apart (i.e. at distance of order $N$ from each other). Let us make this precise.
\begin{lemma}\label{l:geom_nearmax}
Under Assumption \ref{a:logbd} for $\delta>0$ there is a constant $c$ such that for any sequence $(A_N)_{N\in\N}$ of subsets of $\Z^d$ with $A_N\subset V_N$, if $R:=\sup_{N\in\N}\max_{v\in A_N}\Rone_v<\infty$, we have
\begin{equation}\label{e:geom_nearmax}
\limsup_{L\to\infty}\limsup_{N\to\infty}\pp^{N}\Big(\exists u,v\in A_N\cap V_N^\delta\colon L\le |u-v|\le \frac NL,\varphi^{N,w_N}_u\wedge\varphi^{N,w_N}_v\ge m_N-c\log\log L\Big)=0.
\end{equation}
\end{lemma}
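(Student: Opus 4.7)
The plan is to dominate $\varphi^N$ by a Modified Branching Random Walk (MBRW) via Slepian's lemma, then bound the joint two-point upper tail at mesoscopic distances using the multiscale structure of the MBRW together with barrier/ballot estimates for both the ``shared path'' and the ``fluctuation'' contributions.

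First, as in the proof of Lemma~\ref{l:upper_right_tail}, Slepian's lemma applied to the bound $|\Cov^N(u,v)-(\log N-\log_+|u-v|)|\le 2\alpha_\delta(R)$ (which follows from Assumption~\ref{a:logbd} on $A_N\cap V_N^\delta$ together with $\max_{v\in A_N}\Rone_v\le R$) gives an MBRW $\tilde\theta^{N'}$ on a slightly enlarged cube of side $N'=2^{n'}$ with $\log N'-\log N = O(\alpha_\delta(R))$ that dominates $\{\varphi^N_v : v\in A_N\cap V_N^\delta\}$ jointly. Absorbing the $O(\alpha_\delta(R))$ shift in $m_{N'}-m_N$ into $c$, the problem reduces to the analogous statement for the MBRW $\tilde\theta^N$.

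Second, for $u,v$ at distance $r=2^k\in[L,N/L]$, the representation \eqref{eq-MBRW} produces an orthogonal decomposition $\tilde\theta^N_u=S_{u,v}+X_u$, $\tilde\theta^N_v=S_{u,v}+X_v$, where $S_{u,v}$ collects the shared Gaussians $Y_{j,Q_j}$ at coarse scales $j\ge k$ (with $\Var(S_{u,v})=\log(N/r)+O(1)$) and $X_u,X_v$ are independent contributions at fine scales $j<k$, each being the value at the relevant vertex of an independent MBRW on a ``cell'' $C\ni u$, $C'\ni v$ of side $\asymp r$. For the event in question it suffices that the two adjacent cell maxima $M_C:=\max_{v'\in C}\tilde\theta^N_{v'}$ and $M_{C'}$ both exceed $m_N-c\log\log L$. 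Conditional on $S_{u,v}=s$, these maxima are independent, and the standard BRW ballot estimate gives
\[
\pp\!\left(M_C\ge m_N-c\log\log L\mid S_{u,v}=s\right)\le C\!\left((a-s)_+\vee 1\right)e^{-\sqrt{2d}(a-s)},\qquad a:=m_N-c\log\log L-m_r.
\]
Squaring this bound, integrating against the Gaussian density of $S_{u,v}$, and substituting the sharp asymptotic $m_N-m_r=\sqrt{2d}\log(N/r)-\tfrac{3}{2\sqrt{2d}}\log(\log N/\log r)+O(1)$, one extracts a polynomial-in-$\log L$ gain that is ultimately driven by the identity $m_N-(m_{N/r}+m_r)\ge \tfrac{3}{2\sqrt{2d}}\log\log L - O(1)$ valid for all $r\in[L,N/L]$.

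Third, a union bound over the $O((N/r)^d)$ cell-pairs at each of the $O(\log N)$ dyadic scales, combined with the two-cell bound from the previous step, yields a total of the form $C(\log L)^{-\eta}$ (for $c$ sufficiently small and some $\eta=\eta(c,d)>0$) in the iterated limit $\limsup_{L\to\infty}\limsup_{N\to\infty}$, which gives \eqref{e:geom_nearmax}. The main obstacle is the sharpness of the pair-probability bound: a crude bivariate Gaussian union bound on $(\varphi_u,\varphi_v)$ alone gives an exponent $d\log N\cdot\beta(1-\beta)/(2-\beta)>0$ for $\beta=\log r/\log N\in(0,1)$, producing a polynomial-in-$N$ divergence. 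Overcoming this requires the MBRW tree structure together with careful tracking of the $\log\log N$ corrections in $m_N$, and in particular the use of ballot corrections for \emph{both} the shared path $S_{u,v}$ and the independent cell maxima; the interplay between these corrections and the sub-leading term of $m_N$ is what ultimately produces the $\log\log L$-scale gap in the statement.
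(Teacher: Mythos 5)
Your overall strategy (dominate by an MBRW and then run a multiscale/ballot analysis for the two-point event) is workable in spirit, but two steps have genuine gaps as written. First, the reduction: Slepian's inequality compares probabilities of events of the form $\{\forall i\colon X_i\le t_i\}$, i.e.\ it transfers statements about maxima, not about the joint event $\{\exists u,v\colon \varphi^N_u\wedge\varphi^N_v\ge m_N-c\log\log L\}$, whose complement is a union (over pairs) of one-sided constraints and is not of product form in the single coordinates. So "Slepian gives an MBRW that dominates $\{\varphi^N_v\}$ jointly" does not by itself reduce the two-point statement to the MBRW. The standard fix, and the route the paper takes, is to pass to the Gaussian process indexed by \emph{pairs}, $\varphi^N_u+\varphi^N_v+a_{u,v}X$ with $a_{u,v}$ chosen to match variances, compare its maximum with $\tilde\theta^{N'}_{\Psi(u)}+\tilde\theta^{N'}_{\Psi(v)}$ via Slepian, and only then recover a two-point statement at the MBRW level by splitting the event $\{\max(\tilde\theta_u+\tilde\theta_v)\ge 2m_{N'}-2C_0\}$ into "two points both above $m_{N'}-3C_0$" or "one point above $m_{N'}+C_0$", the latter being killed by the one-point right-tail bound. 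Your sketch skips this pair-sum device entirely. Second, in your decomposition $\tilde\theta^N_u=S_{u,v}+X_u$, $\tilde\theta^N_v=S_{u,v}+X_v$ the claim that the two cell maxima $M_C,M_{C'}$ are conditionally independent given the shared coarse part is false for the MBRW: in \eqref{eq-MBRW} the boxes $Q_j\in\tilde\V_{N,2^j}$ with $j<k$ are not aligned with a fixed dyadic partition, so fine-scale boxes straddle the boundary between adjacent cells and couple the two fine fields. (For the non-stationary BRW $\theta^N$ of \eqref{eq-BRW} the independence would hold, but then the covariance structure you need for the Slepian step and the translation-invariance you are implicitly using change.) This is not cosmetic, since the squared ballot bound integrated against the law of $S_{u,v}$ is the heart of your estimate; handling these residual correlations is precisely the technical content of the two-point analysis in \cite{DZ14} and \cite[Lemma 3.3]{DRZ17}.

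For comparison, the paper's proof is much shorter than what you propose: after the pair-sum Slepian comparison described above (with the upscaling map $\Psi_{N,N'}$ absorbing the $O(\alpha_\delta(R))$ discrepancy into $\log N'-\log N$, which is harmless because $R$ is fixed), it does not redo any multiscale computation but simply quotes \cite[Lemma 3.3]{DRZ17} for the MBRW, together with the one-point right-tail bound to dispose of the "one exceptionally high point" alternative. Your plan essentially amounts to re-proving that cited lemma; even if you intend to do so, the quantitative skeleton you give (the identity $m_N-(m_{N/r}+m_r)\ge\frac{3}{2\sqrt{2d}}\log\log L-O(1)$ for $r\in[L,N/L]$ is correct) would still need the barrier estimate for the shared path and a correct treatment of the cross-cell correlations before the union bound over scales and cell pairs closes.
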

Before we begin the proof, we point out that Lemma \ref{l:geom_nearmax} can be combined with the tailbounds of the previous subsection to yield the analogous statement on the geometry of the near-maximizers over all of $V_N$ (the analogue of \cite[Theorem 1.1]{DZ14}). Although not used in the present paper, this might be of independent interest, and so we state it separately.

\begin{theorem}\label{t:geom_nearmax}
Suppose that Assumptions \ref{a:logupp}, \ref{a:logbd}, \ref{a:sparseT}, \ref{a:sparseR} hold. Then there is a constant $c$ such that
\begin{equation}\label{e:geom_nearmax2}
\limsup_{L\to\infty}\limsup_{N\to\infty}\pp^{N}\left(\exists u,v\cap V_N\colon L\le |u-v|\le \frac NL,\varphi^{N}_u\wedge\varphi^{N}_v\ge m_N-c\log\log L\right)=0.
\end{equation}
\end{theorem}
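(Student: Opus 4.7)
The plan is to split the event in \eqref{e:geom_nearmax2} according to whether the near-maximizers $u, v$ lie in a ``good'' region (where $\Rone_\cdot$ is bounded and which stays away from $\partial V_N$) or in an exceptional ``bad'' region, handle the first via Lemma \ref{l:geom_nearmax}, and control the second through the tail bound of Lemma \ref{l:upper_right_tail_badset}. For parameters $R > 0$, $\delta \in (0, 1/2)$, and $T \ge 1$, set
\[A_N(R, \delta) = V_N^\delta \cap \{v : \Rone_v \le R\}, \qquad B_N(R, \delta, T) = \{v : \T_v > T\} \cup \{v : \Rone_v > R\} \cup (V_N \setminus V_N^\delta),\]
and take the constant $c$ in the theorem to be the constant given by Lemma \ref{l:geom_nearmax} at a fixed reference $\delta$. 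Writing $E_N(L)$ for the event in \eqref{e:geom_nearmax2}, the elementary observation is that if neither of the near-maximizers lies in $B_N$ then both lie in $A_N$, so
\[E_N(L) \subset E_N^{\mathrm{good}}(R, \delta, L) \cup \Big\{\max_{v \in B_N(R, \delta, T)} \varphi^N_v \ge m_N - c \log\log L\Big\},\]
where $E_N^{\mathrm{good}}$ denotes the restriction of $E_N$ to pairs with $u, v \in A_N(R, \delta)$.

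For $E_N^{\mathrm{good}}$, Lemma \ref{l:geom_nearmax} applied to the constant-in-$L$ sequence $(A_N(R, \delta))_N$ (which has $\sup_N \max_{v \in A_N} \Rone_v \le R < \infty$) gives $\limsup_L \limsup_N \pp^N(E_N^{\mathrm{good}}(R, \delta, L)) = 0$ for every fixed $R, \delta$. For the maximum over $B_N$, Lemma \ref{l:upper_right_tail_badset} applied with $J = 1$, $w' = 0$, $A = \{v : \Rone_v > R\} \cup (V_N \setminus V_N^\delta)$, and $z = -c \log\log L$ yields
\[\pp^N\Big(\max_{v \in B_N(R, \delta, T)} \varphi^N_v \ge m_N - c \log\log L\Big) \le C\Big[e^{dT} \tfrac{|A|}{N^d}\big(1 + T + \log \tfrac{N^d}{|A|}\big)^{19/8} + e^{-c'T}\Big] (\log L)^{\sqrt{2d} c},\]
with $c'$ the constant from Lemma \ref{l:upper_right_tail_badset}. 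By Assumption \ref{a:sparseR} together with the trivial estimate on the boundary strip, $\limsup_N |A|/N^d \to 0$ as $R \to \infty$, $\delta \to 0$.

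The main obstacle is that for fixed $R, \delta, T$ the bad-region bound grows in $L$ as $(\log L)^{\sqrt{2d} c}$, so a single choice of parameters independent of $L$ does not suffice. I would overcome this by a diagonal argument: choose increasing $R_k \nearrow \infty$ and decreasing $\delta_k \searrow 0$, and via Lemma \ref{l:geom_nearmax} pick thresholds $L_k \nearrow \infty$ such that $\limsup_N \pp^N(E_N^{\mathrm{good}}(R_k, \delta_k, L)) \le 2^{-k}$ for all $L \ge L_k$. Then set $T_k$ to be a sufficiently large multiple of $\log\log L_{k+1}$ (so that $e^{-c' T_k} (\log L_{k+1})^{\sqrt{2d} c} \le 2^{-k-2}$), and pick $R_k, \delta_k$ (possible by \ref{a:sparseR}) so small that $\kappa(R_k, \delta_k)$ satisfies $e^{dT_k} \kappa(R_k, \delta_k) (\log L_{k+1})^{\sqrt{2d} c} (1 + T_k + \log(1/\kappa(R_k, \delta_k)))^{19/8} \le 2^{-k-2}$. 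These choices make the bad-region bound at most $2^{-k}$ uniformly over $L \in [L_k, L_{k+1})$, so applying both estimates on this range yields $\limsup_N \pp^N(E_N(L)) \le 2 \cdot 2^{-k}$, which tends to $0$ as $L \to \infty$.
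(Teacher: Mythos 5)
Your decomposition is exactly the one the paper uses: pairs of near-maximizers in the good region are handled by Lemma \ref{l:geom_nearmax}, and the maximum over the exceptional set $\{\T_\cdot\ge T\}\cup\{\Rone_\cdot>R\}\cup(V_N\setminus V_N^\delta)$ by Lemma \ref{l:upper_right_tail_badset} with $z=-c\log\log L$; you also correctly notice a point the paper's two-line proof passes over in silence, namely that the factor $(z\vee1)\e^{-\sqrt{2d}z}=(\log L)^{\sqrt{2d}c}$ forces the parameters $R,\delta,T$ to vary with $L$. The gap is in how you close the resulting diagonal argument: as written, the parameter selection is circular. You first fix $(R_k,\delta_k)$ and use Lemma \ref{l:geom_nearmax} to produce the threshold $L_k$; then you choose $T_k$ as a multiple of $\log\log L_{k+1}$ and afterwards ``pick $R_k,\delta_k$'' again so that $\kappa(R_k,\delta_k)$ beats $\e^{dT_k}(\log L_{k+1})^{\sqrt{2d}c}$. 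But $L_{k+1}$ is itself produced by Lemma \ref{l:geom_nearmax} applied at $(R_{k+1},\delta_{k+1})$, which by the same recipe must depend on $L_{k+2}$, and so on: the parameters are chosen twice and the induction never closes.

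This is not merely a bookkeeping slip, because the two constraints on $(R_k,\delta_k)$ pull in opposite directions: making the bad set sparse enough to beat $(\log L_{k+1})^{\sqrt{2d}c}$ forces $R_k$ large and $\delta_k$ small, while enlarging the good set pushes up the scale at which Lemma \ref{l:geom_nearmax} becomes effective at level $(R_k,\delta_k)$ (through its proof this scale is of order $\exp(\exp(C\alpha_{\delta_k}(R_k)))$), and that scale must not exceed the left end $L_k$ of the block on which these parameters are used. Treating the two lemmas as black boxes, together with the purely qualitative Assumption \ref{a:sparseR}, nothing guarantees that the admissible windows of $L$ for successive parameter choices overlap, so you must (a) order the choices in a well-founded way ($(R_k,\delta_k)$ first, then $L_k$, then the block and $T_k$), and (b) verify the compatibility quantitatively; for the latter one has to open up Lemma \ref{l:geom_nearmax} (its dependence on $R,\delta$ enters only through the additive term $\alpha_\delta(R)$, so it applies uniformly as long as $\alpha_{\delta(L)}(R(L))$ is a small multiple of $\log\log L$) and use the freedom to take the constant $c$ in the theorem small, rather than quote the lemma as stated. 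A related smaller point: you fix $c$ as the constant of Lemma \ref{l:geom_nearmax} at one reference $\delta$ but then invoke the lemma along $\delta_k\searrow0$, whereas its statement only provides a $\delta$-dependent constant; uniformity in $\delta$ again requires looking inside its proof. In fairness, the paper's own proof is the same reduction and simply asserts the combination, so your route is the intended one; but the diagonalization as you wrote it does not yet constitute a proof.
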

\begin{proof}
In view of Lemma \ref{l:geom_nearmax}, it suffices to show that for each fixed $L$
\[
\limsup_{T\to\infty}\limsup_{\substack{\delta\to0\\R\to\infty}}\limsup_{N\to\infty}\pp^{N}\Big(\max_{\substack{v\in V_N\\v\notin V^\delta_N\text{ or }\T_v\ge T\text{ or }\Rone_v\ge R}}\varphi^{N}_v\ge m_N-c\log\log L\Big)=0.
\]
But this follows immediately from Lemma \ref{l:upper_right_tail_badset}.
\end{proof}

We can now turn to the proof of Lemma \ref{l:geom_nearmax}. For this we could proceed similarly as in \cite{DZ14} or \cite{DRZ17}. The main difference would be that we restrict $u,v$ to a smaller set, and this only helps. However, there is an even faster way. Namely we will use comparison inequalities to deduce our result directly from the one in \cite[Lemma 3.3]{DRZ17}.

\begin{proof}[Proof of Lemma \ref{l:geom_nearmax}]
We use Slepian's inequality to compare the maxima of the two Gaussian processes
\[\Big\{\varphi^{N}_u+\varphi^{N}_v+a_{u,v}X\Big| u,v\in A_N\cap V_N^\delta\colon L\le |u-v|\le \frac NL\Big\}\]
and 
\[\Big\{\tilde\theta^{N'}_{\Psi_{N,N'}^{0,0}(u)}+\tilde\theta^{N'}_{\Psi_{N,N'}^{0,0}(v)}\Big| u,v\in A_N\cap V_N^\delta\colon L\le |u-v|\le \frac NL\Big\}.\]
Here $\Psi$ is an in \eqref{e:upscaling}, $X$ is a standard Gaussian independent of everything else and $a_{u,v}$ is chosen in such a way that the variances match. As in previous arguments, we find that there is a choice of $N'$ with $\log N'-\log N\le 4\alpha_{\delta}(R)+C$ such that the assumptions of Slepian's inequality are satisfied, and we conclude that for any $z\in\R$
\begin{align*}
	&\pp^{N}\Big(\max\Big\{\varphi^{N}_u+\varphi^{N}_v\Big| u,v\in A_N\cap V_N^\delta, L\le |u-v|\le \frac NL\Big\}\ge 2m_N-z\Big)\\
	&\le\pp^{N'}\Big(\max\Big\{\tilde\theta^{N'}_{\Psi_{N,N'}^{0,0}(u)}+\tilde\theta^{N'}_{\Psi_{N,N'}^{0,0}(v)}\Big| u,v\in A_N\cap V_N^\delta, L\le |u-v|\le \frac NL\Big\}\ge 2m_N-z\Big).
\end{align*}
The left-hand side here is clearly an upper bound for the probability in \eqref{e:geom_nearmax}. On the other hand, we can make the right-hand side larger by loosening the restrictions on $u,v$. Thereby we see that
\begin{align*}
&\pp^{N}\Big(\exists u,v\in A_N\cap V_N^\delta\colon L\le |u-v|\le \frac NL,\varphi^{N}_u\wedge\varphi^{N}_v\ge m_N-c_0\log\log L\Big)\\
&\le\pp^{N'}\Big(\max\Big\{\tilde\theta^{N'}_u+\tilde\theta^{N'}_v\Big| u,v\in V_{N'}, L\le |u-v|\le \frac{N'}{L}\Big\}\ge 2m_N-2c_0\log\log L\Big)\\
&\le\pp^{N'}\Big(\max\Big\{\tilde\theta^{N'}_u+\tilde\theta^{N'}_v\Big| u,v\in V_{N'}, L\le |u-v|\le \frac{N'}{L}\Big\}\ge 2m_{N'}-C\alpha_{\delta}(R)-2c_0\log\log L\Big).
\end{align*}
If the maximum of $\tilde\theta^{N'}_u+\tilde\theta^{N'}_v$ exceeds $2m_{N'}-2C_0$ for some constant $C_0$, then either there is a pair of points $u,v$ where $\tilde\theta^{N'}_\cdot$ is at least $m_{N'}-3C_0$, or there must exist one point $v$ where $\tilde\theta^{N'}_\cdot$ is at least $m_{N'}+C_0$. Thus,
\begin{equation}\label{e:geom_nearmax4}
\begin{split}
&\pp^{N}\Big(\exists u,v\in A_N\cap V_N^\delta\colon L\le |u-v|\le \frac NL,\varphi^{N}_u\wedge\varphi^{N}_v\ge m_N-c_0\log\log L\Big)\\
&\le\pp^{N'}\Big(\exists u,v\in V_{N'}\colon L\le |u-v|\le \frac{N'}{L},\tilde\theta^{N'}_u\wedge\tilde\theta^{N'}_v\ge m_{N'}-3C\alpha_{\delta}(R)-6c_0\log\log L\Big)\\
&\quad+\pp^{N'}\Big(\exists v\in V_{N'}\colon\tilde\theta^{N'}_v\ge m_{N'}+C\alpha_{\delta}(R)+2c_0\log\log L\Big).
\end{split}
\end{equation}
The second summand here can be bounded using bounds for the right tail of a MBRW (as implied for example by Lemma \ref{l:upper_right_tail}, or by \cite[Lemma 2.7]{DRZ17}). Taking the limit $N\to\infty$ and then $L\to\infty$, this summand vanishes (for any $c_0$). For the first summand in \eqref{e:geom_nearmax4}, we can apply \cite[Lemma 3.3]{DRZ17} to see that for a sufficiently small $c_0$  it also vanishes in the limit $N\to\infty$ and then $L\to\infty$. This completes the proof.
\end{proof}

\section{Convergence of the maximum of the Gaussian field: proof of Theorem \ref{t:mainthm}}
\label{sec-3}
As in \cite{DRZ17}, the proof of convergence of the maximum of $\varphi^N_\cdot$ is built on 
constructing an easier to analyze Gaussian field, and using comparison theorems for Gaussian processes to relate the two. This section is devoted to the construction of the approximating field and to a proof of Theorem \ref{t:mainthm}. 
After introducing in Section \ref{sec-3.1}
a quick comparison with processes augmented with independent Gaussians,
we provide in Section \ref{sec-approx} the construction of the approximating
fields. Section \ref{sec-3.3} then provides the proof of Theorem \ref{t:mainthm}.
\subsection{Preliminary results}
\label{sec-3.1}
In the next  subsection we will construct an approximation to $\varphi^N$ for which we can control the behaviour of the maximum, and show that in a suitable limit the maxima of $\varphi^N$ and the approximation are close. In this section we will lay the groundwork for that by showing that various modifications do not  significantly change 
 the maximum of a log-correlated Gaussian field. The two results are similar to results in \cite{DRZ17}, and the proofs will also be straightforward adaptions of the proofs there.

As in \cite{DRZ17} we use the Levy metric
\[\dist(\nu_1,\nu_2)=\inf\left\{\delta>0\middle|\nu_1(U^\delta)\le\nu_2(U)+\delta\ \forall U\subset \R\text{ open}\right\}\]
on probability measures on $\R$ (where $U^\delta=\{x\in\R\colon\dist(x,U)<\delta\}$), as well as the one-sided variant
\[\dist^{\le}(\nu_1,\nu_2)=\inf\left\{\delta>0\middle|\nu_1((x,\infty))\le\nu_2((x-\delta,\infty))+\delta\ \forall x\in\R\right\}.\]
It is well-known that $\dist(\cdot,\cdot)$ induces the topology of weak convergence. Furthermore $\dist^{\le}$ measures approximate stochastic domination (in the sense that $\dist^{\le}(\nu_1,\nu_2)=0$ if and only $\nu_2$ stochastically dominates $\nu_1$). Clearly $\dist^{\le}$ is not a metric, however when one symmetrizes it, one obtains a metric that also induces the topology of weak convergence.

Given $L,L'\in\N$ and $\sigma,\sigma'\in\R$, 
consider standard Gaussians $X_B$ for $B\in\V_{N,\left\lfloor{N}/{L}\right\rfloor}$ and $X_{B'}$ for $B'\in\V_{N,L'}$, such that they are all independent.
We define a variant $\tilde\varphi^{N}$ of $\varphi^{N}$ by setting 
$\tilde\varphi^{N}_v=\varphi^{N}_v+\sigma X_B+\sigma'X_{B'}$ on $B\cap B'\cap V_N$.
As we next show,
the law of the maximum of $\tilde \varphi^N$ is (up to a deterministic shift) close to the law of the maximum of $\varphi^{N}$ itself.

\begin{lemma}\label{l:approx_fields}
Under Assumption \ref{a:logbd} let $\delta>0$.
 Let $(A_N)_{N\in\N}$ be a sequence of subsets of $\Z^d$ with $A_N\subset V_N$ and $|A_N|\ge(1-2^{-(d+3)})N^d$, and assume that $R:=\limsup_{N\to\infty}\max_{v\in A_N}\Rone_v<\infty$. Then
\begin{equation}\label{e:approx_fields}
\lim_{L,L'\to\infty}\lim_{N\to\infty}\dist\Big(\max_{v\in A_N\cap V_N^\delta}\varphi^{N},\max_{v\in A_N\cap V_N^\delta}\tilde\varphi^{N}-(\sigma^2+\sigma'^2)\sqrt{\frac{d}{2}}\Big)=0.
\end{equation}
\end{lemma}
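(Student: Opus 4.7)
\medskip

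\noindent\textbf{Proof proposal.} The underlying heuristic is the following tilt computation: if $Y$ has tail $\pp(Y\geq t)\sim C\e^{-\sqrt{2d}t}$ and $X$ is an independent standard Gaussian, then
\[\pp(Y+\sigma X\geq t)=\E \pp(Y\geq t-\sigma X\mid X)\approx C\e^{-\sqrt{2d}t}\E\e^{\sqrt{2d}\sigma X}=C\e^{-\sqrt{2d}t}\e^{d\sigma^2}=C\e^{-\sqrt{2d}(t-\sigma^2\sqrt{d/2})}.\]
Applying this once with $X_B$ and once with $X_{B'}$ gives the total shift $(\sigma^2+\sigma'^2)\sqrt{d/2}$. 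The plan is to turn this heuristic into two-sided bounds on the Levy distance, using the right-tail bounds of Section~\ref{subsec-further} for the upper bound and the separation of near-maximizers (Lemma~\ref{l:geom_nearmax}) for the lower bound. Throughout, I would use Lemma~\ref{l:upper_right_tail_badset} to restrict attention to the points with small $\T_v$ and small $\Rone_v$ away from the boundary; on this restricted set the effective number of ``active'' points is controlled and Assumption~\ref{a:logbd} applies with uniform constants.

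For the upper bound (i.e.\ $\max_{v\in A_N\cap V_N^\delta}\tilde\varphi^N-(\sigma^2+\sigma'^2)\sqrt{d/2}$ is approximately stochastically dominated by $\max_{v\in A_N\cap V_N^\delta}\varphi^N$), first condition on the collection of independent Gaussians $\{X_B\}_{B\in\V_{N,\lfloor N/L\rfloor}}$ and $\{X_{B'}\}_{B'\in\V_{N,L'}}$, so that within each cell $B\cap B'\cap V_N$ the random variables $\tilde\varphi^N_v-\varphi^N_v$ are a single deterministic constant $\sigma X_B+\sigma' X_{B'}$. A union bound over cells followed by Lemma~\ref{l:upper_right_tail} (applied to $\varphi^N$ on each cell) gives, for $z\in\R$,
\begin{align*}
&\pp\Bigl(\max_{v\in A_N\cap V_N^\delta}\tilde\varphi^N_v\geq m_N+(\sigma^2+\sigma'^2)\sqrt{d/2}+z\Bigr)\\
&\qquad\leq\E\sum_{B,B'}\pp\Bigl(\max_{v\in B\cap B'\cap A_N\cap V_N^\delta}\varphi^N_v\geq m_N+(\sigma^2+\sigma'^2)\sqrt{d/2}+z-\sigma X_B-\sigma' X_{B'}\,\Big|\,\{X_\cdot\}\Bigr),
\end{align*}
and after carrying out the Gaussian integration (with the $\e^{-z^2/(C\log N)}$ factor in \eqref{e:upper_right_tail} ensuring convergence of the integral over $X_B,X_{B'}$) the integral of a Gaussian density against $\e^{-\sqrt{2d}(z-\sigma x-\sigma'x')}$ yields exactly the shift $\e^{d(\sigma^2+\sigma'^2)}=\e^{\sqrt{2d}(\sigma^2+\sigma'^2)\sqrt{d/2}}$. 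After summing the resulting bound over the cells, one recovers the right-tail bound for $\max_{v\in A_N\cap V_N^\delta}\varphi^N$ up to a multiplicative factor that tends to $1$ as $L,L'\to\infty$, which is sufficient for the upper half of Levy-distance convergence.

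For the matching lower bound, Lemma~\ref{l:geom_nearmax} ensures that once $L$ and $L'$ are large enough, with high probability all pairs of near-maximizers of $\varphi^N$ on $A_N\cap V_N^\delta$ are either at distance $\leq L$ (hence lie in the same $B\cap B'$, since both $N/L$ and $L'$ dominate $L$ eventually) or at distance $\geq N/L$ (hence lie in different macroscopic boxes $B$). In the first case $\tilde\varphi^N$ adds the same constant to both points, so they remain the same near-cluster; in the second case the corresponding shifts $\sigma X_B+\sigma' X_{B'}$ are independent of each other and of $\varphi^N$. Consequently the conditional point process of near-maximizers of $\tilde\varphi^N$ is, after identifying each cluster with its representative, the $\varphi^N$ near-maximum process tilted by independent Gaussians of variance $\sigma^2+\sigma'^2$. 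A one-line Laplace-transform computation as in the heuristic, combined with the lower tail bound of Lemma~\ref{l:lower_right_tail} on $\max \varphi^N$, then provides the matching lower bound.

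The main obstacle is Step~3: making the ``independent tilt'' argument rigorous without knowing the limiting point process of near-maximizers explicitly. I would handle this, as in \cite{DRZ17}, via a truncated Slepian-type Gaussian comparison together with the quantitative separation result in Lemma~\ref{l:geom_nearmax}, integrating out the independent $X_B,X_{B'}$ against the (quantitative) right-tail asymptotics for $\varphi^N$ from Lemmas~\ref{l:upper_right_tail}--\ref{l:lower_right_tail}; the error terms depend polynomially on $\log L$ and $\log L'$ and vanish after sending first $N\to\infty$ and then $L,L'\to\infty$.
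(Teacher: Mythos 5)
Your tilt heuristic correctly explains where the shift $(\sigma^2+\sigma'^2)\sqrt{d/2}$ comes from, but the way you propose to close the argument has a genuine gap: the lemma asserts that the \emph{laws} of $\max_{A_N\cap V_N^\delta}\varphi^N$ and $\max_{A_N\cap V_N^\delta}\tilde\varphi^N-(\sigma^2+\sigma'^2)\sqrt{d/2}$ are close in the L\'evy metric, and this cannot be obtained by matching right-tail asymptotics. At this stage of the paper no convergence in law of the centered maximum is available, so knowing that the two tails agree for large $z$ (even with multiplicative factor $\to1$) says nothing about the distribution functions at levels of order one, which is exactly what $\dist(\cdot,\cdot)$ measures. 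Moreover, the tail estimates you invoke cannot even deliver the tilt computation with matching constants: Lemma \ref{l:upper_right_tail_sparse}/\ref{l:upper_right_tail} and Lemma \ref{l:lower_right_tail} have unrelated constants $C_{\alpha_\delta(R)}$ and $c_{\alpha_\delta(R)}$ (plus the $19/8$-power logarithmic correction), and the union bound over the $\sim(N/L')^d$ cells $B\cap B'$ loses an additional $(\log N)^{19/8}$-type factor, so neither "half" of the L\'evy-distance statement follows from summing per-cell tail bounds. Your concluding appeal to a "truncated Slepian-type comparison" does not repair this, since the covariances of $\tilde\varphi^N$ and $\varphi^N$ differ by the non-vanishing amount $\sigma^2+\sigma'^2$ on the block diagonal, so the comparison lemma following Lemma \ref{l:approx_fields} is not applicable to this pair.

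The missing idea in the paper's proof is to replace the tail computation by a direct in-law identity: introduce $\hat\varphi^N_v=\varphi^N_v+\sqrt{(\sigma^2+\sigma'^2)/\log N}\,\varphi'^N_v$ with $\varphi'^N$ an independent copy, so that $\hat\varphi^N\overset{d}{=}\sqrt{1+(\sigma^2+\sigma'^2)/\log N}\,\varphi^N$; on the tightness event $\{\max|\varphi^N|\le 2\sqrt{2d}\log N\}$ a Taylor expansion gives $\max\hat\varphi^N=\max\varphi^N+(\sigma^2+\sigma'^2)\sqrt{d/2}+O(1/\log N)$, which produces the deterministic shift with no tail asymptotics at all. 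What then remains is to couple $\tilde\varphi^N$ with $\hat\varphi^N$: after discarding a $\kappa$-neighbourhood of the cell boundaries via Lemma \ref{l:upper_right_tail_sparse}, one uses Lemma \ref{l:upper_left_tail} (lower tightness) and Lemma \ref{l:geom_nearmax} applied to \emph{both} perturbed fields to see that near-maximizers are either in the same microscopic cell (where both perturbations are effectively a single Gaussian of variance $\sigma^2+\sigma'^2$) or macroscopically separated (where the perturbations decorrelate), and then the coupling of \cite[Proposition 3.9]{DRZ17} transfers verbatim. Your use of Lemma \ref{l:geom_nearmax} is in the right spirit, but without the auxiliary field $\hat\varphi^N$ you have no object on the $\varphi^N$-side to couple to, and tail asymptotics alone cannot substitute for it.
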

\begin{proof}
The proof of \cite[Lemma 3.1]{DRZ17} carries over with very minor changes, as it does not use the geometry of the domain other than via the application of Lemma \ref{l:geom_nearmax}. So we only mention the most important steps.

We define yet another variant of $\varphi^{N}$. Namely let $\varphi'^{N}$ be an independent copy of $\varphi^{N}$ (realized on the same probability space), and define \[\hat\varphi^{N}_v=\varphi^{N}_v+\sqrt{\frac{\sigma_1^2+\sigma_2^2}{\log N}}\varphi'^{N}_v.\]

Clearly, $\hat\varphi^{N}$ is equal in distribution to $\sqrt{1+\frac{\sigma_1^2+\sigma_2^2}{\log N}}\varphi^{N}$. By Lemma \ref{l:upper_right_tail} we know that $\Ev:=\left\{\max_{v\in A_N\cap V_N^\delta}|\varphi^{N}_v|\le2\sqrt{2d}\log N\right\}$ occurs with probability tending to 1 as $N\to\infty$. On the event $\Ev$ we can Taylor-expand the square root and find that 
\[\Big|\max_{v\in A_N\cap V_N^\delta}\sqrt{1+\frac{\sigma_1^2+\sigma_2^2}{\log N}}\varphi^{N}_v-\max_{v\in A_N\cap V_N^\delta}\varphi^{N}_v-m_N-(\sigma^2+\sigma'^2)\sqrt{\frac{d}{2}}\Big|\le \frac{C}{\log N}\]
which implies that
\[\lim_{N\to\infty}\dist\Big(\max_{v\in A_N\cap V_N^\delta}\varphi^{N},\max_{v\in A_N\cap V_N^\delta}\hat\varphi^{N}-(\sigma^2+\sigma'^2)\sqrt{\frac{d}{2}}\Big)=0.\]
Thus we only have to show that
\begin{equation}\label{e:approx_fields1}
\lim_{L,L'\to\infty}\lim_{N\to\infty}\dist\Big(\max_{v\in A_N\cap V_N^\delta}\tilde\varphi^{N},\max_{v\in A_N\cap V_N^\delta}\hat\varphi^{N}\Big)=0.
\end{equation}

For that purpose let $\kappa>0$, and let\footnote{Note that in the corresponding definition in the proof of \cite[Proposition 3.9]{DRZ17}, who use $\kappa=\delta$, 
the second intersection was omitted; this is a mistake there.}
\begin{equation}\label{e:approx_fields3}
V_N^{\delta,\kappa}=V_N^\delta\cap\Big(\bigcup_{w'\in\V_{N,\left\lfloor{N}/{L}\right\rfloor}}V_{\left\lfloor\frac{N}{L}\right\rfloor}^\kappa(w')\Big)\cap\Big(\bigcup_{w''\in\V_{N,L'}}V_{L}^\kappa(w'')\Big).
\end{equation}
Then $|V_N^\delta-V_N^{\delta,\kappa}|\le C_d\kappa N^d$, and so by Lemma \ref{e:upper_right_tail_sparse} we have that the probability of the event 
\[\Big\{\max_{v\in A_N\cap V_N^\delta}\tilde\varphi^{N}_v\neq \max_{v\in A_N\cap V_N^{\delta,\kappa}}\tilde\varphi^{N}_v\Big\}\cup\Big\{\max_{v\in A_N\cap V_N^\delta}\hat\varphi^{N}_v\neq \max_{v\in A_N\cap V_N^{\delta,\kappa}}\hat\varphi^{N}_v\Big\}\]
vanishes in the limit $N\to\infty$ and then $\kappa\to0$. Therefore, \eqref{e:approx_fields1} follows once we show that
\begin{equation}\label{e:approx_fields2}
\lim_{L,L'\to\infty}\lim_{\kappa\to0}\lim_{N\to\infty}\dist\Big(\max_{v\in A_N\cap V_N^{\delta,\kappa}}\tilde\varphi^{N},\max_{v\in A_N\cap V_N^{\delta,\kappa}}\hat\varphi^{N}\Big)=0.
\end{equation}
To see \eqref{e:approx_fields2}, we can proceed exactly as in the proof of \cite[Proposition 3.9]{DRZ17}, by constructing a coupling between the two fields. The crucial point is that we can apply Lemma \ref{l:upper_left_tail} and Lemma \ref{l:geom_nearmax} not just to $\varphi^N$, but also to $\tilde\varphi^N$ and $\hat\varphi^N$. The former lemma ensures the lower tightness of the maxima, and the latter lemma is used to ensure that near-maximizers that share the macroscopic box $B$ also share the microscopic box $B'$.
We omit further details.\end{proof}

\begin{lemma}
Under Assumption \ref{a:logbd} let $\delta>0$, $R>0$. Then there is a function 
$\iota_{t}\colon(0,\infty)\to(0,\infty)$ with $\lim_{\ep\to0}\iota_{t}(\ep)=0$, which depends on $t>0$ only,
 with the following property. Let $(A_N)_{N\in\N}$ be a sequence of subsets of $\Z^d$ with $A_N\subset V_N$ and suppose that 
 $$R\ge\limsup_{N\to\infty}\max_{v\in A_N}\Rone_v<\infty.$$ Let $\bar\varphi^{N}$ be another sequence of Gaussian fields on $V_N$, and suppose that there is $\ep>0$ such that for all $N$ and all $u,v\in A_N$
\begin{align*}
\left|\Var \varphi^{N}_v-\Var\bar\varphi^{N}_v\right|\le\ep,\qquad
\E\bar\varphi^{N}_u\bar\varphi^{N}_v-\E\varphi^{N}_u\varphi^{N}_v\le\ep.
\end{align*}
Then
\[\limsup_{N\to\infty}\dist^{\le}\left(\max_{v\in A_N\cap V_N^\delta}\varphi^{N}-m_N,\max_{v\in A_N\cap V_N^\delta}\bar\varphi^{N}-m_N\right)\le\iota_{\alpha_\delta(R)}(\ep).\]
\end{lemma}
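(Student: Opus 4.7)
My plan is a Slepian-type comparison after variance equalization by auxiliary independent Gaussians, combined with ideas from Lemma \ref{l:approx_fields} to convert those auxiliaries into tight shifts.

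Let $X$ be a standard Gaussian and $(Y_v)_{v\in V_N}$ i.i.d.\ standard Gaussians, all independent of the two fields. Define
\[
\tilde\varphi^N_v := \varphi^N_v + \sqrt{\ep}\,X,\qquad
\tilde{\bar\varphi}^N_v := \bar\varphi^N_v + b_v Y_v,\quad b_v^2 := \ep + \Var\varphi^N_v - \Var\bar\varphi^N_v \in [0,2\ep].
\]
By construction the variances agree, $\Var\tilde\varphi^N_v = \Var\tilde{\bar\varphi}^N_v$, and the hypothesis yields, for $u\neq v$,
\[
\E\tilde\varphi^N_u\tilde\varphi^N_v - \E\tilde{\bar\varphi}^N_u\tilde{\bar\varphi}^N_v = \ep - (\E\bar\varphi^N_u\bar\varphi^N_v-\E\varphi^N_u\varphi^N_v)\ge 0.
\]
Slepian's inequality on the index set $A_N\cap V_N^\delta$ then produces
$\pp(\max_{A_N\cap V_N^\delta}\tilde\varphi^N > t)\le\pp(\max_{A_N\cap V_N^\delta}\tilde{\bar\varphi}^N > t)$ for all $t\in\R$.

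Next, I convert the maxima of the auxiliary fields to those of $\varphi^N,\bar\varphi^N$, with an error $\iota_{\alpha_\delta(R)}(\ep)\to 0$ as $\ep\to 0$. On the $\varphi$-side this is immediate: $\max\tilde\varphi^N-\max\varphi^N = \sqrt\ep X$ is a tight $O(\sqrt\ep)$ random variable, which a simple Gaussian tail estimate absorbs at the cost of a shift $\sqrt{\ep\log(1/\ep)}$ and error $o_\ep(1)$. On the $\bar\varphi$-side, the perturbation $b_vY_v$ is per-vertex rather than shared, so the naive bound $\max\tilde{\bar\varphi}^N\le \max\bar\varphi^N+\sqrt{2\ep}\max_v|Y_v|$ is inadmissible (the right-hand side contains an unwanted $\sqrt{\ep\log N}$ term). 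To bypass this I use that $\bar\varphi^N$ itself inherits Assumption \ref{a:logbd} on $A_N\cap V_N^\delta$ with $\alpha_\delta$ replaced by $\alpha_\delta + C\ep$, so the microscopic concentration of near-maximizers (Lemma \ref{l:geom_nearmax}) is available for $\bar\varphi^N$ as well. Combined with a further Slepian comparison that replaces the per-vertex Gaussians by a pair of block Gaussians (macroscopic sidelength $\lfloor N/L\rfloor$ and microscopic sidelength $L'$) -- to which Lemma \ref{l:approx_fields}, applied to $\bar\varphi^N$, converts the block augmentation into a deterministic shift of size $(\sigma^2+\sigma'^2)\sqrt{d/2}$ -- this forces $\max\tilde{\bar\varphi}^N-\max\bar\varphi^N$ to be close in distribution to the constant $\ep\sqrt{d/2}$ up to a tight $O(\sqrt\ep)$ error.

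Assembling the two estimates gives, for all $t\in\R$,
\[
\pp\bigl(\max\nolimits_{A_N\cap V_N^\delta}\varphi^N > m_N+t\bigr) \le \pp\bigl(\max\nolimits_{A_N\cap V_N^\delta}\bar\varphi^N > m_N+t-\iota_{\alpha_\delta(R)}(\ep)\bigr) + \iota_{\alpha_\delta(R)}(\ep),
\]
which is the claimed $\dist^\le$ bound. The main obstacle is the second step on the $\bar\varphi$-side: bounding the effect of the per-vertex Gaussian perturbation uniformly in $N$. The resolution relies on the same microscopic-concentration phenomenon (Lemma \ref{l:geom_nearmax}) that underlies the proof of Lemma~\ref{l:approx_fields}, and the explicit dependence of all intermediate bounds on $\alpha_\delta(R)$ yields the function $\iota_{\alpha_\delta(R)}$.
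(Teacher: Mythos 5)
Your first step (equalize variances with $b_vY_v$ on the $\bar\varphi$-side and a shared $\sqrt{\ep}X$ on the $\varphi$-side, then apply Slepian on $A_N\cap V_N^\delta$) is correct and is indeed how the comparison must be oriented, since the covariance hypothesis is one-sided. The gap is in your treatment of the per-vertex noise on the $\bar\varphi$-side, and it occurs at two places. First, the proposed ``further Slepian comparison that replaces the per-vertex Gaussians by a pair of block Gaussians'' goes in the wrong direction: with variances equalized, the block-augmented field has pointwise \emph{larger} covariances than the field $\bar\varphi_v+b_vY_v$, so Slepian yields $\pp\bigl(\max(\bar\varphi+\text{block})>t\bigr)\le\pp\bigl(\max(\bar\varphi+b_\cdot Y_\cdot)>t\bigr)$, i.e.\ it bounds the block maximum by the per-vertex maximum. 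What your chain of inequalities requires is the reverse: an upper bound on $\pp(\max\tilde{\bar\varphi}>t)$ in terms of $\max\bar\varphi$, and this is exactly the direction Slepian cannot give (and which is false as a stochastic domination in general: independent per-vertex noise produces a stochastically larger maximum than shared noise of the same variance). So the key step, namely $\pp(\max\tilde{\bar\varphi}>t)\le\pp(\max\bar\varphi>t-c_\ep)+o_\ep(1)$, is not established by your argument.

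Second, the auxiliary tools you invoke for $\bar\varphi$ are not available under the stated hypotheses. Assumption \ref{a:logbd} is a two-sided logarithmic bound, while the lemma only assumes an \emph{upper} bound $\E\bar\varphi_u\bar\varphi_v\le\E\varphi_u\varphi_v+\ep$ together with two-sided control of the variances; no lower bound on the covariances of $\bar\varphi$ is assumed (e.g.\ a field that is independent across mesoscopic blocks is admissible). Hence $\bar\varphi$ does not ``inherit \ref{a:logbd} with $\alpha_\delta+C\ep$'', and neither Lemma \ref{l:geom_nearmax} nor Lemma \ref{l:approx_fields} can be applied to $\bar\varphi$: their proofs use Slepian comparisons with the MBRW that require both bounds, and Lemma \ref{l:approx_fields} moreover requires $|A_N|\ge(1-2^{-(d+3)})N^d$, a density hypothesis absent from the present lemma (here $A_N$ may be sparse, so even lower-tail tightness of the restricted maximum fails). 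The paper's proof is the argument of \cite[Lemma 3.2]{DRZ17}: it shares your first Slepian step, but the variance-equalizing per-vertex noise is then removed by a direct estimate that uses only the a priori information available for $\varphi$ and the smallness of the noise variance, not a second comparison that presupposes structural (two-sided) covariance information about $\bar\varphi$. To repair your write-up you would need to replace the block-comparison step by such an argument; as written, the proof does not go through.
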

\begin{proof}
The proof is analogous to the proof of Lemma 3.2 in \cite{DRZ17}.
\end{proof}

\subsection{An approximating field}
\label{sec-approx}
Following the approach in \cite[Section 4.1]{DRZ17}, we construct an approximation $\xi^{N,J,J',\delta}$ to $\varphi^{N}$ that consists of rescaled versions of the field on macroscopic and microscopic scales, with a modified branching random walk in between. {Our construction 
differs from that  in  \cite{DRZ17} in one important aspect: for the approach of \cite{DRZ17} to work, it is necessary that
the variances of $\varphi^{N}$ and the approximating field agree, at all vertices.
 In \cite{DRZ17} the correction terms have variance up to $C\alpha_\delta(R)$ (in our notation), and are controlled uniformly. In our setting, these correction terms blow up as $R\to\infty$, which leads to a loss of control of the tail behaviour of the field. Instead, we estimate the relevant variances more precisely, and this allows us to use correction terms that are independent of $R$ and $T$. However, this comes at the cost of having to use two sets of correction terms, one at macroscopic and one at microscopic scale.

In this and the next section we take particular care to use subscripts and superscripts to indicate on which variables a certain object depends on. For example, our approximating field $\xi^{N,J,J',R,\delta}$ will depend on $N,J,J',R\in\N$ and $\delta>0$, but not on other quantities.

In order to construct $\xi^{N,J,J',R,\delta}$, let $J,J',R\in\N$, and $\delta>0$.
We subdivide $V_N$ into macroscopic boxes of sidelength $\left\lfloor{N}/{J}\right\rfloor$, and then we subdivide these into microscopic boxes of sidelength $J'$. Of course, in general $N$ will not be divisible by $JJ'$, and so there will be some points left over. However, there will be $o(N^d)$, and hence they will not be relevant for the maximum (using Lemma \ref{l:upper_right_tail_badset}).\footnote{In \cite{DRZ17} it is directly assumed that $N$ is a multiple of $JJ'$. We cannot do so here, as this would result in shifting the boxes in $V_N$ in a way that is incompatible with Assumptions \ref{a:micro}, \ref{a:macro} and in particular \ref{a:lln}.} 

To make this precise, let $N^*=JJ'\left\lfloor{N}/{JJ'}\right\rfloor$ be the largest multiple of $JJ'$ that is not larger than $N$. We consider the macroscopic cubes $V_{\left\lfloor{N}/{J}\right\rfloor}(w')$ for $w'\in\W_{N,\left\lfloor{N}/{J}\right\rfloor}$, and in each of them we consider the microscopic cubes $V_{J'}(w'')$ for $w''\in\W_{\left\lfloor{N}/{J}\right\rfloor,J'}(w')$. In this way, the number of microscopic cubes in each of the macroscopic cubes is exactly 
\[\left\lfloor\frac{1}{J'}\left\lfloor\frac{N}{J}\right\rfloor\right\rfloor=\left\lfloor\frac{N}{JJ'}\right\rfloor=\frac{N^*}{JJ'}\]
where the first equality is an easy exercise in arithmetic. We will define $\xi^{N}$ on the the union of these microscopic cubes and then extend it by 0 to $V_N$. The reader is referred to figure \ref{fig:outer} for
a pictorial description of the various boxes entering the construction.

\begin{figure}[h]
\begin{tikzpicture}[scale=0.7]

 \fill[fill=gray!70] (0,0) -- (0,10) -- (0.5,10) -- (0.5,0) -- cycle;
 \fill[fill=gray!70] (0,10) -- (10,10) -- (10,9.5) -- (0,9.5) -- cycle;
 \fill[fill=gray!70] (0,0.5) -- (10,0.5) -- (10,0) -- (0,0) -- cycle;
 \fill[fill=gray!70] (10,0) -- (10,10) -- (9.5,10) -- (9.5,0) -- cycle;

 \fill[fill=gray!70] (9,0) -- (9,10) -- (10,10) -- (10,0) -- cycle;
 \fill[fill=gray!70] (0,9) -- (0,10) -- (10,10) -- (10,9) -- cycle;

  \fill[fill=gray!70] (0,1.35) -- (9,1.35) -- (9,1.575) -- (0,1.575) -- cycle;
  \fill[fill=gray!70] (0,2.85) -- (9,2.85) -- (9,3.075) -- (0,3.075) -- cycle;
  \fill[fill=gray!70] (0,4.35) -- (9,4.35) -- (9,4.575) -- (0,4.575) -- cycle;
  \fill[fill=gray!70] (0,5.85) -- (9,5.85) -- (9,6.075) -- (0,6.075) -- cycle;
   \fill[fill=gray!70] (0,7.35) -- (9,7.35) -- (9,7.575) -- (0,7.575) -- cycle;
  \fill[fill=gray!70] (0,8.85) -- (9,8.85) -- (9,9.075) -- (0,9.075) -- cycle;

   \fill[fill=gray!70] (1.35,0) -- (1.35,9) -- (1.575,9) -- (1.575,0) -- cycle;
   \fill[fill=gray!70] (2.85,0) -- (2.85,9) -- (3.075,9) -- (3.075,0) -- cycle;
 \fill[fill=gray!70] (4.35,0) -- (4.35,9) -- (4.575,9) -- (4.575,0) -- cycle (4.425,0);
   \fill[fill=gray!70] (5.85,0) -- (5.85,9) -- (6.075,9) -- (6.075,0) -- cycle;
   \fill[fill=gray!70] (7.35,0) -- (7.35,9) -- (7.575,9) -- (7.575,0) -- cycle;
   \fill[fill=gray!70] (8.85,0) -- (8.85,9) -- (9.075,9) -- (9.075,0) -- cycle (8.925,0);

\draw (0,0) rectangle (10,10);
\draw[dashed] (0.5,0.5) rectangle (9.5,9.5);

\draw(0,0) rectangle (1.5,1.5);
\draw(0,1.5) rectangle (1.5,3);
\draw(0,3) rectangle (1.5,4.5);
\draw(0,4.5) rectangle (1.5,6);
\draw(0,6) rectangle (1.5,7.5);
\draw(0,7.5) rectangle (1.5,9);


\draw(1.5,0) rectangle (3,1.5);
\draw(1.5,1.5) rectangle (3,3);
\draw(1.5,3) rectangle (3,4.5);
\draw(1.5,4.5) rectangle (3,6);
\draw(1.5,6) rectangle (3,7.5);
\draw(1.5,7.5) rectangle (3,9);


\draw(3,0) rectangle (4.5,1.5);
\draw(3,1.5) rectangle (4.5,3);
\draw(3,3) rectangle (4.5,4.5);
\draw(3,4.5) rectangle (4.5,6);
\draw(3,6) rectangle (4.5,7.5);
\draw(3,7.5) rectangle (4.5,9);


\draw(4.5,0) rectangle (6,1.5);
\draw(4.5,1.5) rectangle (6,3);
\draw(4.5,3) rectangle (6,4.5);
\draw(4.5,4.5) rectangle (6,6);
\draw(4.5,6) rectangle (6,7.5);
\draw(4.5,7.5) rectangle (6,9);


\draw(6,0) rectangle (7.5,1.5);
\draw(6,1.5) rectangle (7.5,3);
\draw(6,3) rectangle (7.5,4.5);
\draw(6,4.5) rectangle (7.5,6);
\draw(6,6) rectangle (7.5,7.5);
\draw(6,7.5) rectangle (7.5,9);


\draw(7.5,0) rectangle (9,1.5);
\draw(7.5,1.5) rectangle (9,3);
\draw(7.5,3) rectangle (9,4.5);
\draw(7.5,4.5) rectangle (9,6);
\draw(7.5,6) rectangle (9,7.5);
\draw(7.5,7.5) rectangle (9,9);


\draw[|<->|] (0,10.2) -- (10,10.2);
\node at (5,10.5) {\small $N$};

\draw[|<->|] (0,-0.2) -- (9,-0.2);
\node at (5,-0.5) {\small $J\lfloor N/J\rfloor$};

\draw[|<->|] (-0.2,0) -- (-0.2,1.5);
\node at (-1,0.75) {\small $\lfloor N/J\rfloor$};

\draw[|<->|] (10.2,0) -- (10.2,0.5);
\node at (10.8,0.25) {\small $\delta N$};


\draw (14.5,4.5) circle(3.6);

\draw [thick] plot [smooth, tension=1] coordinates {  (16.5,2.5) (13,0.5) (7.5,1.5)};

\draw [thick] plot [smooth, tension=1] coordinates {  (12.5,6.5) (8,5.5) (6,3)};

%

 \fill[fill=gray!70] (12.5,2.5) -- (12.5,6.5) -- (12.6,6.5) -- (12.6,2.5) -- cycle;
 \fill[fill=gray!70] (12.72,2.5) -- (12.72,6.5) -- (12.78,6.5) -- (12.78,2.5) -- cycle;
  \fill[fill=gray!70] (12.97,2.5) -- (12.97,6.5) -- (13.03,6.5) -- (13.03,2.5) -- cycle;
 \fill[fill=gray!70] (13.22,2.5) -- (13.22,6.5) -- (13.28,6.5) -- (13.28,2.5) -- cycle;
 \fill[fill=gray!70] (13.47,2.5) -- (13.47,6.5) -- (13.53,6.5) -- (13.53,2.5) -- cycle;
  \fill[fill=gray!70] (13.72,2.5) -- (13.72,6.5) -- (13.78,6.5) -- (13.78,2.5) -- cycle;
 \fill[fill=gray!70] (13.97,2.5) -- (13.97,6.5) -- (14.03,6.5) -- (14.03,2.5) -- cycle;
 \fill[fill=gray!70] (14.22,2.5) -- (14.22,6.5) -- (14.28,6.5) -- (14.28,2.5) -- cycle;
 \fill[fill=gray!70] (14.47,2.5) -- (14.47,6.5) -- (14.53,6.5) -- (14.53,2.5) -- cycle;
  \fill[fill=gray!70] (14.72,2.5) -- (14.72,6.5) -- (14.78,6.5) -- (14.78,2.5) -- cycle;
 \fill[fill=gray!70] (14.97,2.5) -- (14.97,6.5) -- (15.03,6.5) -- (15.03,2.5) -- cycle;
 \fill[fill=gray!70] (15.22,2.5) -- (15.22,6.5) -- (15.28,6.5) -- (15.28,2.5) -- cycle;
 \fill[fill=gray!70] (15.47,2.5) -- (15.47,6.5) -- (15.53,6.5) -- (15.53,2.5) -- cycle;
   \fill[fill=gray!70] (15.72,2.5) -- (15.72,6.5) -- (15.78,6.5) -- (15.78,2.5) -- cycle;
 \fill[fill=gray!70] (15.97,2.5) -- (15.97,6.5) -- (16.03,6.5) -- (16.03,2.5) -- cycle;
 \fill[fill=gray!70] (16.22,2.5) -- (16.22,6.5) -- (16.28,6.5) -- (16.28,2.5) -- cycle;
 \fill[fill=gray!70] (16.22,2.5) -- (16.22,6.5) -- (16.5,6.5) -- (16.5,2.5) -- cycle (16.72,2.5);

\fill[fill=gray!70] (12.5,2.5) -- (16.5,2.5) -- (16.5,2.6) -- (12.5,2.6) -- cycle;
  \fill[fill=gray!70] (12.5,2.72) -- (16.5,2.72) -- (16.5,2.78) -- (12.5,2.78) -- cycle;
  \fill[fill=gray!70] (12.5,2.97) -- (16.5,2.97) -- (16.5,3.03) -- (12.5,3.03) -- cycle;
  \fill[fill=gray!70] (12.5,3.22) -- (16.5,3.22) -- (16.5,3.28) -- (12.5,3.28) -- cycle;
  \fill[fill=gray!70] (12.5,3.47) -- (16.5,3.47) -- (16.5,3.53) -- (12.5,3.53) -- cycle;
  \fill[fill=gray!70] (12.5,3.72) -- (16.5,3.72) -- (16.5,3.78) -- (12.5,3.78) -- cycle;
  \fill[fill=gray!70] (12.5,3.97) -- (16.5,3.97) -- (16.5,4.03) -- (12.5,4.03) -- cycle;
  
  \fill[fill=gray!70] (12.5,4.22) -- (16.5,4.22) -- (16.5,4.28) -- (12.5,4.28) -- cycle;
  \fill[fill=gray!70] (12.5,4.47) -- (16.5,4.47) -- (16.5,4.53) -- (12.5,4.53) -- cycle;
  \fill[fill=gray!70] (12.5,4.72) -- (16.5,4.72) -- (16.5,4.78) -- (12.5,4.78) -- cycle;
  \fill[fill=gray!70] (12.5,4.97) -- (16.5,4.97) -- (16.5,5.03) -- (12.5,5.03) -- cycle;
  
  \fill[fill=gray!70] (12.5,5.22) -- (16.5,5.22) -- (16.5,5.28) -- (12.5,5.28) -- cycle;
  \fill[fill=gray!70] (12.5,5.47) -- (16.5,5.47) -- (16.5,5.53) -- (12.5,5.53) -- cycle;
  \fill[fill=gray!70] (12.5,5.72) -- (16.5,5.72) -- (16.5,5.78) -- (12.5,5.78) -- cycle;
  \fill[fill=gray!70] (12.5,5.97) -- (16.5,5.97) -- (16.5,6.03) -- (12.5,6.03) -- cycle;
  
  \fill[fill=gray!70] (12.5,6.22) -- (16.5,6.22) -- (16.5,6.28) -- (12.5,6.28) -- cycle;

\fill[fill=gray!70] (12.5,6.22) -- (16.5,6.22) -- (16.5,6.5) -- (12.5,6.5) -- cycle;

  \fill[fill=black] (14,4.5)--(14,4.75)--(14.25,4.75)--(14.25,4.5)-- cycle;
\fill[fill=black] (15.5,4)--(15.5,4.25)--(15.75,4.25)--(15.75,4)-- cycle;
%

\draw(12.5,2.5) rectangle (16.5,6.5);

\draw (12.3,5) -- (12.3,5.25);
\draw[->|] (12.3,4.8) -- (12.3,5);
\draw[->|] (12.3,5.45) -- (12.3,5.25);
\node at (11.9,5.1) {\small $J'$};
 \draw[|<->|] (12.5,6.7) -- (16.5,6.7);
 \node at (14.5,7) { $\lfloor N/J\rfloor$};
 \draw[|<->|] (12.5,2.3) -- (16.22,2.3);
\node at (14.4,2) {\small $N^*/J$};
 \draw (16.7,2.5) -- (16.7,2.6);
\draw[->|] (16.7,2.3) -- (16.7,2.5);
\draw[->|] (16.7,2.8) -- (16.7,2.6);
\node at (17.15,3) {\tiny $\delta\lfloor N/J\rfloor$};
 
\draw (18,9) circle(1.8);

\draw [thick] plot [smooth, tension=1] coordinates {  (17,10) (16,8) (15,5)};

\draw [thick] plot [smooth, tension=1] coordinates {  (19,8) (17,6) (15.25,4.75)};

  \fill[fill=black] (18,8.5)--(18.1,8.5)--(18.1,8.6)--(18,8.6)-- cycle (18,8.5);

  \draw[|<->|] (17,10.1) -- (19,10.1);
 \node at (18,10.4) {\small  $J'$};
 \draw (19.2,8) -- (19.2,8.1);
\draw[->|] (19.2,8.3) -- (19.2,8.1);
\draw[->|] (19.2,7.8) -- (19.2,8);
\node at (19.4,8.5) {\tiny $\delta J'$};

 \fill[fill=gray!70] (17,9.9) -- (19,9.9) -- (19,10) -- (17,10) -- cycle (17,9.9);
 \fill[fill=gray!70] (17,10) -- (17,8) -- (17.1,8) -- (17.1,10) -- cycle (17,10);
 \fill[fill=gray!70] (17,8) -- (19,8) -- (19,8.1) -- (17,8.1) -- cycle (17,8);
 \fill[fill=gray!70] (19,8) -- (19,10) -- (18.9,10) -- (18.9,8) -- cycle (19,8);
   \draw(17,8) rectangle (19,10);
\end{tikzpicture}
\caption{Block partitions and good points (see \eqref{eqdef-good}). The grey areas are various $\delta$-neighborhoods of the boundary, at various scales, together with rounding effects (the strips on the top and right sides of the $N$-box and the $\lfloor N/J\rfloor$-boxes). The solid black boxes depict regions with $\Rtwo_{w'}>\lfloor N/J\rfloor$ (larger box) or
vertices with $\Rone_v>R$ (smaller black box). The points which are left after removing all the gray and black points will be the good points.} 
\label{fig:outer}
\end{figure}
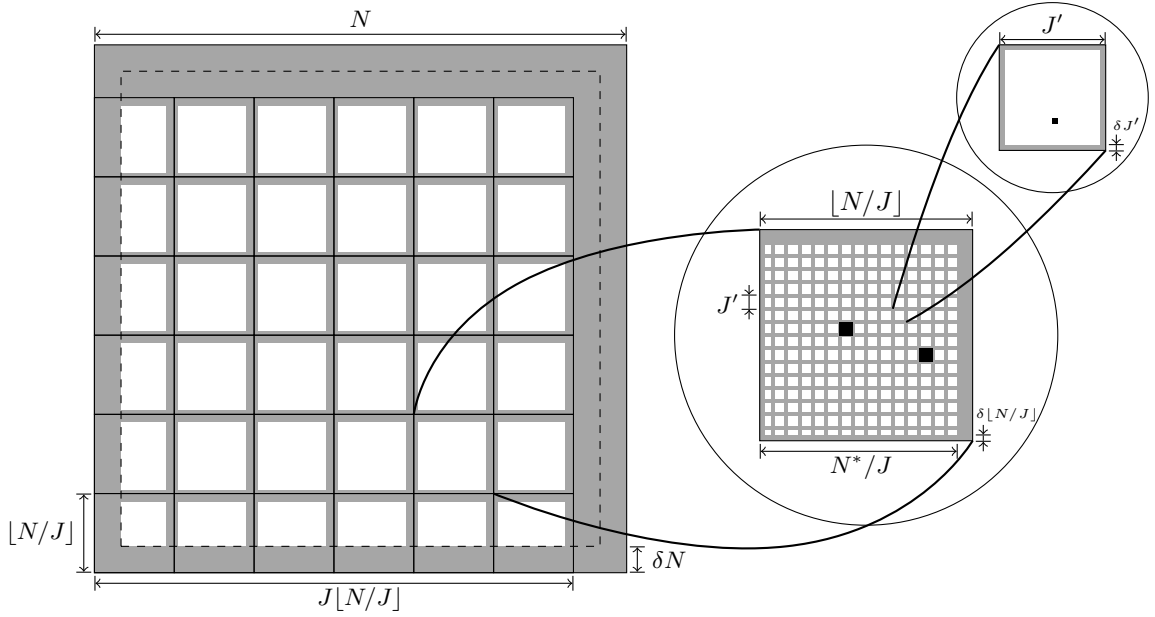

\medskip

Our definition of $\xi^{N,J,J',\delta}$ will be as the sum of fields on macro-, meso- and microscopic scale, and some independent Gaussians that serve as correction terms. We introduce all these objects in the following.

We begin with the macroscopic scale, which will be an upscaling of $\varphi^{J}$. That is, given $\varphi^{J}$ distributed according to $\pp^{J}$, we let \[\xi^{N,J,\mathrm{mac}}_v=\varphi^{J}_{w'^*}\] for all $w'\in\W_{N,\left\lfloor{N}/{J}\right\rfloor}$, where $w'^*\in V_J$ is the preimage of $w'$ under $\Phi^{0,0}_{J,N}$ (the upscaling map $\Phi$ was defined in \eqref{e:upscaling}).

For the microscopic scale we patch together the fields $\varphi^{J',w''}$. That is, given a random field $\varphi^{J',w''}$ distributed according to $\pp^{J',w''}$ for $w''\in\bigcup_{w'\in\W_{N,\left\lfloor{N}/{J}\right\rfloor}}\W_{\left\lfloor{N}/{J}\right\rfloor,J'}$ (and where we take the fields on different microscopic boxes to be independent), we define 
\[\xi^{N,J,J',w_N,\mathrm{mic}}_v=\varphi^{J,w''}_v\]
for all $w''\in\bigcup_{w'\in\W_{N,\left\lfloor{N}/{J}\right\rfloor}}\W_{\left\lfloor{N}/{J}\right\rfloor,J'}$ and all $v\in V_{J'}(w'')$.

Next, for the mesoscopic scale we patch together independent copies of an upscaled MBRW. This is easy to do if it happens that ${N^*}/{JJ'}$ is a power of 2. For the general case we need to make some small adjustments\footnote{This detail is overlooked in \cite{DRZ17}.}.
For that purpose let $i=\left\lceil \log_2\left({N^*}/{JJ'}\right)\right\rceil$, and $I=2^i$, so that $I$ is the smallest power of 2 not smaller than ${N^*}/{JJ'}$. Now let $\tilde\theta^{I}$ be distributed as a MBRW on $V_I$. Then $\Var\tilde\theta^{I}_v=\log I$ for all $v\in V_I$. We want the variance to be exactly $\log({N^*}/{JJ'})$, so define the correction factor 
\[s_{{N}/{J},J'}=\frac{\log\frac{N^*}{JJ'}}{\log I}\]
and note that
\[1\ge s_{{N}/{J},J'}\ge1-\frac{\log 2}{\log\frac{N^*}{JJ'}}.\]
Then $s_{{N}/{J},J'}\tilde\theta^{I,0}$, restricted to $V_{{N^*}/{JJ'}}$, looks like a MBRW on that domain in the sense that we have the estimate 
\begin{equation}\label{e:log_corr_MBRW_corrfact}
\Big|s_{N/J,J'}\E\tilde\theta^{I}_v\tilde\theta^{I}_u-\log \frac{N^*}{JJ'}+\log_+|u-v|_{\sim,N}\Big|\le C_\delta
\end{equation}
for $u,v\in V^\delta_{{N^*}/{JJ'}}$ (as follows easily from \eqref{e:log_corr_MBRW}).

We now take independent copies $(\tilde\theta^{I,(w')})$ of $\tilde\theta^{I}$, indexed by $w'\in \W_{N,\left\lfloor{N}/{J}\right\rfloor}$, and define
\[\xi^{N,J,J',\mathrm{mes}}_{v+w''}=s_{{N}/{J},J'}\tilde\theta^{I,0,(w')}_{(w''-w')/J'}\] for all $w'\in\W_{N,\left\lfloor{N}/{J}\right\rfloor}$, $w''\in\W_{\left\lfloor\frac{N}{J}\right\rfloor,J'}(w')$, and $v\in V_{J'}(w'')$.

To motivate the definition of the correction terms, note that Assumption \ref{a:micro} implies that for $w'\in\W_{N,\left\lfloor{N}/{J}\right\rfloor}$, $w''\in\W_{\left\lfloor{N}/{J}\right\rfloor,J'}(w')$, and $v\in V_{J'}(w'')$ we have, with the symbol $\approx$ meaning with an error that goes to $0$ as $N\to\infty$ followed by $J,J'\to\infty$,
\[
\Var\varphi^{N}_v\approx\log N+g(v,v)+f\left(\frac{v}{N}\right)\]
and
\[
\Var\left(\xi^{N,J,\mathrm{mac}}_v+\xi^{N,J,J',\mathrm{mes}}_v+\xi^{N,J,J',\mathrm{mic}}_v\right)\approx\log N+f\left(\frac{w'^*}{J}\right)+g(v,v)+g(w'^*,w'^*)+f\left(\frac{v-w''}{J'}\right)\]
(where $w'^*$ is the preimage of $w'$ under $\Phi^{0,0}_{J,N}$). By the continuity of $f$ we know that $f\left({v}/{N}\right)\approx f\left({w'}/{J}\right)$, and so the difference of the two variances is approximately $f\left(\frac{v-w''}{J'}\right)+g(w'^*,w'^*)$. These are the terms that we still need to account for.

In order to account for $f\left(\frac{v-w''}{J'}\right)$ note that by Assumption \ref{a:micro} the function $f$ is bounded from above on $(0,1)^d$. Let
\begin{equation}
\label{eq-Gamma}
\Gamma:=\sup_{x\in(0,1)^d}f(x)<\infty\end{equation}
and define $b^{J',\mathrm{mic}}_{v}\colon V_{J'}(0)\to\R$ by setting
\[(b^{J',\mathrm{mic}}_{v})^2=\Gamma-f\left(\frac{v}{J'}\right).\]

In order to account for $g(w'^*,w'^*)$, we first observe that we have an upper bound for $g$. Indeed, let
\[\Gamma_\delta:=\sup_{x\in(\delta,1-\delta)^d}f(x)\le\Gamma.\]
If Assumptions \ref{a:logbd} and \ref{a:micro} hold, we must have $g(v,v)+f(x)\le\alpha_\delta(\Rone_v)$ for any $v\in\Z^d$ and any $x\in(\delta,1-\delta)^d$, as otherwise \ref{a:micro} would contradict \ref{a:logbd} on large cubes $V_N(w)$ with $\frac{v-w}{N}\approx x$. Optimizing over $x$, this means that 
$g(v,v)\le\alpha_\delta(\Rone_v)-\Gamma_\delta$.
Let now 
\begin{equation}
\label{eq-Gammaprime}\Gamma'_{R,\delta}:=\alpha_\delta(R)-\Gamma_\delta\end{equation}
and define $\hat b^{J,R,\delta,\mathrm{mac}}\colon V_J\to[0,\infty)$ by setting
\begin{equation}\label{e:def_hatb}
(\hat b^{J,R,\delta,\mathrm{mac}}_{\hat w})^2=\begin{cases}\Gamma'_{R,\delta}-g(\hat w,\hat w)&\Rone_{\hat w}\le R\\0&\text{else}\end{cases}
\end{equation}
and then 
$b^{J,R,\delta,\mathrm{mac}}\colon \W_{N,\left\lfloor\frac{N}{J}\right\rfloor}\to[0,\infty)$ as
\[b^{J,R,\delta,\mathrm{mac}}_{w'}=\hat b^{J,R,\delta,\mathrm{mac}}_{w'^*}\]
where, once again, $w'^*\in V_J$ is the preimage of $w'$ under $\Phi^{0,0}_{J,N}$.

\medskip

Now we can finally define our approximating field $\xi^{N,J,J',R,\delta}$. We take two collections of independent standard Gaussians. The first, $X^{\mathrm{mic}}_{w''}$, is indexed by $w''\in\bigcup_{w'\in\W_{N,\left\lfloor{N}/{J}\right\rfloor}}\W_{\left\lfloor{N}/{J}\right\rfloor,J'}(w')$, and the second, $X^{\mathrm{mac}}_{w'}$ is indexed by $w'\in\W_{N,\left\lfloor{N}/{J}\right\rfloor}$. We then define for $w'\in\W_{N,\left\lfloor{N}/{J}\right\rfloor}$, $w''\in\W_{\left\lfloor{N}/{J}\right\rfloor,J'}(w')$, and $v\in V_{J'}(w'')$:
\begin{align}
\xi^{N,J,J',R,\delta,\mathrm{coarse}}_v&=\xi^{N,J,\mathrm{mac}}_v+b^{J,R,\delta,\mathrm{mac}}_{w'}X^{\mathrm{mac}}_{w'},\nonumber\\
\xi^{N,J,J',\mathrm{fine}}_v&=\xi^{N,J,J',\mathrm{mes}}_v+\xi^{N,J,J',\mathrm{mic}}_v+b^{J',\mathrm{mic}}_{v-w''}X^{\mathrm{mic}}_{w''},\label{eq-approxfield}\\
\xi^{N,J,J',R,\delta}_v&=\xi^{N,J,J',R,\delta,\mathrm{coarse}}_v+\xi^{N,J,J',\mathrm{fine}}_v.\nonumber
\end{align}

\medskip

We will shortly show that $\xi^{N,J,J',R,\delta}$ is indeed a good approximation to $\varphi^{N}$. Before doing so, we introduce some more notation. We let $J=KL$ and $J'=K'L'$ for integers $K,L,K',L'$ that we will later send to infinity in the order $K',L',K,L$. As in \cite{DRZ17}, we abbreviate
\[\limsup_{(L,K,L',K')\Rightarrow\infty}:=\limsup_{L\to\infty}\limsup_{K\to\infty}\limsup_{L'\to\infty}\limsup_{K'\to\infty} .\]
Recall that by definition $N^*$ is a multiple of $KLK'L'$. Let also $R,T$ be integers. We define a set of good points, (similarly as in \eqref{e:approx_fields3}) by setting
\begin{equation}  \label{eqdef-good}
\begin{split}
	&V_N^{K,L,K',L',R,\delta}\\
	&=V_N^\delta\\
	&\quad\cap\bigcup_{w'\in\W_{N,\left\lfloor\frac{N}{L}\right\rfloor}}V_{\left\lfloor\frac{N}{L}\right\rfloor}^\delta(w')\cap\bigcup_{w'\in\W_{N,\left\lfloor\frac{N}{KL}\right\rfloor}}V_{\left\lfloor\frac{N}{KL}\right\rfloor}^\delta(w')\\
	&\quad\cap\bigcup_{w'\in\W_{N,\left\lfloor\frac{N}{KL}\right\rfloor}}\ \bigcup_{w''\in\W_{\left\lfloor\frac{N}{KL}\right\rfloor,L'}(w')}V_{L'}^\delta(w'')\cap
\bigcup_{w'\in\W_{N,\left\lfloor\frac{N}{KL}\right\rfloor}}\ \bigcup_{w''\in\W_{\left\lfloor\frac{N}{KL}\right\rfloor,K'L'}(w')}V_{K'L'}^\delta(w'')\\
	&\quad\cap\bigcup_{\substack{w'\in\W_{N,\left\lfloor\frac{N}{KL}\right\rfloor}\\\Rtwo_{w'}\le \left\lfloor\frac{N}{KL}\right\rfloor}}V_{K'L'}(w')\cap\bigcup_{w'\in\W_{N,\left\lfloor\frac{N}{KL}\right\rfloor}}\ \bigcup_{\substack{w''\in\W_{\left\lfloor\frac{N}{KL}\right\rfloor,K'L'}(w')\\\Rtwo_{w''}\le K'L'}}V_{K'L'}(w'')\\
		&\quad\cap\bigcup_{\substack{w'\in\W_{N,\left\lfloor\frac{N}{KL}\right\rfloor}\\\Rone_{w'^*}\le R}}V_{K'L'}(w')\nonumber\\
		&\quad\cap\left\{v\in V_N:\Rone_v\le R\right\}
\end{split}
\end{equation}
and
\[V_N^{K,L,K',L',R,T,\delta}=V_N^{K,L,K',L',R,\delta}\cap\left\{v\in V_N(w_N):T_v\le T\right\}.\]
These definitions looks admittedly quite complicated, so let us give some explanations: 
\begin{itemize}
\item In the definition of $V_N^{K,L,K',L',R,\delta}$ we begin with $V_N^\delta$. In the second and third lines we exclude those points which are too close the boundary of the relevant boxes on any of the four scales. The reason is that in order to apply Assumptions \ref{a:logbd}, \ref{a:micro} and \ref{a:macro}, we need to stay away from the boundary of the corresponding box.
\item In the fourth line we exclude those points which lie in a macroscopic or a microscopic box where we know that the random scale $\Rtwo$ is smaller than the sidelength of the corresponding box. On the remaining macroscopic and microscopic boxes we can now use Assumptions \ref{a:micro} and \ref{a:macro}.
\item In the fifth line we exclude those points that lie in a macroscopic box for which $\Rone$ at the corresponding point in the reference configuration $V_J$ is too large. The reason is that on these points we do not control $b^{J,R,\delta,\mathrm{mac}}$.
\item  Finally, in the sixth line we exclude those points where $\Rone$ is too large. For $V_N^{K,L,K',L',R,T,\delta}$ we also exclude those points where $T$ is too large. This is because the assumptions in part A of \ref{as:main} are only useful together with upper bounds on $\Rone$ and $\T$.
\end{itemize}

We refer to  the points that are left after all these steps as good or typical points. We expect that most points are good, and will quantify this now.
For later use we define for $w'\in\Z^d$
\begin{align*}
V_{\left\lfloor\frac{N}{KL}\right\rfloor}^{K,L,K',L',R,\delta}(w')&:=V_{\left\lfloor\frac{N}{KL}\right\rfloor}(w')\cap V_N^{K,L,K',L',R,\delta},\\
V_{\left\lfloor\frac{N}{KL}\right\rfloor}^{K,L,K',L',R,T,\delta}(w')&:=V_{\left\lfloor\frac{N}{KL}\right\rfloor}(w')\cap V_N^{K,L,K',L',R,T,\delta}.
\end{align*}
Now, Assumptions \ref{a:sparseT} and \ref{a:sparseR} imply that most points are good in the sense that we have
\begin{align}
\liminf_{\substack{\delta\to0\\R\to\infty}}\inf_{K,L\in\N}\liminf_{L'\to\infty}\liminf_{K'\to\infty}\liminf_{N\to\infty}\min_{w'\in\W_{N,\left\lfloor\frac{N}{KL}\right\rfloor}}\frac{|V_N^{K,L,K',L',R,\delta}(w_N')|}{(N/KL)^d}=1,\label{e:few_bad_pointsR}\\
\liminf_{T\to\infty}\liminf_{\substack{\delta\to0\\R\to\infty}}\inf_{K,L\in\N}\liminf_{L'\to\infty}\liminf_{K'\to\infty}\liminf_{N\to\infty}\min_{w'\in\W_{N,\left\lfloor\frac{N}{KL}\right\rfloor}}\frac{|V_N^{K,L,K',L',R,T,\delta}(w_N')|}{(N/KL)^d}=1.\label{e:few_bad_pointsT}
\end{align}

\subsection{Convergence in distribution of the maximum}
\label{sec-3.3}

We can now begin with comparing the maxima of $\varphi^{N}$ and $\xi^{N,J,J',R,\delta}$. To do so, we first show that their covariances on the good set $V_N^{K,L,K',L',R,T,\delta}$ are close. Recall the constants $\Gamma$ and $\Gamma'_{R,\delta}$, see \eqref{eq-Gamma} and \eqref{eq-Gammaprime}. The precise statement is as follows.

\begin{lemma}\label{l:approx_variances}
Under Assumptions \ref{a:logbd}, \ref{a:micro} and \ref{a:macro} let $K,L,K',L',R$ be integers, $\delta>0$, and choose $J=KL,J'=K'L'$. Then, for all $R,T,\delta$
 there is  a constant $\Gamma''_{R,\delta}$ and  $\ep'_{N,K,L,K',L',R,T,\delta}$ with
\[\limsup_{(L,K,L',K')\Rightarrow\infty}\limsup_{N\to\infty}\ep'_{N,K,L,K',L',R,T,\delta}=0\] 
with the following property. Whenever $KL$ and $K'L'$ are large enough (depending on $R$), the following estimates hold for all $N$ large enough and $u,v\in V_N^{K,L,K',L',R,T,\delta}$.
\begin{itemize}
	\item[(i)] If $u,v\in V^\delta_{L'}(\hat w'')$ for some $\hat w''\in\W_{\left\lfloor\frac{N}{KL}\right\rfloor,L'}(w')$ and some $w'\in\W_{N,\left\lfloor\frac{N}{KL}\right\rfloor}$, then 
	\[\left|\E\varphi^{N}_v\varphi^{N}_u-\E\xi^{N,KL,K'L',R,\delta}_v\xi^{N,KL,K'L',R,\delta}_u-\Gamma- \Gamma'_{R,\delta}\right|\le\ep'_{N,K,L,K',L',R,\delta}.\]
	\item[(ii)] If $u\in V^\delta_{\left\lfloor\frac{N}{L}\right\rfloor}(w'_u)$,  $v\in V^\delta_{\left\lfloor\frac{N}{L}\right\rfloor}(w'_v)$ for some $\hat w'_u,\hat w'_v\in\W_{N,\left\lfloor\frac{N}{L}\right\rfloor}$ with $\hat w'_u\neq \hat w'_v$, then 
	\[\left|\E\varphi^{N}_v\varphi^{N}_u-\E\xi^{N,KL,K'L',R,\delta}_v\xi^{N,KL,K'L',R,\delta}_u\right|\le\ep'_{N,K,L,K',L',R,\delta}.\]
	\item[(iii)] In any case,
	\[\left|\E\varphi^{N}_v\varphi^{N}_u-\E\xi^{N,KL,K'L',R,\delta}_v\xi^{N,KL,K'L',R,\delta}_u\right|\le\Gamma''_{R,\delta}.\]
\end{itemize}
\end{lemma}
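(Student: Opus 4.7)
The approach is a case analysis on the position of $(u,v)$ with respect to the nested box partitions used to construct $\xi^{N,J,J',R,\delta}$. In each case I will expand $\E\xi^{N,J,J',R,\delta}_u\xi^{N,J,J',R,\delta}_v$ as a sum of five independent contributions---from the macroscopic field $\xi^{N,J,\mathrm{mac}}$, the mesoscopic field $\xi^{N,J,J',\mathrm{mes}}$, the microscopic field $\xi^{N,J,J',\mathrm{mic}}$, the macroscopic correction $(b^{J,R,\delta,\mathrm{mac}}_{w'})^2$ (present only when $u,v$ share a macro-box), and the microscopic correction $b^{J',\mathrm{mic}}_{u-w''}b^{J',\mathrm{mic}}_{v-w''}$ (present only when $u,v$ share a micro-box)---and compare the result with $\E\varphi^N_u\varphi^N_v$. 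The three items (i), (ii), (iii) differ only in which of the covariance assumptions \ref{a:micro}, \ref{a:macro}, \ref{a:logbd} I apply on the three relevant scales.

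For (i), with $u,v$ in a common $L'$-subbox $V_{L'}^\delta(\hat w'')$, I will apply Assumption \ref{a:micro} three times: to $\varphi^N$ at $(u,v)$ giving $\log N+f(v/N)+g(u,v)$; to $\varphi^J$ at the diagonal $(w'^*,w'^*)$ giving $\Var\varphi^J_{w'^*}\approx\log J+f(w'^*/J)+g(w'^*,w'^*)$; and to $\varphi^{J',w''}$ at $(u,v)$ giving $\log J'+f((v-w'')/J')+g(u,v)$. Since $\xi^{N,J,J',\mathrm{mes}}$ is constant on each micro-box, its contribution equals its variance, and \eqref{e:log_corr_MBRW_corrfact} together with $s_{N/J,J'}\to 1$ matches this to $\log(N^*/(JJ'))$ within the $\ep'$ tolerance. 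The three logarithms then telescope into $\log N$. By the design of the correction terms, $(b^{J,R,\delta,\mathrm{mac}}_{w'})^2+g(w'^*,w'^*)=\Gamma'_{R,\delta}$ exactly, while $b^{J',\mathrm{mic}}_{u-w''}b^{J',\mathrm{mic}}_{v-w''}+f((v-w'')/J')\to\Gamma$---the latter via uniform continuity of $f$ on $[\delta,1-\delta]^d$ combined with $|u-v|/J'\le 1/K'\to 0$ and the good-set inclusion $(u-w'')/J',(v-w'')/J'\in[\delta,1-\delta]^d$. The only residual mismatch is $f(v/N)-f(w'^*/J)\to 0$ (by uniform continuity, since $|v/N-w'^*/J|\lesssim 1/J$), and the total difference becomes $-\Gamma-\Gamma'_{R,\delta}+\ep'$ with $\ep'\to 0$ in the stated order.

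For (ii), with $u,v$ in different $L$-boxes, the independence structure collapses every summand except the macroscopic one. The good-set $\delta$-interior condition inside the $L$-boxes forces $|u-v|_\infty\ge 2\delta N/L$, and once $K$ is large enough that each macro-box sits inside a single $L$-box, the corresponding preimages $w'^*_u,w'^*_v\in V_J$ satisfy $|w'^*_u-w'^*_v|_\infty\ge J/L''$ for some $L''\le L/\delta$. Assumption \ref{a:macro} then applies to both $\varphi^N$ and $\varphi^J$, yielding $\E\varphi^N_u\varphi^N_v\approx h(u/N,v/N)$ and $\E\xi^{N,J,\mathrm{mac}}_u\xi^{N,J,\mathrm{mac}}_v\approx h(w'^*_u/J,w'^*_v/J)$. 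Uniform continuity of $h$ on the compact set $\{(x,y)\in([\delta,1-\delta]^d)^2:|x-y|_\infty\ge 1/L''\}$, combined with $(w'^*_u/J,w'^*_v/J)\to(u/N,v/N)$ as $N\to\infty$, gives the $o(1)$ matching.

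For (iii) the same scheme runs but with Assumption \ref{a:logbd} in place of the sharper estimates, and the main obstacle is the intermediate configuration where $u,v$ share a macro-box but lie in different micro-boxes. The mesoscopic piece then contributes $\log(N/J)-\log|w''_u-w''_v|+O(1)$, while \ref{a:logbd} gives $\log N-\log|u-v|+O(\alpha_\delta(R))$ for $\varphi^N$, leaving a mismatch $\bigl|\log|w''_u-w''_v|-\log|u-v|\bigr|$ that must be controlled uniformly in $N,J,J'$. The key observation is that the $\delta$-interior clauses in the definition of $V_N^{K,L,K',L',R,\delta}$ force $|u-v|\ge 2\delta J'$ whenever $w''_u\ne w''_v$; combined with $J'\le|w''_u-w''_v|\le|u-v|+2J'$ this pins the ratio $|w''_u-w''_v|/|u-v|$ inside $[1/3,1+1/\delta]$, giving a log-ratio bound depending only on $\delta$. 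An identical argument at the coarser scale handles the remaining intermediate configuration (different macro-boxes within a common $L$-box). Combined with the boundedness of $f$ on $[\delta,1-\delta]^d$ and the a-priori inequality $g(w'^*,w'^*)\le\alpha_\delta(R)-\Gamma_\delta$ available on good points (as in the derivation of \eqref{e:def_hatb}), this delivers a constant $\Gamma''_{R,\delta}$ depending only on $R$ and $\delta$.
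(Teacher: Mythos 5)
Your proposal is correct and follows essentially the same route as the paper's proof, which itself defers to \cite[Lemma 4.1]{DRZ17}: decompose $\E\xi^{N,KL,K'L',R,\delta}_u\xi^{N,KL,K'L',R,\delta}_v$ into the independent macroscopic/mesoscopic/microscopic/correction contributions, apply \ref{a:micro} at the three scales $N$, $KL$, $K'L'$ for (i) (the $g$-terms cancelling and the corrections absorbing $f$ and $g$ by design), reduce to the macroscopic field and apply \ref{a:macro} at scales $N$ and $KL$ for (ii), and use \ref{a:logbd} together with \eqref{e:log_corr_MBRW_corrfact} and the bounded log-ratio argument for (iii), with the good-set bounds on $\Rone$, $\Rtwo$ making the assumptions applicable exactly as in the paper's sketch. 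Note only that the sign you obtain in (i), namely $\E\varphi^N_u\varphi^N_v-\E\xi_u\xi_v\approx-\Gamma-\Gamma'_{R,\delta}$ (i.e.\ $\xi$ carries the extra variance), is the one consistent with the construction and with its use in Lemma \ref{l:comparison_maxima}, even though the displayed statement of item (i) writes the difference with the opposite sign.
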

Note that in Lemma \ref{l:approx_variances}
we do not assume Assumption $\ref{a:logupp}$, i.e. we do not use that on the set $V_N^{K,L,K',L',R,T,\delta}$ we have a bound on $\T$. We will use this fact only later in the proof of Lemma \ref{l:comparison_maxima}. On the other hand, it is crucial for the proof that on $V_N^{K,L,K',L',R,T,\delta}$ we have upper bounds on $\Rone$ and $\Rtwo$.

\begin{proof}
This is shown just like \cite[Lemma 4.1]{DRZ17}. The idea for part (i) is that if $u,v\in V^\delta_{L'}(\hat w'')$ then they must lie in the same microscopic subbox $V^\delta_{K'L'}(w'')$. In particular the contributions from $\xi^{N,J,\mathrm{mac}}$, $b^{J,R,\delta,\mathrm{mac}}X^{\mathrm{mac}}$ and $\xi^{N,J,J',\mathrm{mes}}$ are the same for $u,v$, and the nontrivial part is to estimate the covariance of $\xi^{N,\mathrm{mic}}$. We must have $\Rone_v\le R$, $\Rtwo_{\hat w''}\le K'L'$, as otherwise $u,v$ would not be in the set of good points $V_N^{K,L,K',L',R,T,\delta}$. So, as soon as $K'L'\ge R$, we can apply Assumption \ref{a:micro} to the cube $V^\delta_{K'L'}(\hat w'')$, and obtain easily the claimed estimate.

The argument for part (ii) is similar. This time we have to estimate the covariance of $\xi^{N,J,\mathrm{mac}}$. This field was defined using $\varphi^{KL}$. For $KL$ large enough we have $\Rtwo_0\le KL$. Therefore we can apply Assumption \ref{a:macro} to $V_{KL}$ once $KL$ is large enough, and again obtain the conclusion.

Finally, for part (iii) we use that on $V_N^{K,L,K',L',R,T,\delta}$ we can have an upper bound on $\Rone$, so that we can apply \ref{a:logbd} while having a uniform upper bound for $\Rone$. It turns out that we can choose $\Gamma''_{R,\delta}=4\alpha_\delta(R)+\Gamma+\Gamma'_{R,\delta}$.
\end{proof}

Using the results of the previous subsection, Lemma \ref{l:approx_variances} implies that also the maxima of $\varphi^{N}$ and the approximating field are close.

\begin{lemma}\label{l:comparison_maxima}
Under Assumptions \ref{a:logupp}, \ref{a:logbd}, \ref{a:micro}, \ref{a:macro}, \ref{a:sparseT} and \ref{a:sparseR} we have
\begin{align*}
&\limsup_{T\to\infty}\limsup_{\substack{\delta\to0\\R\to\infty}}\limsup_{(L,K,L',K')\Rightarrow\infty}\limsup_{N\to\infty}\\
&\qquad
\dist\Big(\max_{v\in V_N}\varphi^{N}_v,\max_{v\in V_N^{K,L,K',L',R,T,\delta}}\xi^{N,KL,K'L',R,\delta}_v-\sqrt{\frac{d}{2}}(\Gamma+\Gamma'_{R,\delta})\Big)=0.
\end{align*}
\end{lemma}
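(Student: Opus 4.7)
The plan is to adapt the strategy of \cite[Proposition 4.2]{DRZ17} to our inhomogeneous setting, in three stages. First, reduce $\max_{v\in V_N}\varphi^N_v$ to $\max_{v\in V_N^{K,L,K',L',R,T,\delta}}\varphi^N_v$. Combining Lemma \ref{l:upper_right_tail_badset} (which controls the right tail of the maximum over the bad complement of the good set, in particular the points near the boundary of $V_N$, macroscopic or microscopic boxes; the points with $\T_v\geq T$ or $\Rone_v\geq R$; and points in boxes where $\Rtwo$ is large) with the density estimates \eqref{e:few_bad_pointsR}--\eqref{e:few_bad_pointsT} and the tightness statement of Theorem \ref{t:tightness}, the contribution of the bad points vanishes in distribution in the stated iterated limit.

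The second stage is a variance-matching augmentation. Apply Lemma \ref{l:approx_fields} with $A_N=V_N^{K,L,K',L',R,T,\delta}$, $\sigma^2=\Gamma'_{R,\delta}$ (attached to macroscopic cubes of sidelength $\lfloor N/(KL)\rfloor$) and $\sigma'^2=\Gamma$ (attached to microscopic cubes of sidelength $K'L'$), obtaining a field $\tilde\varphi^N_v=\varphi^N_v+\sqrt{\Gamma'_{R,\delta}}X_B+\sqrt{\Gamma}X_{B'}$ whose maximum over the good set is, in the limit, just $\max_{A_N}\varphi^N_v-\sqrt{d/2}(\Gamma+\Gamma'_{R,\delta})$ away from that of $\varphi^N$ in the Lévy metric. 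The construction of $\tilde\varphi^N$ is designed so that its covariance on the good set matches that of $\xi^{N,KL,K'L',R,\delta}$ precisely in the two decisive regimes identified by Lemma \ref{l:approx_variances}: in case (i) (same microscopic cube) the augmentation inflates the covariance by exactly $\Gamma+\Gamma'_{R,\delta}$, which compensates the gap; in case (ii) (different macroscopic cubes) the augmentation adds nothing, again matching the covariance up to $\ep'$. A Slepian-type comparison between $\tilde\varphi^N$ and $\xi^{N,KL,K'L',R,\delta}$ then couples the two maxima on the good set, and Lemma \ref{l:approx_variances}(iii) controls the covariance gap in between.

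The main obstacle is the mesoscopic regime -- pairs of points in the same macroscopic but distinct microscopic cubes -- where the covariance gap is only bounded by $\Gamma''_{R,\delta}$ in Lemma \ref{l:approx_variances}(iii), so direct Slepian matching fails. This is exactly what Lemma \ref{l:geom_nearmax} is designed to kill: with probability tending to $1$ as $L\to\infty$, no two near-maximizers of $\varphi^N$ (nor, by the same argument applied to $\xi^{N,KL,K'L',R,\delta}$, of the approximating field) lie at distance between $L$ and $N/L$. One then runs the symmetrized Slepian/interpolation argument on the ``thinned'' set of pairs where covariance matching holds, and packages the remaining error via the Lévy-metric/one-sided bounds of Section \ref{sec-3.1}. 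The iterated limit $\limsup_{T\to\infty}\limsup_{(\delta,R)\to(0,\infty)}\limsup_{(L,K,L',K')\Rightarrow\infty}\limsup_{N\to\infty}$ is ordered so that (i) in the innermost limit $N\to\infty$ the covariance-matching error $\ep'_{N,K,L,K',L',R,T,\delta}$ vanishes, (ii) taking $K'\to\infty$ and $L'\to\infty$ activates the microscopic-scale homogenization from Assumption \ref{a:micro}, (iii) taking $K\to\infty$ and then $L\to\infty$ enforces the mesoscopic separation of near-maximizers, and (iv) the outer limits $(\delta,R)\to(0,\infty)$ and $T\to\infty$ erase the restriction to the good set via Lemma \ref{l:upper_right_tail_badset} and \eqref{e:few_bad_pointsT}.
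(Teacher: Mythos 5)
Your proposal is correct and follows essentially the same route as the paper: one first reduces the maximum of $\varphi^N$ to the good set $V_N^{K,L,K',L',R,T,\delta}$ by combining the bad-set right-tail bound of Lemma \ref{l:upper_right_tail_badset} with a lower bound on the maximum (the paper invokes Lemma \ref{l:upper_left_tail} directly, you invoke the tightness of Theorem \ref{t:tightness}, which rests on that same lemma), and then runs the argument of \cite[Lemma 4.2]{DRZ17} on the good set, i.e. Lemma \ref{l:approx_fields} with $\sigma^2=\Gamma'_{R,\delta}$, $\sigma'^2=\Gamma$, the covariance comparison of Lemma \ref{l:approx_variances}, and Lemma \ref{l:geom_nearmax} to neutralize the mesoscopic regime. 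Your choice to augment $\varphi^N$ rather than $\xi$ is indeed the intended one—by the construction of the correction terms, $\xi$'s covariance exceeds $\varphi^N$'s by approximately $\Gamma+\Gamma'_{R,\delta}$ within a microscopic box (note the printed sign in Lemma \ref{l:approx_variances}(i) reads the other way), which is also what produces the downward shift $\sqrt{d/2}\,(\Gamma+\Gamma'_{R,\delta})$ in the statement.
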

\begin{proof}
By Assumptions \ref{a:sparseT} and \ref{a:sparseR}, \eqref{e:few_bad_pointsR} and Lemma \ref{l:upper_right_tail_badset} we have an upper bound on the right tail of the maximum of $\varphi^{N}$ on the set $V_N\setminus V_N^{K,L,K',L',R,T,\delta}$. On the other hand, by Lemma \ref{l:upper_left_tail} and Assumption \ref{a:sparseR} we have a lower bound of the maximum on $V_N^{K,L,K',L',R,T,\delta}$. When combined, these two facts together imply that in the limit $R\to\infty$, $\delta\to0$, $T\to\infty$ the probability that the maxima of $\varphi^{N}$ on $V_N$ and $V_N^{K,L,K',L',R,T,\delta}$ are different vanishes. Thus we can assume that the maximum of $\varphi^{N}$ is assumed at a point in $V_N^{K,L,K',L',R,T,\delta}$.

So we can restrict our attention to $V_N^{K,L,K',L',R,T,\delta}$. On this set, we can argue just as in \cite[Lemma 4.2]{DRZ17}, using Lemma \ref{l:approx_fields}, to show that the two maxima are close in distribution.
\end{proof}

By Lemma \ref{l:comparison_maxima} we now only have to investigate the maximum of the random variable
\[\max_{v\in V_N^{K,L,K',L',R,T,\delta}}\xi^{N,KL,K'L',R,\delta}_v-m_N-\sqrt{\frac{d}{2}}(\Gamma+\Gamma'_{R,\delta}).\]
For that purpose, the main ingredient are precise asymptotics for the right tail of the fine field 
$\xi^{N,KL,K'L',\mathrm{fine}}$, where we recall that
\[\xi^{N,J,J',\mathrm{fine}}_v=\xi^{N,J,J',\mathrm{mes}}_v+\xi^{N,J,J',\mathrm{mic}}_v+b^{J',\mathrm{mic}}_{v-w''}X^{\mathrm{mic}}_{w''}.\]
Note that this field does not depend on $R,T,\delta$.

\begin{lemma}\label{l:right_tail}
Under Assumptions \ref{a:logupp}, \ref{a:logbd}, \ref{a:micro}, \ref{a:macro}, \ref{a:sparseT}, \ref{a:sparseR} and \ref{a:lln} there are there are constants $\Gamma^\pm$ such that for any $R,T$ sufficiently large and $\delta$ sufficiently small there is a parameter $\ep''_{R,T,\delta}$ such that 
\[\limsup_{T\to\infty}\limsup_{\substack{\delta\to0\\R\to\infty}}\ep''_{R,T,\delta}=0\]
and such that the following hold.

For any $K',L'$ sufficiently large there are constants $\beta_{K'L'}$ that depend on $K'L'$ only, and satisfy $\Gamma^-\le\beta_{K'L'}\le\Gamma^+$ such that for any $K,L,N$ sufficiently large and every sequence $a\in V_{KL}$, setting $w_{a,KL,N}'=a\left\lfloor \frac{N}{KL}\right\rfloor$, we have  
\begin{equation}\label{e:right_tail}
\begin{split}
&\limsup_{z\to\infty}\limsup_{K'L'\to\infty}\limsup_{N\to\infty}\\
&\quad\Bigg|\frac{\e^{\sqrt{2d}z}}{z}\pp^{N}\bigg(\max_{v\in V_{\left\lfloor\frac{N}{KL}\right\rfloor}^{K,L,K',L',R,T,\delta}(w_{a,KL,N}')}\xi^{N,KL,K'L',\mathrm{fine}}_v\ge m_{\left\lfloor\frac{N}{KL}\right\rfloor}+z\bigg)-\beta_{K'L'}\Bigg|\le\ep
''_{R,T,\delta}.
\end{split}
\end{equation}
\end{lemma}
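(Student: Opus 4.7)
The strategy is to exploit the product structure of $\xi^{N,KL,K'L',\mathrm{fine}}$ on the macroscopic box $V_{\lfloor N/KL\rfloor}(w'_{a,KL,N})$. Observe that, by construction, the mesoscopic component $\xi^{N,KL,K'L',\mathrm{mes}}$ is \emph{constant} on each microscopic sub-box $V_{K'L'}(w'')$ (depending only on $(w''-w')/J'$), and forms a single scaled MBRW on scale $N_1:=\lfloor N/(KL)\rfloor / (K'L')$ which is independent of everything happening inside each microscopic sub-box. Inside $V_{K'L'}(w'')$ the ``tip'' field
\[
\Psi_{w''}(v) := \xi^{N,KL,K'L',\mathrm{mic}}_v + b^{K'L',\mathrm{mic}}_{v-w''} X^{\mathrm{mic}}_{w''},
\]
has law that depends only on $w''$ through the law of $\varphi^{K'L',w''}$, and the $\Psi_{w''}$ are jointly independent across $w''$. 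Writing $\tilde M_{w''} := \xi^{N,KL,K'L',\mathrm{mes}}_{w''}$ and $\Phi_{w''} := \max_{v\in V_{K'L'}(w'')\cap V_N^{K,L,K',L',R,T,\delta}} \Psi_{w''}(v)$, the event whose probability we want to estimate is simply $\{\max_{w''} (\tilde M_{w''}+\Phi_{w''}) \geq m_{\lfloor N/KL\rfloor}+z\}$, a convolution of the MBRW $\tilde M$ with i.i.d.-in-law tip displacements $\Phi_{w''}$.

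The second step is to apply a ballot/right-tail analysis for the MBRW at scale $N_1$, exactly as in \cite{BDZ16, DRZ17}. Adding to the MBRW an independent ``tip'' $\Phi$ of subcritical magnitude (roughly $\Phi\sim m_{K'L'}+O(1)$), the classical computation (Bramson-style barrier, followed by integration against the density of the Brownian bridge hitting level $m_{N_1}+z-y$ at the endpoint) gives
\[
\mathbb{P}\!\left(\max_{w''}(\tilde M_{w''}+\Phi_{w''}) \geq m_{\lfloor N/KL\rfloor}+z\right) \sim C_\star \, z\,e^{-\sqrt{2d}\,z}\cdot \frac{1}{|\W|}\sum_{w''} \mathbb{E}\!\left[(1\wedge (\Phi_{w''}\!-\!m_{K'L'})_+)\,e^{\sqrt{2d}(\Phi_{w''}-m_{K'L'})}\mathbf 1_{\Phi_{w''}\leq m_{K'L'}+K''}\right]
\]
(modulo a truncation parameter $K''\to\infty$), where $C_\star$ is a universal constant coming from the MBRW density. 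This reduces the lemma to proving that the empirical average on the right-hand side converges, as $N\to\infty$ then $K'L'\to\infty$, to a constant $\beta_{K'L'}$.

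The third step is to identify $\beta_{K'L'}$ via Assumption \ref{a:lln}. For each microscopic box $w''$, the law of the tip $\Psi_{w''}(\cdot)$ is a Gaussian law fully determined by the covariance of $\varphi^{K'L',w''}$ (plus the independent $X^{\mathrm{mic}}$), so the quantity
\[
F(\pi) := \mathbb{E}_\pi\!\left[(1\wedge (M-m_{K'L'})_+)\,e^{\sqrt{2d}(M-m_{K'L'})}\mathbf 1_{M\leq m_{K'L'}+K''}\right], \qquad M := \max_{V_{K'L'}} \psi,
\]
is a functional on $\mathcal{M}_1(\mathbb{R}^{V_{K'L'}})$. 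Truncation at level $K''$ makes $F$ continuous and bounded under weak convergence. Then $\nu_{K'L'}$-averaging yields the limit, and the bounds $\Gamma^-\leq \beta_{K'L'}\leq \Gamma^+$ follow directly from the matching lower and upper right-tail bounds of Lemmas \ref{l:lower_right_tail} and \ref{l:upper_right_tail} applied to $\varphi^{K'L',w''}$ (whose covariance is uniformly logarithmic in $\delta$-interior subcubes, on typical $w''$).

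The principal obstacle, and the place where real work is needed, is that the good-set restriction in the maximum defining $\Phi_{w''}$ depends on the \emph{realization} of $\Rone,\Rtwo,\T$, whereas \ref{a:lln} only gives convergence in law of the unrestricted microscopic laws $\mathbb{P}^{K'L',w''}$. I would bridge this gap in two stages. First, replace $\Phi_{w''}$ by the max over the full microscopic box $V_{K'L'}(w'')$; by Lemma \ref{l:upper_right_tail_badset_micro} together with Assumption \ref{a:sparseR}, the aggregate contribution from the bad points in microscopic boxes to the truncated exponential moment vanishes in the limit, so this replacement costs only $\ep''_{R,T,\delta}\to 0$. Second, the restriction $\Rtwo_{w''}\leq K'L'$ (built into the good-set definition) excludes an $o(1)$-fraction of boxes by the second part of \ref{a:sparseR}, and on the remaining boxes the distribution of the full-box maximum is asymptotically governed by the local law $\pp^{K'L',w''}(\tau_{w''}(\cdot))$, so \ref{a:lln} applies to the truncated continuous functional $F$. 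Combining these, the empirical average of $F$ over $w''\in\W_{\lfloor N/KL\rfloor,K'L'}(w'_{a,KL,N})$ converges, uniformly in $a$, to $\int F\,d\nu_{K'L'}$, from which $\beta_{K'L'}$ is read off (up to the universal MBRW factor $C_\star$). Finally, $z\to\infty$ then $K''\to\infty$ removes the truncation, with the error absorbed into $\ep''_{R,T,\delta}$.
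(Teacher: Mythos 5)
Your plan is correct and follows essentially the same route as the paper's proof: the decomposition into a scaled MBRW plus independent microscopic ``tips'', the Bramson-type barrier/first-and-second-moment analysis imported from \cite{BDZ16,DRZ17}, the removal of the good-set restriction via Lemma \ref{l:upper_right_tail_badset_micro}, Assumption \ref{a:sparseR} and the lower tail bound (with the error absorbed into $\ep''_{R,T,\delta}$), and the identification of $\beta_{K'L'}$ by applying Assumption \ref{a:lln} to a bounded functional of the local laws. Your truncated exponential-moment functional is just a repackaging of the paper's integration of the tip tail probabilities against the bridge density, so the two arguments coincide in substance.
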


\begin{proof}
This is the most technical proof of the whole paper. The proof is similar to the proof of \cite[Proposition 4.3]{DRZ17}, although extra care is needed because of the additional parameters $R$ and $T$ and the fact that the distribution of $\xi^{N,J,J',\mathrm{mic}}$ on different microscopic boxes is not the same, but only that Assumption \ref{a:lln} ensures some averaging of these distribution. The proof of \cite[Proposition 4.3]{DRZ17} is outlined in the appendix there, while making extensive reference to \cite{BDZ16} for various intermediate steps. We proceed similary, by giving details where some new ideas are required, and otherwise referring the reader to \cite{BDZ16,DRZ17} for various calculations.

\emph{Step 1: Preliminaries}\\
We begin with the observation that by Assumption \ref{a:logbd} and Lemma \ref{l:approx_variances} (i) we have for $K,L,K',L'$ large enough that
\begin{equation}\label{e:right_tail1}
\left|\E\xi^{N,KL,K'L',\mathrm{fine}}_v\xi^{N,KL,K'L',\mathrm{fine}}_u-\log N+\log_+|u-v|\right|\le \alpha_{\delta}(R)+\Gamma+1
\end{equation}
for all $u,v\in V_N^{K,L,K',L',R,T,\delta}$. Thus the field $\xi^{N,KL,K'L',\mathrm{fine}}$ satisfies a bound like in Assumption \ref{a:logbd}, just with $\alpha_{\delta}(\cdot)+\Gamma+1$ instead of $\alpha_{\delta}(\cdot)$, and so Lemma \ref{l:upper_right_tail} implies an upper bound on the right tail of the form
\begin{equation}\label{e:right_tail2}
\pp^{N}\bigg(\max_{v\in V_{\left\lfloor\frac{N}{KL}\right\rfloor}^{K,L,K',L',R,T,\delta}(w'_{a,KL,N})}\xi^{N,KL,K'L',\mathrm{fine}}_v\ge m_{\left\lfloor\frac{N}{KL}\right\rfloor}+z\bigg)\le C_{\alpha_\delta(R)}z\e^{-\sqrt{2d}z}
\end{equation}
for all $z\ge 1$.
Similarly, \eqref{e:right_tail1} together with Lemma \ref{l:lower_right_tail} and Assumptions \ref{a:sparseT} and \ref{a:sparseR} imply that for $R,T$ sufficiently large and $\delta$ sufficiently small we have a lower bound on the right tail of the form
\begin{equation}\label{e:right_tail3}
\pp^{N}\bigg(\max_{v\in V_{\left\lfloor\frac{N}{KL}\right\rfloor}^{K,L,K',L',R,T,\delta}(w'_{a,KL,N})}\xi^{N,KL,K'L',\mathrm{fine}}_v\ge m_{\left\lfloor\frac{N}{KL}\right\rfloor}+z\bigg)\ge c_{\alpha_\delta(R)}z\e^{-\sqrt{2d}z}
\end{equation}
for all $\sqrt{\log N}-C_{\alpha_\delta(R)}\ge z\ge 1$.

Now \eqref{e:right_tail2} and \eqref{e:right_tail3} applied for some fixed $R,\delta$, clearly imply that if the constant $\beta_{K'L'}$ in \eqref{e:right_tail} exists, then necessarily $\Gamma^-\le\beta_{K'L'}\le\Gamma^+$. So it remains to show the existence of $\beta_{K'L'}$.

\emph{Step 2: Definition of barrier events}\\
By definition, the field $\xi^{N,KL,K'L',\mathrm{fine}}$ on the macroscopic box $V_{\left\lfloor\frac{N}{KL}\right\rfloor}(w'_{a,KL,N})$ is defined only on the microscopic boxes $V_{KL}(w'')$ for $w''\in\W_{\left\lfloor\frac{N}{KL}\right\rfloor,KL'}(w'_{a,KL,N})$, and there are exactly $\left(\frac{N^*}{KLK'L'}\right)^d$ of these microscopic boxes in the macroscopic box $V_{\left\lfloor\frac{N}{KL}\right\rfloor}(w'_{a,KL,N})$.

As in \cite{DRZ17}, we switch notation to make the connection to (M)BRWs more obvious. We let $\bar N:=\frac{N^*}{KL}$, $J':=K'L'$, and let $\bar n:=\log_2\bar N$, $j'=\log_2J'$. Furthermore, we let $\Xi_{\bar N,J'}:=\W_{\left\lfloor\frac{N}{KL}\right\rfloor,K'L'}(w'_N)$. For $w''\in\Xi_{\bar N,J'}$ we write 
\begin{align*}
X^{N,KL,K'L'}_{w''}&:=\xi^{N,KL,K'L',\mathrm{mes}}_{w''}\\
Y^{N,K,L,K',L',R,T,\delta}_{w''}&:=\max_{v\in V_{J'}(w'')\cap V^{K,L,K',L',R,T,\delta}_N}\left(\xi^{N,KL,K'L',\mathrm{mic}}_{v}+b^{K'L',\mathrm{mic}}_vX_{w''}\right)
\end{align*}
where we use the convention that the maximum of the empty set is $-\infty$.
With these definitions we have that
\begin{equation}\label{e:right_tail4}
\max_{v\in V_{\left\lfloor\frac{N}{KL}\right\rfloor}^{K,L,K',L',R,T,\delta}(w'_N)}\xi^{N,KL,K'L',\mathrm{fine}}_v=\max_{w''\in \Xi_{\bar N,J'}}\left(X^{N,KL,K'L'}_{w''}+Y^{N,K,L,K',L',R,T,\delta}_{w''}\right),
\end{equation}
and so it suffices to study the behaviour of the right-hand side.


Recall that $\xi^{N,KL,KL',\mathrm{mes}}_{w''}$ was defined as the value of a certain MBRW together with a small correction factor, that is
\[X^{N,KL,K'L'}_{w''}=s_{\bar N,J'}\tilde\theta^{2^{\lceil \bar n-j'\rceil}}_{w''}\]
where 
\[s_{\bar N,J'}=\frac{\log 2^{\bar n-j'}}{\log 2^{\lceil\bar n-j'\rceil}}=\frac{\bar n-j'}{\lceil\bar n-j'\rceil}.\]
For each $w''$, $X^{N,KL,K'L'}_{w''}$ is distributed as a centered Gaussian with variance $\lceil\bar n-j'\rceil\log 2$. As in the proof of Lemma \ref{l:upper_right_tail_sparse}, we can interpret $s_{\bar N,J'}X^{N,KL,K'L'}_{w''}$ as the value at time $\bar n-j'$ of a $2d$-ary branching Brownian motion that branches at times $0,s_{\bar N,J'},2s_{\bar N,J'},\ldots,\bar n-j' -s_{\bar N,J'}$. For that branching Brownian motion, we can define the barrier events

\begin{align*}
\Ev^{N,K,L,K',L',R,T,\delta}_{E,w''}(z)&=\Bigg\{X^{N,KL,K'L'}_{w''}(t)\le z+\frac{t m_{\bar N}}{\bar n}\ \forall0\le t\le \bar n-j',\\
&\qquad \qquad\qquad\qquad  X^{N,KL,K'L'}_{w''}(\bar n-j')+Y^{N,K,L,K',L',R,T,\delta}_{w''}\ge m_{\bar N}+z\Bigg\},\\
\Ev^{N,K,L,K',L',R,T,\delta}_{F,w''}(z)&=\Bigg\{X^{N,KL,K'L'}_{w''}(\bar n-j')+Y^{N,K,L,K',L',R,T,\delta}_{w''}\ge m_{\bar N}+z,\\
&\!\!\!\!\!\! \!\!\!\!\!\!\!\! \!\!X^{N,KL,K'L'}_{w''}(t)\le z+\frac{t m_{\bar N}}{\bar n}+10\log_+(t\vee(\bar n-j'-t))+z^{1/20}\ \forall0\le t\le \bar n-j'\Bigg\},\\
\Ev^{N,K,L,K',L',R,T,\delta}_G(z)&=\\
&\!\!\!\!\!\! \!\!\!\!\!\!\!\! \!\!\!\!\!\!\!\! \!\!
 \bigcup_{w''\in\Xi_{\bar N,J'}}\bigcup_{0\le t\le \bar n-j'}\left\{X^{N,KL,K'L'}_{w''}(t)\ge z+\frac{t m_{\bar N}}{\bar n}+10\log_+(t\vee(\bar n-j'-t))+z^{1/20}\right\},
\end{align*}
and the random variables
\begin{align*}
\Lambda^{N,K,L,K',L',R,T,\delta}_{E}(z)&=\sum_{w''\in\Xi_{\bar N,J'}}\I_{\Ev^{N,K,L,K',L',R,T,\delta}_{E,w''}(z)},\\
\Lambda^{N,K,L,K',L',R,T,\delta}_{F}(z)&=\sum_{w''\in\Xi_{\bar N,J'}}\I_{\Ev^{N,K,L,K',L',R,T,\delta}_{F,w''}(z)}.
\end{align*}

\emph{Step 3: First and second moment argument}\\
Our goal in this step is to show that the probability that the maximum of $\xi^{N,KL,K'L',R,T,\delta,w_N,\mathrm{fine}}_v$ is at least $m_{\bar N}+z$ is asymptotically the same as the expectation of $\Lambda^{N,K,L,K',L',R,T,\delta}_E(z)$. To do so, we need to compare the various barrier events, and show that $\Lambda^{N,K,L,K',L',R,T,\delta}_E(z)$ is concentrated around its expectation. For the latter we will use a second moment argument.

We begin by observing that by the same argument as in \cite{BDZ16} and \cite{DRZ17} the event $\Ev^{N,K,L,K',L',R,T,\delta}_G(z)$ is negligible in the sense that
\begin{equation}\label{e:right_tail6}
\pp^{N}\left(\Ev^{N,K,L,K',L',R,T,\delta}_G(z)\right)\le C\e^{-\sqrt{2d}z}
\end{equation}
for some absolute constant $z$. 

Next we study $\Lambda^{N,K,L,K',L',R,T,\delta}_E(z)$ and $\Lambda^{N,K,L,K',L',R,T,\delta}_F(z)$. We have the trivial estimate $\Lambda^{N,K,L,K',L',R,T,\delta}_E(z)\le\Lambda^{N,K,L,K',L',R,T,\delta}_F(z)$, and we claim that this is asymptotically sharp in the sense that\footnote{Note that the corresponding result in \cite{DRZ17}, Equation (59), contains several typos: The $\limsup$s there should be $\liminf$s, and the expectation in the numerator and denominator is missing.}
\begin{equation}\label{e:right_tail7}
\lim_{z\to\infty}\liminf_{J'\to\infty}\liminf_{\bar N\to\infty}\frac{\E\Lambda^{N,K,L,K',L',R,T,\delta}_E(z)}{\E\Lambda^{N,K,L,K',L',R,T,\delta}_F(z)}=1.
\end{equation}
For this, the proof from \cite{DRZ17} does not carry over directly, as the $\Ev^{N,K,L,K',L',R,T,\delta}_{E,w''}(z)$ for different $w''$ do not have the same probability (and the same holds for the $\Ev^{N,K,L,K',L',R,T,\delta}_{F,w''}(z)$). But this is just a minor problem: For each fixed $w''$ it could be that $V_{J'}(w'')\cap V^{K,L,K',L',R,T,\delta}=\varnothing$, and in that case $Y^{N,K,L,K',L',R,T,\delta}_{w''}=-\infty$, and so both $\Ev^{N,K,L,K',L',R,T,\delta}_{E,w''}(z)$ and $\Ev^{N,K,L,K',L',R,T,\delta}_{F,w''}(z)$ have probability 0.
In the generic case that $V_{J'}(w'')\cap V^{K,L,K',L',R,T,\delta}_N(w_N)\neq\varnothing$, we have that both events $\Ev^{N,K,L,K',L',R,T,\delta}_{E,w''}(z)$ and $\Ev^{N,K,L,K',L',R,T,\delta}_{F,w''}(z)$ have positive probability. We can compare them using the same argument as in \cite[Lemma 4.10]{BDZ16}, which uses as its only information on $Y^{N,K,L,K',L',R,T,\delta}_{w''}$ the Gaussian bound on the right tail that is implied by Lemma \ref{l:upper_right_tail}. In this way we find
\begin{align*}
&\limsup_{z\to\infty}\limsup_{J'\to\infty}\limsup_{\bar N\to\infty}\max_{\substack{w''\in\Xi_{\bar N,J'}\\V_{J'}(w'')\cap V^{K,L,K',L',R,T,\delta}_N\neq\varnothing}}\\
&\qquad\qquad\qquad\frac{\pp^{N}(\Ev^{N,K,L,K',L',R,T,\delta}_{F,w''}(z))-\pp^{N}(\Ev^{N,K,L,K',L',R,T,\delta}_{E,w''}(z))}{\pp^{N}(\Ev^{N,K,L,K',L',R,T,\delta}_{E,w''}(z))}=0.\end{align*}
Summing this for all $w''$, we indeed obtain \eqref{e:right_tail7}.

Next, we need an exponential lower bound on $\E\Lambda^{N,K,L,K',L',R,T,\delta}_E(z)$. Such a lower bound follows from \eqref{e:right_tail3} together with \eqref{e:right_tail6} and \eqref{e:right_tail7}, and we find that for $R,T$ sufficiently large and $\delta$ sufficiently small
\begin{equation}\label{e:right_tail9}
\E\Lambda^{N,K,L,K',L',R,T,\delta}_E(z)\ge c'_{\alpha_\delta(R)}z\e^{-\sqrt{2d}z}
\end{equation}
for all $\sqrt{\log N}-C_{\alpha_\delta(R)}\ge z\ge 1$.

The bounds derived so far can now be combined to give upper bounds on the tail probability in question. Indeed from \eqref{e:right_tail6}, \eqref{e:right_tail7}, \eqref{e:right_tail9} and Markov's inequality we obtain for $R,T$ sufficiently large and $\delta$ sufficiently small
\begin{equation}\label{e:right_tail10}
\limsup_{z\to\infty}\limsup_{J'\to\infty}\limsup_{\bar N\to\infty}\frac{\pp^{N}\left(\max_{w''\in \Xi_{\bar N,J'}}\left(X^{N,KL,K'L'}_{w''}+Y^{N,K,L,K',L',R,T,\delta}_{w''}\right)\ge m_{\bar N}+z\right)}{\E\Lambda^{N,K,L,K',L',R,T,\delta}_E(z)}\le1.
\end{equation}

For the corresponding lower bound, we need to bound the second moment of $\Lambda^{N,K,L,K',L',R,T,\delta}_E(z)$. This is a very lengthy calculation, but fortunately this calculation also uses as its only information on $Y^{N,K,L,K',L',R,T,\delta}_{w''}$ the Gaussian bound on the right tail. This means that the calculation in \cite[Lemma 4.11]{BDZ16} carries over directly to our setting, and we find
\begin{equation}\label{e:right_tail8}
\lim_{z\to\infty}\limsup_{J'\to\infty}\limsup_{\bar N\to\infty}\frac{\E\Lambda^{N,K,L,K',L',R,T,\delta}_E(z)^2}{\E\Lambda^{N,K,L,K',L',R,T,\delta}_E(z)}=1
\end{equation}
where we note that the right-hand side is trivially bounded below by 1.

Now \eqref{e:right_tail8} and the Paley-Zygmund inequality imply that
\begin{equation}\label{e:right_tail11}
\liminf_{z\to\infty}\liminf_{J'\to\infty}\liminf_{\bar N\to\infty}\frac{\pp^{N,w_N}\left(\max_{w''\in \Xi_{\bar N,J'}}\left(X^{N,KL,K'L'}_{w''}+Y^{N,K,L,K',L',R,T,\delta}_{w''}\right)\ge m_{\bar N}+z\right)}{\E\Lambda^{N,K,L,K',L',R,T,\delta}_E(z)}\ge1.
\end{equation}

Now \eqref{e:right_tail10} and \eqref{e:right_tail11} are matching upper and lower bounds. Taking them together and recalling \eqref{e:right_tail4}, we obtain the desired asymptotics on the right tail, namely
\begin{align}\label{e:right_tail12}
&\limsup_{z\to\infty}\limsup_{J'\to\infty}\limsup_{\bar N\to\infty}\Bigg|\frac{\pp^{N,w_N}\bigg(\max_{v\in V_{\left\lfloor\frac{N}{KL}\right\rfloor}^{K,L,K',L',R,T,\delta}(w'_N)}\xi^{N,KL,K'L',\delta,w_N,\mathrm{fine}}_v\ge m_{\left\lfloor\frac{N}{KL}\right\rfloor}+z\bigg)}{\E\Lambda^{N,K,L,K',L',R,T,\delta}_E(z)}-1\Bigg|\nonumber\\
&\qquad\qquad\qquad\qquad\qquad\qquad\qquad\qquad=0
\end{align}
for any $R,T$ large and $\delta$ small.

\emph{Step 4: Asymptotics for the first moment}\\
After the hard work of the previous step, it remains to derive the asymptotics, as $z\to\infty$,  of $\E\Lambda^{N,K,L,K',L',R,T,\delta}_E(z)$.  This step is somewhat different than the corresponding argument in \cite{DRZ17}, as the 
$Y^{N,K,L,K',L',R,T,\delta}_{w''}$ for different $w''$ are not  identically distributed. Instead we will employ Assumption \ref{a:lln} to control the average. In order to so, we will first need to get rid of the restriction to points in $V^{K,L,K',L',R,T,\delta}_N$ in the definition of $Y^{N,K,L,K',L',R,T,\delta}_{w''}$.

Denote by $\chi_{N,KL,K'L',w''}$ the density of 
\[\pp^{N}\left(X^{N,KL,K'L'}_{w''}(t)\le z+\frac{t m_{\bar N}}{\bar n}\ \forall0\le t\le \bar n-j',X^{N,KL,K'L'}_{w''}(\bar n-j')-\frac{(\bar n-j') m_{\bar N}}{\bar n}\in\cdot\right)\]
with respect to one-dimensional Lebesgue measure. Then we have
\begin{equation}\label{e:right_tail13}
\begin{split}
&\E\Lambda^{N,K,L,K',L',R,T,\delta}_E(z)=\sum_{w''\in\Xi_{\bar N,J'}}\pp^{N}\left(E^{N,K,L,K',L',R,T,\delta}_{w''}(z)\right)\\
&=\sum_{w''\in\Xi_{\bar N,J'}}\int_{-\infty}^z\chi_{N,KL,K'L',w''}(y)\pp^{N}\left( Y^{N,K,L,K',L',R,T,\delta}_{w''}\ge \frac{j' m_{\bar N}}{\bar n}+z-y\right)\ud y.
\end{split}
\end{equation}
The main contribution to the integrals on the right-hand side should come from $y$ which are of the order of $j'^{1/2}$. Indeed, for an interval $I\subset\R$ we define
\[\lambda^{N,K,L,K',L',R,T,\delta}_{I,w''}(z):=
\int_{I}\chi_{N,KL,K'L',w''}(y)\pp^{N}\left( Y^{N,K,L,K',L',R,T,\delta}_{w''}\ge \frac{j' m_{\bar N}}{\bar n}+z-y\right)\ud y.\]
Now consider any sequence $(x_{w'',\bar N})_{\bar N\in\N,w''\in \Xi_{\bar N,J'}}$ with $|x_{w'',\bar N}|\le j'^{1/5}$, and let $I_{j'}=[-j',-j'^{2/5}]$. Then we claim as in \cite{DRZ17} that
\begin{equation}\label{e:right_tail14}
\liminf_{z\to\infty}\liminf_{J'\to\infty}\liminf_{\bar N\to\infty}\frac{\sum_{w''\in\Xi_{\bar N,J'}}\lambda^{N,K,L,K',L',R,T,\delta}_{x_{w'',\bar N}+I_{j'},w''}(z)}{\E\Lambda^{N,K,L,K',L',R,T,\delta}_E(z)}=1.
\end{equation}
Note that the right-hand side here is trivially bounded above by 1. The actual estimate \eqref{e:right_tail14} follows just as in \cite{DRZ17}. Namely using the Gaussian upper bound on the right tail of the random variables $Y^{N,K,L,K',L',R,T,\delta}_{w''}$ one bounds the integrand on $(-\infty,0]\setminus x_{w'',\bar N}+I_{j'}$ by terms that are negligigle in comparison to the lower bound in \eqref{e:right_tail9}.

Next, we want to argue that the numerator on the left-hand side of \eqref{e:right_tail14} depends only weakly on $R,T,\delta$. To that end, define
\[\bar Y^{N,KL,K'L'}_{w''}:=\max_{v\in V_{J'}(w'')}\left(\xi^{N,KL,K'L',w_N,\mathrm{mic}}_{v}+b^{K'L',\mathrm{mic}}_vX_{w''}\right)\]
and, for an interval $I\subset\R$,
\[\bar\lambda^{N,KL,K'L'}_{I,w''}(z):=
\int_{I}\chi_{N,KL,K'L',w''}(y)\pp^{N}\left( \bar Y^{N,KL,K'L'}_{w''}\ge \frac{j' m_{\bar N}}{\bar n}+z-y\right)\ud y.\]
Note that $\bar\lambda^{N,KL,K'L'}_{I,w''}(z)$ is independent of $R,T,\delta$.

Our claim then is that for $(x_{w'',\bar N})$ as in \eqref{e:right_tail14} we have
\begin{equation}\label{e:right_tail15}
\liminf_{z\to\infty}\liminf_{L'\to\infty}\liminf_{K'\to\infty}\liminf_{\bar N\to\infty}\frac{\sum_{w''\in\Xi_{\bar N,J'}}\lambda^{N,K,L,K',L',R,T,\delta}_{I_{j'},w''}(z)}{\sum_{w''\in\Xi_{\bar N,J'}}\bar\lambda^{N,KL,K'L'}_{I_{j'},w''}(z)}\ge 1-\ep''_{R,T,\delta}
\end{equation}
where $\ep''_{R,T,\delta}$ is a parameter that depends on $R,T,\delta$ only and satisfies
\[\limsup_{T\to\infty}\limsup_{\substack{\delta\to0\\R\to\infty}}\ep''_{R,T,\delta}=0.\]
Again we note that the fraction in \eqref{e:right_tail15} is trivially bounded above by 1. To see \eqref{e:right_tail15} itself, it suffices to show that
\begin{equation}\label{e:right_tail16}\liminf_{z\to\infty}\liminf_{J'\to\infty}\liminf_{\bar N\to\infty}\inf_{y\in I_{\ell}}\frac{\sum_{w''\in\Xi_{\bar N,J'}}\pp^{N}\Big( \bar Y^{N,KL,K'L'}_{w''}\ge \frac{j' m_{\bar N}}{\bar n}+z-y\Big)}{\sum_{w''\in\Xi_{\bar N,J'}}\pp^{N}\Big( Y^{N,K,L,K',L',R,T,\delta}_{w''}\ge \frac{j' m_{\bar N}}{\bar n}+z-y\Big)}\ge 1-\ep''_{R,T,\delta}
\end{equation}
as then \eqref{e:right_tail15} follows by integrating over $y$.
To that end, let
\[z':=\frac{j' m_{\bar N}}{\bar n}+z-y-m_{J'}\]
and note that by Lemma \ref{l:upper_right_tail_badset_micro},
\begin{equation}\label{e:right_tail17}
\begin{split}
&\sum_{w''\in\Xi_{\bar N,J'}}\bigg(\pp^{N}\Big( \bar Y^{N,KL,K'L'}_{w''}\ge \frac{j' m_{\bar N}}{\bar n}+z-y\Big)-
\pp^{N}\Big( Y^{N,K,L,K',L',R,T,\delta}_{w''}\ge \frac{j' m_{\bar N}}{\bar n}+z-y\Big)\bigg)\\
&\le\sum_{w''\in\Xi_{\bar N,J'}}\quad\pp^{N}\Big(\max_{v\in V_{J'}(w'')\setminus V^{K,L,K',L',R,T,\delta}_N}\big(\xi^{N,KL,K'L',\mathrm{mic}}_{v}+b^{K'L',\mathrm{mic}}_vX_{w''}\big)\ge m_{J'}+z'\Big)\\
&\le C\gamma^{K,L,K',L',R,\delta}_{w'_{a,KL,N}}\left(\frac{N}{JJ'}\right)^dz'\e^{-\sqrt{2d}z'}
\end{split}
\end{equation}
where 
\[\begin{split}
&\gamma^{K,L,K',L',R,\delta}_{w'_{a,KL,N}}:=\\
&\left(\e^{d(T+\Gamma)}\frac{\Big|V_{\left\lfloor\frac{N}{KL}\right\rfloor}^{K,L,K',L',R,\delta}(w'_{a,KL,N})\Big|}{N^d}\Bigg(1+T+\log\bigg(\frac{N^d}{\Big|V_{\left\lfloor\frac{N}{KL}\right\rfloor}^{K,L,K',L',R,\delta}(w'_N)\Big|}\bigg)\bigg)^{19/8}+\e^{-c(T-\Gamma)}\right).
\end{split}\]
By \eqref{e:few_bad_pointsR}, we have
\[\limsup_{T\to\infty}\limsup_{\substack{\delta\to0\\R\to\infty}}\sup_{K,L\in\N}\limsup_{L'\to\infty}\limsup_{K'\to\infty}\limsup_{N\to\infty}\max_{a\in V_{KL}(0)}\gamma^{K,L,K',L',R,\delta}_{w'_{a,KL,N}}=0.\]
These estimates show that the difference between denominator and numerator in \eqref{e:right_tail15} is small. On the other hand we have a lower bound for the numerator. Indeed we can pick some  $R_0$ large such that at least $\frac34$ of the points in $\bigcup_{w''\in\Xi_{\bar N,J'}}V_{J'}(w'')$ satisfy $\Rone_v\ge R_0$. Then by the pigeonhole principle we can apply the lower bound on the right tail, Lemma \ref{l:lower_right_tail}, with $R=R_0+\Gamma$ on at least half of the cubes $V_{J'}(w'')$, and summing over these we find
\[\sum_{w''\in\Xi_{\bar N,J'}}\pp^{N}\left( \bar Y^{N,KL,K'L'}_{w''}\ge \frac{j' m_{\bar N}}{\bar n}+z-y\right)\ge cz'\left(\frac{N}{JJ'}\right)^dz'\e^{-\sqrt{2d}z'}.\]
Together with \eqref{e:right_tail17}, we find
\[\frac{\sum_{w''\in\Xi_{\bar N,J'}}\lambda^{N,K,L,K',L',R,T,\delta}_{I_{j'},w''}(z)}{\sum_{w''\in\Xi_{\bar N,J'}}\bar\lambda^{N,KL,K'L'}_{I_{j'},w''}(z)}\ge 1-C\gamma^{K,L,K',L',R,\delta}_{w'_{a,KL,N}}\]
and from this estimate we immediately obtain \eqref{e:right_tail16} and then also \eqref{e:right_tail15}.

Combining now \eqref{e:right_tail15} with \eqref{e:right_tail14}, we have shown that
\begin{equation}\label{e:right_tail19}
\limsup_{z\to\infty}\limsup_{L'\to\infty}\limsup_{K'\to\infty}\limsup_{\bar N\to\infty}\left|\frac{\E\Lambda^{N,K,L,K',L',R,T,\delta}_E(z)}{\sum_{w''\in\Xi_{\bar N,J'}}\bar\lambda^{N,KL,K'L'}_{I_{j'},w''}(z)}-1\right|\le\ep''_{R,T,\delta}.
\end{equation}
So to show the existence of $\beta_{K'L'}$, it suffices to show that $\sum_{w''\in\Xi_{\bar N,J'}}\bar\lambda^{N,KL,K'L'}_{I_{j'},w''}(z)$ has a limit that only depends on $K'L'$.
Recall that
\[\sum_{w''\in\Xi_{\bar N,J'}}\bar\lambda^{N,KL,K'L'}_{I_{j'},w''}(z)=
\sum_{w''\in\Xi_{\bar N,J'}}\int_{I_{j'}}\chi_{N,KL,K'L',w''}(y)\pp^{N,w_N}\left( \bar Y^{N,KL,K'L'}_{w''}\ge \frac{j' m_{\bar N}}{\bar n}+z-y\right)\ud y.\]
Now we have that
\[\frac{j' m_{\bar N}}{\bar n}=\sqrt{2d}\log 2j'+O\left(j'\frac{\log\bar n}{\bar n}\right)\]
and therefore, by shifting the domain of integration by $O\left(j'\frac{\log\bar n}{\bar n}\right)$ and using the asymptotics for $\chi_{N,KL,K'L',w''}(y)$ from \cite[Equation (67)]{DRZ17} to estimate the change in that term, we see that
\begin{align*}
&\sum_{w''\in\Xi_{\bar N,J'}}\bar\lambda^{N,KL,K'L'}_{I_{j'},w''}(z)\\
&=\left(1+O\left(j'^3\frac{\log\bar n}{\bar n}\right)\right)\int_{I_{j'}}\chi_{N,KL,K'L',w''}(y)\sum_{w''\in\Xi_{\bar N,J'}}\pp^{N}\left( \bar Y^{N,KL,K'L'}_{w''}\ge \sqrt{2d}\log 2l+z-y\right)\ud y\\
&=\left(1+O\left(j'^3\frac{\log\bar n}{\bar n}\right)\right)\int_{I_{j'}}\chi_{N,KL,K'L',w''}(y)\\
& \qquad\qquad \qquad\qquad \qquad \sum_{w''\in\Xi_{\bar N,J'}}\pp^{J',w''}\left(\max_{v\in V_{J'}(w'')}\varphi^{J',w''}+b_{J',v}X_{w''}\ge \sqrt{2d}\log 2l+z-y\right)\ud y.
\end{align*}
The right-hand side is now finally in a form to which we can apply our ergodicity Assumption \ref{a:lln}. By that assumption and dominated convergence we know for every fixed $y\le0$ that
\begin{align*}
&\lim_{N\to\infty}\left(\frac{KLK'L'}{N}\right)^d\sum_{w''\in\Xi_{\bar N,J'}}\pp^{N}\left(\max_{v\in V_{J'}(w'')}\varphi+b^{K'L',\mathrm{mic}}_v x\ge \sqrt{2d}\log 2l+z-y\right)\\
&\quad=\lim_{N\to\infty}\int_{\R}\frac{\e^{-x^2/2}}{\sqrt{2\pi}}\left(\frac{KK'LL''}{N}\right)^d\sum_{w''\in\Xi_{\bar N,J'}}\pp^{N}\left(\max_{v\in V_{J'}(w'')}\varphi+b^{K'L',\mathrm{mic}}_v x \ge \sqrt{2d}\log 2l+z-y\right)\ud x\\
&\quad=\int_{\R}\int\frac{\e^{-x^2/2}}{\sqrt{2\pi}}\pp^{N}\left(\max_{v\in V_{J'}(w'')}\varphi+b^{K'L',\mathrm{mic}}_v x\ge \sqrt{2d}\log 2l+z-y\right)\,\nu_{J'}(\mathrm{d}\pp)\ud x.
\end{align*}
Using dominated convergence a second time and also the asymptotics from \cite[Equation (67)]{DRZ17}, we obtain
\begin{equation}\label{e:right_tail18}
\begin{split}
&\lim_{N\to\infty}\sum_{w''\in\Xi_{\bar N,J'}}\bar\lambda^{N,KL,K'L'}_{I_{j'},w''}(z)\\
&\quad=\int_{I_{j'}}\frac{z(z-y)}{\sqrt{2\pi\log2}\e^{\sqrt{2d}y}}\int_{\R}\int\frac{\e^{-x^2/2}}{\sqrt{2\pi}}\\
&\qquad\qquad\qquad\qquad\qquad\qquad \qquad \pp^{N}\left(\max_{v\in V_{J'}(w'')}\varphi+b^{K'L',\mathrm{mic}}_v x\ge \sqrt{2d}\log 2l+z-y\right)\,\nu_{J'}(\mathrm{d}\pp)\ud x\ud y\\
&\quad=\int_{I_{j'}}\frac{z(z-y)}{\sqrt{2\pi\log2}\e^{\sqrt{2d}y}}\int\pp\left(\max_{v\in V_{J'}(w'')}\varphi+b_{J',v}X\ge \sqrt{2d}\log 2l+z-y\right)\nu_{J'}(\mathrm{d}\pp)\ud y.
\end{split}
\end{equation}
Denoting the right-hand side of \eqref{e:right_tail18} by $\Lambda^{*,J'}(z)$ and recalling \eqref{e:right_tail19} and \eqref{e:right_tail12}, we have thus shown that there is $\Lambda^{*,K'L'}(z)$ depending on $K'L'$ and $z$ only, such that
\begin{equation}\label{e:right_tail20}
\begin{split}
&\limsup_{z\to\infty}\limsup_{L'\to\infty}\limsup_{K'\to\infty}\limsup_{\bar N\to\infty}\\
&\qquad\qquad \Bigg|\frac{\pp^{N}\bigg(\max_{v\in V_{\left\lfloor\frac{N}{KL}\right\rfloor}^{K,L,K',L',R,T,\delta}(w'_{a,KL,N})}\xi^{N,KL,K'L',\mathrm{fine}}_v\ge m_{\left\lfloor\frac{N}{KL}\right\rfloor}+z\bigg)}{\Lambda^{*,K'L'}(z)}-1\Bigg|\le\ep''_{R,T,\delta}.
\end{split}
\end{equation}
Now the only remaining task is to asymptotics of $\Lambda^{*,K'L'}(z)$ for large $z$. This can be done just as in \cite{DRZ17}, using \eqref{e:right_tail14} as well as the asymptotics from \cite[Equation (67)]{DRZ17}. We omit the details.
\end{proof}

Using Lemma \ref{l:right_tail}, we can complete the proof of Theorem \ref{t:mainthm} just as in \cite{DRZ17}. Let us mention the most important steps.

As a first step, we note that with little additional effort we can deduce from \ref{l:right_tail} a seemingly stronger version.
\begin{lemma}\label{l:right_tail_unif}
In the setting of Lemma \ref{l:right_tail} there is a function $\gamma\colon\N^3\times(0,1)\to\R$ such that for all $R,T,\delta$ we have 
\[\lim_{J'\to\infty}\gamma(J',R,T,\delta)=0\] and such that
\begin{equation}\label{e:right_tail_unif}
\begin{split}
&\limsup_{z'\to\infty}\limsup_{K'L'\to\infty}\limsup_{N\to\infty}\sup_{z'\le z\le\gamma(K'L',R,T,\delta)}\\
&\quad\left|\frac{\e^{\sqrt{2d}z}}{z}\pp^{N}\bigg(\max_{v\in V_{\left\lfloor\frac{N}{KL}\right\rfloor}^{K,L,K',L',R,T,\delta}(w'_{a,KL,N})}\xi^{N,KL,K'L',\mathrm{fine}}_v\ge m_{\left\lfloor\frac{N}{KL}\right\rfloor}+z\bigg)-\beta_{K'L'}\right|\le2\ep
''_{R,T,\delta}.
\end{split}
\end{equation}
\end{lemma}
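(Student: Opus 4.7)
The plan is to combine the pointwise statement of Lemma \ref{l:right_tail} with the monotonicity of the tail probability via a simple interpolation argument. Writing $p_N(z)$ for the probability appearing in \eqref{e:right_tail} and $F_N(z)=(e^{\sqrt{2d}z}/z)p_N(z)$, the key observation is that $z\mapsto p_N(z)$ is non-increasing (as it is the tail of a random variable). This yields, for any $z_1\le z\le z_2$ with $z_2-z_1\le\Delta$,
\[
e^{-\sqrt{2d}\Delta}F_N(z_2)\le F_N(z)\le e^{\sqrt{2d}\Delta}F_N(z_1),
\]
so that if $|F_N(z_i)-\beta_{K'L'}|\le\epsilon$ for $i\in\{1,2\}$, then, since $\beta_{K'L'}\le\Gamma^+$ by Lemma \ref{l:right_tail}, we have $|F_N(z)-\beta_{K'L'}|\le\epsilon+C\Delta\Gamma^+$ for every $z\in[z_1,z_2]$.

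First I would fix $\Delta=\Delta(R,T,\delta)>0$ small enough that $C\Delta\Gamma^+\le\ep''_{R,T,\delta}/2$ and consider the deterministic grid $z_i=z'+i\Delta$, $i=0,1,2,\ldots$. By Lemma \ref{l:right_tail} applied with error parameter $\eta=\ep''_{R,T,\delta}/2$, there exists $z_0=z_0(R,T,\delta)$ such that $\limsup_{K'L'\to\infty}\limsup_{N\to\infty}|F_N(z)-\beta_{K'L'}|\le\frac{3}{2}\ep''_{R,T,\delta}$ for every $z\ge z_0$. For any fixed $M$, the grid $\{z_i\}$ in $[z_0,M]$ is finite, so $\limsup_{K'L'}\limsup_N$ commutes with the maximum over these grid points. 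Combined with the interpolation bound above, this yields, for every $z'\ge z_0$ and every fixed $M$,
\[
\limsup_{K'L'\to\infty}\limsup_{N\to\infty}\sup_{z'\le z\le M}|F_N(z)-\beta_{K'L'}|\le 2\ep''_{R,T,\delta}.
\]

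Finally, I would construct $\gamma$ by a standard diagonal argument. For each $m\in\N$, the previous step produces a threshold $K_m=K_m(R,T,\delta)$ such that $\limsup_N\sup_{z'\le z\le m}|F_N(z)-\beta_{K'L'}|\le 2\ep''_{R,T,\delta}+1/m$ for all $K'L'\ge K_m$. Setting $\gamma(K'L',R,T,\delta)=m$ whenever $K_m\le K'L'<K_{m+1}$ produces a function that tends to infinity as $J'=K'L'\to\infty$, and \eqref{e:right_tail_unif} follows upon taking the triple limsup in the prescribed order. I do not anticipate any serious obstacle: the substantive asymptotic work has already been carried out in Lemma \ref{l:right_tail}, and what remains is only the routine interpolation/diagonalization that upgrades pointwise to locally uniform convergence via monotonicity; the only thing to verify is that the grid spacing $\Delta$ and the cutoff $z_0$ can be chosen depending on $R,T,\delta$ (but not on $K'L'$ or $N$), which is what the argument above provides.
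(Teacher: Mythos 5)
Your argument is correct and follows essentially the same route as the paper's proof: monotonicity of the tail probability together with the merely exponential growth of the prefactor reduces the uniform statement to a grid of spacing depending only on $\ep''_{R,T,\delta}$ and $\Gamma^+$, the pointwise Lemma \ref{l:right_tail} controls the grid points, and a diagonal choice of thresholds in $K'L'$ yields the function $\gamma$ (which, as in the paper, tends to infinity — the ``$\to 0$'' in the lemma statement is a typo). The only cosmetic difference is your explicit interpolation constant and the split $\tfrac32\ep''+\tfrac12\ep''$ versus the paper's $\tfrac54\ep''$ budget, which is immaterial.
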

The corresponding result in \cite{DRZ17} is stated there without proof, so let us quickly give an outline of the proof here.
\begin{proof}
Abbreviate the term in absolute values in \eqref{e:right_tail_unif} by $F_{N,KL,K'L',R,T,\delta}(z)$.
The probability in \eqref{e:right_tail_unif} is a non-increasing function of $z$, while the prefactor ${\e^{\sqrt{2d}z}}/{z}$ grows only exponentially fast. These facts combined imply that we can control $F$ on short intervals by its values at the endpoints of that interval. This means we can discretize the problem in $z$. More precisely, there is $\eta>0$ depending only on $\ep
''_{R,T,\delta}$ and $\Gamma^\pm$ such that \eqref{e:right_tail_unif} follows from
\begin{equation}\label{e:right_tail_unif1}
\limsup_{z'\to\infty}\limsup_{K'L'\to\infty}\limsup_{N\to\infty}\sup_{\substack{z'\le z-\eta\le\gamma(K'L',R,T,\delta)+\eta\\z\in\eta\Z}}\left|F_{N,KL,K'L',R,T,\delta}(z)\right|\le\frac32\ep
''_{R,T,\delta}.
\end{equation}
To see \eqref{e:right_tail_unif1}, note that from Lemma \ref{l:right_tail} we know that for each fixed sufficiently large $z$ we have
\[
\limsup_{K'L'\to\infty}\limsup_{N\to\infty}\quad\left|F_{N,KL,K'L',R,T,\delta}(z)\right|\le\frac54\ep
''_{R,T,\delta}.
\]
So for each sufficiently large $z$ there is $J'_z<\infty$ such that for any $K'L'\ge J'_z$ we have
\[\limsup_{N\to\infty}\quad\left|F_{N,KL,K'L',R,T,\delta}(z)\right|\le\frac32\ep
''_{R,T,\delta}.\]
Now we can define
\[\gamma(K'L',R,T,\delta)=\sup\left\{z\in\eta\Z\colon K'L'\ge J'_z\right\}.\]
With this choice of $\gamma$ we trivially have 
\eqref{e:right_tail_unif1}, and moreover it is clear that $\lim_{J'\to\infty}\gamma(J',R,T,\delta)=\infty$.
\end{proof}

We construct a field that is independent of $N$ as follows: Our starting point is Lemma \ref{l:right_tail_unif}, and we use the objects from that Lemma (and Lemma \ref{l:right_tail}).

Let $\hat\Xi_{KL}=\frac{1}{KL}\Z^d\cap[0,1]^d$. Let $\tilde\gamma(J):=\log\log\log(J)$.\footnote{In \cite{DRZ17} the choice $\tilde\gamma=\gamma$ was made. However the functions do play different roles, and so we prefer to keep them separate here.} For each $x\in\hat\Xi_{KL}$, consider a Bernoulli random variable $\rho_{KL,K'L',R,T,\delta,x}$ with
\[\pp(\rho_{KL,K'L',R,T,\delta,x}=1)=\beta_{K'L'}\tilde\gamma(KL)\e^{-\sqrt{2d}\tilde\gamma(KL)}\]
and a random variable $Y_{KL,K'L',R,T,\delta,x}$ such that
\[\pp(Y_{KL,K'L',R,T,\delta,x}\ge z)=\frac{\tilde\gamma(KL)}{\tilde\gamma(KL)+z}\e^{-\sqrt{2d}z}.\]
The random field $Z_{KL,K'L',R,T,\delta}$ will be defined as a downscaled version of $\xi^{N,J,J',R,\delta,\mathrm{coarse}}$ (which by the downscaling becomes independent of $N$). To be precise, consider $\varphi^{KL}$ distributed according to $\pp^{KL}$, and define the random field $Z_{KL,K'L',R,T,\delta}$ by 
\[Z_{KL,K'L',R,T,\delta,x}=\varphi^{KL}_{KLx}+\hat b^{KL,R,\delta,\mathrm{mac}}_{KLx}X^{\mathrm{mac}}_{KLx}\]
where $\hat b^{KL,R,\delta,\mathrm{mac}}$ is as in \eqref{e:def_hatb} and $X^{\mathrm{mac}}$ is a collection of independent standard Gaussians.

We can assume that $(\rho_{KL,K'L',R,T,\delta,x})_{x\in\Xi_{KL}}$, $(Y_{KL,K'L',R,T,\delta,x})_{x\in\Xi_{KL}}$ and $Z_{KL,K'L',R,T,\delta}$ are all independent. This allows us to consider the random field $\hat\xi^{KL,K'L',R,T,\delta}$ on $\Xi_{KL}$, where
\[\hat\xi^{K,L,K',L',R,T,\delta}_x=Z_{KL,K'L',R,T,\delta,x}-\sqrt{2d}\log(KL)+\rho_{KL,K'L',R,T,\delta,x}\left(Y_{KL,K'L',R,T,\delta,x}+\tilde\gamma(KL)\right).\]

\begin{lemma}\label{l:coupling_max}
Under Assumptions \ref{a:logupp}, \ref{a:logbd}, \ref{a:micro}, \ref{a:macro}, \ref{a:sparseT}, \ref{a:sparseR} and \ref{a:lln} we have that
\begin{equation}\label{e:coupling_max}
\begin{split}
&\limsup_{T\to\infty}\limsup_{\substack{\delta\to0\\R\to\infty}}\limsup_{(L,K,L',K')\Rightarrow\infty}\limsup_{N\to\infty}\\
&\quad\dist\Big(\max_{v\in V_N^{K,L,K',L',R,T,\delta}}\xi^{N,KL,K'L',R,\delta}_v,\max_{x\in \hat\Xi_{KL}}\hat\xi^{K,L,K',L',R,T,\delta}_x\Big)=0
\end{split}
\end{equation}
\end{lemma}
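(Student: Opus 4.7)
The plan is to adapt the approach of \cite[Lemma 4.4]{DRZ17}: exploit the hierarchical structure of $\xi^{N,KL,K'L',R,\delta}$ to write its maximum over the good set as a maximum, across macroscopic boxes, of a coarse value plus an independent fine-field maximum; identify the coarse vector with $(Z_x)_x$; and couple the independent collection of fine-field maxima with the prescribed i.i.d.\ perturbations $(\rho_x(Y_x+\tilde\gamma(KL)))_x$.

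First I would exploit the decomposition $\xi^{N,KL,K'L',R,\delta}=\xi^{\mathrm{coarse}}+\xi^{\mathrm{fine}}$, where $\xi^{\mathrm{coarse}}$ is constant on each macroscopic cube $V_{\left\lfloor\frac{N}{KL}\right\rfloor}(w')$ ($w'\in\W_{N,\left\lfloor\frac{N}{KL}\right\rfloor}$), while $\xi^{\mathrm{fine}}$---built from mesoscopic and microscopic pieces all indexed by independent copies across the macroscopic cubes---is mutually independent across these cubes and independent of $\xi^{\mathrm{coarse}}$. Consequently
\[\max_{v\in V_N^{K,L,K',L',R,T,\delta}}\xi^{N,KL,K'L',R,\delta}_v=\max_{w'\in\W_{N,\left\lfloor\frac{N}{KL}\right\rfloor}}\big(\xi^{\mathrm{coarse}}_{w'}+M^{\mathrm{fine}}_{w'}\big),\]
where $M^{\mathrm{fine}}_{w'}$ denotes the maximum of $\xi^{\mathrm{fine}}$ over $V_{\left\lfloor\frac{N}{KL}\right\rfloor}(w')\cap V_N^{K,L,K',L',R,T,\delta}$ (with $\max\emptyset=-\infty$; such empty intersections are asymptotically negligible by \eqref{e:few_bad_pointsT}). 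Under the bijection $w'\leftrightarrow x_{w'}:=w'^*/(KL)$, where $w'^*\in V_{KL}$ is the preimage of $w'$ under the upscaling map, the vectors $(\xi^{\mathrm{coarse}}_{w'})_{w'}$ and $(Z_{x_{w'}})_{w'}$ have identical joint law, both being of the form $\varphi^{KL}_{w'^*}$ plus an independent centered Gaussian of variance $(\hat b^{KL,R,\delta,\mathrm{mac}}_{w'^*})^2$, so I may realize them as the same random vector.

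The technical core is to couple, for each $w'$, the variable $M^{\mathrm{fine}}_{w'}$ with the pair $(\rho_{x_{w'}},Y_{x_{w'}})$ so that the contributions of $M^{\mathrm{fine}}_{w'}$ and of $\rho_{x_{w'}}(Y_{x_{w'}}+\tilde\gamma(KL))-\tilde\gamma(KL)$ to the respective maxima agree. Lemma \ref{l:right_tail_unif} supplies the tail asymptotics
\[\pp\Big(M^{\mathrm{fine}}_{w'}\ge m_{\left\lfloor\frac{N}{KL}\right\rfloor}+z\Big)=\big(\beta_{K'L'}+O(\ep''_{R,T,\delta})\big)\,z\,\e^{-\sqrt{2d}z}\]
uniformly in $z\in[z',\gamma(K'L',R,T,\delta)]$ and, crucially, uniformly in the macroscopic-box position $a$. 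A quantile-coupling argument, combined with the mutual independence on both sides across $w'$, then produces a joint realization in which $M^{\mathrm{fine}}_{w'}-m_{\left\lfloor\frac{N}{KL}\right\rfloor}$ and $\rho_{x_{w'}}(Y_{x_{w'}}+\tilde\gamma(KL))-\tilde\gamma(KL)$ agree at the top of the distribution up to a single exceptional event of total probability at most $\ep''_{R,T,\delta}+o(1)$; bulk discrepancies are harmless since they do not affect the maximum.

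Assembling the coarse and fine couplings and using $m_{\left\lfloor\frac{N}{KL}\right\rfloor}+\sqrt{2d}\log(KL)=m_N+o_N(1)$ while $\tilde\gamma(KL)$ grows slowly, the two maxima in \eqref{e:coupling_max} will agree up to a deterministic shift that vanishes in the iterated limit, modulo the exceptional event above; passing successively to the limits $N\to\infty$, $(L,K,L',K')\to\infty$, $(R,\delta^{-1},T)\to\infty$ in the prescribed order will conclude the proof. The main obstacle is this coupling step: since $(M^{\mathrm{fine}}_{w'})_{w'}$ are not identically distributed in $w'$, standard i.i.d.\ quantile coupling does not directly apply, and the uniform-in-$a$ formulation of Lemma \ref{l:right_tail_unif} is precisely what makes a uniform coupling across the $\sim(KL)^d$ macroscopic boxes possible.
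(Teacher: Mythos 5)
Your proposal follows essentially the same route as the paper's proof (which itself adapts \cite[Theorem 4.5]{DRZ17}): identify $\xi^{N,KL,K'L',R,\delta,\mathrm{coarse}}$ with $Z_{KL,K'L',R,T,\delta}$ via $w'\mapsto w'^*$, and couple the independent per-box fine maxima to $(\rho_x,Y_x)$ using the uniform right-tail asymptotics of Lemma \ref{l:right_tail_unif} on a window $[\tilde\gamma(KL),KL]\subset[z',\gamma(K'L',R,T,\delta)]$ arranged by choosing the parameters in the right order. The only ingredient the paper spells out more explicitly is the tightness argument (Lemmas \ref{l:upper_right_tail} and \ref{l:upper_left_tail} together with $m_N-m_{\lfloor N/KL\rfloor}-m_{KL}-\tilde\gamma(KL)\to\infty$) guaranteeing that the maximum is attained at a point where the fine field exceeds $m_{\lfloor N/KL\rfloor}+\tilde\gamma(KL)$, which is the precise content of your remark that bulk discrepancies do not affect the maximum.
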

\begin{proof}
This is shown similar to \cite[Theorem 4.5]{DRZ17}, so we just describe the most important steps.

Let $\ep>0$ be given. Our goal is to construct a coupling between the two fields such that on an event of probability $1-\ep$ their maxima are at most $\ep$ apart. As a first step, we choose $T,R$ large and $\delta$ small and fix them for the moment.

Now we proceed just like in the proof of \cite[Theorem 4.5]{DRZ17}. It is obvious from the definition of $\hat\xi^{K,L,K',L',R,T,\delta}$ how to couple it to $\xi^{N,KL,K'L',R,\delta,\mathrm{coarse}}$. To construct the coupling for the other random variables, the main observation is that with high probability the maximum of 
$\xi^{N,KL,K'L',R,T,\delta}$ is assumed at a point where $\xi^{N,KL,K'L',\mathrm{fine}}$ is exceptionally large,
and hence in the regime where the right-tail asymptotics from Lemma \ref{l:right_tail_unif} are sharp.
To make this precise, let $v_{\mathrm{max}}=\argmax_{v\in V_N^{K,L,K',L',R,T,\delta}}\xi^{N,KL,K'L',R,T,\delta}_v$. 
Note that by Lemma \ref{l:upper_right_tail} and \ref{l:upper_left_tail} the maximum of $\xi^{N,KL,K'L',R,T,\delta}$ over $V_N^{K,L,K',L',R,T,\delta}$ is tight around $m_N$, and similarly the maximum of $\xi^{N,KL,K'L',R,\delta,\mathrm{coarse}}$ over $V_N^{K,L,K',L',R,T,\delta}$ is tight around $m_J$. We have
\[\lim_{J\to\infty}\lim_{N\to\infty}m_N-m_{\left\lfloor\frac{N}{J} \right\rfloor}-m_J-\tilde\gamma(J)=\infty,\]
and so we have
\begin{equation}\label{e:coupling_max1}
\xi^{N,KL,K'L',\mathrm{fine}}_{v_{\mathrm{max}}}\ge m_{\left\lfloor\frac{N}{KL} \right\rfloor}+\tilde\gamma(KL)
\end{equation}
on a event whose probability tends to 1 in the limit $N\to\infty$ and then $(L,K,L',K')\Rightarrow\infty$. So for our purposes we can assume that \eqref{e:coupling_max} occurs.

By similar arguments we can assume that
\begin{equation}\label{e:coupling_max2}
\max_{v\in V_N^{K,L,K',L',R,T,\delta}}\xi^{N,KL,K'L',\mathrm{fine}}_v\le m_{\left\lfloor\frac{N}{KL} \right\rfloor}+KL.
\end{equation}
For $w'\in \W_{N,\left\lfloor\frac{N}{KL}\right\rfloor}$ define
\[M^{N,K,L,K',L',R,T,\delta,\mathrm{fine}}_{w'}=\max_{v\in V_N^{K,L,K',L',R,T,\delta}(w')}\xi^{N,KL,K'L',\mathrm{fine}}_v.\]
Note that Lemma \ref{l:right_tail_unif} gives us good estimates for $\pp(M^{N,K,L,K',L',R,T,\delta,\mathrm{fine}}_{w'}\ge m_{\left\lfloor{N}/{KL}\right\rfloor}+z$ when $z\in[z',\gamma(K'L',R,T,\delta)]$. In view of \eqref{e:coupling_max1} and \eqref{e:coupling_max2} we thus want to arrange our parameters such that
\begin{equation}\label{e:coupling_max3}
[\tilde\gamma(KL),KL]\subset[z',\gamma(K'L',R,T,\delta)].
\end{equation}
But this is not a problem: We first choose $z'$ large enough that the error in \eqref{e:right_tail_unif} is small, then $KL$ large enough in terms of $z'$, and finally $K'L'$ large enough in terms of $KL$.
Now it is clear how to construct the coupling. Our goal is that $\rho_{KL,K'L',R,T,\delta}=1$ if and only if the corresponding $M^{N,K,L,K',L',R,T,\delta,\mathrm{fine}}$ exceeds 
$m_{\left\lfloor{N}/{KL} \right\rfloor}+\tilde\gamma(KL)$, and in that case $Y_{KL,K'L',R,T,\delta}$ should equal $M^{N,K,L,K',L',R,T,\delta,\mathrm{fine}}-\tilde\gamma(KL)$.
We can achieve this exactly, but \eqref{e:coupling_max2}, \eqref{e:coupling_max3} and Lemma \ref{l:right_tail_unif} imply that this is possible up to errors that are bounded by $C\ep''_{R,T,\delta}$ in the limit $N\to\infty$ and then $(L,K,L',K')\Rightarrow\infty$ (see \cite{DRZ17} for more details).
Note that by \eqref{e:coupling_max1} there is at least one $x$ with $\rho_{KL,K'L',R,T,\delta,x}=1$, and so our coupling indeed ensures that $\max_{v\in V_N^{K,L,K',L',R,T,\delta}}\xi^{N,KL,K'L',R,\delta}_v$ and $\max_{x\in \hat\Xi_{KL}}\hat\xi^{K,L,K',L',R,T,\delta}_x$ are close. All errors vanish in the limit $N\to\infty$, then $(L,K,L',K')\Rightarrow\infty$, then $R\to\infty, \delta\to0$ and finally $T\to\infty$.
This allows us to conclude the proof.
\end{proof}

\begin{proof}[Proof of Theorem \ref{t:mainthm}]
The convergence in distribution of the recentered maximum follows directly from Lemma \ref{l:comparison_maxima} and Lemma \ref{l:coupling_max}. 

The characterisation of the limit law as a randomly shifted Gumbel distribution requires more work. However, the argument is completely analogous to the proof of \cite[Theorem 1.4]{DRZ17}, and so we omit the details.
\end{proof}

\section{Estimates for Green's functions on percolation clusters,
and proof of Theorem  \ref{t:percolation_cluster}}
\label{sec-4}
This section is devoted to the proof of Theorem  \ref{t:percolation_cluster}, based on quantitative homogenization results and a-priori estimates from percolation theory, and is structured as follows. Section \ref{sec-4.1} collects results from the quantitative homogenization literature, and in particular from \cite{AD18,DG21}, and introduces some useful modifications with respect to locality of various parameters and uniformity as function of the parameter $p$. The next three sections are devoted to the proof of Theorem \ref{t:percolation_cluster}, by checking the assumptions of Theorem \ref{t:mainthm}. In Section \ref{sec-4.2}, which deals with the ``easy to check'' assumptions of Theorem \ref{t:mainthm}, 
we slightly modify the results from Section \ref{sec-4.1} and verify all assumptions of Theorem \ref{t:mainthm} except for \ref{a:logupp} and \ref{a:sparseT}. These arguments are valid for all $p>p_c=1/2$. Section \ref{sec-4.3} 
is devoted to the statements and proofs of some large deviation results for
the percolation cluster $\CC_\infty$, for $p$ close to $1$.
The verification of the remaining Assumptions \ref{a:logupp} and 
\ref{a:sparseT} for $p$ sufficiently close to 1 is carried out in Section \ref{sec-4.4}, and is based on 
the results in Section \ref{sec-4.3}.
\subsection{Homogenization on percolation clusters}
\label{sec-4.1}
In this section we introduce various notation and recall various existing results on the structure of supercritical clusters, and on the large-scale behaviour of the graph Laplacian on the cluster. We take various results on the homogenization of the percolation cluster from \cite{AD18,DG21}. Most results have the character that there is a random scale $\Rscale_v$ indexed by $v\in\Z^2$ such that on length-scales $\ge\Rscale_v$ around $v$ some desirable property holds. Where possible we also state an estimate that the event $\{\Rscale\le R\}$ is asymptotically a local event. This is not done in the works \cite{AD18,DG21} from which we cite. However in most cases it is very easy do so, so we state and prove the (asymptotic) locality right away. A notable exception is Theorem \ref{t:green_asympt}, where there is no direct way to obtain locality of the relevant random scale, and so we postpone discussing the locality in that theorem to the next section.

In this section we only consider dimension $d=2$. We denote by $\nabla$ and $\Delta$ the lattice gradient and Laplacian, and by $\bar\nabla$ and $\bar\Delta$ their continuous counterparts. We also denote by $L^q$ discrete $q$-norms, and by $\bar L^q$ the (standard) continuous $q$-norms. Thereby we hopefully avoid any risk of confusion.

Recall that $V_N(w)$ denoted the cube of sidelength $N$ and lower left corner $w$, and that $\partial^+ V_N(w)$
denotes its outer boundary. In this section we will frequently encounter cubes with given center. Thus we define $Q_N(w)=w+[-N+1,N-1]^2\cap \Z^2$.
 
In the following we denote the sidelength of a cube $Q:=V_N(w)$ by $\ell(Q):=N$ (note that $\ell(Q_N(w))=2N-1$ with our definitions).

Let $0<\Lambda^-\le\Lambda^+$. We let $\F$ be the Borel $\sigma$-algebra on $(\{0\}\cup[\Lambda^-,\Lambda^+])^{E(\Z^2)}$, and we let $\PP$ be an i.i.d. probability measure on $(\{0\}\cup[\Lambda^-,\Lambda^+])^{E(\Z^2)},\F$. In other words, for each edge $e\in E(Z^2)$ we consider an i.i.d random variable $\A(e)$ supported in $\{0\}\cup[\Lambda^-,\Lambda^+]$, and denote by $\PP$ the joint law of the $\A(e)$.

For some $A\subset E(\Z^2)$, we let $\F_A$ be the $\sigma$-algebra generated by $\{\A(e)\colon e\in A\}$. When $A$ is equal to the edges of the subgraph induced by $Q_N(w)$, we write, slightly abusing notation, $\F_{Q_N(w)}$ for the corresponding $\sigma$-algebra.

Let $p=\PP(\A(e)>0)$. We always assume $p>1/2$, the critical threshold for bond percolation on $\Z^2$ \cite{K80}. It is well-known that $\PP$-almost surely there is a unique infinitely cluster $\CC_\infty$, i.e. a infinite connected subgraph of $\Z^2$ such that $\A(e)>0$ for each $e\in E(\CC_\infty)$. We denote by $\dist_{\CC_\infty}$ the graph distance on $\CC_\infty$ (as an unweighted graph).

Unless indicated otherwise, we regard $p,\Lambda^-,\Lambda^+$ as fixed and so do not make explicit how various constants depend on this quantities, except that in various
locations, with $\Lambda^-,\Lambda^+$ fixed  we specify uniformity with respect to $p$ in a neighborhood of $1$.

We let $\Delta_{\A}$ be the graph Laplacian on $\CC_\infty$, i.e.
\[\Delta_{\A}U(x)=\sum_{\{u,v\}\in E(\CC_\infty)}\A(\{u,v\})(U(u)-U(v)).\]

If $V_N(w)\subset\Z^2$ is a box, we can define a Green's function $G^\A_{V_N(w)}\colon\CC_\infty\times\CC_\infty\to\R$ as follows: if $v\notin\V_N(w)$, then $G^\A_{V_N(w)}(\cdot,v)=0$, while if $v\in\V_N(w)$ then $G^\A_{V_N(w)}(\cdot,v)$ is the unique function which is 0 on $\CC_\infty\setminus V_N(w)$ and such that $-\Delta_\A G^\A_{V_N(w)}(\cdot,v)=\delta_v$ on $V_N(w)\cap \CC_\infty$. Note that $G^\A_{V_N(w)}$ is also the Green's function for the (variable speed, continuous time) random walk on $\CC_\infty$ that is killed when exiting $V_N(w)$.

We also want to define a Green's function $G^\A$ on all of $\CC_\infty$. This is only possible when we fix some normalization. We let $G^\A\colon\CC_\infty\times\CC_\infty\to\R$ be a function such that $-\Delta_\A G^\A(\cdot,y)=\delta_y$, $G^\A(y,y)=0$ and $\lim_{|x|\to\infty}\frac{1}{|x|}G^\A(x,y)=0$ for all $x,y$. There is a unique such function $\PP$-almost surely. The function $G^\A$ is also the potential kernel for random walk on $\CC_\infty$.

From the theory of stochastic homogenization it is known that there is a deterministic constant $\oA$ such that the operator $-\Delta_\A$ homogenizes to the operator $-\oA\bar\Delta$, i.e. a scalar multiple of the standard continuous Laplacian. In the following we rely in particular on the quantitative homogenization results from \cite{AD18,DG21}. Let us quote the results that are the most important for us.

We begin with an estimate for $G^\A$ that follows easily from \cite[Theorem 1.2]{DG21}. Here and in the rest of the chapter, we use the phrase
``there is a uniform $s>0$'' to mean 
that $s>0$ is 
bounded below uniformly in $p$ in a neighborhood of $1$, for
fixed ${\Lambda^+}/{\Lambda^-}$.

\begin{theorem}\label{t:green_asympt}
For each $p>1/2$ and each ${\Lambda^+}/{\Lambda^-}\ge1$ there are  random variables $\mathsf{K}_v\in\R$ and $\Rscale^{\mathrm{Green}}_v\in\Z$ indexed by $v\in\CC_\infty$, such that
\[\left|G^\A(u,v)+\frac{1}{2\pi\oA}\log|u-v|-\mathsf{K}_v\right|\le\frac{1}{\oA|u-v|^{3/4}}\]
whenever $u,v\in\CC_\infty$ are such that $|u-v|\ge \Rscale^{\mathrm{Green}}_v$.
In addition $R^{\mathrm{Green}}_v$ satisfies the tail bound
\[\PP(\Rscale^{\mathrm{Green}}_v\le R)\ge 1- C\e^{-R^{s}/C}\]
with some constant $C>0$ and some uniform $s>0$ independent of $v\in \CC_\infty$.
\end{theorem}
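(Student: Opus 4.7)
The plan is to deduce this directly from the quantitative homogenization of the Green's function established in \cite[Theorem 1.2]{DG21}. The fundamental solution of $-\oA\bar\Delta$ on $\R^2$ is $-\tfrac{1}{2\pi\oA}\log|x-y|$ up to an additive constant, and on the lattice this additive freedom is fixed by the normalization $G^\A(v,v)=0$. The cited result produces a random scale $\tilde\Rscale_v$ with stretched-exponential tails such that, for $|u-v|\ge\tilde\Rscale_v$, the expression $G^\A(\cdot,v)+\tfrac{1}{2\pi\oA}\log|\cdot-v|$ is, in an averaged sense, a polynomially small perturbation of a $v$-dependent constant. I would then invoke the large-scale $C^{0,1}$-regularity for $\A$-harmonic functions of \cite{AD18} (in exactly the same spirit as in the reduction that the authors announce below for Theorem \ref{t:homog_dirich}) to upgrade this averaged statement to a pointwise bound, and in the process define
\[
  \mathsf{K}_v \ := \ \lim_{|u-v|\to\infty}\Bigl(G^\A(u,v)+\tfrac{1}{2\pi\oA}\log|u-v|\Bigr),
\]
which is finite and captures the local behaviour of the cluster and conductances near $v$ that the homogenized object has forgotten. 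The exponent $3/4$ appearing in the error sits strictly below the optimal rate available in $d=2$ and is chosen merely for convenience in later applications. Setting $\Rscale^{\mathrm{Green}}_v:=\tilde\Rscale_v$, possibly enlarged by a universal multiplicative factor, then yields the first half of the statement.

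The tail bound for $\Rscale^{\mathrm{Green}}_v$ is inherited directly from the corresponding bound on $\tilde\Rscale_v$ in \cite{DG21}. The only genuinely nontrivial point is the uniformity of the exponent $s>0$ as $p\uparrow 1$ with $\Lambda^+/\Lambda^-$ fixed: since \cite{DG21} fixes $p$, one has to revisit the multiscale scheme there and verify that the constants entering the percolation Poincar\'e inequalities, the concentration of correctors, and the iteration steps are stable under small perturbations of $p$ in a neighbourhood of $1$. In that regime the infinite cluster differs from $\Z^2$ only through rare defects controllable by Peierls-type estimates of the sort employed later in Section \ref{sec-4.3}, so such stability is natural to expect; I view this bookkeeping---and not the extraction of the asymptotic itself---as the main, though essentially routine, obstacle.
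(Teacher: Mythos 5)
Your route is essentially the paper's: both rest on \cite[Theorem 1.2]{DG21} plus the observation that the uniformity of the exponent $s$ in $p$ (for fixed $\Lambda^+/\Lambda^-$) is not stated there but follows from tracking the proofs of \cite{AD18,DG21}. The one substantive discrepancy is your characterization of what the cited theorem provides. The paper's proof is a two-line citation: \cite[Theorem 1.2]{DG21} already gives the \emph{pointwise} two-point asymptotics, i.e.\ exactly the displayed estimate with the error $\frac{1}{\oA|u-v|^{3/4}}$ replaced by $C_\ep/|u-v|^{1-\ep}$ for any $\ep>0$; one then takes $\ep<1/4$ and enlarges $\Rscale^{\mathrm{Green}}_v$ by a bounded factor so that the constant $C_\ep$ is absorbed and the prefactor $1/\oA$ comes out as stated. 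In particular the existence of $\mathsf{K}_v$ (your limit) is part of that statement, not something to be extracted. Your extra step of upgrading an ``averaged'' estimate to a pointwise one via the large-scale $C^{0,1}$-regularity of \cite{AD18} is therefore unnecessary here — that upgrade is genuinely needed later for Theorem \ref{t:homog_dirich} (see Lemma \ref{l:homog_dirich_improv}), but not for the full-space Green's function. This is not a fatal gap (the detour would presumably also work), but the proposal as written misattributes the form of the input, and the only real content of the proof — which you do correctly flag — is the $p$-uniformity of $s$, which the paper likewise handles by reference to the discussion below \cite[Remark 1.1]{AD18} and the fact that the estimates there only improve as $p-p_c$ grows.
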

Note that here (unlike the following statements) we do not make any claim about the locality of $\{\Rscale^{\mathrm{Green}}_v\le R\}$. The problem is that $G^\A$ is by definition a global object depending on all of $\CC_\infty$ and not just on $\CC_\infty$ intersected with some finite box.

\begin{proof}
In \cite[Theorem 1.2]{DG21}, the same statement is given, with $1/(\oA|u-v|^{3/4})$ 
replaced by ${C_\ep}/({|u-v}|^{1-\ep})$ for any $\ep>0$. Our version follows by 
taking some $\ep<1/4$ and increasing $\Rscale^{\mathrm{Green}}_v$ by a bounded factor.

The fact that $s$ is bounded below uniformly as claimed
is not made explicit in \cite{AD18,DG21}, however it follows from tracking their proofs (cf. also the discussion below \cite[Remark 1.1]{AD18}). 
In particular, the quantitative estimates in \cite{AD18} improve 
as $p-p_c$ increases, and thus it should even be possible to take
$s$ non-increasing in $p\in\left(\frac12,1\right]$.
\end{proof}

In order to state the other results we need, we need to introduce more notation to describe the large-scale structure of $\CC_\infty$.

In \cite{AD18,DG21} the lattice is partitioned into triadic cubes in such a way that all the cubes $Q$ are well-connected (this means in particular that they contain a unique large crossing cluster, denoted $\CC_*(Q)$). The details of the construction can be found in \cite[Section 2.1]{DG21}, but are not important for our purposes here. What is important is that almost surely there is a partition $\mathcal{P}$ of $\Z^2$ into well-connected cubes that satisfies
\[\bigcup_{Q\in\mathcal{P}}\CC_*(Q)\subset \CC_\infty\]
(note that in general we do not have equality here). We also need that this partition is local and comes with tail bounds on the size of the cubes. 
More precisely, we have that for each triadic cube $Q=Q_{3^\ell}(3^j)$ the event $Q\in\mathcal{P}$ is $F_{Q_{3^{\ell+1}}(3^j)}$-measurable.}
As a consequence of this, while $\CC_\infty$ is of course a global object, we can approximate it by local objects as follows:
\begin{lemma}\label{l:local_approx_cluster}
For each $p>1/2$ there is $C>0$ such that for any $R\in\N$ there are events $\Ev_v^{R,\mathrm{Clust}}\in\F_{Q_{9R}(v)}$  indexed by $v\in\Z^2$ such that
\[\PP(\Ev_v^{R,\mathrm{Clust}})\ge 1-C\e^{-R/C}\]
and such that on the event $\Ev_v^{R,\mathrm{Clust}}$ we have
\[\CC_*(Q_{3R}(v))\cap Q_R(v)=\CC_\infty\cap Q_R(v).\]
\end{lemma}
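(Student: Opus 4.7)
The plan is to take $\mathcal{E}_v^{R,\mathrm{Clust}}$ to be the intersection of four standard $\F_{Q_{9R}(v)}$-measurable 2D percolation events that force a thick macroscopic open structure around $Q_R(v)$: (i) there is an open circuit $\gamma_1\subset Q_{3R}(v)\setminus Q_R(v)$ surrounding $Q_R(v)$; (ii) there is an open circuit $\gamma_2\subset Q_{9R}(v)\setminus Q_{3R}(v)$ surrounding $Q_{3R}(v)$; (iii) $\gamma_1$ and $\gamma_2$ lie in the same open cluster inside $Q_{9R}(v)$; and (iv) every open cluster in $Q_{9R}(v)$ of diameter at least $R/2$ contains a vertex of $\gamma_1$. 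All four conditions are determined by $\A|_{E(Q_{9R}(v))}$.

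For the probability estimate I would invoke the standard supercritical 2D toolkit available for $p>1/2$: the dual model is subcritical, so exponential decay of dual connectivity in annuli yields (i) and (ii) with probability at least $1-Ce^{-R/C}$; failure of (iii) forces a closed dual $*$-path separating $\gamma_1$ from $\gamma_2$ across an annulus of width of order $R$, which by the same subcritical estimate has probability at most $Ce^{-R/C}$; and (iv) is a Grimmett--Marstrand style uniqueness estimate for the large open cluster in a cube of sidelength of order $R$. A union bound then gives $\PP(\mathcal{E}_v^{R,\mathrm{Clust}})\ge 1-Ce^{-R/C}$.

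The easy (and deterministic) inclusion in the set equality is $\mathcal{C}_\infty\cap Q_R(v)\subseteq \mathcal{C}_*(Q_{3R}(v))\cap Q_R(v)$: for $x\in \mathcal{C}_\infty\cap Q_R(v)$, any infinite open path from $x$ must cross $\gamma_1$ at some first vertex $y$ (since $\gamma_1$ separates $Q_R(v)$ from infinity in $\Z^2$), and the segment from $x$ to $y$ stays in the bounded component of $\Z^2\setminus\gamma_1$, which is contained in $Q_{3R}(v)$. Hence $x$ is connected to $\gamma_1$ by open edges inside $Q_{3R}(v)$, and by (i) and (iv) the open cluster of $\gamma_1$ in $Q_{3R}(v)$ coincides with the unique crossing cluster $\mathcal{C}_*(Q_{3R}(v))$, so $x\in \mathcal{C}_*(Q_{3R}(v))$.

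The main obstacle is the reverse inclusion, because whether the cluster $\mathcal{K}$ obtained from $\gamma_1,\gamma_2$ inside $Q_{9R}(v)$ extends to infinity genuinely depends on the configuration outside $Q_{9R}(v)$. I would resolve this via the exponential tail on the diameter of finite open clusters in supercritical 2D percolation: a union bound over the $O(R^2)$ vertices of $Q_{9R}(v)$ shows that the probability that any of them lies in a finite open cluster of diameter at least $R$ is $O(e^{-R/C})$. Since $\mathcal{K}$ has diameter at least $3R$ by (ii)--(iii), outside this exceptional event $\mathcal{K}\subseteq \mathcal{C}_\infty$, and for $x\in \mathcal{C}_*(Q_{3R}(v))\cap Q_R(v)\subseteq \mathcal{K}$ we obtain $x\in \mathcal{C}_\infty$. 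Thus the set equality in the lemma holds $\PP$-a.s.\ on $\mathcal{E}_v^{R,\mathrm{Clust}}$, the exceptional event being of the same order $O(e^{-R/C})$ as the slack already built into the lemma's probability bound.
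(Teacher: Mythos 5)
The paper's own proof of this lemma is a one-line citation to \cite[Section 2, Eq.\ (2.18)]{AD18}, so your self-contained construction via planar duality (open circuits in the annuli $Q_{3R}(v)\setminus Q_R(v)$ and $Q_{9R}(v)\setminus Q_{3R}(v)$, their connection, uniqueness of macroscopic clusters, and the exponential tail for the diameter of finite clusters) is a genuinely different route. It is elementary and perfectly adapted to $d=2$, $p>1/2$, which is all that is needed here; what the citation buys instead is compatibility with the precise definition of $\CC_*(Q)$ and of the partition $\mathcal{P}$ from \cite{AD18,DG21}, which your sketch uses only informally (``the unique crossing cluster''). The probabilistic ingredients you invoke (dual circuits in annuli, separation by closed dual paths, uniqueness of large clusters, exponential decay of truncated connectivity) are all standard for $p>1/2$.

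Two points need attention. First, a local slip: your condition (iv) connects every open cluster of diameter $\ge R/2$ in $Q_{9R}(v)$ to $\gamma_1$, but for the inclusion $\CC_\infty\cap Q_R(v)\subset\CC_*(Q_{3R}(v))$ you need the open cluster of $\gamma_1$ \emph{inside} $Q_{3R}(v)$ to coincide with $\CC_*(Q_{3R}(v))$, and the connection supplied by (iv) may leave $Q_{3R}(v)$. You should add to the event the standard statement that $Q_{3R}(v)$ contains open crossings and a unique open cluster of diameter $\ge R/2$ (so that $\CC_*(Q_{3R}(v))$ is well defined and is that cluster); since the cluster of $\gamma_1$ in $Q_{3R}(v)$ has diameter $\ge 2R$, this closes the gap at the same exponential cost. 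Second, and more substantively: what you prove for the reverse inclusion is not the lemma as stated. Your exceptional event (some vertex of $Q_{9R}(v)$ lies in a finite open cluster of diameter $\ge R$) is not $\F_{Q_{9R}(v)}$-measurable and is not contained in the complement of your $\Ev_v^{R,\mathrm{Clust}}$, so what you obtain is $\PP\big(\Ev_v^{R,\mathrm{Clust}}\cap\{\CC_*(Q_{3R}(v))\cap Q_R(v)\neq\CC_\infty\cap Q_R(v)\}\big)\le C\e^{-R/C}$ rather than ``on $\Ev_v^{R,\mathrm{Clust}}$ the equality holds''; your phrase ``holds $\PP$-a.s.\ on $\Ev$'' is inaccurate, since on your event the equality fails with positive (exponentially small) probability, e.g.\ whenever all edges in an annulus outside $Q_{9R}(v)$ are closed. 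To be fair, this weakening is unavoidable: the same closed-annulus configuration shows that for $p<1$ no $\F_{Q_{9R}(v)}$-measurable event of probability close to $1$ can force $\CC_*(Q_{3R}(v))\cap Q_R(v)\subset\CC_\infty$ pointwise, so the lemma can only hold in this approximate sense (the opposite inclusion $\CC_\infty\cap Q_R(v)\subset\CC_*(Q_{3R}(v))$ you do get pointwise on the event once the first slip is fixed). But you should state explicitly which version you prove, because the pointwise reading is what is used later to assert exact $\F$-measurability (e.g.\ in Lemma \ref{l:dist_to_cluster} and Lemma \ref{l:tail_bound_green}); with your version those statements have to be rephrased as ``local event up to an additional event of exponentially small probability'', in the same way the paper already treats $\Rone$, $\Rtwo$ and $\T$.
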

We emphasize that $\CC_*(Q_{3R}(v))$ only depends on the bonds in $Q_{3R}(v)$, and so is a genuinely local object.
\begin{proof}
This follows from the results in \cite[Section 2]{AD18}, in particular Equation (2.18) there.
\end{proof}


Already in the statement of Theorem \ref{t:percolation_cluster} we needed to consider points not in
$\CC_\infty$ and their projection to $\CC_\infty$. Let us recall this here: For $v\in\Z^2$ we denote by $v^*$ the point in $\CC_\infty$ closest to $v$ in Euclidean distance, where in case of a tie we take the lexicographically first point. The following lemma provides us with a tail bound on the distance between $v$ and $v^*$.
For $v\in\Z^2$, define the random variable
\begin{equation}
  \label{eq-Rvdist}
  \Rscale^{\mathrm{Dist}}_v=|v-v^*|_\infty\in\N.
\end{equation}
\begin{lemma}\label{l:dist_to_cluster}
  For each $p>1/2$ there is $C>0$ so that, for any $R\in\N$, we have
\begin{equation}\label{e:dist_to_cluster}
\PP(\Rscale^{\mathrm{Dist}}_v\le R,\Ev_v^{R,\mathrm{Clust}})\ge 1-C\e^{R/C}.
\end{equation}
Moreover, the event $\{\Rscale^{\mathrm{Dist}}_v\le R\}\cap \Ev_v^{R,\mathrm{Clust}}$ is $\F_{Q_{9R}(v)}$-measurable.
\end{lemma}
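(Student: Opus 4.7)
The plan is to decompose the complement of $\{\Rscale^{\mathrm{Dist}}_v \le R\} \cap \Ev_v^{R,\mathrm{Clust}}$ into two pieces, each controlled by a stretched-exponential tail, and to read off the measurability directly from the key identity supplied by Lemma \ref{l:local_approx_cluster}. By translation invariance of $\PP$ it suffices to argue for $v=0$.

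First I would dispose of $(\Ev_v^{R,\mathrm{Clust}})^c$ by Lemma \ref{l:local_approx_cluster}, which already provides the bound $C\e^{-R/C}$. The second piece, $\{\Rscale^{\mathrm{Dist}}_v > R\}$, is contained in $\{\CC_\infty \cap Q_{R+1}(v) = \emptyset\}$ (up to matching the $\ell^\infty$-ball and $Q_R$ conventions). To bound the latter I would invoke the classical Peierls-type estimate for supercritical Bernoulli bond percolation on $\Z^2$: for every $p>\frac12$ there exist $c,C>0$ such that
\[
\PP\bigl(\CC_\infty \cap Q_R(0) = \emptyset\bigr) \le C\e^{-cR}.
\]
The short argument is via duality: the event in question forces a circuit of closed edges to surround $Q_R(0)$, hence of length of order $R$; since the dual parameter $p^* = 1-p < \frac12$ is subcritical, a union bound over the possible shapes of such a circuit gives the claimed exponential decay. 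Estimates of this flavour will in any case be recalled in Section \ref{sec-4.3}.

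For the measurability claim, the key observation is that on $\Ev_v^{R,\mathrm{Clust}}$ Lemma \ref{l:local_approx_cluster} yields $\CC_\infty \cap Q_R(v) = \CC_*(Q_{3R}(v)) \cap Q_R(v)$, so that (up to an adjustment of the radius conventions absorbable in the constants) the event of interest can be rewritten as
\[
\{\Rscale^{\mathrm{Dist}}_v \le R\} \cap \Ev_v^{R,\mathrm{Clust}} = \bigl\{\CC_*(Q_{3R}(v)) \cap Q_R(v) \neq \emptyset\bigr\} \cap \Ev_v^{R,\mathrm{Clust}}.
\]
The right-hand side is $\F_{Q_{9R}(v)}$-measurable, since $\CC_*(Q_{3R}(v))$ depends only on the bonds in $Q_{3R}(v) \subset Q_{9R}(v)$ and $\Ev_v^{R,\mathrm{Clust}} \in \F_{Q_{9R}(v)}$ by construction; the probability bound \eqref{e:dist_to_cluster} then follows by a union bound after enlarging $C$.

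I do not foresee any genuine obstacle here. The only mildly delicate point is to track the off-by-one discrepancies between the various boxes $Q_R(v)$ and the $\ell^\infty$-ball of radius $R$ around $v$ that appears in the definition of $\Rscale^{\mathrm{Dist}}_v$; these can all be absorbed into the constant $C$, possibly by working with $R\pm 1$ in places.
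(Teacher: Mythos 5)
The probabilistic half of your argument is fine, and it takes a slightly different route from the paper: instead of quoting \cite[Lemma 2.7]{AD18} for the tail of the distance to the cluster, you re-derive it from the classical estimate $\PP\bigl(\CC_\infty\cap Q_r(v)=\emptyset\bigr)\le C\e^{-cr}$ for $p>1/2$, combined with Lemma \ref{l:local_approx_cluster} for $(\Ev_v^{R,\mathrm{Clust}})^\complement$. Two small caveats: the passage from $\{\Rscale^{\mathrm{Dist}}_v>R\}$ to ``$\CC_\infty$ misses a box'' costs a factor $\sqrt2$ rather than an additive unit, because $\Rscale^{\mathrm{Dist}}_v$ is the $\ell^\infty$-distance to the \emph{Euclidean}-nearest point (harmless for the probability bound); and the naive Peierls count over circuit shapes only closes for $1-p<1/3$, so for general $p>1/2$ you should control the closed dual circuit surrounding the box via exponential decay of subcritical dual connectivities (summing over the point where the circuit crosses a fixed half-line) rather than by enumerating shapes. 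Both points are classical and easily repaired.

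The measurability part, however, has a genuine gap. The identity you assert, $\{\Rscale^{\mathrm{Dist}}_v\le R\}\cap\Ev_v^{R,\mathrm{Clust}}=\{\CC_*(Q_{3R}(v))\cap Q_R(v)\ne\emptyset\}\cap\Ev_v^{R,\mathrm{Clust}}$, is false as a set identity: since $v^*$ is the Euclidean-nearest point of $\CC_\infty$ while $\Rscale^{\mathrm{Dist}}_v=|v-v^*|_\infty$, the cluster can meet $Q_R(v)$ only near a corner (Euclidean distance about $\sqrt2 R$) while the Euclidean-nearest point sits at, say, $v+(R+1,0)$, so that $\Rscale^{\mathrm{Dist}}_v=R+1>R$ even though the right-hand event holds; conversely $v^*$ may have $|v-v^*|_\infty=R$ and lie just outside $Q_R(v)$. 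The deeper problem is that ``up to radius conventions absorbable in the constants'' is not an available move here: the lemma asserts measurability of one specific event, and changing $R$ changes the event, so there is no constant to absorb anything into. The actual content of the claim is that, on the good event, whether $|v-v^*|_\infty\le R$ is decided by the bonds in $Q_{9R}(v)$, and for this one must identify $v^*$ itself --- i.e.\ the configuration of $\CC_\infty$ on the whole region (Euclidean radius up to $\sqrt2R$) where $v^*$ can a priori lie --- with a quantity computed from the local crossing cluster. That is exactly how the paper argues: on $\Ev_v^{R,\mathrm{Clust}}$ the clusters $\CC_\infty$ and $\CC_*(Q_{3R}(v))$ agree near $v$, hence $v^*$ coincides with the nearest point of the local object $\CC_*(Q_{3R}(v))$, and the event is a function of the bonds in $Q_{9R}(v)$; the radius bookkeeping you wave away (which agreement region covers all candidate locations of $v^*$) is precisely the substance of the proof. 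Your reduction to mere non-emptiness of $\CC_*(Q_{3R}(v))\cap Q_R(v)$ discards exactly the information (where the nearest point is) that the event is about, so the measurability argument needs to be redone along the paper's lines.
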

\begin{proof}
The tail bound \eqref{e:dist_to_cluster} follows from Lemma \ref{l:local_approx_cluster} and \cite[Lemma 2.7]{AD18}. The locality follows from the fact on the event $\Ev_v^{R,\mathrm{Clust}}$, the clusters $\CC_\infty$ and $\CC_*(Q_{3R}(v))$ agree on $Q_R(v)$, so that $v^*$ is also the point in $\CC_*(Q_{3R}(v))$ closest to $v$.
\end{proof}

We also have good upper bounds on the size of the largest cube in $\mathcal{P}$ intersecting a given cube. We denote by $Q^{\mathcal{P}}(v)$ the unique cube in $\mathcal{P}$ containing $v$.
\begin{lemma}\label{l:maximal_size_P}
For each $p>1/2$ and any $\ep>0$ there are random variables $\Rscale^{\mathrm{Part},\ep}_w\in\N$ indexed by $w\in\Z^2$ such that the following holds: Let $w\in\Z^2$, $R\in\N$ with $R\ge\Rscale^{\mathrm{Part},\ep}_w$. Then
\begin{equation}\label{e:maximal_size_P}
\max_{v\in V_R(w)}\ell(Q^{\mathcal{P}}(v))\le R^{\ep}.
\end{equation}
In addition, 
there are constants $C>0$ and uniform $s>0$   so that for any $w\in\Z^2$ and any $R,R'\in\N$ with $R'\ge R$ there is an
event $\Ev^{R,R',\mathrm{Part},\ep}_w\in\F_{Q_{4R'}(w)}$ 
such that
\begin{equation}\label{e:maximal_size_P1}
\PP(\Ev^{R,R',\mathrm{Part},\ep}_w)\ge 1-C\e^{-R^{s}/C}
\end{equation}
and 
\begin{equation}\label{e:maximal_size_P2}
\PP(\Rscale^{\mathrm{Part},\ep}_w> R,\Ev^{R,R',\mathrm{Part},\ep}_w)\le C\e^{-R'^{s}/C}.
\end{equation}
\end{lemma}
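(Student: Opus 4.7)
The plan is to first establish the single-point estimate, which comes directly from the construction of $\mathcal{P}$ in \cite{AD18,DG21}: the sidelength $\ell(Q^{\mathcal{P}}(v))$ has a stretched-exponential tail, $\PP(\ell(Q^{\mathcal{P}}(v)) > L) \le C\e^{-L^{s'}/C}$ for some $s'>0$ that is uniform in $p$ in a neighborhood of $1$ (in the sense explained before Theorem \ref{t:green_asympt}). The key locality observation, which I use throughout, is that since $\{Q \in \mathcal{P}\} \in \F_{Q_{3^{k+1}}(z)}$ for every triadic cube $Q = Q_{3^k}(z)$, the event $\{\ell(Q^{\mathcal{P}}(v)) \le L\}$ is determined by the bonds in a box of sidelength $O(L)$ around $v$; concretely it lies in $\F_{Q_{4L}(v)}$.

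With this in hand I define $\Rscale^{\mathrm{Part},\ep}_w$ to be the smallest $R_0 \in \N$ such that $\max_{v \in V_R(w)} \ell(Q^{\mathcal{P}}(v)) \le R^\ep$ for every $R \ge R_0$ (and $+\infty$ otherwise). A union bound over $v$ gives
\[\PP\Big(\max_{v \in V_R(w)} \ell(Q^{\mathcal{P}}(v)) > R^\ep\Big) \le C R^2 \e^{-R^{\ep s'}/C},\]
which is summable in $R$, so by Borel--Cantelli $\Rscale^{\mathrm{Part},\ep}_w$ is finite almost surely, and \eqref{e:maximal_size_P} then holds by definition.

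For the locality statements \eqref{e:maximal_size_P1}--\eqref{e:maximal_size_P2}, I take
\[\Ev^{R,R',\mathrm{Part},\ep}_w := \bigcap_{R \le R'' \le R'} \Big\{\max_{v \in V_{R''}(w)} \ell(Q^{\mathcal{P}}(v)) \le (R'')^\ep\Big\}.\]
By the locality observation, the $R''$-term in the intersection is $\F_{Q_{R'' + 4(R'')^\ep}(w)}$-measurable, which, provided $R$ is large enough in terms of $\ep$, is contained in $\F_{Q_{2R''}(w)} \subset \F_{Q_{4R'}(w)}$. A union bound over $R''$ yields
\[\PP\big((\Ev^{R,R',\mathrm{Part},\ep}_w)^c\big) \le \sum_{R'' \ge R} C (R'')^2 \e^{-(R'')^{\ep s'}/C} \le C' \e^{-R^{s}/C'},\]
for $s := \ep s'/2$, giving \eqref{e:maximal_size_P1}. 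Finally, on the event $\{\Rscale^{\mathrm{Part},\ep}_w > R\} \cap \Ev^{R,R',\mathrm{Part},\ep}_w$, some $R'' > R'$ must violate the bound, and a second union bound yields \eqref{e:maximal_size_P2} with the same $s$.

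The only subtle point will be the interplay between $\ep$ and the localization radius: for $\ep$ close to $1$ the approximating event is only contained in $\F_{Q_{2R'}(w)}$ once $R$ exceeds an $\ep$-dependent threshold, and the slack factor of $2$ between $Q_{2R'}(w)$ and the allowed $Q_{4R'}(w)$ is exactly what makes this work. For $R$ below the threshold one absorbs the discrepancy into the constant $C$, at the cost of losing any quantitative content (which is harmless since \eqref{e:maximal_size_P1} then becomes trivial).
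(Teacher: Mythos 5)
Your argument is correct and essentially identical to the paper's proof: the paper likewise introduces the per-scale events $\Ev^{\rho,\mathrm{Part},\ep}_w=\{\max_{v\in V_\rho(w)}\ell(Q^{\mathcal{P}}(v))\le\rho^\ep\}$, observes via locality of $\{Q\in\mathcal{P}\}$ that they lie in $\F_{Q_{\rho+3\rho^\ep}(w)}\subset\F_{Q_{4\rho}(w)}$ and have probability at least $1-C\e^{-\rho^{s}/C}$ (quoting \cite[Proposition 2]{DG21} directly instead of your single-point tail plus union bound over $v$), defines $\Rscale^{\mathrm{Part},\ep}_w$ as the first scale beyond which all these events hold, and takes $\Ev^{R,R',\mathrm{Part},\ep}_w$ to be the intersection of the per-scale events for scales between $R$ and $R'$, so that \eqref{e:maximal_size_P1} and \eqref{e:maximal_size_P2} follow by the same union bounds you employ. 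Your inclusive endpoint at the scale $R$ and the absorption of small $R'$ into the constant are harmless bookkeeping variants of the same scheme.
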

Note that the exponent in the right hand side of \eqref{e:maximal_size_P2}
is $R'$, in contrast with \eqref{e:maximal_size_P1}. The estimates \eqref{e:maximal_size_P1} and \eqref{e:maximal_size_P2} combined formalize the intuition that the event $\Rscale^{\mathrm{Part},\ep}_w> R$ is approximately local in the sense that if it occurs then either the local event $\Ev^{R,R',\mathrm{Part},\ep}_w$ or an event with very small probability occurs. Similar statements will occurs many times in the next lemmas.
\begin{proof}
The event that \eqref{e:maximal_size_P} holds for some fixed $R$ is 
local. We will use this fact to  prove both the existence of $\Rscale^{\mathrm{Part},\ep}_w$ and that of $\Ev^{R,R',\mathrm{Part},\ep}_w$. 
We will use this argument many times in the following, so we spell out the details here.
Define the event
\[\Ev^{\rho,\mathrm{Part},\ep}_w:=\left\{\max_{v\in V_\rho(w)}\ell(Q^{\mathcal{P}}(v))\le \rho^{\ep}\right\}.\]
For a fixed $\rho$ the event $\Ev^{\rho,\mathrm{Part},\ep}_w$ only 
depends on the events $Q\in\mathcal{P}$ for the cubes $Q$ intersecting $V_\rho(w)$ and of diameter $\le \rho^\ep$. Thus the event that \eqref{e:maximal_size_P} holds is in $\F_{Q_{\rho+3\rho^\ep}(w)}\subset \F_{Q_{4\rho}(w)}$.
Moreover, from \cite[Proposition 2]{DG21} it follows that
\begin{equation}\label{e:maximal_size_P3}
\PP(\Ev^{\rho,\mathrm{Part},\ep}_w)\ge 1-C\e^{-\rho^{s}/C}.
\end{equation}

We set now
\[\Rscale^{\mathrm{Part},\ep}_w=\inf\left\{R\in\N\colon \Ev^{\rho,\mathrm{Part},\ep}_w \quad \forall \rho\ge R\right\}\]
so that \eqref{e:maximal_size_P} clearly holds, and further set
\begin{align*}
\Ev^{R,R',\mathrm{Part},\ep}_w&=\left\{\Rscale^{\mathrm{Part},\ep}_w\le R\text{ or }\Rscale^{\mathrm{Part},\ep}_w>R'\right\}
=\bigcap_{R<\rho\le R'}\Ev^{\rho,\mathrm{Part},\ep}_w.
\end{align*}
This event is obviously in $\F_{Q_{4R'}(w)}$, and the estimates \eqref{e:maximal_size_P1} and \eqref{e:maximal_size_P2} follow directly from \eqref{e:maximal_size_P3}.

Regarding the uniformity of $s$, 
see the comment in the proof of Theorem \ref{t:green_asympt}.
\end{proof}

Next, we need a lower bound on the density of the cluster in a large cube.
\begin{lemma}\label{l:density_cluster}
For each $p>1/2$ there are $C>0$ 
and random variables 
$\Rscale^{\mathrm{Dens}}_v\in\N$ indexed by $v\in\Z^2$ such for any 
$v\in\Z^2$, $R\in\N$ with $R\ge\Rscale^{\mathrm{Dens}}_v$,
\[\left|\CC_\infty\cap Q_R(v)\right|\ge\frac{R^2}{C}.\]
In addition, for any $v\in\Z^2$ and any $R,R'\in\N$ with $R\le R'$ 
there is an event $\Ev^{R,R',\mathrm{Dens}}_v\in\F_{Q_{9R'}(v)}$ 
such that
\[
\PP(\Ev^{R,R',\mathrm{Dens}}_v)\ge 1-C\e^{-R/C}
\]
and 
\[
\PP(\Rscale^{\mathrm{Dens}}_v> R,\Ev^{R,R',\mathrm{Dens}}_v)\le C\e^{-{R'}/C}.
\]

\end{lemma}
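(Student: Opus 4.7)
The strategy is to construct a local proxy for $|\CC_\infty \cap Q_\rho(v)|$ using Lemma \ref{l:local_approx_cluster}, exploit independence of distant proxies via Hoeffding's inequality, and then assemble $\Rscale^{\mathrm{Dens}}_v$ and $\Ev^{R,R',\mathrm{Dens}}_v$ via the same scale-by-scale recipe used in the proof of Lemma \ref{l:maximal_size_P}. First fix $\rho_0 \in \N$ large enough that $C_0 e^{-\rho_0/C_0} \leq \theta/2$, where $\theta := \PP(0 \in \CC_\infty) > 0$ (since $p>1/2$) and $C_0$ is the constant from Lemma \ref{l:local_approx_cluster}. For each $w \in \Z^2$ define the $\F_{Q_{9\rho_0}(w)}$-measurable random variable
\[
X_w := |\CC_*(Q_{3\rho_0}(w)) \cap Q_{\rho_0}(w)|\, \I_{\Ev_w^{\rho_0,\mathrm{Clust}}}.
\]
On $\Ev_w^{\rho_0,\mathrm{Clust}}$ the local cluster coincides with $\CC_\infty$ inside $Q_{\rho_0}(w)$, so $X_w \leq |\CC_\infty \cap Q_{\rho_0}(w)|$ always, with equality on $\Ev_w^{\rho_0,\mathrm{Clust}}$. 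Translation invariance of $\PP$ and the choice of $\rho_0$ give
\[
\E X_w \geq \theta|Q_{\rho_0}(w)| - |Q_{\rho_0}(w)|\PP((\Ev_w^{\rho_0,\mathrm{Clust}})^c) \geq (\theta/2)|Q_{\rho_0}(w)|,
\]
while $0 \leq X_w \leq |Q_{\rho_0}(w)|$ deterministically.

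Now fix $\rho \geq 20\rho_0$ and select a $20\rho_0$-spaced grid of centers $w_1,\ldots,w_M \in Q_{\rho-\rho_0}(v)$ with $M \geq c(\rho/\rho_0)^2$ for a universal $c>0$ and pairwise disjoint cubes $Q_{\rho_0}(w_i) \subset Q_\rho(v)$. Split this grid into at most $18^2$ sub-families such that within each sub-family centers are at $\ell_\infty$-distance $\geq 18\rho_0$; this makes the sets $Q_{9\rho_0}(w_i)$ pairwise disjoint, and hence the variables $X_{w_i}$ independent in that sub-family. Applying Hoeffding's inequality to each sub-family (mean at least $(\theta/2)|Q_{\rho_0}(w)|$, range $\leq |Q_{\rho_0}(w)|$) and a union bound over the $O(1)$ sub-families yields
\[
\PP\Bigl(\sum_{i=1}^M X_{w_i} \leq c_1 \rho^2 \Bigr) \leq C_1 e^{-c_2 (\rho/\rho_0)^2} \leq C_2 e^{-\rho/C_2}
\]
for constants $c_1, c_2, C_1, C_2 > 0$ depending only on $p, \Lambda^-, \Lambda^+$. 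Define
\[
\tilde\Ev^\rho_v := \Bigl\{\sum_{i=1}^M X_{w_i} \geq c_1 \rho^2\Bigr\} \ \ (\rho \geq 20\rho_0), \qquad \tilde\Ev^\rho_v := \Omega \ \ (\rho < 20\rho_0).
\]
Since each $X_{w_i}$ is $\F_{Q_{9\rho_0}(w_i)}$-measurable and $w_i \in Q_\rho(v)$, the event $\tilde\Ev^\rho_v$ lies in $\F_{Q_{9\rho}(v)}$ for $\rho \geq 20\rho_0$; moreover on $\tilde\Ev^\rho_v$ Lemma \ref{l:local_approx_cluster} guarantees that the vertices counted by the $X_{w_i}$ all lie in $\CC_\infty$, so $|\CC_\infty \cap Q_\rho(v)| \geq c_1 \rho^2$.

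Finally, exactly as in the proof of Lemma \ref{l:maximal_size_P}, set
\[
\Rscale^{\mathrm{Dens}}_v := \inf\{R \in \N : \tilde\Ev^\rho_v \text{ holds for all } \rho \geq R\}, \qquad \Ev^{R,R',\mathrm{Dens}}_v := \bigcap_{R < \rho \leq R'} \tilde\Ev^\rho_v.
\]
The measurability $\Ev^{R,R',\mathrm{Dens}}_v \in \F_{Q_{9R'}(v)}$ is immediate. The bound $\PP(\Ev^{R,R',\mathrm{Dens}}_v) \geq 1 - Ce^{-R/C}$ follows by geometric summation of $\PP((\tilde\Ev^\rho_v)^c) \leq C_2 e^{-\rho/C_2}$ over $\rho \in (R,R']$, and the bound $\PP(\Rscale^{\mathrm{Dens}}_v > R,\, \Ev^{R,R',\mathrm{Dens}}_v) \leq Ce^{-R'/C}$ follows from the inclusion $\{\Rscale^{\mathrm{Dens}}_v > R\} \cap \Ev^{R,R',\mathrm{Dens}}_v \subseteq \bigcup_{\rho > R'} (\tilde\Ev^\rho_v)^c$ and the same summation. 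The main technical point is the mean estimate $\E X_w \geq (\theta/2)|Q_{\rho_0}(w)|$: this is where Lemma \ref{l:local_approx_cluster} (and hence the supercriticality $p > 1/2$ via $\theta > 0$) is crucial, since without the identification $\CC_*(Q_{3\rho_0}(w)) \cap Q_{\rho_0}(w) = \CC_\infty \cap Q_{\rho_0}(w)$ on a high-probability event we would have no way to lower bound a local quantity by the density of the globally defined $\CC_\infty$.
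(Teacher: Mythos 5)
Your probabilistic input is genuinely different from the paper's: the paper simply quotes the Durrett--Schonmann surface-order large deviation bound $\PP\left(|\CC_\infty\cap Q_R(v)|<R^2/C\right)\le C\e^{-R/C}$ from \cite{DS88} and then runs exactly the scale-by-scale locality recipe of Lemma \ref{l:maximal_size_P}, whereas you re-derive the density bound from scratch via the local proxies $X_w$, the mean bound $\E X_w\ge(\theta/2)|Q_{\rho_0}(w)|$ (using $\theta=\PP(0\in\CC_\infty)>0$ and Lemma \ref{l:local_approx_cluster}), independence of well-separated blocks and Hoeffding. That part is correct and self-contained (it even gives a rate $\e^{-c(\rho/\rho_0)^2}$, stronger than needed), and your assembly of $\Rscale^{\mathrm{Dens}}_v$ and $\Ev^{R,R',\mathrm{Dens}}_v$ follows the same pattern as the paper's.

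However, there is a genuine defect in the final assembly: declaring $\tilde\Ev^\rho_v:=\Omega$ for $\rho<20\rho_0$ breaks the first assertion of the lemma. With your definition, on the event that all the nontrivial events $\tilde\Ev^\rho_v$, $\rho\ge 20\rho_0$, hold simultaneously, one has $\Rscale^{\mathrm{Dens}}_v=1$; but (for $p<1$) this is compatible, with positive probability, with $\CC_\infty\cap Q_R(v)=\varnothing$ for all $R$ up to order $\rho_0$ --- in particular with $v\notin\CC_\infty$ --- and then the claimed bound $|\CC_\infty\cap Q_1(v)|\ge 1/C$ fails. You cannot escape by enlarging $C$: a cardinality below $R^2/C\in(0,1)$ must be $0$, so the conclusion at small $R$ really does assert that the cluster meets $Q_R(v)$. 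The paper avoids this because its scale-$\rho$ good event is the density statement itself, $\left\{|\CC_\infty\cap Q_\rho(v)|\ge\rho^2/C\right\}$, made $\F_{Q_{9\rho}(v)}$-measurable by intersecting with $\Ev^{\rho,\mathrm{Clust}}_v$, at \emph{every} scale $\rho$; then the first claim holds by the very definition of $\Rscale^{\mathrm{Dens}}_v$, and the probability bounds at the finitely many scales $\rho<20\rho_0$ are trivially absorbed into the constant (there $C\e^{-\rho/C}\ge1$). The repair to your proof is the same and costs nothing: for $\rho<20\rho_0$ replace $\Omega$ by $\left\{|\CC_*(Q_{3\rho}(v))\cap Q_\rho(v)|\ge\rho^2/C\right\}\cap\Ev^{\rho,\mathrm{Clust}}_v$, which is local and, on $\Ev^{\rho,\mathrm{Clust}}_v$, equivalent to the density bound; the rest of your argument (including both tail estimates) then goes through unchanged.
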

\begin{proof}
From \cite[Theorems 2,3]{DS88} we have that
there is a constant $C>0$ such that
\begin{equation}
  \label{eq-DSlater}
  \PP\left(\left|\CC_\infty\cap Q_R(v)\right|<\frac{R^2}{C}\right)\le  C\e^{-R/C}.
  \end{equation}
The argument to deduce from this the lemma is similar, but not completely the same as in Lemma \ref{l:maximal_size_P}. Namely, the event 
\[\Ev^{R,\mathrm{Dens}}:=\left\{\left|\CC_\infty\cap Q_R(v)\right|<\frac{R^2}{C}\right\}\] only depends on $\CC_\infty\cap Q_R(v)$, and so $\Ev^{R,\mathrm{Dens}}_v\cap \Ev^{R,\mathrm{Clust}}_v$ is $\F_{Q_{9R}(v)}$-measurable. We can now define
\begin{align*}
  \Rscale^{\mathrm{Dens}}_v&=\inf\left\{R\in\N\colon(\Ev^{\rho,\mathrm{Dens}}\cap \Ev^{\rho,\mathrm{Clust}}_v)\;\forall \rho\ge R\right\},\\
  \Ev^{R,R',\mathrm{Dens}}_v&=\bigcap_{R<\rho\le R'}\Ev^{\rho,\mathrm{Dens}}_v\cap \Ev^{\rho,\mathrm{Clust}}_v,
\end{align*}
and directly obtain the claimed result from \eqref{eq-DSlater} and Lemma
\ref{l:local_approx_cluster}.
\end{proof}

Given a function $f\colon\CC_\infty\to\R$, we can make use of the partition $\mathcal{P}$ to define an interpolated version $[f]_{\mathcal{P}}\colon\R^2\to\R$ as follows. We first define $[f]_{\mathcal{P}}$ on $\Z^2$ by setting $[f]_{\mathcal{P}}(v)=f(z(Q^{\mathcal{P}}(v)))$ for $v\in\Z^2$, where $Q^{\mathcal{P}}(v)$ is the unique cube in $\mathcal{P}$ containing $v$ and $z$ is the lattice point closest to its center (where in case of ties we take the lexicographically first). Then extend to $\R^2$ by letting $[f]_{\mathcal{P}}$ be piecewise constant on each cube $v+\left[-\frac12,\frac12\right)^2$. We also fix a mollifier $\eta\in C_c^\infty(\R^2)$ such that $\int\eta=1$. Throughout, write $\hat V_N(w)=V_N(w)\cup \partial^+ V_N(w))$.

We are now ready to state an estimate for the homogenization error for the Dirichlet problem. 
\begin{theorem}\label{t:homog_dirich}
  For each $p>1/2$ and any ${\Lambda^+}/{\Lambda^-}\ge1$ there are $C>0$, 
  $\ep_{\mathrm{Homog}}\in(0,1)$,  and random variables $\Rscale^{\mathrm{Homog}}_w\in\N$ indexed by $w\in\Z^2$ such that the following holds.
Let $w\in\Z^2$, $N\ge \Rscale^{\mathrm{Homog}}_w$, let $F\colon\CC_\infty\cap \hat V_N(w)\to\R$, and let $U\colon\CC_\infty\cap \hat V_N(w) \to\R$ be the unique solution of the discrete elliptic equation
\begin{alignat*}{2}
	\begin{aligned}
		-\Delta_\A U&=0 && \text{in }\CC_\infty\cap V_N(w),\\
		U&=F && \text{on }\CC_\infty\cap\partial^+ V_N(w).
	\end{aligned}	
	\end{alignat*}
Furthermore let $\bar U\colon \hat V_N(w)
\to\R$ be the unique solution of the continuous elliptic equation
\begin{alignat*}{2}
	\begin{aligned}
		-\oA\bar\Delta \bar U&=0 && \text{in }w+(-1,N)^2,\\
		\bar U&=[F]_{\mathcal{P}}*\eta  && \text{on }\partial(w+(-1,N)^2).
	\end{aligned}	
	\end{alignat*}
Then we have the estimate
\begin{equation}\label{e:homog_dirich}
\|U-\bar U\|_{L^2(\CC_\infty\cap V_N(w))}\le CN^{2-\ep_{\mathrm{Homog}}}\|\nabla F\|_{ L^\infty(\hat V_N(w))}.
\end{equation}
In addition, there is a uniform $s>0$ so that for any $w\in\Z^2$ and any $R,R'$ with $R\le R'$ there is an event $\Ev^{R,R',\mathrm{Homog}}_w\in\F_{Q_{9R'}(w)}$ such that, 
\[
\PP(\Ev^{R,R',\mathrm{Homog}}_w)\ge 1-C\e^{-R^s/C}
\]
and 
\[
\PP(\Rscale^{\mathrm{Homog}}_w> R,\Ev^{R,R',\mathrm{Homog}}_w)\le C\e^{-R'^s/C}.
\]
\end{theorem}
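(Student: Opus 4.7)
The plan is to combine existing quantitative homogenization estimates on percolation clusters from \cite{AD18, DG21} with a localization argument modeled on Lemma \ref{l:maximal_size_P}. First I would extract from \cite{AD18, DG21} an initial (non-local) random scale $\tilde\Rscale^{\mathrm{Homog}}_w$ such that \eqref{e:homog_dirich} holds whenever $N \geq \tilde\Rscale^{\mathrm{Homog}}_w$, with a stretched exponential tail $\PP(\tilde\Rscale^{\mathrm{Homog}}_w > R) \leq C\e^{-R^s/C}$ and with $s > 0$ uniform in $p$ near $1$. The statement to extract is an $L^2$-homogenization estimate for the Dirichlet problem with Lipschitz lattice boundary data, which is proved via a two-scale expansion using the sublinear correctors of \cite{AD18}, the large-scale $C^{0,1}$-regularity theory developed there, and the flux-corrector estimates of \cite{DG21}.

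Next I would upgrade this random scale to one that is approximately local. The Dirichlet problem defining $U$ depends only on $\CC_\infty \cap \hat V_N(w)$ and on the conductances in a bounded neighborhood of $V_N(w)$. By Lemma \ref{l:local_approx_cluster}, on the event $\Ev^{N,\mathrm{Clust}}_w$ the cluster $\CC_\infty \cap Q_N(w)$ agrees with the genuinely local object $\CC_*(Q_{3N}(w)) \cap Q_N(w)$. Thus the event
\[
\Ev^{\rho,\mathrm{HomLoc}}_w := \Ev^{\rho,\mathrm{Clust}}_w \cap \{\text{\eqref{e:homog_dirich} holds at scale }\rho\},
\]
once the second component is rewritten with $\CC_*(Q_{3\rho}(w))$ in place of $\CC_\infty$, is $\F_{Q_{9\rho}(w)}$-measurable. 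Following the template of Lemma \ref{l:maximal_size_P}, I would define
\[
\Rscale^{\mathrm{Homog}}_w := \inf\{R \in \N : \Ev^{\rho,\mathrm{HomLoc}}_w \text{ holds for all } \rho \geq R\}
\]
and
\[
\Ev^{R,R',\mathrm{Homog}}_w := \bigcap_{R < \rho \leq R'} \Ev^{\rho,\mathrm{HomLoc}}_w.
\]
The measurability of $\Ev^{R,R',\mathrm{Homog}}_w$ with respect to $\F_{Q_{9R'}(w)}$ is then immediate, and a union bound combining the tail on $\tilde\Rscale^{\mathrm{Homog}}_w$ with the tail on $\Ev^{\rho,\mathrm{Clust}}_w$ from Lemma \ref{l:local_approx_cluster} yields both required probability estimates, with a possibly smaller but still uniform exponent $s$.

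The main obstacle I expect lies in Step 1. The homogenization literature on percolation clusters is stated in several normalizations — energy estimates, $L^2$ estimates for smooth boundary data on continuous domains, interior estimates in well-connected subdomains — and a careful but essentially routine argument is needed to extract precisely \eqref{e:homog_dirich}, with $[F]_\mathcal{P} * \eta$ as the continuous boundary data and $\|\nabla F\|_{L^\infty}$ controlling the right-hand side. The boundary layer (where the two-scale expansion fails to be a good approximation) and the mollification of the discontinuous lattice boundary data are the delicate points; handling them requires a trace / boundary-layer estimate together with a Meyers-type interior improvement of integrability, which is by now standard in the quantitative stochastic homogenization toolkit. The localization argument in Step 2, in contrast, is entirely parallel to the proof of Lemma \ref{l:maximal_size_P} and presents no genuine difficulty.
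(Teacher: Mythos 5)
Your proposal follows essentially the same route as the paper: obtain the $L^2$ homogenization estimate from \cite{AD18,DG21} and then localize the random scale exactly as in Lemmas \ref{l:maximal_size_P} and \ref{l:density_cluster}, using $\Ev^{\cdot,\mathrm{Clust}}_w$ and the crossing clusters $\CC_*$ for measurability. The only difference is that the ``main obstacle'' you anticipate in Step 1 does not arise: \cite[Theorem 3.2]{DG21} already states the estimate in precisely this form (with boundary data $[F]_{\mathcal{P}}*\eta$ and the $\|\nabla F\|_{L^\infty}$ right-hand side), in a parabolic version from which the elliptic case follows by taking the data constant in time, so no two-scale expansion or boundary-layer argument needs to be redone.
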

We expect that the $L^2$-norms of  $U$ and $\bar U$ are both at most of order $N^2\|\nabla F\|_{ L^\infty(\hat V_N(w))
}$, so \eqref{e:homog_dirich} effectively means a gain of $N^{-\ep_{\mathrm{Homog}}}$ over the trivial estimate.

\begin{proof}
This is essentially \cite[Theorem 1]{AD18}. However, the result there is phrased differently. The result, as stated above, follows immediately from \cite[Theorem 3.2]{DG21}. There the parabolic version of the theorem is given, but the elliptic version follows by taking all functions constant in time.
The locality follows again as in Lemma \ref{l:density_cluster}. Regarding the 
uniformity of $s$, see the proof of Theorem \ref{t:green_asympt}.
\end{proof}

We also need various functional equalities and elliptic regularity estimates for $-\Delta_\A$-harmonic functions, most of which hold true beyond some random lengthscale. We collect them in the following theorem. We denote by $(U)_A$ the average of $U$ over a set $A$, and remark for future use that for $A\subset A'$ we have
\begin{equation}\label{e:monotonicity_L2}
\|U-(U)_A\|_{L^2(A)}\le \|U-(U)_{A'}\|_{L^2(A)}\le \|U-(U)_{A'}\|_{L^2(A')}.
\end{equation}

\begin{theorem}\label{t:regularity}
For each $p>1/2$ and each ${\Lambda^+}/{\Lambda^-}\ge1$ there are $C>0$  and random variables $\Rscale^{\mathrm{Regul}}_v\in\N$ indexed by $v\in\Z^2$, such that the following holds:
Let $v\in\Z^2$, $R\ge \rho\ge\Rscale^{\mathrm{Regul}}_v$. Let $U\colon\CC_\infty\cap Q_{R+1}(v)\to\R$ satisfy $-\Delta_A U=0$ in $\CC_\infty\cap Q_R(v)$. Then we have the elliptic regularity estimate
\begin{equation}\label{e:regularity}
\left\|U-(U)_{\CC_\infty\cap Q_{\rho}(v))}\right\|_{L^2(\CC_\infty\cap Q_{\rho}(v))}\le \frac{C\rho^2}{R^2}\left\|U-(U)_{\CC_\infty\cap Q_{R}(v))}\right\|_{L^2(\CC_\infty\cap Q_{R}(v))}.
\end{equation}

In addition, for any $v\in\Z^2$ and any $R,R'\in\N$ with $R\le R'$ there is an event $\Ev^{R,R',\mathrm{Regul}}_v\in\F_{Q_{9R'}(v)}$ such that, 
with $s>0$ uniform,
\[
\PP(\Ev^{R,R',\mathrm{Regul}}_v)\ge 1-C\e^{-R^s/C}
\]
and 
\[
\PP(\Rscale^{\mathrm{Regul}}_v> R,\Ev^{R,R',\mathrm{Regul}}_v)\le C\e^{-R'^s/C}.
\]
\end{theorem}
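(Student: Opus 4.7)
The plan is to derive \eqref{e:regularity} from the large-scale $C^{0,1}$-regularity for $\A$-harmonic functions on the supercritical percolation cluster, proved in \cite[Theorem 2]{AD18} (see also related statements in \cite{DG21}). That result provides, beyond a random scale with stretched-exponential tails, a pointwise gradient estimate for $\A$-harmonic $U$ of the form
\[
\|\nabla U\|_{L^\infty(\CC_\infty\cap Q_{R/2}(v))}\le \frac{C}{R^2}\bigl\|U-(U)_{\CC_\infty\cap Q_R(v)}\bigr\|_{L^2(\CC_\infty\cap Q_R(v))},
\]
the factor $R^{-2}$ reflecting the $d=2$ volume normalization of the $L^2$-norm. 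I then plan to convert this gradient bound into the oscillation-decay estimate \eqref{e:regularity} as follows. For $\rho\in[\Rscale^{\mathrm{Regul}}_v,R/2]$, any two vertices of $\CC_\infty\cap Q_\rho(v)$ can be connected by a path inside $\CC_\infty\cap Q_{C\rho}(v)$ of chemical length at most $C\rho$ (above some further random scale with stretched-exponential tails; this is the Antal--Pisztora estimate \cite{AP96}, which will also be recalled in Section \ref{sec-4.3}). Combined with Lemma \ref{l:density_cluster}, which gives $|\CC_\infty\cap Q_\rho(v)|\le C\rho^2$, this yields
\[
\bigl\|U-(U)_{\CC_\infty\cap Q_\rho(v)}\bigr\|_{L^2(\CC_\infty\cap Q_\rho(v))}\le C\rho^{2}\,\|\nabla U\|_{L^\infty(\CC_\infty\cap Q_\rho(v))},
\]
and plugging in the gradient estimate at scale $R$ produces the factor $C\rho^2/R^2$ of \eqref{e:regularity}. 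The remaining regime $\rho\in[R/2,R]$ is handled trivially via the monotonicity \eqref{e:monotonicity_L2}, which gives the bound with constant $4$.

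For the measurability of the events $\Ev^{R,R',\mathrm{Regul}}_v$ and the tail bounds on $\Rscale^{\mathrm{Regul}}_v$, my plan is to follow the template of Lemma \ref{l:maximal_size_P}. The subtlety is that $\CC_\infty$ and $\Delta_\A$ on it are a priori global objects, while $\Ev^{R,R',\mathrm{Regul}}_v$ must be $\F_{Q_{9R'}(v)}$-measurable. I will resolve this as in Lemmas \ref{l:dist_to_cluster} and \ref{l:density_cluster}: intersecting with the cluster-approximation event $\Ev^{\rho,\mathrm{Clust}}_v$ of Lemma \ref{l:local_approx_cluster}, on which $\CC_\infty\cap Q_\rho(v)=\CC_*(Q_{3\rho}(v))\cap Q_\rho(v)$ is determined by the bonds in $Q_{9\rho}(v)$, renders the event that \eqref{e:regularity} (together with the supporting density and chemical-distance bounds) holds at a given scale $\rho$ genuinely local. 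The scale $\Rscale^{\mathrm{Regul}}_v$ and the events $\Ev^{R,R',\mathrm{Regul}}_v$ can then be defined exactly as in the proof of Lemma \ref{l:maximal_size_P}, taking $\Rscale^{\mathrm{Regul}}_v$ to be the maximum of the random scales supplied by \cite[Theorem 2]{AD18}, \cite{AP96}, and Lemma \ref{l:density_cluster}. The stretched-exponential tail bounds follow by a union bound from the tails of these inputs, and uniformity of the exponent $s$ in $p$ near $1$ is inherited, as in the proof of Theorem \ref{t:green_asympt}.

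The main obstacle is not conceptual but a matter of bookkeeping: one must ensure that the decay exponent in \eqref{e:regularity} is exactly $2$, rather than some smaller $1+\alpha$ that would follow from a weaker $C^{0,\alpha}$-type statement. This forces me to rely on the full $C^{0,1}$-regularity of \cite[Theorem 2]{AD18}; a weaker statement, while easier to establish, would be insufficient for the applications below, in particular for the pointwise upgrade of Theorem \ref{t:homog_dirich} that will be needed when verifying Assumptions \ref{a:logbd}, \ref{a:micro}, \ref{a:macro}.
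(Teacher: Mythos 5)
There is a genuine gap at the first step: the pointwise gradient bound
\[
\|\nabla U\|_{L^\infty(\CC_\infty\cap Q_{R/2}(v))}\le \frac{C}{R^2}\bigl\|U-(U)_{\CC_\infty\cap Q_R(v)}\bigr\|_{L^2(\CC_\infty\cap Q_R(v))}
\]
is not what \cite[Theorem 2]{AD18} provides, and with a random scale anchored only at $v$ it is in fact false in the percolation setting. The large-scale regularity theory on $\CC_\infty$ controls only averaged quantities (normalized $L^2$-norms over balls of radius $r$) on scales $r$ above the random minimal scale at the relevant point; it gives no unit-scale Lipschitz bound. Concretely, conditioning on $\Rscale^{\mathrm{Regul}}_v\le\rho$ does not preclude a local bottleneck of the cluster (two pieces of $\CC_\infty\cap Q_m(y)$ joined by a single open edge, $m$ a large constant) at some $y$ with $|y-v|\approx R/4$; such a configuration has probability bounded below independently of $R$, and for an $\A$-harmonic $U$ with roughly affine boundary data the drop across that single edge is of order $m$, while your right-hand side is $O(1)$. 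So the cited theorem cannot justify this display, and the subsequent chemical-distance conversion rests on an unavailable estimate. (Your instinct that one must convert $L^2$ control into pointwise control via paths is needed elsewhere, but there the paper pays for it: in the proof of Lemma \ref{l:homog_dirich_improv} the pointwise bound is extracted from the $L^2$ statement by a Campanato-type telescoping over dyadic scales down to $\Rscale^{\mathrm{Regul'}}_v$, and the resulting estimate carries explicit factors of the random scale, precisely because no unit-scale gradient bound exists.)

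The fix is also much shorter than your route: \cite[Theorem 2 (iii)]{AD18} applied with $k=0$ \emph{is} already the desired excess-decay estimate. Since the only bounded $\A$-harmonic functions on $\CC_\infty$ are constants, the infimum over the space $\mathcal{A}_0$ appearing there is attained at the average, and the statement reads, in normalized $L^2$-norms, a decay factor $C\rho/R$ between scales $\rho$ and $R$ for $\rho\ge\Rscale^{\mathrm{Regul}}_v$; converting normalized to unnormalized norms (using the density bound of Lemma \ref{l:density_cluster}, absorbed into the random scale) produces exactly the factor $C\rho^2/R^2$ of \eqref{e:regularity}, with the regime $\rho\ge R/2$ handled trivially by \eqref{e:monotonicity_L2}, as you note. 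No chemical-distance input is needed at all. Your treatment of the measurability and tail bounds (intersecting with the cluster-approximation events of Lemma \ref{l:local_approx_cluster} and following the template of Lemmas \ref{l:maximal_size_P} and \ref{l:density_cluster}, with uniformity of $s$ as in Theorem \ref{t:green_asympt}) does match the paper's argument and is fine.
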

\begin{proof}
This result be extracted from \cite{AD18} and \cite{DG21}. Indeed, for \eqref{e:regularity} we can first assume that $R\ge 2\rho$, as otherwise the result follows trivially from \eqref{e:monotonicity_L2}. We can also 
assume that $(U)_{\CC_\infty\cap(Q_R(v)))}=0$, as otherwise we replace $U$ with $U-(U)_{\CC_\infty\cap Q_R(v))}$. Then \eqref{e:regularity} follows from \cite[Theorem 2 (iii)]{AD18} by taking $k=0$. Namely the only bounded $-\Delta_\A$-harmonic functions are constants, and so the optimal $\phi$ on the left-hand side in \cite[Theorem 2 (iii)]{AD18} is indeed $(U)_{\CC_\infty\cap Q_{\rho}(v))}$.

Once more, the locality follows as in Lemma \ref{l:density_cluster}, and the 
uniformity of $s$  as in Theorem \ref{t:green_asympt}.
\end{proof}

\subsection{Proof of Theorem \ref{t:percolation_cluster}, first part}
\label{sec-4.2}

We can now begin with the proof of Theorem \ref{t:percolation_cluster}. As mentioned above,
Assumptions \ref{a:logupp} and \ref{a:sparseT} are much harder to establish than the other five assumptions. So in this subsection we only consider the five "easier" assumptions, and postpone the discussion of Assumptions \ref{a:logupp} and \ref{a:sparseT} to the following sections.

We will use the results collected in the previous section as a toolbox, but we cannot directly use them in our setting.
As a first step, we establish improved versions of Theorem \ref{t:green_asympt} and \ref{t:homog_dirich}. For Theorem \ref{t:green_asympt} the improvement consists in showing that the event $\{\Rscale^{\mathrm{Green}}_v>R\}$ can be approximated by a local event. For Theorem \ref{t:homog_dirich} this fact is rather obvious. There a different improvement is necessary. Namely we claim that (at least under sufficiently strong assumptions on $f$) we can get rid of the interpolation $[f]_{\mathcal{P}}*\eta$. We also use the regularity results from Theorem \ref{t:regularity} to replace the $L^2$-estimate in \eqref{e:homog_dirich} by a pointwise estimate, provided we stay far enough from the boundary.

We begin with a version of Theorem \ref{t:green_asympt}.
Recall that for $v\in \Z^2$, we denote by 
$v^*\in\Z^2$  the point in $\CC_\infty$ closest to $v$ 
(with ties broken  lexicographically).
\begin{lemma}\label{l:green_asympt_improved}
For each $p>1/2$ and each ${\Lambda^+}/{\Lambda^-}\ge1$ there are $C>0$ and random variables $\mathsf{K}'_v\in\R$ and $\Rscale^{\mathrm{Green}'}_v\in\N$ indexed by $v\in\Z^2$ with the following property: if $u,v\in\Z^2$ satisfy $|u-v|\ge \Rscale^{\mathrm{Green}'}_v$ and we also have $u\in\CC_\infty$ or $|u-v|\ge \Rscale^{\mathrm{Green}'}_u$, then
\begin{equation}\label{e:green_asympt_improved}
\left|G^\A(u^*,v^*)+\frac{1}{2\pi\oA}\log|u-v|-\mathsf{K}'_v\right|\le\frac{1}{\oA|u-v|^{1/2}}.
\end{equation}

In addition, for any $v\in\Z^2$ and any $R,R'\in\N$ with $R\le {R'}/{2}$ there is an event $\Ev^{R,R',\mathrm{Green}'}_v\in\F_{Q_{9R'}(v)}$ such that,
with $s>0$ uniform,
\begin{equation}\label{e:green_asympt_improved1}
\PP(\Ev^{R,R',\mathrm{Green}'}_v)\ge 1-C\e^{-R^{s}/C}
\end{equation}
and 
\begin{equation}\label{e:green_asympt_improved2}
\PP(\Rscale^{\mathrm{Green}'}_v> R,\Ev^{R,R',\mathrm{Green}'}_v)\le C\e^{-R'^{s}/C}.
\end{equation}
\end{lemma}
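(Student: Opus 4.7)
The plan is to combine three ingredients: the Dirichlet decomposition of $G^\A(\cdot, v^*)$ in a large box $Q_{R'}(v)$, quantitative homogenization of the resulting local Dirichlet problem (Theorem~\ref{t:homog_dirich}), and Theorem~\ref{t:green_asympt} applied only on $\partial^+ Q_{R'}(v)$ in order to extract $\mathsf{K}_{v^*}$. The structural observation is that the first two ingredients depend only on $\A$ inside $Q_{R'}(v)$, while the third contributes only through an error of order $R'^{-3/4}$, which is negligible once $R'$ is much larger than $R = |u-v|$. This is what lets us detect $\{\Rscale^{\mathrm{Green}'}_v \le R\}$ from an event in $\F_{Q_{9R'}(v)}$, at the price of the restriction $R \le R'/2$ in the statement. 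First I would dispose of the projection $v \mapsto v^*$ via Lemma~\ref{l:dist_to_cluster}: when $u \notin \CC_\infty$, the distances $|u-u^*|$ and $|v-v^*|$ are bounded by $\Rscale^{\mathrm{Dist}}_u$ and $\Rscale^{\mathrm{Dist}}_v$ respectively (with stretched-exponential tails), so imposing $|u-v| \ge (\Rscale^{\mathrm{Dist}}_u \vee \Rscale^{\mathrm{Dist}}_v)^2$ yields $\log|u^*-v^*| = \log|u-v| + O(|u-v|^{-1/2})$. I will then set $\mathsf{K}'_v := \mathsf{K}_{v^*}$; nothing in the statement requires $\mathsf{K}'_v$ itself to be local.

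The central identity is the Dirichlet decomposition
$$G^\A(u^*, v^*) = G^\A_{Q_{R'}(v)}(u^*, v^*) + H^\A_{R'}(u^*),$$
where $H^\A_{R'}$ is the $\A$-harmonic extension of the boundary values of $G^\A(\cdot, v^*)$. Using Theorem~\ref{t:green_asympt} on $\partial^+ Q_{R'}(v)$ (valid provided $R' \ge \Rscale^{\mathrm{Green}}_{v^*}$) together with the maximum principle, one has $H^\A_{R'} = V^{R'} + \mathsf{K}_{v^*} + O(R'^{-3/4})$, where $V^{R'}$ is the local $\A$-harmonic function on $\CC_\infty \cap Q_{R'}(v)$ with boundary data $-\tfrac{1}{2\pi\oA}\log|\cdot - v^*|$. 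Hence the claimed error $G^\A(u^*,v^*) + \tfrac{1}{2\pi\oA}\log|u-v| - \mathsf{K}'_v$ reduces, modulo $O(R'^{-3/4})$, to the \emph{local} quantity $G^\A_{Q_{R'}(v)}(u^*, v^*) + V^{R'}(u^*) + \tfrac{1}{2\pi\oA}\log|u-v^*|$. Its continuous counterpart vanishes identically, since on the continuous cube the Dirichlet Green function and the harmonic extension of $-\tfrac{1}{2\pi\oA}\log|\cdot - v^*|$ sum to $-\tfrac{1}{2\pi\oA}\log|\cdot - v^*|$ in the interior. Theorem~\ref{t:homog_dirich} applied to $V^{R'}$, combined with Theorem~\ref{t:regularity} to promote $L^2$-control to pointwise control at distance $\ge cR'$ from the boundary (and a separate argument to handle the logarithmic singularity of $G^\A_{Q_{R'}(v)}(\cdot,v^*)$ near $v^*$, by subtracting a local comparison built from Theorem~\ref{t:green_asympt} at an intermediate scale around $v^*$), will then give a pointwise bound on the bracketed quantity of the form $R'^{-\beta}$ for some $\beta > 0$.

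The local event $\Ev^{R,R',\mathrm{Green}'}_v$ is the intersection, over dyadic $R \le \rho \le R'$, of the local events $\Ev^{\rho,R',\mathrm{Homog}}_v$, $\Ev^{\rho,R',\mathrm{Regul}}_v$, $\Ev^{\rho,R',\mathrm{Dens}}_v$, $\Ev^{\rho,R',\mathrm{Part},\ep}_v$ and $\Ev^{\rho,\mathrm{Clust}}_v$, all of which lie in $\F_{Q_{9R'}(v)}$; a union bound on their tails yields \eqref{e:green_asympt_improved1}. On this event the analysis of the preceding paragraph provides the asymptotic for $|u-v| \in [R, R']$, while for $|u-v| > R'$ Theorem~\ref{t:green_asympt} itself gives it directly as soon as $\Rscale^{\mathrm{Green}}_{v^*} \le R'$. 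Hence $\Rscale^{\mathrm{Green}'}_v \le R$ on $\Ev^{R,R',\mathrm{Green}'}_v$ unless the single non-local event $\{\Rscale^{\mathrm{Green}}_{v^*} > R'\}$ occurs, which by Theorem~\ref{t:green_asympt} has probability at most $C\e^{-R'^s/C}$, yielding \eqref{e:green_asympt_improved2}. The hard part will be verifying that the homogenization exponent $\beta$ arising from Theorems~\ref{t:homog_dirich} and~\ref{t:regularity} applied to a boundary datum with gradient of size $\sim 1/R'$ really satisfies $\beta \ge \tfrac12$, so that $R'^{-\beta}$ dominates $|u-v|^{-1/2}$ uniformly over $|u-v| \in [R, R']$; this is where the large-scale $C^{0,1}$-regularity of $\A$-harmonic functions from \cite{AD18} has to be invoked to sharpen the off-the-shelf homogenization bound, and it is also where the separation of the singular part of the Dirichlet Green function near $v^*$ from the harmonic remainder requires care.
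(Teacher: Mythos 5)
Your overall architecture (set $\mathsf{K}'_v=\mathsf{K}_{v^*}$, handle the projection $v\mapsto v^*$ via Lemma \ref{l:dist_to_cluster}, and localize by working in a box of radius $R'$ so that the non-local input enters only through the rare event $\{\Rscale^{\mathrm{Green}}_{v^*}>R'\}$) is sound, but the core quantitative step does not go through with the tools available. Your plan requires a \emph{pointwise} comparison of the finite-volume Green's function $G^\A_{Q_{R'}(v)}(\cdot,v^*)$ (and of the $\A$-harmonic extension $V^{R'}$) with its continuous counterpart, with error at most $\frac{1}{\oA|u-v|^{1/2}}$ uniformly over $R\le|u-v|\le R'$ — in particular error $\lesssim R'^{-1/2}$ when $|u-v|$ is comparable to $R'$. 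Theorem \ref{t:homog_dirich} only yields an $L^2$ error $CN^{2-\ep_{\mathrm{Homog}}}\|\nabla F\|_{L^\infty}$ with an unspecified small $\ep_{\mathrm{Homog}}$, and Theorem \ref{t:regularity} (large-scale $C^{0,1}$ regularity) converts $L^2$ to pointwise bounds without improving the rate — exactly as in Lemma \ref{l:homog_dirich_improv}, where the final gain is only $N^{-\ep_{\mathrm{Homog'}}/2}$. So the exponent $\beta$ you flag cannot be pushed to $1/2$ within this framework; achieving rate $1/2$ for the Dirichlet Green's function is essentially the content of the Dario--Gu result that Theorem \ref{t:green_asympt} already encapsulates, not something re-derivable from \eqref{e:homog_dirich}. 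Worse, your proposed treatment of the singularity (``subtracting a local comparison built from Theorem \ref{t:green_asympt} at an intermediate scale around $v^*$'') is circular in the locality argument: that comparison is only valid at distances $\ge\Rscale^{\mathrm{Green}}_{v^*}$, and on your approximating event you have only excluded $\Rscale^{\mathrm{Green}}_{v^*}>R'$, so in the regime $R<\Rscale^{\mathrm{Green}}_{v^*}\le R'$ you have no control on $G^\A_{Q_{R'}(v)}(\cdot,v^*)$ at distances in $[R,\Rscale^{\mathrm{Green}}_{v^*})$, which is precisely where the conclusion $\Rscale^{\mathrm{Green}'}_v\le R$ needs it. (A smaller, fixable issue: points $u$ with $|u-v|$ near $R'$ sit close to $\partial Q_{R'}(v)$, where the pointwise homogenization statement of Lemma \ref{l:homog_dirich_improv} does not apply.)

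The paper avoids homogenizing anything locally. The local event is \emph{defined} as the statement that the Dirichlet Green's function on the Euclidean ball $B_{R'}(v)$ itself obeys the logarithmic asymptotics with \emph{some} constant $K$ for all $u$ with $R\le|u-v|\le R'$ (intersected with the cluster-locality and $\Rscale^{\mathrm{Dist}}$ events); this is manifestly $\F_{Q_{9R'}(v)}$-measurable and needs no rate of homogenization to formulate. Its probability is then bounded below by transferring the full-space asymptotics of Theorem \ref{t:green_asympt} (valid when $\Rscale^{\mathrm{Green}}_{v^*}\le R/2$) to the ball Green's function via the maximum principle, using only that the oscillation of $G^\A(\cdot,v^*)$ on the boundary annulus of $B_{R'}(v)$ is $O(R'^{-3/4})$, and \eqref{e:green_asympt_improved2} is a maximum-principle transfer in the opposite direction. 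If you want a proof along your lines, you would either have to import a quantitative finite-volume Green's function estimate at rate $1/2$ (i.e.\ strengthen the input, not derive it from Theorem \ref{t:homog_dirich}), or switch to the softer maximum-principle localization just described.
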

In other words, the bad event $\Rone_v>R$ implies that either the local bad event $ (\Ev^{R,R',\mathrm{Green}'}_v)^\complement$ or a very unlikely event occurs.

\begin{proof}
We take 
\begin{align*}
\Rscale^{\mathrm{Green'}}_v=\Rscale^{\mathrm{Green}}_{v^*}\vee(\Rscale^{\mathrm{Dist}}_v)^4\vee C', \qquad 
\mathsf{K}'_v=\mathsf{K}_{v^*},
\end{align*}
with the objects on the right-hand sides of the equality
as in Theorem \ref{t:green_asympt} and \eqref{eq-Rvdist},
and $C'$ is a constant to be chosen shortly. We also set $s$ as the minimum of the exponents $s$ in the quoted results.

Let us first show that with this choice we have \eqref{e:green_asympt_improved}. From Theorem \ref{t:green_asympt} we know that
\begin{equation}\label{e:green_asympt_improved3}
\left|G^\A(u^*,v^*)+\frac{1}{2\pi\oA}\log|u^*-v^*|-\mathsf{K}_{v*}\right|\le\frac{1}{|u^*-v^*|^{3/4}}.
\end{equation}
The assumption $|u-v|\ge\Rscale^{\mathrm{Green'}}_v$ implies that
\[|v-v^*|_\infty=\Rscale^{\mathrm{Dist}}_v\le(\Rscale^{\mathrm{Green'}}_v)^{1/4}\le|u-v|^{1/4}.\]
Similarly, if $|u-v|\ge\Rscale^{\mathrm{Green'}}_v$ holds then
\[|u-u^*|_\infty\le|u-v|^{1/4},\]
and the same is trivially true if $u\in\CC_\infty$. So under the given assumptions on $u,v$ we have in any case that
\[\left|\frac{|u^*-v^*|}{|u-v|}-1\right|\le\frac{2\sqrt{2}}{|u-v|^{3/4}}\]
and hence \eqref{e:green_asympt_improved3} implies
\[\left|G^\A(u^*,v^*)+\frac{1}{2\pi\oA}\log|u-v|-\mathsf{K}'_v\right|\le\frac{C}{|u-v|^{3/4}}.\]
If we choose $C'$ large enough, this implies \eqref{e:green_asympt_improved}.

Regarding $\Ev^{R,R',\mathrm{Green'}}_v$, a first attempt might be to define
\[\Ev^{R,R',\mathrm{Green'}}_v=\left\{\Rscale^{\mathrm{Green'}}_v\le R\text{ or }\Rscale^{\mathrm{Green'}}_v> R'\right\}\cap \Ev_v^{R',\mathrm{Clust}}\cap \left\{\Rscale^{\mathrm{Dist}}\le R^{1/4}\right\}\cap \Ev_v^{R^{1/4},\mathrm{Clust}}.\]
With this definition \eqref{e:green_asympt_improved1} and \eqref{e:green_asympt_improved2} would easily follow. However, the event is not local. The problem is that $\left\{\Rscale^{\mathrm{Green}}_v\le R\right\}$ depends on all of $\CC_\infty$, not just on $\CC_\infty$ intersected with some large box.

So we need to do another approximation step. We let $B_{r}(v)=\{u\in\Z^2\colon |u-v|\le r\}$ be the intersection of $\Z^2$ with an (Euclidean) ball of radius $r$ centered at $v$, and define the event $\Ev^{R,R',\mathrm{Green'}}_v$ as follows: We let $\Ev^{R,R',\mathrm{Green'}}_v$ be the certain event if $R\le C''$ for some $C''$ to be determined later, and if $R>C''$ we set 
\begin{align}\label{e:green_asympt_improved4}
&\Ev^{R,R',\mathrm{Green'}}_v\nonumber\\
&=\left\{\exists K\text{ s.t. }\left|G^\A_{B_{R'}(v)}(u,v^*)+\frac{1}{2\pi\oA}\log|u-v|-K\right|\le\frac{1}{\oA|u-v|^{1/2}}\ \forall u\in B_v(R')\text{ with }|u-v|\ge R\right\}\nonumber\\
&\qquad\cap\Ev_v^{R',\mathrm{Clust}}\cap \left\{\Rscale^{\mathrm{Dist}}_{v}\le R^{1/4}\right\}\cap \Ev_v^{R^{1/4},\mathrm{Clust}}.
\end{align}
By the same argument as in the proof of Lemma \ref{l:dist_to_cluster} we have $\Ev^{R,R',\mathrm{Green}}_v\in\F_{Q_{9R'}(v)}$.

To see \eqref{e:green_asympt_improved1}, observe that we know
\[\PP\left(\Ev_v^{R',\mathrm{Clust}}\cap \left\{\Rscale^{\mathrm{Dist}}_{v}\le R^{1/4}\right\}\cap \Ev_v^{R^{1/4},\mathrm{Clust}}\right)\ge 1-C\e^{-R^s/C}\]
from Lemmas \ref{l:local_approx_cluster} and  \ref{l:dist_to_cluster}. 
So it suffices to show that the first event on the right-hand side of \eqref{e:green_asympt_improved4} also occurs with probability at least $1-C\e^{-R^s/C}$. In fact, we will show that this event occurs whenever 
$\Rscale^{\mathrm{Green}}_{v^*}\le {R}/{2}$ and $(\Rscale^{\mathrm{Dist}}_v)^4\le R$ (which by Theorem \ref{t:green_asympt} and Lemma \ref{l:local_approx_cluster} is sufficient).
Indeed, if $\Rscale^{\mathrm{Green}}_{v^*}\le {R}/{2}$, then there is a constant $K=K_{v^*}$ such that
\[\left|G^\A(u,v^*)+\frac{1}{2\pi\oA}\log|u-v^*|-K\right|\le\frac{1}{|u-v^*|^{3/4}}\ \forall u\in \CC_\infty\text{ with }|u-v^*|\ge \frac{R}{2}.\]
By the same argument as earlier in the proof, we know that if $u\in\CC_\infty$ and $|u-v|\ge \vee\Rscale^{\mathrm{Green}'}_v$ then $\vee|v-v^*|\le|u-v|^{1/4}$, and so also
\[\left|G^\A(u,v^*)+\frac{1}{2\pi\oA}\log|u-v|-K\right|\le\frac{C}{|u-v|^{3/4}}\ \forall u\in \CC_\infty\text{ with }|u-v|\ge R.\]
In particular, on $B_{R'}(v)\setminus B_{R'-1}(v)$, the oscillation of 
$G^\A(\cdot,v^*)$ is at most ${C}/{R'^{3/4}}$. Now we can employ the maximum principle on the domain $B_{R'}(v)$ with comparison function $G^\A(\cdot,v)-c_\pm$ for suitable $c_\pm$ to conclude that
\[\left|G^\A_{B_v(R')}(u,v^*)+\frac{1}{2\pi\oA}\log|u-v|-K'\right|\le\frac{1}{|u-v|^{3/4}}+\frac{C}{R'^{3/4}}\le \frac{C}{|u-v|^{3/4}}\]
for some $K'$ (which can be taken as $K'=K_{v^*}+(\log R')/2\pi\A$)
whenever $u\in B_{R'}(v)$ with $|u-v|\ge R$. As soon as $C''$ is large bounded, the right-hand side here is bounded by ${1}/{|u-v|^{1/2}}$, and so indeed $\Ev^{R,R',\mathrm{Green}}_v$ occurs.

Regarding \eqref{e:green_asympt_improved2} it suffices to check that if $\Ev^{R,R',\mathrm{Green'}}_v$ occurs and $\Rscale^{\mathrm{Green'}}_v\ge R$, then already $\Rscale^{\mathrm{Green'}}_v\ge R'$. This claim follows again from the maximum principle on $B_v(R')$ by a very similar argument.
\end{proof}

In Theorem \ref{t:homog_dirich} the locality of the relevant event is much easier to show. Here the main challenge is to show  a pointwise estimate instead of just an $L^2$ error estimate. Recall that $\bar\nabla$ and $\bar \Delta$ denote,
respectively, the continuous gradient and Laplacian operators.

\begin{lemma}\label{l:homog_dirich_improv}
For each $p>1/2$ and each ${\Lambda^+}/{\Lambda^-}\ge1$ there are $C>0$, $\ep_{\mathrm{Homog'}}>0$ and random variables $\Rscale^{\mathrm{Homog'}}_w\in\N$ and $\Rscale^{\mathrm{Regul'}}_v\in\N$ indexed by $w,v\in\Z^2$ respectively, such that the following holds:
Let $w\in\Z^2$, $N\ge \Rtwo_w$, 
 let $F\in C^{1,1}(w+(-1,N)^2,\R)$, and let $U\colon\CC_\infty\cap \hat V_N(w)\to\R$ be the unique solution of the discrete elliptic equation
\begin{alignat*}{2}
	\begin{aligned}
		-\Delta_\A U&=0 && \text{in }\CC_\infty\cap V_N(w),\\
		U&=F && \text{on }\CC_\infty\cap\partial^+ V_N(w).
	\end{aligned}	
	\end{alignat*}
Furthermore let $\hat U\colon w+(-1,N)^2\to\R$ be the unique solution of the continuous elliptic equation
\begin{alignat*}{2}
	\begin{aligned}
		-\oA\bar\Delta \hat U&=0 && \text{in }w+(-1,N)^2,\\
		\hat U&=F && \text{on }\partial(w+(-1,N)^2).
	\end{aligned}	
	\end{alignat*}
Then for every $\delta>0$ there are constants $C_\delta$, $N_\delta$ such that if $N\ge N_\delta$ then for any $v\in V_N^\delta(w)$ with $\Rscale^{\mathrm{Regul'}}_v\le\delta N$ we have the estimate
\begin{equation}\label{e:homog_dirich_improv}
|U(v^*)-\hat U(v^*)|\le C_\delta\left( N^{1-\ep_{\mathrm{Homog'}}}(\Rscale^{\mathrm{Regul'}}_v)^{1/2}+(\Rscale^{\mathrm{Regul'}}_v)^2\right)\|\bar\nabla F\|_{\bar L^\infty(w+(-1,N)^2)}.
\end{equation}

In addition, for any $w,v\in\Z^2$ and any $R,R'\in\N$ with $R\le {R'}/{2}$ there are events $\Ev^{R,R',\mathrm{Homog'}}_w\in\F_{Q_{9R'}(w)}$ and $\Ev^{R,R',\mathrm{Regul}}_v\in\F_{Q_{9R'}(v)}$ such that, with uniform $s>0$, 
\begin{align}
\pp(\Ev^{R,R',\mathrm{Homog'}}_w)&\ge 1-C\e^{-R^{s}/C}\label{e:homog_dirich_improv1},\\
\pp(\Ev^{R,R',\mathrm{Regul'}}_v)&\ge 1-C\e^{-R^{s}/C}\label{e:homog_dirich_improv2},
\end{align}
and 
\begin{align}
\pp(\Rscale^{\mathrm{Homog'}}_w>R,\Ev^{R,R',\mathrm{Homog'}}_w)&\le C\e^{-R^{s}/C},\label{e:homog_dirich_improv3}\\
\pp(\Rscale^{\mathrm{Regul'}}_v>R,\Ev^{R,R',\mathrm{Regul'}}_v)&\le C\e^{-R^{s}/C}.\label{e:homog_dirich_improv4}
\end{align}
\end{lemma}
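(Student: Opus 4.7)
\medskip

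\noindent
\textbf{Proof plan.} I would build $\Rscale^{\mathrm{Homog}'}_w$ as the maximum of $\Rscale^{\mathrm{Homog}}_w$, $\Rscale^{\mathrm{Dist}}_w$ and $\Rscale^{\mathrm{Part},\ep_0}_{w'}$ ranging over $w'$ near $\partial V_N(w)$ for some small $\ep_0\in(0,\ep_{\mathrm{Homog}})$, and build $\Rscale^{\mathrm{Regul}'}_v$ as the maximum of $\Rscale^{\mathrm{Dist}}_v$, $\Rscale^{\mathrm{Regul}}_{v^*}$ and $\Rscale^{\mathrm{Dens}}_{v^*}$. The local events $\Ev^{R,R',\mathrm{Homog}'}_w$ and $\Ev^{R,R',\mathrm{Regul}'}_v$ are the intersections of the constituent events from Theorem \ref{t:homog_dirich} and Lemmas \ref{l:local_approx_cluster}, \ref{l:dist_to_cluster}, \ref{l:maximal_size_P}, \ref{l:density_cluster} and Theorem \ref{t:regularity}; the tail and locality bounds \eqref{e:homog_dirich_improv1}--\eqref{e:homog_dirich_improv4} then follow by a union bound, with the uniform $s>0$ taken as the minimum of the exponents supplied by those results.

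\medskip

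\noindent
The first stage is to replace the mollified boundary data in Theorem \ref{t:homog_dirich} by $F$ itself. Since $F\in C^{1,1}$, for any partition cube $Q\in\mathcal P$ meeting $\partial(w+(-1,N)^2)$ with $\ell(Q)\le N^{\ep_0}$ one has $\|F-[F]_{\mathcal P}*\eta\|_{\bar L^\infty(Q)}\le CN^{\ep_0}\|\bar\nabla F\|_{\bar L^\infty}$; Lemma \ref{l:maximal_size_P} ensures this bound holds uniformly along the boundary under the assumption $N\ge\Rscale^{\mathrm{Part},\ep_0}_{w'}$ for all relevant $w'$, so the continuous maximum principle yields $\|\hat U-\bar U\|_{\bar L^\infty(w+(-1,N)^2)}\le CN^{\ep_0}\|\bar\nabla F\|_{\bar L^\infty}$. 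Combined with Theorem \ref{t:homog_dirich} via the triangle inequality, and choosing $\ep_0$ small enough (e.g.\ $\ep_0<\ep_{\mathrm{Homog}}/2$), this produces the global $L^2$ estimate
\begin{equation}\label{eq:plan-L2}
\|U-\hat U\|_{L^2(\CC_\infty\cap V_N(w))}\le CN^{2-\ep_1}\|\bar\nabla F\|_{\bar L^\infty}
\end{equation}
for some $\ep_1>0$.

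\medskip

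\noindent
The second stage, and the main technical difficulty, is the pointwise upgrade at $v^*$. Set $\phi:=U-\hat U$, so that $\phi\equiv 0$ on $\partial^+V_N(w)\cap\CC_\infty$ and $-\Delta_\A\phi=g$ on $V_N(w)\cap\CC_\infty$ with $g(x):=\Delta_\A\hat U(x)$. Continuous interior regularity for $\hat U$ furnishes $\|\bar\nabla^k\hat U\|_{\bar L^\infty(V_N^\delta(w))}\le C_{k,\delta}N^{-(k-1)}\|\bar\nabla F\|_{\bar L^\infty}$, so a Taylor expansion of $\hat U$ to second order at each $x\in V_N^\delta(w)\cap\CC_\infty$ yields $|g(x)|\le C\|\bar\nabla F\|_{\bar L^\infty}$ pointwise (the dominant contribution being the first-order antisymmetric term, which does not vanish because the conductances at edges incident to $x$ are not equal). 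Writing $\rho:=\Rscale^{\mathrm{Regul}'}_v$ and decomposing $\phi=\tilde\phi_\rho+\psi_\rho$ on $Q_\rho(v^*)\cap\CC_\infty$, where $\tilde\phi_\rho$ is the $-\Delta_\A$-harmonic extension of $\phi|_{\partial^+Q_\rho(v^*)\cap\CC_\infty}$ and $\psi_\rho=G^\A_{Q_\rho(v^*)}g$, the expected exit time bound $\sum_y G^\A_{Q_\rho(v^*)}(v^*,y)\le C\rho^2$ gives $|\psi_\rho(v^*)|\le C\rho^2\|\bar\nabla F\|_{\bar L^\infty}$, accounting for the $\rho^2$ term in \eqref{e:homog_dirich_improv}.

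\medskip

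\noindent
To control $\tilde\phi_\rho(v^*)$, which is where the delicate $N^{1-\ep_{\mathrm{Homog}'}}\rho^{1/2}$ term originates, I would combine the oscillation-decay estimate \eqref{e:regularity} (valid since $\rho\ge\Rscale^{\mathrm{Regul}}_{v^*}$) with the density lower bound $|Q_\rho(v^*)\cap\CC_\infty|\ge\rho^2/C$ from Lemma \ref{l:density_cluster} and a trace-averaging argument: averaging $\|\phi\|_{L^2(\partial^+Q_r(v^*))}^2$ over $r$ in a dyadic window around $\rho$ and using \eqref{eq:plan-L2} produces a good slice $r$ on which the boundary trace of $\phi$ is controlled. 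Writing $\tilde\phi_\rho(v^*)$ as the Poisson integral of $\phi$ against the discrete harmonic measure on $\partial^+Q_\rho(v^*)$ and applying Cauchy--Schwarz, the $L^2$-norm of the harmonic measure (of order $\rho^{-1/2}$ in two dimensions) combines with the sliced trace estimate to yield the desired $\rho^{1/2}$ factor times $N^{1-\ep_{\mathrm{Homog}'}}\|\bar\nabla F\|_{\bar L^\infty}$. The main obstacle is the careful bookkeeping required to interweave the quantitative $L^2$ homogenization estimate, the cluster-scale $C^{0,1}$ regularity, and the averaging step so that the right exponent comes out, all while keeping the relevant random scales local in the sense of \eqref{e:homog_dirich_improv3}--\eqref{e:homog_dirich_improv4}.
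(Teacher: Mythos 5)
Your first stage (the choice of $\Rscale^{\mathrm{Homog}'}_w$ and $\Rscale^{\mathrm{Regul}'}_v$ as maxima of the scales from Theorem \ref{t:homog_dirich}, Theorem \ref{t:regularity} and Lemmas \ref{l:dist_to_cluster}, \ref{l:maximal_size_P}, \ref{l:density_cluster}, the locality and tail bounds by union bounds, and the removal of the mollified boundary data via the maximum principle and Lemma \ref{l:maximal_size_P}, yielding $\|U-\hat U\|_{L^2(\CC_\infty\cap V_N(w))}\le CN^{2-\ep_1}\|\bar\nabla F\|_{\bar L^\infty}$) is exactly the paper's Step 1. The pointwise upgrade, however, has two genuine gaps. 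First, the exit-time bound $\sum_y G^\A_{Q_\rho(v^*)}(v^*,y)\le C\rho^2$ is not a deterministic fact on a percolation cluster: if $v^*$ sits at the end of a pipe of length of order $\rho^2$ inside $Q_\rho(v^*)$, the expected exit time is of order $\rho^4$ (compare \eqref{e:estgreen8}, which is only quadratic in the chemical distance). Such an estimate holds only above a further random scale with its own tail and locality properties, and none of the scales you placed into $\Rscale^{\mathrm{Regul}'}_v$ (namely $\Rscale^{\mathrm{Dist}}_v$, $\Rscale^{\mathrm{Regul}}_{v^*}$, $\Rscale^{\mathrm{Dens}}_{v^*}$) supplies it, so the $\rho^2$ term is not justified as stated.

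Second, and more seriously, your treatment of the harmonic part does not produce the claimed exponent. Slicing your global $L^2$ bound over radii in a dyadic window around $\rho$ gives a good sphere with $\|U-\hat U\|_{L^2(\partial^+Q_r(v^*))}\le C\rho^{-1/2}N^{2-\ep_1}\|\bar\nabla F\|_{\bar L^\infty}$, and Cauchy--Schwarz against a harmonic measure of $\ell^2$-norm of order $\rho^{-1/2}$ (which is itself not deterministic on the cluster) then only yields $|\tilde\phi_\rho(v^*)|\le C\rho^{-1}N^{2-\ep_1}\|\bar\nabla F\|_{\bar L^\infty}$; for small $\rho$ this is of order $N^{2-\ep_1}$ and far exceeds the target $N^{1-\ep_{\mathrm{Homog}'}}\rho^{1/2}$. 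The missing ingredient is an intermediate mesoscopic scale $S$: the paper compares $U(v)$ and $\hat U(v)$ with their averages over $\CC_\infty\cap Q_S(v)$, uses the quadratic decay \eqref{e:regularity} iterated in Campanato fashion (together with the density bound of Lemma \ref{l:density_cluster}) to obtain $|U(v)-(U)_{\CC_\infty\cap Q_S(v)}|\le C_\delta\,\Rscale^{\mathrm{Regul}'}_v S\,\|\bar\nabla F\|_{\bar L^\infty}$, Schauder estimates for $\hat U$, and the $L^2$ bound with Cauchy--Schwarz to get $|(U)_{\CC_\infty\cap Q_S(v)}-(\hat U)_{\CC_\infty\cap Q_S(v)}|\le CN^{2-\ep_{\mathrm{Homog}}}S^{-1}\|\bar\nabla F\|_{\bar L^\infty}$, and then optimizes $S\sim N^{1-\ep_{\mathrm{Homog}}/2}(\Rscale^{\mathrm{Regul}'}_v)^{-1/2}$ to arrive at \eqref{e:homog_dirich_improv}. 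You cite \eqref{e:regularity} but never actually deploy this two-scale mechanism; anchored at the single scale $\rho$ the exponents cannot come out, and moving your decomposition $\phi=\tilde\phi+G^\A_{Q}\,\Delta_\A\hat U$ to a mesoscopic box does not help either, since there the source term contributes at least of order $S\|\bar\nabla F\|_{\bar L^\infty}$, which is again too large.
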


\begin{proof}
In order to improve the $L^2$-estimate from Theorem \ref{t:homog_dirich} into a pointwise estimate, we will use the regularity of $\hat U$ (which follows from standard Schauder estimates) and of $U$ (which follows from Theorem \ref{t:regularity}). More precisely, we use these regularity results to show that if $U(v)$ and $\hat U(v)$ were far apart, then the averages of $U$ and of $\hat U$ over $Q_S(v)$ for some mesoscopic length-scale $S$ would still be be far apart, in contradiction to the $L^2$-estimate.

\emph{Step 1: Preliminaries and $L^2$-estimate}\\
We let $\ep_{\mathrm{Homog'}}={\ep_{\mathrm{Homog}}}/{2}$ with the $\ep_{\mathrm{Homog}}$ from Theorem \ref{t:homog_dirich}.
We take 
\begin{align*}
\Rscale^{\mathrm{Homog'}}_w&=\Rscale^{\mathrm{Homog}}_w\vee \Rscale^{\mathrm{Part},\ep_{\mathrm{Homog}}}_w, \qquad
\Rscale^{\mathrm{Regul'}}_v=\Rscale^{\mathrm{Regul}}_{v^*}\vee\Rscale^{\mathrm{Dens}}_{v^*}\vee\Rscale^{\mathrm{Dist}}_v,
\end{align*}
where the random scales on the right-hand sides are those from Lemma \ref{l:dist_to_cluster}, Lemma \ref{l:maximal_size_P}, Lemma \ref{l:density_cluster}, Theorem \ref{t:homog_dirich} and Theorem \ref{t:regularity}. We take $s$ as the minimum of the exponents $s$ in the quoted results.
We also define
\begin{align*}
\Ev^{R,R',\mathrm{Homog'}}_w&=\Ev^{R,R',\mathrm{Homog}}_w\cap \Ev^{R,R',\mathrm{Part},\ep_{\mathrm{Homog}}}_w,\\
\Ev^{R,R'\mathrm{Regul'}}_v&=\Ev^{R,R',\mathrm{Regul}}_v\cap \Ev^{R,R',\mathrm{Dens}}_{v}\cap\left\{(\Rscale^{\mathrm{Dist}})^4\le R\right\}.
\end{align*}
With these definitions, \eqref{e:homog_dirich_improv1} and \eqref{e:homog_dirich_improv3} and the fact that $\Ev^{R,R',\mathrm{Homog'}}_w\in\F_{w+[-9R',9R']^2}$ follow directly from Theorem \ref{t:homog_dirich} and Theorem \ref{t:regularity}. Similarly, \eqref{e:homog_dirich_improv2} and \eqref{e:homog_dirich_improv4} and $\Ev^{R,R',\mathrm{Regul}}_v\in\F_{Q_{9R'}(v)}$ follow from Lemma \ref{l:dist_to_cluster}, Lemma \ref{l:maximal_size_P} and Lemma \ref{l:density_cluster}.


The main challenge of the proof is to show \eqref{e:homog_dirich_improv}. For that purpose, we can assume that $v=v^*$.
In this first step, we control the difference between $U$ and $\hat U$ in $L^2$. We claim that
\begin{equation}\label{e:homog_dirich_improv5}
  \|U-\hat U\|_{L^2(\CC_\infty\cap V_N(w))}\le CN^{2-\ep_{\mathrm{Homog}}}\|\bar\nabla F\|_{\bar L^\infty(w+(-1,N)^2)}.
\end{equation}
With $\bar U$ as in the statement of Theorem \ref{t:homog_dirich}, we know that
\begin{equation}\label{e:homog_dirich_improv6}
  \|U-\bar U\|_{L^2(\CC_\infty\cap V_N(w))}\le CN^{2-\ep_{\mathrm{Homog}}}\|\nabla F\|_{ L^\infty(\hat V_N(w))}\le
CN^{2-\ep_{\mathrm{Homog}}}\|\bar\nabla F\|_{\bar L^\infty(w+(-1,N)^2)},
\end{equation}
and so it suffices to control $\|\bar U-\hat U\|_{L^2(\CC_\infty\cap V_N(w))}$. This can be done using the maximum principle for the Laplacian. Indeed, $\bar U-\hat U$ is a solution of the (discrete) elliptic equation
\begin{alignat*}{2}
	\begin{aligned}
		-\oA\Delta (\bar U-\hat U)&=0 && \text{in }w+(-1,N)^2\\
		\bar U-\hat U&=[F]_{\mathcal{P}}*\eta -F && \text{on }\partial(w+(-1,N)^2)
	\end{aligned}	
\end{alignat*}
and so the (discrete) maximum principle implies that
\begin{align*}
\|\bar U-\hat U\|_{L^2(\CC_\infty\cap V_N(w))}&\le N\|\bar U-\hat U\|_{L^\infty(\CC_\infty\cap V_N(w))}\\
&\le N\sup_{x\in\partial(w+(-1,N)^d)}\left|([F]_{\mathcal{P}}*\eta)(x) -F(x)\right|\\
&\le CN\max_{\substack{Q\in\mathcal{P}\\ Q\cap V_N(w)\neq\varnothing}}\ell(Q)\|\bar\nabla F\|_{\bar L^\infty(w+(-1,N)^2)}.
\end{align*}
From Lemma \ref{l:maximal_size_P} we know that the maximum on the right-hand side is at most $N^{\ep_{\mathrm{Homog'}}}$, which is much less than $N^{1-\ep_{\mathrm{Homog'}}}$. So, together with \eqref{e:homog_dirich_improv6}, we have shown \eqref{e:homog_dirich_improv5}.

\emph{Step 2: Regularity of $U$ and $\hat U$}\\
Our task now is to improve the $L^2$-estimate \eqref{e:homog_dirich_improv5} to a pointwise estimate. For that purpose we use the knowledge that both $U$ and $\hat U$ are solutions of elliptic equations, and so we expect them to be regular on small scales, meaning that if they were far apart at some point $u$, then the $L^2$-norm of their difference would also be large.

To make this rigorous, recall that $(U)_A$ denotes the average of $U$ on 
a set $A$. We claim that for any $S$ with $\Rscale^{\mathrm{Regul'}}_v\le S\le \delta N$ we have
\begin{align}
\left\|U-(U)_{\CC_\infty\cap Q_S(v)}\right\|_{L^2(\CC_\infty\cap Q_{\Rscale^{\mathrm{Regul'}}_v}(v))}&\le C_\delta \Rscale^{\mathrm{Regul'}}_v S\|\bar\nabla F\|_{\bar L^\infty(w+(-1,N)^2)},\label{e:regularity_U}\\
\left\|\hat U-(\hat U)_{\CC_\infty\cap Q_S(v)}\right\|_{L^2(\CC_\infty\cap Q_{\Rscale^{\mathrm{Regul'}}_v}(v))}&\le C_\delta \Rscale^{\mathrm{Regul'}}_v S\|\bar\nabla F\|_{\bar L^\infty(w+(-1,N)^2)}.\label{e:regularity_hatU}
\end{align}

We begin by establishing \eqref{e:regularity_hatU}. This follows easily from standard elliptic regularity theory. Indeed, by the maximum principle we have
\begin{align*}
\left\|\hat U-(\hat U)_{w+(-1,N)^2}\right\|_{\bar L^2(w+(-1,N)^2)}&\le N\left(\sup_{x\in(w+[-1,N]^2)}\hat U(x)-\inf_{x\in\CC_\infty\cap V_N(w)}\hat U(x)\right)\\
&\le N\left(\sup_{x\in\partial(w+[-1,N]^2)}F(x)-\inf_{x\in\partial(w+[-1,N]^2)}F(x)\right)\\
&\le CN^2\|\bar\nabla F\|_{\bar L^\infty(w+(-1,N)^2)}
\end{align*}
and by interior Schauder estimates for $-\bar\Delta$ we then have
\[
\left\|\bar\nabla\hat U\right\|_{\bar L^\infty(w+(\delta N,(1-\delta)N)^2)}\le\frac{C_\delta}{N^2}\left\|\hat U-(\hat U)_{w+[-1,N]^2}\right\|_{\bar L^2(w+[-1,N]^2)}\le C_\delta \|\bar\nabla F\|_{\bar L^\infty(w+(-1,N)^2)}.
\]
This means in particular that
\[\sup_{x\in Q_S(v)}U(x)-\inf_{x\in Q_S(v)}U(x)\le C_\delta S\|\bar\nabla F\|_{\bar L^\infty(w+(-1,N)^2)},\]
which easily implies \eqref{e:regularity_hatU}.

For \eqref{e:regularity_U} we cannot proceed like this, because we do not have Schauder estimates for $-\A\Delta$. Instead we will make use of Theorem \ref{t:regularity}, which can be thought of as a large-scale $C^{0,1}$-estimate. But as we do not actually control the gradient of $U$, but only $L^2$-norms on various scales, the proof of \eqref{e:regularity_U} will be quite technical. The  following argument is similar to the well-known proof that Campanato spaces embed into H\"{o}lder spaces. 

Note first that by the (discrete) maximum principle we have
\begin{align*}
\left\|U-(U)_{\CC_\infty\cap V_N(w))}\right\|_{L^2(\CC_\infty\cap V_N(w))}&\le N\left(\sup_{u\in\CC_\infty\cap V_N(w)}U(u)-\inf_{u\in\CC_\infty\cap V_N(w)}U(u)\right)\\
&\le N\left(\sup_{x\in\partial(w+[-1,N]^2)}F(x)-\inf_{x\in\partial(w+[-1,N]^2)}F(x)\right)\\
&\le CN^2\|\bar\nabla f\|_{\bar L^\infty(w+(-1,N)^2)}
\end{align*}
and so in particular, by Theorem \ref{t:regularity} and \eqref{e:monotonicity_L2},
\begin{equation}\label{e:homog_dirich_improv7}
\begin{split}
\left\|U-(U)_{\CC_\infty\cap Q_S(v)}\right\|_{L^2(\CC_\infty\cap Q_S(v))}&\le\frac{CS^2}{(\delta N)^2}\left\|U-(U)_{\CC_\infty\cap Q_{\delta N}(v)}\right\|_{L^2(\CC_\infty\cap Q_{\delta N}(v))}\\
&\le\frac{CS^2}{(\delta N)^2}\left\|U-(U)_{\CC_\infty\cap V_N(w))}\right\|_{L^2(\CC_\infty\cap V_N(w))}\\
&\le C_\delta S^2\|\bar\nabla F\|_{\bar L^\infty(w+(-1,N)^2)}.
\end{split}
\end{equation}
If now $\Rscale^{\mathrm{Regul'}}_v\ge {S}/{2}$, then \eqref{e:regularity_U} follows from another application of \eqref{e:monotonicity_L2}. So in the following we assume $\Rscale^{\mathrm{Regul'}}_v\le{S}/{2}$.

Let $k_0=\left\lfloor\log_2\left(\frac{S}{\Rscale^{\mathrm{Regul'}}_v}\right)\right\rfloor$ and note that 
$k_0\ge1$. Pick some $k\in\{0,1,\ldots,k_0\}$. Applying Theorem \ref{t:regularity} with length-scales $S$ and $2^{-k}S$ and using \eqref{e:homog_dirich_improv7} we find
\begin{equation}\label{e:homog_dirich_improv8}
\begin{split}
  \left\|U-(U)_{\CC_\infty\cap Q_{2^{-k}S}(v))}\right\|_{L^2(\CC_\infty\cap Q_{2^{-k}S}(v))}&\le C\frac{2^{-2k}S^2}{(\delta N)^2}N^2\|\bar\nabla F\|_{\bar L^\infty(w+(-1,N)^2)}\\
&=C_\delta2^{-2k}S^2\|\bar\nabla F\|_{\bar L^\infty(w+(-1,N)^2)}.
\end{split}
\end{equation}
Let now $k\in\{0,1,\ldots,k_0-1\}$. Using \eqref{e:homog_dirich_improv8} for $k$ and $k+1$ as well as the lower bound on the cluster density from Lemma \ref{l:density_cluster} and \eqref{e:monotonicity_L2}, we can now estimate that
\begin{align*}
&\left|(U)_{\CC_\infty\cap Q_{2^{-k}S}(v)}-(U)_{\CC_\infty\cap Q_{2^{-k-1}S}(v)}\right|\\
&\le\frac{1}{2^{-k}S}\left\|(U)_{\CC_\infty\cap Q_{2^{-k}S}(v)}-(U)_{\CC_\infty\cap Q_{2^{-k-1}S}(v)}\right\|_{L^2(\CC_\infty\cap Q_{2^{-k-1}S}(v))}\\
&\le\frac{1}{2^{-k}S}\Big(\left\|U-(U)_{\CC_\infty\cap Q_{2^{-k}S}(v)}\right\|_{L^2(\CC_\infty\cap Q_{2^{-k}S}(v))}
+\left\|U-(U)_{\CC_\infty\cap Q_{2^{-k-1}S}(v)}\right\|_{L^2(\CC_\infty\cap Q_{2^{-k-1}S}(v))}\Big)\\
&\le C_\delta 2^{-k}S\|\bar\nabla F\|_{\bar L^\infty(w+(-1,N)^2)}.
\end{align*}
We can sum this estimate over $k\in\{0,1,\ldots,k_0-1\}$ and obtain that
\begin{equation}\label{e:homog_dirich_improv9}
  \left|(U)_{\CC_\infty\cap Q_{2^{-k_0}S}(v)}-(U)_{\CC_\infty\cap Q_S(v)}\right|\le C_\delta S\|\bar\nabla F\|_{\bar L^\infty(w+(-1,N)^2)}.
\end{equation}
Next, we combine \eqref{e:homog_dirich_improv9} with \eqref{e:homog_dirich_improv8} (for $k=k_0$) and use that $\Rscale^{\mathrm{Regul'}}_v\le 2^{-k_0}S\le2\Rscale^{\mathrm{Regul'}}_v$ to see that
\begin{align*}
&\left\|U-(U)_{\CC_\infty\cap Q_S(v)}\right\|_{L^2(\CC_\infty\cap Q_{\Rscale^{\mathrm{Regul'}}_v}(v))}\\
&\le\left\|U-(U)_{\CC_\infty\cap Q_S(v)}\right\|_{L^2(\CC_\infty\cap Q_{2^{-k_0}S}(v))}\\
&\le\left\|U-(U)_{\CC_\infty\cap Q_{2^{-k_0}S}(v)}\right\|_{L^2(\CC_\infty\cap Q_{2^{-k_0}S}(v))}
+2^{-k_0}S\left|(U)_{\CC_\infty\cap Q_{2^{-k_0}S}(v)}-(U)_{\CC_\infty\cap Q_S(v)}\right|\\
&\le C_\delta2^{-2k_0}S^2\|\bar\nabla F\|_{\bar L^\infty(w+(-1,N)^2)}+C_\delta 2^{-k_0}S^2\|\bar\nabla F\|_{\bar L^\infty(w+(-1,N)^2)}\\
&\le C_\delta \Rscale^{\mathrm{Regul'}}_v S \|\bar\nabla F\|_{\bar L^\infty(w+(-1,N)^2)},
\end{align*}
which is \eqref{e:regularity_U}.

\emph{Step 3: Pointwise estimate}\\
Using the results from the previous steps, it is now easy to finish the proof of \eqref{e:homog_dirich_improv}.

Let $v\in V_N^\delta(w)$. Consider $S$ with $\Rscale^{\mathrm{Regul'}}_v\le S\le \delta N$ (which we will choose shortly). Then \eqref{e:homog_dirich_improv5} and  \eqref{e:regularity_U} imply that
\begin{align}
  \left|U(v)-(U)_{\CC_\infty\cap Q_S(v)}\right|&\le \left\|U-(U)_{\CC_\infty\cap Q_S(v)}\right\|_{L^2(\CC_\infty\cap Q_{\Rscale^{\mathrm{Regul}}_v}(v)}\le C_\delta \Rscale^{\mathrm{Regul'}}_v S\|\bar\nabla F\|_{L^\infty(w+(-1,N)^2)},\label{e:homog_dirich_improv10}\\
  \left|\hat U(v)-(\hat U)_{\CC_\infty\cap Q_S(v)}\right|&\le \left\|\hat U-(\hat U)_{\CC_\infty\cap Q_S(v)}\right\|_{L^2(\CC_\infty\cap Q_{\Rscale^{\mathrm{Regul}}_v}(v)}\le C_\delta \Rscale^{\mathrm{Regul'}}_v S\|\bar\nabla F\|_{L^\infty(w+(-1,N)^2)}.\label{e:homog_dirich_improv11}
\end{align}
Furthermore, Lemma \ref{l:density_cluster}, the Cauchy-Schwarz inequality and \eqref{e:homog_dirich_improv5} imply that 
\begin{equation}\label{e:homog_dirich_improv12}
\begin{split}
\left|(U)_{\CC_\infty\cap Q_S(v)}-(\hat U)_{\CC_\infty\cap Q_S(v)}\right|&\le \frac{C}{S^2}\|U-\bar U\|_{L^1(\CC_\infty\cap Q_S(v))}\\
&\le \frac{C}{S}\|U-\bar U\|_{L^2(\CC_\infty\cap Q_S(v))}\\
&\le \frac{CN^{2-\ep_{\mathrm{Homog}}}}{S}\|\bar\nabla F\|_{\bar L^\infty(w+(-1,N)^2)}.
\end{split}
\end{equation}
Combining \eqref{e:homog_dirich_improv10}, \eqref{e:homog_dirich_improv11} and \eqref{e:homog_dirich_improv12} we see that
\[\left|U(v)-\hat U(v)\right|\le C_\delta\left(\Rscale^{\mathrm{Regul'}}_v S+ \frac{N^{2-\ep_{\mathrm{Homog}}}}{S}\right)\|\bar\nabla F\|_{\bar L^\infty(w+(-1,N)^2)},\]
and all that remains is to optimize $S$. Choosing
\[S=\sqrt{\frac{N^{2-\ep_{\mathrm{Homog}}}}{\Rscale^{\mathrm{Regul'}}_v}}\vee \Rscale^{\mathrm{Regul'}}_v=\frac{N^{1-\ep_{\mathrm{Homog'}}}}{(\Rscale^{\mathrm{Regul'}}_v)^{1/2}}\vee \Rscale^{\mathrm{Regul'}}_v\]
we certainly have $S\ge \Rscale^{\mathrm{Regul'}}_v$, and for $N\ge N_\delta:=\delta^{-1/\ep_{\mathrm{Homog'}}}$ we also have $S\le \delta N$. Now with this choice of $S$ we finally obtain \eqref{e:homog_dirich_improv}.
\end{proof}

Lemma \ref{l:green_asympt_improved} and Lemma \ref{l:homog_dirich_improv} now 
allow us to begin with the proof of Theorem \ref{t:percolation_cluster}. Namely we prove that for $p>1/2$ the Gaussian free field on a percolation cluster satisfies all assumptions of Theorem \ref{t:mainthm} expect \ref{a:logupp} and \ref{a:sparseT}.

Before we begin with the actual proof, let us introduce notation for the relevant continuous Green's functions: We let
\[\bar G^\oA(x,y)=-\frac{1}{2\pi\oA}\log|x-y|\]
be the Green's function of $-\oA\bar\Delta$ on $\R^2$. Furthermore, for $Q\subset\R^2$ open and bounded, we let $\bar G^\oA_Q$ be the Green's function of $-\oA\bar\Delta$ on $Q$. That is, for each $y\in Q$ the function $\bar G^\oA_Q(\cdot,y)$ is the unique solution of
\begin{alignat*}{2}
	\begin{aligned}
		-\oA\bar\Delta \bar G^\oA_Q(\cdot,y)&=\bar\delta_y && \text{in }Q,\\
		\bar G^\oA_Q(\cdot,y)&=0 &&\text{on }\partial Q,
	\end{aligned}	
\end{alignat*}
where $\bar\delta$ denotes the Dirac delta distribution.

The idea of the proof of the first part of Theorem \ref{t:percolation_cluster} is now relatively straightforward: Lemma \ref{l:green_asympt_improved} gives us precise asymptotics for $G^\A$, and Lemma \ref{l:homog_dirich_improv} allows us to compare the difference of $G^\A$ and $G^\A_{V_N(w)}$ with the difference of $\bar G^\oA$ and $\bar G^\oA_{V_N(w)}$. So, combining the two lemmas, we can get very good estimates on $G^\A_{V_N(w)}$. 

\begin{proof}[Proof of Theorem \ref{t:percolation_cluster}, first part]$ $

\emph{Step 1: Preliminaries}\\
To prove Theorem \ref{t:percolation_cluster} we need to check to check whether $\sqrt{2\pi\oA}\varphi'^{\A,N,w}$ satisfies Assumption
\ref{as:main}. Let us begin by relating this field to the Green's functions of the previous lemmas.

Recall that the field $\varphi^{\A,N,w}$ is defined a priori only on $\CC_\infty\cap V_N(w)$, and that we extend it to $V_N(w)$ by setting $\varphi'^{\A,N,w}_v=\varphi^{\A,N,w}_{v^*}$. Now for $u,v\in V_N$ we have
\begin{equation}\label{e:estgreen}
\E\sqrt{2\pi\oA}\varphi'^{\A,N,w}_{u}\sqrt{2\pi\oA}\varphi'^{\A,N}_{v}=2\pi\oA G^\A_{V_N(w)}(u^*,v^*),
\end{equation}
and so all the items in Assumption \ref{as:main} correspond to various statements on $2\pi\oA G^\A_{V_N(w)}$.

We can assume that the $\ep_{\mathrm{Regul'}}$ from Lemma \ref{l:homog_dirich_improv} is at most $1/3$. We make the choices
\begin{align*}
\Rone_v&=\Rscale^{\mathrm{Green}'}_v\vee(\Rscale^{\mathrm{Regul}'}_v)^{1/\ep_{\mathrm{Regul'}}},
\qquad \Rtwo_w=\Rscale^{\mathrm{Homog}'}_w,
\end{align*}
where the random scales on the right-hand sides are as in Lemmas \ref{l:green_asympt_improved} and \ref{l:homog_dirich_improv}.

\emph{Step 2: Verification of \ref{a:micro} and \ref{a:macro}}\\
Let now $w\in\Z^2$. We define the function $H^\A_{V_N(w)}\colon (\CC_\infty\cap V_N(w))\times(\CC_\infty\cap V_N(w))\to\R$ by
\[H^\A_{V_N(w)}(u^*,v^*)=G^\A_{V_N(w)}(u^*,v^*)-\mathsf{K}'_v-G^\A(u^*,v^*)\]
This function can be thought of as the regular part of $G^\A$ on $\CC_\infty\cap V_N(w)$. Similarly, we define its continuous analogue
$\bar H^\oA_{w+(-1,N)^2}\colon (w+(-1,N)^2)\times(w+(-1,N)^2)\to\R$ by
\[\bar H^\oA_{w+(-1,N)^2}(x,y)=\bar G^\oA_{w+(-1,N)^2}(x,y)-\bar G^\oA(x,y)\]
Our main claim is that if $u,v\in V_N^\delta(w)$ and $\Rone_v\vee\Rone_v\le\delta N$, $\Rtwo_w\le N$ then
\begin{equation}\label{e:estgreen1}
\left|H^\A_{V_N(w)}(u^*,v^*)-\bar H^\oA_{w+(-1,N)^2}(u^*,v^*)\right|\le\frac{C_\delta}{N^{\ep_{\mathrm{Homog'}}/2}}.
\end{equation}
To see this, note that $\bar H^\oA_{w+(-1,N)^2}(\cdot,v^*)$ is harmonic on $w+(-1,N)^2$. Furthermore, $\bar H^\oA_{w+(-1,N)^2}(\cdot,v^*)$ is Lipschitz-continuous. Indeed, by interior Schauder estimates we have
\[\left\|\bar\nabla\bar H^\oA_{w+(-1,N)^2}(\cdot,v^*)\right\|_{\bar L^\infty(w+(\delta N,(1-\delta)N)^2)}\le \frac{C_\delta}{N}\] 
while near the boundary we can estimate the gradient of $\bar G^\oA(\cdot,v^*)$ and $\bar G^\oA_{w+(-1,N)^2}(\cdot,v^*)$ separately (for the latter there are no issues near the boundary because of the Schwarz reflection principle) and find that
\begin{align*}
 & \left\|\bar\nabla\bar G^\oA(\cdot,v^*)\right\|_{\bar L^\infty((w+(-1,N)\setminus(w+(\delta N,(1-\delta)N))}+\!\left\|\bar\nabla\bar G^\oA_{w+(-1,N)^2}(\cdot,v^*)\right\|_{\bar L^\infty((w+(-1,N)^2\setminus(w+(\delta N,(1-\delta)N)^2)}\\
 &\qquad \qquad \qquad \qquad \qquad \qquad \qquad \qquad \le \frac{C_\delta}{N}.\end{align*}
Combining the last two estimates, we indeed obtain the global Lipschitz estimate
\begin{equation}\label{e:estgreen2}
\left\|\bar\nabla\bar H^\oA_{w+(-1,N)^2}(\cdot,v^*)\right\|_{\bar L^\infty(w+(-1,N)^2)}\le \frac{C_\delta}{N}.
\end{equation}
Now let $\tilde H^\A_{V_N(w)}\colon (\CC_\infty\cap V_N(w))\times(\CC_\infty\cap V_N(w))\to\R$ be such that $\tilde H^\A_{V_N(w)}(\cdot,v^*)$ is the unique solution of 
\begin{alignat*}{2}
	\begin{aligned}
		-\Delta_\A \tilde H^\A_{V_N(w)}(\cdot,v^*)&=0 && \text{in }\CC_\infty\cap V_N(w),\\
		\tilde H^\A_{V_N(w)}(\cdot,v^*)
		&=\bar H^\oA_{w+(-1,N)^2}(\cdot,v^*) && \text{on }\CC_\infty\setminus V_N(w).
	\end{aligned}	
\end{alignat*}
From Lemma \ref{l:homog_dirich_improv} we obtain that
\begin{align}\label{e:estgreen3}
&\left|\tilde H^\A_{V_N(w)}(u^*,v^*)-\bar H^\oA_{w+(-1,N)^2}(u^*,v^*)\right|\\
&\nonumber
\qquad\le C_\delta\left( N^{1-\ep_{\mathrm{Homog'}}}(\Rscale^{\mathrm{Regul'}}_v)^{1/2}+(\Rscale^{\mathrm{Regul'}}_v)^2\right)\left\|\bar\nabla\bar H^\oA_{w+(-1,N)^2}(\cdot,v^*)\right\|_{\bar L^\infty(w+(-1,N)^2)}.
\end{align}
We can estimate the right-hand side here further. On the one hand, the Lipschitz norm is bounded by \eqref{e:estgreen3}, on the other hand our assumption $\delta N\ge\Rone_v\ge(\Rscale^{\mathrm{Regul'}}_v)^{1/\ep_{\mathrm{Regul'}}}$ implies that $\Rscale^{\mathrm{Regul'}}_v\le C_\delta N^{\ep_{\mathrm{Regul'}}}$. So from \eqref{e:estgreen3} we obtain
\begin{equation}\label{e:estgreen4}
\left|\tilde H^\A_{V_N(w)}(u^*,v^*)-\bar H^\oA_{w+(-1,N)^2}(u^*,v^*)\right|\le\frac{C_\delta}{N^{\ep_{\mathrm{Homog'}}/2}}.
\end{equation}
On the other hand, $H^\A_{V_N(w)}(\cdot,v^*)$ and $\tilde H^\A_{V_N(w)}(\cdot,v^*)$ are close. Indeed, both are $-\Delta^\A$-harmonic functions on $\CC_\infty\cap V_N(w)$, and by Lemma \ref{l:green_asympt_improved} and our assumption $\delta N\ge\Rscale^{\mathrm{Green}}_v$ we have
\[\sup_{u'^*\in\CC_\infty\cap\partial(w+[-1,N]^2)}\left|H^\A_{V_N(w)}(u'^*,v^*)-\tilde H^\A_{V_N(w)}(u'^*,v^*)\right|\le\frac{1}{(\delta N)^{1/2}}\le \frac{C_\delta}{N^{1/2}}\]
so that by the discrete maximum principle we also have
\begin{equation}\label{e:estgreen5}
\left|H^\A_{V_N(w)}(u^*,v^*)-\tilde H^\A_{V_N(w)}(u^*,v^*)\right|\le\frac{C_\delta}{N^{1/2}}.
\end{equation}
Combining \eqref{e:estgreen4} and \eqref{e:estgreen5}, we have established \eqref{e:estgreen1}.

Using this estimate, we can now verify \ref{a:micro} and \ref{a:macro}.
We begin with \ref{a:micro}. The functions $\bar G^\oA_{w+(-1,N)^2}$ and $\bar H^\oA_{w+(-1,N)^2}$ satisfy the scaling relations 
\begin{align}
\bar G^\oA_{w+(-1,N)^2}(x,y)&=\bar G^\oA_{(0,1)^2}\left(\frac{x-w+e}{N+1},\frac{y-w+e}{N+1}\right)\label{e:estgreen7G}\\
\bar H^\oA_{w+(-1,N)^2}(x,y)&=\bar H^\oA_{(0,1)^2}\left(\frac{x-w+e}{N+1},\frac{y-w+e}{N+1}\right)+\frac{1}{2\pi\oA}\log (N+1)\label{e:estgreen7}
\end{align}
where $e:=(-1,-1)\in\Z^2$.
Furthermore, by standard elliptic regularity theory, $\bar H^\oA_{(0,1)^2}$ is a smooth function on $(0,1)^2\times(0,1)^2\setminus\{(x,x)\colon x\in(0,1)^2\}$ that has a continuous extension onto the diagonal. So
\[f(x):=2\pi\oA\lim_{\substack{x'\to x\\x''\to x}}\bar H^\oA_{(0,1)^2}(x',x'')\]
is a well-defined continuous function on $(0,1)^2$. The function $f$ is also bounded above (as can be seen from the maximum principle and the fact that $\bar H^\oA_{(0,1)^2}(\cdot,x)=-\bar G^\oA_{(0,1)^2}(\cdot,x)$ is bounded by $\frac{1}{2\pi\oA}\log\sqrt{2}$ on $\partial(0,1)^2$). We also choose $g(u,v)=2\pi\oA G^\A_{V_N(w)}(u^*,v^*)+\mathsf{K}'_v$. With these definitions, \eqref{e:estgreen1} can be rewritten as
\[\Big|G^\A_{V_N(w)}(u^*,v^*)-\frac{1}{2\pi\oA}g(u,v)-\frac{1}{2\pi\oA}\log (N+1)-\bar H^\oA_{(0,1)^2}\left(\frac{u^*-w+e}{N+1},\frac{v^*-w+e}{N+1}\right)\Big|\le\!\frac{C_\delta}{N^{\ep_{\mathrm{Homog'}}/2}}\]
and hence also
\begin{equation}\label{e:estgreen12}
\begin{split}
&\left|G^\A_{V_N(w)}(u^*,v^*)-\frac{1}{2\pi\oA}g(u,v)-\frac{1}{2\pi\oA}\log N-\frac{1}{2\pi\oA}f\left(\frac{v-w}{N}\right)\right|\\
&\;\le\frac{C_\delta}{N^{\ep_{\mathrm{Homog'}}/2}}+\left|\bar H^\oA_{(0,1)^2}\left(\frac{u^*-w+e}{N+1},\frac{v^*-w+e}{N+1}\right)-\bar H^\oA_{(0,1)^2}\left(\frac{v-w}{N},\frac{v-w}{N}\right)\right|+\!\frac{1}{2\pi\oA}\log\frac{N+1}{N}.
\end{split}
\end{equation}
Under the assumption $|u-v|_\infty\le L$ and $|u-u^*|^4\vee|v-v^*|^4\le\Rone_u\vee\Rone_v\le\delta N$ we have that
$|u^*-v|\vee|v^*-v|\le L+N^{1/4}$, and all of $\frac{u^*-w+e}{N+1},\frac{v^*-w+e}{N+1},\frac{v-w}{N}$ are elements of $\left(\frac{\delta}{2},1-\frac{\delta}{2}\right)^2\times\left(\frac{\delta}{2},1-\frac{\delta}{2}\right)^2$. The function $\bar H^\oA_{(0,1)^2}$ is uniformly continuous on that set, and therefore the second summand on the right-hand side of \eqref{e:estgreen12} is bounded by a function of $\delta$ and ${L}/{N}$ that goes to 0 as $N\to\infty$. From this we immediately obtain the desired estimate for $2\pi\oA G^\A_{V_N(w)}$, i.e. \ref{a:micro}.

The argument for \ref{a:macro} is similar. We choose $h(x,y)=2\pi\oA\bar G^\oA_{(0,1)^2}$. The estimate \eqref{e:estgreen1} and the scaling relation \eqref{e:estgreen7G} imply that 
\begin{equation}\label{e:estgreen6}
\begin{split}
&\left|G^\A_{V_N(w)}(u^*,v^*)-\frac{1}{2\pi\oA}h\left(\frac{u-w}{N},\frac{u-w}{N}\right)\right|\\
&\quad\le\frac{C_\delta}{N^{\ep_{\mathrm{Homog'}}/2}}+\left|G^\A(u^*,v^*)+\mathsf{K}_v-\bar G^\oA(u^*,v^*)\right|\\
&\qquad+\frac{1}{2\pi\oA}\left|h\left(\frac{u^*-w+e}{N+1},\frac{v^*-w+e}{N+1}\right)-h\left(\frac{u-w}{N},\frac{v-w}{N}\right)\right|.
\end{split}
\end{equation}
Under the assumption $\Rone_u\vee\Rone_v\le\delta N$ we know that
$|u^*-u|\vee|v^*-v|\le N^{1/4}$, and so by Lemma \ref{l:green_asympt_improved} the second term on the right-hand side of \eqref{e:estgreen6} is bounded by $|u-v|^{-1/2}$ for $|u-v|\ge \Rone_v$. The third term on the right-hand side is small because we know $|u^*-v^*|\ge{N}/{2L}$ , and $h$ is uniformly continuous away from the diagonal and the boundary of $(0,1)^2$. Thus we obtain \ref{a:macro}.

\emph{Step 3: Verification of \ref{a:logbd}}\\
This step is different than the previous one in that we make no use of \eqref{e:estgreen1}. Instead we will argue directly using the maximum principle.

To do so, we begin with a rather crude estimate. Namely we claim that for any $N$ and $v,u,u'$ we have
\begin{equation}\label{e:estgreen8}
  0\le G^\A_{Q_R(v)}(u^*,u'^*)\le\frac{\dist_{\CC_\infty}(u^*,\partial Q_R(v))\wedge \dist_{\CC_\infty}(u'^*,\partial Q_R(v))}{\Lambda^-}
\end{equation}
where we recall that $\dist_{\CC_\infty}$ denotes the graph distance on $\CC_\infty$. 
Indeed, the lower bound is trivial. For the upper bound, it suffices (by symmetry) to prove the upper bound $\frac{\dist_{\CC_\infty}(u^*, \partial Q_R(v))}{\Lambda^-}$. Moreover, it suffices (by the maximum principle) to consider the case $u=u'=u^*$. It is helpful to consider $G^\A_{Q_R(v)}(\cdot,u)$ as the voltage distribution that arises when we let a unit current flow from $u$ through the electrical network given by the conductances $\A$. Then for each edge the current through it is at most 1, and so the voltage drop along that edge is at most its resistance. In other words, for each edge the difference of $G^\A_{Q_R(v)}(\cdot,u)$ at the two endpoints is at most ${1}/{\Lambda^-}$. If we use this estimate along each of the edges of path $\gamma$ consisting of $\dist_{\CC_\infty}(u,\partial Q_R(v))$ edges connecting $u$ to a vertex in $\CC_\infty\cap \partial Q_R(v)$, we directly obtain
\[G^\A_{Q_R(v)}(u,u)\le\frac{\dist_{\CC_\infty}(u,\partial Q_R(v))}{\Lambda^-}.\]
This establishes \eqref{e:estgreen8}.


Note that if $v\in \CC_\infty\cap Q_R(w)$, then $\dist_{\CC_\infty}(v,\partial Q_R(w))\le 4R^2$. Thus, the bound in \eqref{e:estgreen8} is at worst quadratic. This motivates the choice $\alpha_\delta(R)=C'_\delta (R^2+1)$ (for a constant $C'_\delta$ that depends on $\delta$ and will be chosen later). To verify \ref{a:logbd} we have to check whether for $u,v\in V_N^\delta(w)$ we have
\begin{equation}\label{e:estgreen9}
\left|G^\A_{V_n(w)}(u^*,v^*)-\frac{1}{2\pi\A}\log N+\frac{1}{2\pi\A}\log_+|u-v|\right|\le C_\delta\left((\Rone_u)^2+(\Rone_v)^2+1\right).
\end{equation}
As the estimate \eqref{e:estgreen9} is symmetric in $u,v$, we can assume $\Rone_u\le\Rone_v$. If $\Rone_v>\delta N$ then both the upper and lower bound in \eqref{e:estgreen9} follow from \eqref{e:estgreen8} (provided that $C_\delta$ is large enough). So we can assume that $\Rone_v\le\delta N$. 
Lemma \ref{l:green_asympt_improved} then implies that $G^\A(\cdot,v^*)$ has oscillation bounded by $C_\delta$ on $\CC_\infty\cap V_N(w)$. Hence the maximum principle on $\CC_\infty\cap V_N(w)$ with comparison functions $G^\A(\cdot,v^*)+\frac{1}{2\pi\oA}\log N-\mathsf{K}'_v\pm C_\delta$ implies that
\begin{equation}\label{e:estgreen10}
\left|G^\A_{V_N(w)}(\cdot,v^*)-\frac{1}{2\pi\oA}\log N-G^\A(\cdot,v^*)\right|\le C_\delta
\end{equation}
on $\CC_\infty\cap V_N(w)$. If now $|u^*-v|>\Rone_v=\Rone_v\vee\Rone_u$, then \eqref{e:estgreen9} directly follows from \eqref{e:estgreen10} and Lemma \ref{l:green_asympt_improved}. If on the other hand $|u^*-v|\le\Rone_v$, we use the maximum principle once again: the estimate \eqref{e:estgreen10} implies that
\[\left|G^\A_{V_N(w)}(\cdot,v^*)-\frac{1}{2\pi\oA}\log \frac{N}{\Rone_v}\right|\le C_\delta\]
on $\CC_\infty\cap Q_{\Rone_v}(v)$, and so the maximum principle on $\CC_\infty\cap Q_{\Rone_v}(v)$ with comparison function $G^\A_{Q_{\Rone_v}(v)}$ implies that
\[\left|G^\A_{V_N(w)}(u^*,v^*)-G^\A_{Q_{\Rone_v}(v)}-\frac{1}{2\pi\oA}\log \frac{N}{\Rone_v}\right|\le C_\delta.\]
Remembering \eqref{e:estgreen8}, this implies that
\[\left|G^\A_{V_N(w)}(u^*,v^*)-\frac{1}{2\pi\oA}\log N\right|\le C_\delta+\frac{1}{2\pi\oA}\log \Rone_v.\]
We also know that 
\[\log_+|u-v|\le\log_+\left((\Rone_u)^{1/4}+|u*-v|\right)\le C(\Rone_u+\Rone_v)\]
and so we obtain \eqref{e:estgreen9} also in this case.

\emph{Step 4: Verification of \ref{a:sparseR}}\\
In order to verify Assumption \ref{a:sparseR}, we need an upper bound on the number of points where $\Rone$ or $\Rtwo$ are large.   In view of Lemmas
\ref{l:green_asympt_improved} and \ref{l:homog_dirich_improv}, our strategy is to write the events $\{\Rscale^{(i)}_v\ge R\}$ as the union of a local event and an event with very small probability. The former events can be controlled using a second moment bound, while the occurrence of the latter events can be controlled with a first moment bound.

We give details, beginning with $\Rone$. For $R\le R'$ we define the event
\[\Ev^{R,R',(1)}=\Ev^{R,R',\mathrm{Green'}}_v\cap \Ev^{R,R',\mathrm{Regul'}}_v.\]
From Lemmas \ref{l:green_asympt_improved} and \ref{l:homog_dirich_improv} we conclude that there are $C$ and $s>0$ such that
\[
\PP(\Ev^{R,R',(1)}_v)\ge 1-C\e^{-R^{s}/C}
\]
and 
\[
\PP(\Rone_v> R,\Ev^{R,R',(1)}_v)\le C\e^{-R'^{s}/C}
\]
and that moreover $\Ev^{R,R',(1)}\in\F_{Q_{9R'}(v)}$. So we know that if $\Rone_v>R$ then either the local event $(\Ev^{R,R',(1)}_v)^\complement$ or a very unlikely event occurs.

Now, given $R$, we choose $R'=\sqrt{{N}/{L}}$. Then for $
w'\in \W_{N,\left\lfloor \frac{N}{L}\right\rfloor}(w_N)$ we have
\begin{align*}
\left|\left\{v\in V_{\left\lfloor \frac{N}{L}\right\rfloor}(w'): \Rone_{v}> R\right\}\right|&=\sum_{v\in V_{\left\lfloor \frac{N}{L}\right\rfloor}(w')}\I_{\Rone_v>R}\\
&\le\sum_{v\in V_{\left\lfloor \frac{N}{L}\right\rfloor}(w')}\I_{(\Ev^{R,R',(1)}_v)^\complement}+\sum_{v\in V_{\left\lfloor \frac{N}{L}\right\rfloor}(w')}\I_{\Rone_v>R,\Ev^{R,R',(1)}_v},
\end{align*}
and thus, for a constant $C'$ to be fixed later,
\begin{equation}\label{e:estgreen11}
\begin{split}
&\PP\left(\left|\left\{v\in V_{\left\lfloor \frac{N}{L}\right\rfloor}(w'): \Rone_{v}> R\right\}\right|\ge C'\left(\frac{N}{L}\right)^2\e^{-R^s/C'}\right)\\
&\le \PP\bigg(\sum_{v\in V_{\left\lfloor \frac{N}{L}\right\rfloor}(w')}
\I_{(\Ev^{R,R',(1)}_v)^\complement}\ge \frac{C'}{2}\left(\frac{N}{L}\right)^2\e^{-R^s/C'}\bigg)\\
&\qquad \qquad +\PP\bigg(\sum_{v\in V_{\left\lfloor \frac{N}{L}\right\rfloor}(w')}\I_{\Rone_v>R,\Ev^{R,R',(1)}_v}\ge \frac{C'}{2}\left(\frac{N}{L}\right)^2\e^{-R^s/C'}\bigg).
\end{split}
\end{equation}
We can bound the second summand here very crudely with a union bound and obtain
\begin{align*}
&\PP\bigg(\sum_{v\in V_{\left\lfloor \frac{N}{L}\right\rfloor}(w')}\I_{\Rone_v>R,\Ev^{R,R',(1)}_v}\ge\frac{C'}{2}\left(\frac{N}{L}\right)^2\e^{-R^s/C'}\bigg)\\
&\le \PP\left(\exists v\colon\Rone_v>R,\Ev^{R,R',(1)}_v\right)
\le C(\frac{N}{L})^2\e^{-(N/L)^{s/2}/C}\le C\e^{-(N/L)^{s/2}/C}.
\end{align*}
For the first summand, we use that the events $\Ev^{R,R',(1)}_v$ and $\Ev^{R,R',(1)}_{v'}$ are independent when $|v-v'|\ge18R'$. Thus we can partition $V_{\left\lfloor \frac{N}{L}\right\rfloor}(w')$ into $(18R')^2=324{N}/{L}$ classes $(A_i)_{i=1}^{(18R')^2}$ of at most $C\left(\frac{N}{18R'L}\right)^2\le C\frac{N}{L}$ elements such that for each $i$ the events corresponding to $v\in A_i$ are independent. Using now, for example, Hoeffding's inequality on each $A_i$, we find
\begin{align*}
&\PP\bigg(\sum_{v\in V_{\left\lfloor \frac{N}{L}\right\rfloor}(w')}\I_{(\Ev^{R,R',(1)}_v)^\complement}\ge\frac{C'}{2}\left(\frac{N}{L}\right)^2\e^{-R^s/C'}\bigg)\\
&\le\sum_{i=1}^{324N/L}\PP\left(\sum_{v\in A_i}\I_{(\Ev^{R,R',(1)}_v)^\complement}\ge\frac{C'}{648}\frac{N}{L}\e^{-R^s/C'}\right)\\
&\le C\frac{N}{L}\exp\left(-2\left(\frac{C'}{648}\frac{N}{L}\e^{-R^s/C'}-\E\left(\sum_{v\in A_i}\I_{(\Ev^{R,R',(1)}_v)^\complement}\right)\right)^2\right)\\
&\le C\frac{N}{L}\exp\left(-2\left(\frac{C'}{648}\frac{N}{L}\e^{-R^s/C'}-C\frac{N}{L}\e^{-R^s/C}\right)^2\right)
\le C\frac{N}{L}\exp\left(-C\frac{N}{L}\e^{-R^s/C}\right),
\end{align*}
where the last step holds provided we have chosen $C'$ large enough.

Inserting these results into \eqref{e:estgreen11}, we have shown that
\begin{align*}
  &\PP\left(\left|\left\{v\in V_{\left\lfloor \frac{N}{L}\right\rfloor}(w'): \Rone_{v}> R\right\}\right|\ge C'\left(\frac{N}{L}\right)^2\e^{-R^s/C'}\right)\\
  &\qquad \qquad \le C\left(\frac{N}{L}\exp\left(-C\frac{N}{L}\e^{-R^s/C}\right)+\e^{-(N/L)^{s/2}/C}\right).\end{align*}
As the right-hand side is exponentially small in $N$, it follows from the Borel-Cantelli lemma that for each fixed $R$ and $L$
\[\PP\left(\limsup_{N\to\infty}\left(\frac{L}{N}\right)^2\left|\left\{v\in V_{\left\lfloor \frac{N}{L}\right\rfloor}(w'): \Rone_{v}> R\right\}\right|\ge C'\e^{-R^s/C'}\right)=0\]
and this in turn easily implies the assertion on $\Rone$ in Assumption \ref{a:sparseR}.

The assertion on $\Rtwo$ is shown very similarly. We define 
\[\Ev^{R,R',(2)}_w=\Ev^{R,R',\mathrm{Homog'}}_v\]
and use that $\Rtwo_w\ge L'$ implies that either $\Ev^{L',R',(2)}_w$ does not hold or $\left\{\Rtwo_w\ge L'\right\}\cap\Ev^{L',R',(2)}_w$. The latter event is so unlikely that with the choice $R'=\sqrt{\frac{N}{L}}$ we can control its occurrence with a union bound, while the former event is local, so that upon partitioning into $CR'^2$ classes we gain independence and can argue using Hoeffding's inequality as before.

\emph{Step 5: Verification of \ref{a:lln}}\\
When we consider $\delta_{\pp^{L',w''}}$ as a random variable with values in $\mathcal{M}_1(V_{L'}(w''))$, then it is $\F_{Q_L'(w'')}$-measurable and its law is invariant under translations of $\A$. So Assumption \ref{a:lln} follows from an appropriate ergodic theorem for random variables with values in the Polish space $\mathcal{M}_1(\R^{V_{L'}(0)})$.
\end{proof}

\subsection{Large deviation results for highly supercritical percolation clusters}
\label{sec-4.3}

All previous results were valid for any $p>\frac12$, and we did not care about the dependence on $p$. For the quantitative estimate in \ref{a:sparseT} we cannot be so careless. As a preparation for the second part of the proof of Theorem \ref{t:percolation_cluster}, we collect here three large deviation results for $\CC_\infty$. We no longer care about whether the relevant events are (asymptotically) local, however we are now interested in the decay rates as $p$ gets close to 1.

The first result complements Lemma \ref{l:density_cluster}, however we want now a $p$-independent lower bound on the density of the cluster. Of course this is only possible if $p$ is close enough to 1.

\begin{lemma}\label{l:density_cluster_highlysc}
For any $\kappa>0$ there is $p^{\mathrm{Dens}}_\kappa<1$ with the following property. For $p^{\mathrm{Dens}}_\kappa\le p\le1$ there is $C>0$ such that for any $v\in\Z^2$, $R\in\N$ we have
\[\PP\left(\left|\CC_\infty\cap Q_R(v)\right|\ge\frac{R^2}{2}\right)\ge 1-C\e^{-\kappa R}.\]
\end{lemma}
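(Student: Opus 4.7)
My plan is to prove this via a block renormalization argument combined with Liggett--Schonmann--Stacey stochastic domination. The idea is to tile $\Z^2$ into cells of fixed sidelength $k=k(\kappa)$, define a local notion of \emph{goodness} for each cell which implies a positive density of $\CC_\infty$-vertices there, and exploit that the goodness field has finite-range dependence with marginals arbitrarily close to $1$ (as $p\to 1$) to obtain exponential concentration for the number of good cells via stochastic domination by a product measure.

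More precisely, for each cell $B$ of sidelength $k$ I will call $B$ \emph{good} if $|\CC_*(3B)\cap B|\ge (1-\eta)k^2$ (with $\CC_*(3B)$ as in Section \ref{sec-4.1}) and the event $\Ev^{k,\mathrm{Clust}}_{z(B)}$ of Lemma \ref{l:local_approx_cluster} occurs at the center $z(B)$ of $B$, for a small threshold $\eta$ to be fixed. Both conditions are $\F_{Q_{9k}(z(B))}$-measurable. By translation invariance, $\E[|\CC_\infty^c\cap B|] = k^2(1-\theta(p))$ with $\theta(p)\to 1$ as $p\to 1$, so Markov's inequality gives $\PP(|\CC_\infty\cap B|<(1-\eta)k^2)\le (1-\theta(p))/\eta$; combined with the bound $Ce^{-k/C}$ from Lemma \ref{l:local_approx_cluster}, this yields $\PP(B\text{ not good})\le (1-\theta(p))/\eta + Ce^{-k/C}$, which for fixed $\eta$ can be made arbitrarily small by first taking $k$ large and then $p$ close to $1$.

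The goodness indicators form a finite-range dependent random field: cells whose $9k$-neighborhoods are disjoint are independent. By the Liggett--Schonmann--Stacey theorem, this field stochastically dominates an i.i.d.\ Bernoulli$(q)$ field with $q=q(p,k)\to 1$ as $p\to 1$ for any fixed $k$. Choosing $\eta=1/4$, if at least $3/4$ of the cells fully contained in $Q_R(v)$ are good then
\[
|\CC_\infty\cap Q_R(v)|\ge \frac34\Big\lfloor \frac{2R-1}{k}\Big\rfloor^2\cdot \frac{3k^2}{4}\ge \frac{9R^2}{16}\ge\frac{R^2}{2}
\]
for $R\ge 2k$. By Chernoff's inequality applied to the dominating Bernoulli field, the probability that the fraction of good cells is less than $3/4$ is at most $\exp(-cR^2/k^2)$ for some $c>0$ depending on $q$.

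To conclude, given $\kappa>0$ I will choose $k=k(\kappa)$ large and then $p^{\mathrm{Dens}}_\kappa$ close enough to $1$ so that $q$ is close enough to $1$ to give a usable $c>0$. For $R\ge k^2\kappa/c$ the estimate $\exp(-cR^2/k^2)\le\exp(-\kappa R)$ delivers the claim, and for smaller $R$ the statement is vacuous after enlarging $C$ to absorb the finitely many cases. The main obstacle will be establishing the approximate locality of $\{|\CC_\infty\cap B|\ge(1-\eta)k^2\}$: this is precisely why I replace $\CC_\infty\cap B$ by $\CC_*(3B)\cap B$ and invoke Lemma \ref{l:local_approx_cluster} to control the discrepancy; without such a localization one could not apply Liggett--Schonmann--Stacey.
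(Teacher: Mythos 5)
Your renormalization argument is correct in outline, but it takes a genuinely different route from the paper: the paper dispatches this lemma in one line as a Peierls/contour argument (citing \cite{DP96} for a stronger surface-order statement), the point being that for $p$ near $1$ deleting a positive fraction of $Q_R(v)$ from $\CC_\infty$ forces closed dual circuits of total length of order $R$, whose probability decays like $\e^{-c(p)R}$ with $c(p)\to\infty$ as $p\to1$ --- this is exactly where the $\kappa$-dependence of the threshold $p^{\mathrm{Dens}}_\kappa$ enters. Your coarse-graining scheme (local goodness of $k$-cells defined through the crossing cluster $\CC_*$ and Lemma \ref{l:local_approx_cluster}, Liggett--Schonmann--Stacey domination, then Chernoff) is heavier machinery but buys something: the volume-order rate $\exp(-cR^2/k^2)$ beats any linear rate $\kappa R$ once $R\gtrsim \kappa k^2/c$, so your threshold can be taken independent of $\kappa$ (with only $C$ depending on $\kappa$ and $p$), and the same skeleton would extend, with a Pisztora-type goodness definition, to all $p>1/2$, which the bare contour argument does not. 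Two small repairs are needed. First, the constant $C=C(p)$ in Lemma \ref{l:local_approx_cluster} depends on $p$, so the order ``first $k$ large, then $p$ close to $1$'' has a quantifier wrinkle: fix the threshold $p_*$ using only the term $(1-\theta(p))/\eta$ (monotone in $p$), and then, for each $p\ge p_*$, choose $k=k(p)$ large enough to kill $C(p)\e^{-k/C(p)}$; this is legitimate because the final constant $C$ in the lemma is allowed to depend on $p$, and the LSS threshold and the Chernoff rate depend only on the (universal) dependence range of the goodness field, not on $k$. Second, the count $\lfloor (2R-1)/k\rfloor^2$ of cells fully contained in $Q_R(v)$, and hence your chain of inequalities ending in $9R^2/16$, is only exact for a tiling aligned with $Q_R(v)$; either tile $Q_R(v)$ itself (translation invariance keeps the marginal and dependence bounds) or lose one cell per row and require $R\ge CK$ --- the slack down to $R^2/2$ easily absorbs this.
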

\begin{proof}
This is essentially a contour argument. An even stronger result can be found in \cite{DP96}.
\end{proof}

The second result is an estimate for the isoperimetric constant of sufficiently large subsets of $\CC_\infty$. 
For $A\subset\CC_\infty$ we denote by 
\[\partial^+A=\{u\in\CC_\infty\setminus A\colon\exists v\in A \,\text{such that}\, (u,v)\in E(\CC_\infty)\}\]
the outer (vertex) boundary of $A$ in $\CC_\infty$. Our result then is.
\begin{lemma}\label{l:isoperimetry}
For any $\kappa>0$ there is $p^{\mathrm{Iso}}_\kappa<1$ with the following property. For $p^{\mathrm{Iso}}_\kappa\le p\le1$ there is $C>0$ such that for any $v\in\Z^2$, $R\in\N$ we have
\[\PP\left(\exists A\subset\CC_\infty\colon v\in A,|A|\ge R^2,|\partial^+A|\le |A|^{1/2}\right)\le C\e^{-\kappa R}.\]
\end{lemma}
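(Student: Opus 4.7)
The plan is a Peierls contour argument. Given a putative set $A$ satisfying the hypotheses (assume $A$ connected for clarity; the general case reduces to the connected component of $A$ containing $v$ by a short averaging argument on components), I would pass to its \emph{fill-in} $B := A\cup(\text{finite components of }\Z^2\setminus A)$: this is a finite, simply connected subset of $\Z^2$ containing $v$, with $|B|\ge|A|\ge R^2$. Its outer edge boundary $\partial_e^{\Z^2}B$ traces a simple closed dual contour $\gamma^*$ around $v$ of length $\ell\ge 4\sqrt{|B|}\ge 4R$ by the edge isoperimetric inequality on $\Z^2$, and analogously $|\partial^+_{\Z^2}B|\ge 4\sqrt{|B|}$ by its vertex counterpart.

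The heart of the argument is to show that a definite fraction of the primal edges underlying $\gamma^*$ must be closed. A geometric check first rules out any edge $\{u,w\}\in\partial_e^{\Z^2}B$ with $u\in B\setminus A$: such $u$ and $w\notin B$ would be adjacent vertices of $\Z^2\setminus A$ lying in distinct components of $\Z^2\setminus A$, which is impossible. Hence each inner endpoint $u$ lies in $A\subset\CC_\infty$. Calling $w\in\partial^+_{\Z^2}B$ \emph{good} if $w\in\partial^+A$ and \emph{bad} otherwise, the hypothesis $|\partial^+A|\le\sqrt{|A|}$ forces at least $4\sqrt{|B|}-\sqrt{|A|}\ge 3\sqrt{|B|}$ bad vertices. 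For each bad $w$ every edge from $w$ to its $A$-neighbors must be closed: if $w\in\CC_\infty$ this holds by the very definition of $\partial^+A$, while if $w\notin\CC_\infty$ any open edge from $w$ to $A\subset\CC_\infty$ would place $w$ in $\CC_\infty$. Writing $k_w$ for the number of $A$-neighbors of $w$, the two bounds $\sum_{w\text{ bad}}k_w\ge 3\sqrt{|B|}$ and $\sum_{w\text{ bad}}k_w=\ell-\sum_{w\text{ good}}k_w\ge\ell-4\sqrt{|A|}$ combine to show that at least $\max(3\sqrt{|B|},\,\ell-4\sqrt{|B|})\ge 3\ell/8$ primal edges along $\gamma^*$ are closed.

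From here it is a standard Peierls sum. Using the classical estimate that the number of simple dual contours around $v$ of length $\ell$ is at most $C\cdot 3^\ell$, and that the probability of $\ge 3\ell/8$ prescribed edges being closed is at most $\binom{\ell}{3\ell/8}(1-p)^{3\ell/8}\le\bigl(2(1-p)^{3/8}\bigr)^\ell$, a union bound gives
\[
\PP\bigl(\exists A\text{ as in the lemma}\bigr)\le\sum_{\ell\ge 4R}C\bigl(6(1-p)^{3/8}\bigr)^\ell.
\]
Choosing $p$ close enough to $1$ that $6(1-p)^{3/8}\le\e^{-\kappa}$ collapses this to $C\e^{-\kappa R}$, yielding the lemma with an explicit threshold $p^{\mathrm{Iso}}_\kappa<1$.

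The main obstacle I expect is reconciling the two regimes of contour length in the lower bound on closed edges: for $\ell$ near the isoperimetric minimum $\asymp 4\sqrt{|B|}$ the dominant contribution comes from the bad-vertex deficit, while for longer contours it comes from the complementary edge-count, and harmonizing the two to obtain a uniform fraction such as $3/8$ is the main delicate point. Reducing from disconnected $A$ to a single connected component of comparable size and boundary-to-area ratio is a separate technicality but follows from a short averaging step. Everything else---edge and vertex isoperimetry on $\Z^2$, counting simple dual cycles around a fixed point, and the binomial/Chernoff tail bound on closed-edge counts---is classical.
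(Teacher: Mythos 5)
Your core contour argument for \emph{connected} $A$ is essentially the proof the paper intends: the paper's own proof of Lemma \ref{l:isoperimetry} is just a pointer to the Peierls argument in \cite{BM03}, and your version of it is sound. In particular, the key geometric point (every boundary edge of the fill-in $B$ has its inner endpoint in $A$, so bad outer vertices force closed edges), the dichotomy good/bad, and the two-regime combination giving a uniform positive fraction of closed edges along the dual circuit around $v$ are all correct, after which the circuit count and the binomial bound finish the proof for $p$ close to $1$. Two small slips, both harmless: the vertex-isoperimetric constant of $\Z^2$ is $2\sqrt{2}$ (the $\ell^1$ ball, not the square, is the minimizer), so the count "$4\sqrt{|B|}-\sqrt{|A|}$ bad vertices" and the resulting fraction $3/8$ must be adjusted to some smaller explicit fraction, which is irrelevant since $p$ is chosen close to $1$ afterwards; and the number of dual circuits of length $\ell$ surrounding $v$ is $O(\ell\,3^\ell)$ rather than $O(3^\ell)$, which only changes the constant $C$.

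The genuine gap is the reduction to connected $A$, which you dismiss as "a short averaging argument on components"; no such reduction exists, and in fact the statement without a connectivity hypothesis is false for every fixed $p<1$. For fixed $R$, almost surely there is somewhere in $\Z^2$ a $(2R+1)\times(2R+1)$ box all of whose internal edges are open and which is joined to the rest of the infinite cluster by exactly one open boundary edge (a standard Borel--Cantelli argument over well-separated locations). On the positive-probability event $v\in\CC_\infty$, take $A$ to be that box together with the singleton $\{v\}$: then $v\in A$, $|A|\ge R^2$, and $|\partial^+A|\le 5\le|A|^{1/2}$, so the probability in the lemma is bounded below, uniformly in $R$, by the probability that $v$ lies in $\CC_\infty$. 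This is exactly why your averaging step cannot work: the component of $A$ containing $v$ may be a singleton, and the large component need not be anchored anywhere near $v$, so no component of $A$ inherits both the volume lower bound and the boundary upper bound. The lemma is implicitly a statement about connected sets $A\ni v$ --- that is what the cited argument of \cite{BM03} proves, and it is all that is used in the paper, where Lemma \ref{l:isoperimetry} is applied in the proof of Lemma \ref{l:tail_bound_green} to (connected) superlevel-type sets $A_k\cup\tilde A_k$ of the Green's function. So your proof is complete once you either add the connectivity hypothesis to the statement or note explicitly that only the connected case is needed; as written, the reduction step is a hole, and the unrestricted claim it is meant to justify is untrue.
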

\begin{proof}
This is once again a straightforward Peierls' argument. The same argument (with the $p$-dependence not made explicit) can be found for example in \cite[Proof of Theorem 2.4]{BM03}.
\end{proof}

The third result is a large-deviation result for the graph distance (or chemical distance), which essentially follows from \cite{AP96}. For this specific version
we use a result of \cite{GM07}.

\begin{lemma}\label{l:chemical_distance}
For any $\kappa>0$ there is $p^{\mathrm{Chem}}_\kappa<1$ with the following property. For $p^{\mathrm{Chem}}_\kappa\le p\le1$ there is $C>0$ such that for any $v\in\Z^2$, $R\in\N$ we have
\begin{equation}\label{e:chemical_distance1}
\PP\left(\max_{u,u'\in \CC_\infty\cap Q_R(v)}\dist_{\CC_\infty}(u,u')\le 24R\right)\ge 1-C\e^{-\kappa R}.
\end{equation}
\end{lemma}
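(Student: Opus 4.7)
The plan is to reduce \eqref{e:chemical_distance1} to a single-pair chemical-distance estimate followed by a union bound over the at most $(2R-1)^{2}$ choices of the "outer" vertex. The single-pair estimate that I will use is a quantitative Antal--Pisztora bound: for every $\rho>1$ and every $c<\infty$ there exists $p^{\mathrm{AP}}_{\rho,c}<1$ such that, whenever $p\ge p^{\mathrm{AP}}_{\rho,c}$, one has
\[
\PP\bigl(x,y\in\CC_\infty,\ \dist_{\CC_\infty}(x,y)>\rho\,|x-y|_1\bigr)\le C\e^{-c\,|x-y|_1}
\qquad\forall x,y\in\Z^{2}.
\]
This is the content of \cite{AP96,GM07} together with the observation that the constants $\rho(p)$ and $c(p)$ appearing there satisfy $\rho(p)\to 1$ and $c(p)\to\infty$ as $p\to 1$.

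Once this input is available, the rest is routine. Given $\kappa>0$ I would fix $p^{\mathrm{Chem}}_\kappa$ large enough that the above holds with $\rho=3$ and $c=\kappa+10$. Pick a reference vertex, e.g.\ let $v_\star$ be the lexicographically first point of $\CC_\infty\cap Q_R(v)$; on the complement of the very rare event that $\CC_\infty\cap Q_R(v)=\varnothing$ (whose probability is $O(\e^{-\kappa R})$ by a Peierls argument, or alternatively by Lemma \ref{l:density_cluster_highlysc} once $p^{\mathrm{Chem}}_\kappa\ge p^{\mathrm{Dens}}_\kappa$), every $u\in\CC_\infty\cap Q_R(v)$ satisfies $|u-v_\star|_1\le 4R-4$. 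By the single-pair bound and the union bound over $u$,
\[
\PP\Bigl(\exists u\in\CC_\infty\cap Q_R(v):\dist_{\CC_\infty}(v_\star,u)>12R\Bigr)
\le (2R-1)^{2}\,C\e^{-(\kappa+10)\cdot 1}\cdot\max_{k\le 4R}\e^{-(\kappa+10)(k-1)}\cdot\I_{k\ge 1}
\le C'\e^{-\kappa R},
\]
after absorbing the polynomial prefactor $(2R)^{2}$ into the exponential decay $\e^{-10 R}$. The triangle inequality $\dist_{\CC_\infty}(u,u')\le \dist_{\CC_\infty}(v_\star,u)+\dist_{\CC_\infty}(v_\star,u')\le 24R$ then yields \eqref{e:chemical_distance1}.

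The main obstacle is the $p$-dependence of the Antal--Pisztora constants, since \cite{AP96} is stated for fixed $p>p_c$ without explicit tracking of the limit $p\to 1$. Rather than chasing constants through \cite{AP96,GM07}, I would prove this self-contained for $p$ near $1$ by a direct Peierls/renormalisation argument: partition $\Z^{2}$ into blocks of side $k$ (for some fixed $k$), declare a block \emph{good} if all its edges are present and every pair of vertices in the block is connected inside the block by its $\ell^{1}$-geodesic. For $p$ close to $1$ the bad blocks form a subcritical site-percolation by domination, so with exponentially high probability every large region contains only small islands of bad blocks; any two points of $\CC_\infty$ in $Q_R(v)$ can then be joined by a path that follows $\ell^{1}$-geodesics in good blocks and makes short local detours around each bad island. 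The total length of such a path is at most $(1+o_p(1))|u-u'|_1$ with exponential tails on the error, which is exactly the form needed above, with $\rho(p)\to 1$ and $c(p)\to\infty$ as $p\to 1$.
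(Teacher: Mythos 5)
Your reduction to a single-pair bound fails at the union-bound step, because you anchor at a reference vertex $v_\star$ \emph{inside} $Q_R(v)$. The Antal--Pisztora/Garet--Marchand input only gives decay in the Euclidean (or $\ell^1$) separation of the pair: $\PP\left(\dist_{\CC_\infty}(x,u)>\rho|x-u|_1\right)\le C\e^{-c|x-u|_1}$. For $u$ at $\ell^1$-distance $k$ from $v_\star$, the event $\{\dist_{\CC_\infty}(v_\star,u)>12R\}$ is therefore only bounded by $C\e^{-ck}$, which for $k=O(1)$ is a constant independent of $R$; your own display makes this explicit, since $\max_{k\le 4R}\e^{-(\kappa+10)(k-1)}=1$ (attained at $k=1$), so the bound you actually obtain is $(2R-1)^2C\e^{-(\kappa+10)}$, which grows with $R$ instead of decaying like $\e^{-\kappa R}$. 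The factor $\e^{-10R}$ into which you propose to absorb the polynomial prefactor does not exist anywhere in the estimate. (There is also the minor point that $v_\star$ is random, so one must union over its possible positions, but that only costs another polynomial factor; the real problem is the absence of any decay in $R$ for nearby pairs.)

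The fix, which is what the paper does, is to anchor at a point $\bar u\in\CC_\infty\cap(Q_{3R}(v)\setminus Q_{2R}(v))$, i.e.\ at Euclidean distance at least $R$ (and at most $6R$) from every point of $Q_R(v)$. If some pair $u,u'\in\CC_\infty\cap Q_R(v)$ has $\dist_{\CC_\infty}(u,u')>24R$, then one of $\dist_{\CC_\infty}(u,\bar u)$, $\dist_{\CC_\infty}(u',\bar u)$ is at least $12R\ge 2|u-\bar u|$, so every pair entering the union bound has separation at least $R$ and the pair bound (with factor $2$) gives a uniform $\e^{-\kappa R}$; summing over the $O(R^4)$ pairs finishes the proof. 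Concerning your worry about the $p$-dependence of the constants: the paper takes exactly this input from \cite[Theorem 1.4]{GM07}, which for $p$ close to $1$ yields the factor $2$ together with an arbitrarily prescribed decay rate $\kappa$, so no constant-chasing through \cite{AP96} is needed. Your renormalisation fallback could serve as a self-contained substitute for that input, but as written it again outputs a pair bound and feeds it into the same flawed reduction; to make it work you would either have to run it at the level of the whole box (controlling all bad islands meeting, say, $Q_{2R}(v)$ simultaneously, which directly bounds all chemical distances in $Q_R(v)$ by $CR$) or combine it with the annulus anchoring just described.
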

\begin{proof}
Applying \cite[Theorem 1.4]{GM07}, 
  we have for all $p$ sufficiently close to 1 the large deviation estimate
\begin{equation}\label{e:chemical_distance2}
\PP(\dist_{\CC_\infty}(u,u')\ge 2|u-v|)\le Ce^{-\kappa|u-u'|}
\end{equation}
for any $u,u'\in\CC_\infty$. 
This bound is useful for us when $|u-u'|\ge cR$. To get an estimate also for $u,u'$ which are closer to each other, we proceed as follows. We can assume that $\CC_\infty\cap Q_{3R}(v)$ is non-empty, as otherwise the left-hand side of \eqref{e:chemical_distance1} is trivially equal to 1. So take some $\bar u\in \CC_\infty\cap (Q_{3R}(v)\setminus Q_{3R}(v))$. If there are $u,u'\in \CC_\infty\cap Q_R(v)$ with $\dist_{\CC_\infty}(u,u')\ge24R$, then one of $\dist_{\CC_\infty}(u,\bar u)$ and $\dist_{\CC_\infty}(u',\bar u)$ must be at least $12R$, while $R\le|u-\bar u|\le6R $ and $\rho\le|u'-\bar u|\le6R $. Thus, using \eqref{e:chemical_distance2} and a union bound,
\begin{align*}
&\PP\left(\exists u,u'\in \CC_\infty\cap Q_R(v)\colon \dist_{\CC_\infty}(u,u')> 24R\right)\\
&\quad\le\PP\left(\exists u\in\CC_\infty\cap Q_R(v),\bar u\in \CC_\infty\cap (Q_{3R}(v)\setminus Q_{2R}(v))\colon \dist_{\CC_\infty}(u,\bar u)\ge 12R \right)\\
&\quad\le\sum_{u\in\CC_\infty\cap Q_{R}(v)}\sum_{\bar u\in \CC_\infty\cap (Q_{3R}(v)\setminus Q_{2R}(v))}\PP(\dist_{\CC_\infty}(u,\bar u)\ge 12R)
\le CR^4\e^{-\kappa R},
\end{align*}
which implies \eqref{e:chemical_distance1}.
\end{proof}

\subsection{Proof of Theorem \ref{t:percolation_cluster}, second part}
\label{sec-4.4}
In this section we will complete the proof of Theorem \ref{t:percolation_cluster} by showing that for $p$ sufficiently close to 1, also Assumptions \ref{a:logupp} and \ref{a:sparseT} are satisfied for a suitable choice of $\T_\cdot$.

In the previous section the dependence of various quantities on $p$ and ${\Lambda^+}/{\Lambda^-}$ was unimportant for as, as anyhow all relevant results held for all supercritical $p$. This will be different in this section, and we change our convention on notation slightly: the generic constant $C$ might still depend on $p$ or ${\Lambda^+}/{\Lambda^-}$, but for all other named constants we now indicate explicitly their dependence on $p$ and ${\Lambda^+}/{\Lambda^-}$.

The Assumptions \ref{a:logupp} and \ref{a:sparseT} are at first glance very similar to Assumptions \ref{a:logbd} and \ref{a:sparseR}. However, establishing them is substantially more challenging, mainly because of two new difficulties.
The first 
is that we now require estimates not just in $V_N^\delta$ for some $\delta>0$, but up to the boundary. Near the boundary, the full-space Green's function $G^\A$ is no longer a useful comparison function to be used for the maximum principle, as it was used e.g. in the proof of Lemma \ref{l:green_asympt_improved}.
 Instead we will need to use half-space Green's functions as comparison function. Therefore, as a first step we prove asymptotics for these, by using Lemma \ref{l:green_asympt_improved} together with the reflection principle.

 The other difficulty is even more serious. Namely \ref{a:sparseT} is a quantitative upper bound on the number of points $v\in V_N(w)$ where $\T_v$ is large. Our argument for \ref{a:sparseR} was also quantitative, but the resulting bounds are too weak to be useful here. In more detail, for \ref{a:logbd} we have used Lemma \ref{l:green_asympt_improved} together with the maximum principle to obtain estimates for $G^\A(\cdot,v)$ on lengthscales at least $\Rone_v$, and used the deterministic estimate \eqref{e:estgreen8} to control $G^\A(\cdot,v)$ in the box $Q_{\Rone_v}(v)$. This has lead to bounds on $G^\A(u,v)$ with error term of order $(\Rone_v)^2$.
In order to establish \ref{a:logupp}, we would thus need to choose $\T_v=C(\Rone_v)^2$. In view of the tail-bound 
\begin{equation}\label{e:estgreenquant}
\PP(\Rone_v\ge R)\le C\e^{-R^s/C}
\end{equation}
we would then obtain the tail-bound
\[\PP(\T_v\ge T)\le C\e^{-T^{s/2}/C}\]
where $s>0$ is some small exponent. This only gives us a chance to also establish \ref{a:sparseT} if we are able to take $s=2$ (or even better $s>2$).

So a natural question is whether \eqref{e:estgreenquant} holds for some $s\ge2$. Unfortunately, the answer to that question is no, and the best possible $s$ one can hope for is $s=1$. The problem (which is explained well in \cite[Section 1.4]{DG21}) is that one needs to change only $CR$ bonds to (almost) disconnect a box $Q_R(v)$ from its complement, which makes the environment very irregular on length-scale $R$ around $v$. Therefore, the probability that $\Rone_v>R$ should be at least the probability of this particular bad event, which is at least $c\e^{-R/C}$.

Thus, with the approach that worked well for Assumptions \ref{a:logbd} and \ref{a:sparseR}, we cannot hope to also show \ref{a:logupp} and \ref{a:sparseT}. However, we remark that if $p=1$ (i.e. we are in the setting of uniformly bounded conductances), then things are much easier. Namely, then instead of \eqref{e:estgreen8} we could use the upper bound
\[
0\le G^\A_{V_N(w)}(u^*,v^*)\le\frac{\log N}{2\pi\Lambda^-}
\]
(which follows from Rayleigh monotonicity), that would allow us to take $\T_v=c\Rone_v$. With this choice of $\T_v$, \eqref{e:estgreenquant} would easily be strong enough to obtain exponential (and not just stretched-exponential) tails for $\T_v$, and so we would have a good chance to establish \ref{a:sparseT}.

If $p<1$, though, then \eqref{e:estgreen8} is clearly best-possible as a deterministic estimate. Our idea to establish \ref{a:logupp} and \ref{a:sparseT} now is that we do not actually need a bound like \eqref{e:estgreen8} that holds for all environments, but only a bound that holds for most environments. In other words, we need to understand the upper tail of the random variable $G^\A_{Q_N}(v^*,v^*)$, and in fact we need to show that it is exponentially unlikely in $T$ that this random variable exceeds its typical value ${\log N}/{(2\pi\oA)}$ by more that $T$. Such a quantitative tail estimate appears to be new, and previously only qualitative estimates were known (see e.g. \cite{A15} and the 
references therein, although of course  
stronger quantitative bounds could be deduced from \cite{DG21}).

Turning now to the actual proof, we begin with the asymptotics for the Green's function in half-spaces. Let us first introduce some notation.
For $w\in\Z^2$ and $\vec e\in\{\vec e_1,-\vec e_1,\vec e_2,-\vec e_2\}$ let 
\[Q^{\vec e}(w)=\{v\in\Z^2\colon (v-w)\cdot \vec e\ge 0\}\]
be a half-space, and let $G^\A_{Q^{\vec e}(w)}\colon\CC_\infty\times\CC_\infty\to\R$ be the Green's function on $Q^{\vec e}(w)$. That is, if $v\notin Q^{\vec e}(w)$, then $G^\A_{Q^{\vec e}(w)}(\cdot,v)=0$, while if $v\in Q^{\vec e}(w)$ then $G^\A_{Q^{\vec e}(w)}(\cdot,v)$ is 0 on $\CC_\infty\setminus Q^{\vec e}(w)$ and satisfies $-\Delta_\A G^\A_{Q^{\vec e}(w)}(\cdot,v)=\delta_v$ on $Q^{\vec e}(w)\cap \CC_\infty$ and grows sublinearly at infinity. Equivalently, $G^\A_{Q^{\vec e}(w)}$ is the Green's function for random walk on $\CC_\infty$ killed when exiting $Q^{\vec e}(w)$.

It follows from the general homogenization results in \cite{DG21} (or already from a qualitative invariance principle for the random walk on $\CC_\infty$), that $\PP$-almost surely there is a unique such $G^\A_{Q^{\vec e}(w)}$, and it is non-negative on $\CC_\infty\times\CC_\infty$.

We also denote by
\[v_{w,\vec e}:=v-2((v-w)\cdot \vec e)\vec e\]
the mirror point of $v$ under reflection at $\partial Q^{\vec e}(w)$. Then we have the following asymptotics for $G^\A_{Q^{\vec e}(w)}$ beyond some random scale.
\begin{lemma}\label{l:green_asympt_halfspace}
For each $p>1/2$ and any ${\Lambda^+}/{\Lambda^-}\ge1$ there are $C>0$
and random variables $\Rscale^{\mathrm{Green}''}_v\in\N$ indexed by $v\in\Z^2$ 
with the following property: if $w\in\Z^2$, $\vec e'\in\{\vec e_1,-\vec e_1,\vec e_2,-\vec e_2\}$ and $u,v\in Q^{\vec e}(w)$ satisfy $|u-v|\ge\Rscale^{\mathrm{Green}''}_v$, $(v-w)\cdot \vec e\ge \Rscale^{\mathrm{Green}''}_v$ and we also have $u\in\CC_\infty$ or $(u-w)\cdot \vec e\ge \Rscale^{\mathrm{Green}''}_u$, then
\begin{equation}\label{e:green_asympt_halfspace}
\left|G^\A_{Q^{\vec e}(w)}(u^*,v^*)-\frac{1}{2\pi\oA}\log\left(\frac{|u^*-v_{w,\vec e}|}{|u^*-v|}\right)\right|\le\frac{4}{\oA}.
\end{equation}

In addition, for any $v\in\Z^2$ and any $R,R'\in\N$ with $R\le {R'}/{2}$ there is an event $\Ev^{R,R',\mathrm{Green}''}_v\in\F_{Q_{10R'}(v)}$ such that,
with $s>0$ uniform,
\begin{equation}\label{e:green_asympt_halfspace1}
\PP(\Ev^{R,R',\mathrm{Green}''}_v)\ge 1-C\e^{-R^{s}/C}
\end{equation}
and 
\begin{equation}\label{e:green_asympt_halfspace2}
\PP(\Rscale^{\mathrm{Green}''}_v> R,\Ev^{R,R',\mathrm{Green}''}_v)\le C\e^{-R'^{s}/C}.
\end{equation}
\end{lemma}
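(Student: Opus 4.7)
The plan is to implement the method of images, comparing $G^\A_{Q^{\vec e}(w)}(\cdot,v^*)$ to a suitable difference of full-space Green's functions, for which Lemma~\ref{l:green_asympt_improved} gives precise asymptotics. I would introduce the auxiliary function
\[
\tilde G(u) := G^\A(u^*,v^*) - G^\A\bigl(u^*,(v_{w,\vec e})^*\bigr) - \bigl(\mathsf{K}'_v - \mathsf{K}'_{v_{w,\vec e}}\bigr),
\]
where the subtracted constant ensures $\tilde G(u)\to 0$ as $|u|\to\infty$ along $\CC_\infty$ (matching the decay of $G^\A_{Q^{\vec e}(w)}(\cdot,v^*)$). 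Provided $(v-w)\cdot\vec e$ exceeds the local fluctuation $\Rscale^{\mathrm{Dist}}_{v_{w,\vec e}}$, the reflected projection $(v_{w,\vec e})^*$ lies outside $Q^{\vec e}(w)$, so that $-\Delta_\A\tilde G=\delta_{v^*}$ on $Q^{\vec e}(w)\cap\CC_\infty$, matching the source of $G^\A_{Q^{\vec e}(w)}(\cdot,v^*)$. Applying Lemma~\ref{l:green_asympt_improved} to each Green's function term and using $|u^*-v_{w,\vec e}|\ge|u^*-v|$ for $u\in Q^{\vec e}(w)$, one obtains
\[
\tilde G(u) = \frac{1}{2\pi\oA}\log\frac{|u^*-v_{w,\vec e}|}{|u^*-v|} + O\!\left(\frac{1}{\oA\,|u^*-v|^{1/2}}\right).
\]

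On the outer boundary $\CC_\infty\setminus Q^{\vec e}(w)$, a short computation (decomposing into normal and tangential components with respect to $\partial Q^{\vec e}(w)$) gives $|u^*-v_{w,\vec e}|^2-|u^*-v|^2=O((v-w)\cdot\vec e)$, while $|u^*-v|\ge(v-w)\cdot\vec e$, so the log ratio contributes only $O(1/((v-w)\cdot\vec e))$. Consequently $|\tilde G|\le 3/\oA$ on $\partial^+(Q^{\vec e}(w)\cap\CC_\infty)$ once the random scale is taken large enough to absorb universal constants. Since $\tilde G$ and $G^\A_{Q^{\vec e}(w)}(\cdot,v^*)$ are both $\Delta_\A$-harmonic on $Q^{\vec e}(w)\cap\CC_\infty\setminus\{v^*\}$, share the singular behaviour at $v^*$, and decay sublinearly to $0$ at infinity, a discrete maximum principle applied on an exhausting sequence of truncations then yields $|G^\A_{Q^{\vec e}(w)}(u^*,v^*) - \tilde G(u)|\le 3/\oA$ on $Q^{\vec e}(w)\cap\CC_\infty$. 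Combining this with the expansion above and absorbing the residual $O(1/\oA|u-v|^{1/2})$ into the universal constant produces \eqref{e:green_asympt_halfspace} with the stated constant $4/\oA$.

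The main obstacle is that as $(w,\vec e)$ vary, the reflected point $(v_{w,\vec e})^*$ ranges over the four rays $\{v+2k\vec e:k\in\N\}$ with $\vec e\in\{\pm\vec e_1,\pm\vec e_2\}$, and the asymptotic expansion at that point requires control of $\Rscale^{\mathrm{Green}'}_{v_{w,\vec e}}$, a random scale not determined by the cluster near $v$. I would resolve this by setting
\[
\Rscale^{\mathrm{Green}''}_v := C'\vee \Rscale^{\mathrm{Green}'}_v \vee \inf\!\bigl\{R\in\N : \Rscale^{\mathrm{Green}'}_{v+2k\vec e}\le k \text{ for all } \vec e\in\{\pm\vec e_1,\pm\vec e_2\} \text{ and all } k\ge R/2\bigr\},
\]
with $C'$ a fixed constant absorbing the $\Rscale^{\mathrm{Dist}}$-type fluctuations and the universal constants from the previous steps. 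Summing the stretched-exponential tails $\PP(\Rscale^{\mathrm{Green}'}_{v+2k\vec e}>k)\le Ce^{-k^s/C}$ over $k\ge R/2$ and the four directions yields $\PP(\Rscale^{\mathrm{Green}''}_v>R)\le Ce^{-R^s/C'}$. For the local events I would take
\[
\Ev^{R,R',\mathrm{Green}''}_v := \Ev^{R,R',\mathrm{Green}'}_v\cap\bigcap_{\vec e}\bigcap_{R/2\le k\le R'/2}\Ev^{k,R',\mathrm{Green}'}_{v+2k\vec e},
\]
which lies in $\F_{Q_{10R'}(v)}$ because each factor with $k\le R'/2$ is $\F_{Q_{9R'}(v+2k\vec e)}\subset\F_{Q_{10R'}(v)}$-measurable. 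Then \eqref{e:green_asympt_halfspace1}--\eqref{e:green_asympt_halfspace2} follow by union bounds over the $O(R')$ indices, combined with the stretched-exponentially small contribution from uncontrolled rays $k>R'/2$.
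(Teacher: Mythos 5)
Your proposal is correct and follows essentially the same route as the paper's proof: the image function $G^\A(\cdot,v^*)-G^\A(\cdot,(v_{w,\vec e})^*)-(\mathsf{K}'_v-\mathsf{K}'_{v_{w,\vec e}})$ compared to $G^\A_{Q^{\vec e}(w)}(\cdot,v^*)$ via boundary estimates from Lemma~\ref{l:green_asympt_improved} and a maximum-principle/sublinear-growth argument, with $\Rscale^{\mathrm{Green}''}_v$ defined to control $\Rscale^{\mathrm{Green}'}$ at the mirror points along the four axis rays and the local event built as the intersection of the corresponding $\Ev^{\cdot,R',\mathrm{Green}'}$ events at those mirror points up to distance $\sim R'$, exactly as in the paper. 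The only loose phrase is that a fixed constant $C'$ cannot ``absorb'' the random $\Rscale^{\mathrm{Dist}}_{v_{w,\vec e}}$, but this is harmless since $\Rscale^{\mathrm{Green}'}\ge(\Rscale^{\mathrm{Dist}})^4$, so your requirement $\Rscale^{\mathrm{Green}'}_{v_{w,\vec e}}\le k$ already forces the reflected projection outside the half-space.
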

The heat kernel asymptotics in \cite{DG21} actually imply even sharper asymptotics for $G^\A_{Q^{\vec e}(w)}$ (with error term $o(1)$ instead of $O(1)$. However the estimate stated in Lemma \ref{l:green_asympt_halfspace} is sufficient for our purposes, and it has the advantage that it follows relatively easily from Lemma \ref{l:green_asympt_improved}. 

\begin{proof}
Our strategy is to use Lemma \ref{l:green_asympt_improved} in combination with the reflection principle. For that purpose we need to know that $\Rscale^{\mathrm{Green}'}$ is small not just at $v$ but also at the relevant mirror point. So we define $\Rscale^{\mathrm{Green}''}$ in such a way that it control $\Rscale^{\mathrm{Green}'}$ at $v$ as well as at the mirror points. To be precise, define
\[\Rscale^{\mathrm{Green}''}_v=\Rscale^{\mathrm{Green}'}_v\vee\max\left\{R\colon \Rscale^{\mathrm{Green}'}_{v_{w',\vec e'}}\le\frac{|v-v_{w',\vec e'}|}{2}\ \forall w',\vec e'\text{ with }R\le \frac{|v-v_{w',\vec e'}|}{2}\right\}\]
and
\[\Ev^{R,R',\mathrm{Green}''}_v=\Ev^{R,R',\mathrm{Green}'}_v\cap\bigcap_{\substack{w',\vec e'\\(w'-v')\cdot \vec e'\in\N\\R\le|w'-v|\le R'/2}}\Ev^{|v-v_{w',\vec e'}|/2,R',\mathrm{Green}'}_{v_{w',\vec e'}}\]
and take $s$ as in Lemma \ref{l:green_asympt_improved}. 

With these definitions, we have
\begin{align*}
\PP(\Ev^{R,R',\mathrm{Green}''}_v)&\ge 1-\PP( (\Ev^{R,R',\mathrm{Green}'}_v)^\complement)-\sum_{\substack{w',\vec e'\\(w'-v')\cdot \vec e'\in\N\\R\le|w'-v|\le R'/2}}\PP( (\Ev^{|v-v_{w',\vec e'}|/2,R',\mathrm{Green}'}_{v_{w',\vec e'}})^\complement)\\
&\ge1-Ce^{-R^s/C}-\sum_{\substack{w',\vec e'\\(w'-v')\cdot \vec e'\in\N\\R\le|w'-v|\le R'/2}}Ce^{-(|v-v_{w',\vec e'}|)^s/C}
\ge1-Ce^{-R^s/C}
\end{align*}
and thus \eqref{e:green_asympt_halfspace1}. The argument for \eqref{e:green_asympt_halfspace2} is analogous. Moreover, since $\Ev^{R,R',\mathrm{Green}'}_v\in\F_{Q_{9R'}(v)}$ and $\Ev^{|v-v_{w',\vec e'}|/2,R',\mathrm{Green}'}_{v_{w',\vec e'}}\in\F_{Q_{9R'}(v_{w',\vec e'})}\subset \F_{Q_{10R'}(v)}$, we see that $\Ev^{R,R',\mathrm{Green}''}_v\in\F_{Q_{10R'}(v)}$.

It remains to show \eqref{e:green_asympt_halfspace}. For this purpose consider $H^\A_{Q^{\vec e}(w)}\colon (\CC_\infty\cap Q^{\vec e}(w))\times (\CC_\infty\cap Q^{\vec e}(w))\to\R$ defined by 
\[H^\A_{Q^{\vec e}(w)}(u'^*,v'^*)=G^\A(u'^*,v'^*)-G^\A_{Q^{\vec e}(w)}(u'^*,(v_{w',\vec e'})^*)-\mathsf{K}'_{v'}+\mathsf{K}'_{v_{w',\vec e'}}.\]
Consider some $u'\in \CC_\infty\cap \partial Q^{\vec e}(w)$. Then 
$|u'-v|\ge(v-w)\cdot\vec e\ge \Rscale^{\mathrm{Green}''}_v\ge\Rscale^{\mathrm{Green}'}_v$, and so by Lemma \ref{l:green_asympt_improved} we know that
\[\left|G^\A(u'^*,v^*)+\frac{1}{2\pi\oA}\log|u'^*-v|-\mathsf{K}'_v\right|\le\frac{1}{\oA|u'^*-v|^{1/2}}\le\frac{1}{\oA\Rscale^{\mathrm{Green}''}_v}.\]
Similarly, $|u'-v_{w,\vec e}|\ge\Rscale^{\mathrm{Green}'}_{v_{w,\vec e}}$, and so by another application of \eqref{e:green_asympt_improved} we have
\[\left|G^\A(u'^*,(v_{w,\vec e})^*)+\frac{1}{2\pi\oA}\log|u'^*-v_{w,\vec e}|-\mathsf{K}'_{v_{w,\vec e}}\right|\le\frac{1}{\oA|u'^*-v_{w,\vec e}|^{1/2}}\le\frac{1}{\oA\Rscale^{\mathrm{Green}''}_v}.\]
Combining the last two estimates and using that $|u'-v_{w,\vec e}|=|u'^*-v|$ by symmetry, we find that
\[\left|H^\A_{Q^{\vec e}(w)}(u'^*,v^*)\right|\le \frac{2}{\oA\Rscale^{\mathrm{Green}''}_v}\le\frac{2}{\oA}.\]
In other words, $H^\A_{Q^{\vec e}(w)}(\cdot,v^*)$ grows sublinearly at infinity and is bounded by $\frac{2}{\oA}$ on $\CC_\infty\cap\partial Q^{\vec e}(w)$.

This means that $G^\A_{Q^{\vec e}(w)}(\cdot,v^*)-H^\A_{Q^{\vec e}(w)}(\cdot,v^*)$ is a $-\Delta_\A$-harmonic function on $\CC_\infty\cap\partial Q^{\vec e}(w)$ that is bounded by $\frac{2}{\oA}$ on $\CC_\infty\cap\partial Q^{\vec e}(w)$ and grows sublinearly at infinity. Thus it must be bounded by $\frac{2}{\oA}$ everywhere. We conclude that
\[\left|G^\A_{Q^{\vec e}(w)}(u^*,v^*)-G^\A(u'^*,v'^*)-G^\A_{Q^{\vec e}(w)}(u'^*,(v_{w',\vec e'})^*)-\mathsf{K}'_{v'}+\mathsf{K}'_{v_{w',\vec e'}}\right|\le\frac{2}{\oA}\]
We can now insert the asymptotics \eqref{e:green_asympt_improved} from Lemma \ref{l:green_asympt_improved} and directly obtain \eqref{e:green_asympt_halfspace}.
\end{proof}

As mentioned in the beginning of the section, we need a replacement for the deterministic upper bound on the Green's function in a (small) box given by \eqref{e:estgreen8}. The following lemma provides such an estimate. The random variable $\T^{\mathrm{Tail},S,\ep}_v$ defined here is genuinely local, and so this time (as already for Lemma \ref{l:dist_to_cluster}) we do not need an estimate regarding its approximate locality.

\begin{lemma}\label{l:tail_bound_green}
For any $\kappa>0$ there is $p^{\mathrm{Tail}}_\kappa<1$ with the following property. For $p^{\mathrm{Tail}}_\kappa\le p\le1$, for any $0<\Lambda^-\le\Lambda^+$ and for any $0<\ep<1$ there are $C>0$ and random variables $\T^{\mathrm{Tail},\ep}_v\in\R$ indexed by $v\in\Z^2$ such that if $R\in\N$ is such that $R^\ep\le \T^{\mathrm{Tail},\ep}_v$ then
\begin{equation}\label{e:tail_bound_green}
  \max_{u,u'\in Q_R(v)}G^\A_{Q_R(v)}(u^*,u'^*)\le \frac{\T^{\mathrm{Tail},\ep}_v}{2\pi\Lambda^+}.
\end{equation}
Furthermore, for all $T\in\R$ we have the tail bound 
\begin{equation}\label{e:tail_bound_green1}
\PP(\T^{\mathrm{Tail},\ep}_v\le T,\Ev^{T^{1/\ep},\mathrm{Clust}}_v)\ge 1-C\e^{-\kappa T\Lambda^-/\Lambda^+},
\end{equation}
and the event $\{\T^{\mathrm{Tail},\ep}_v\le T\}\cap\Ev^{T^{1/\ep},\mathrm{Clust}}_v$ is $\F_{Q_{9T^{1/\ep}}(v)}$-measurable.
\end{lemma}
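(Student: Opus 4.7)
The proof has two main components: identifying the Green's function on the diagonal with an effective resistance, and controlling this resistance via a multiscale argument based on the estimates of Section \ref{sec-4.3}.

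By the maximum principle applied first to $G^\A_{Q_R(v)}(\cdot,u'^*)$ and then to $G^\A_{Q_R(v)}(u^*,\cdot)$ separately, one has $G^\A_{Q_R(v)}(u^*,u'^*)\leq G^\A_{Q_R(v)}(u^*,u^*)\vee G^\A_{Q_R(v)}(u'^*,u'^*)$, so it suffices to bound the diagonal uniformly over $u\in Q_R(v)$. The diagonal value equals the effective resistance from $u^*$ to $\CC_\infty\setminus Q_R(v)$ in the network $(\CC_\infty,\A)$, and Rayleigh monotonicity (using $\A(e)\geq\Lambda^-$ on $E(\CC_\infty)$) gives
\[ G^\A_{Q_R(v)}(u^*,u^*)\leq \frac{1}{\Lambda^-}\,R^1_{\mathrm{eff}}(u^*,\CC_\infty\setminus Q_R(v)), \]
where $R^1_{\mathrm{eff}}$ denotes the effective resistance in the subnetwork with unit conductances. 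The problem thereby reduces to a probabilistic upper bound on $R^1_{\mathrm{eff}}$ in the unweighted cluster.

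For each $R\in\N$ let $\mathcal{G}_R(v)$ denote the intersection of $\Ev^{R,\mathrm{Clust}}_v$ with the good events from Lemmas \ref{l:density_cluster_highlysc}, \ref{l:isoperimetry}, and \ref{l:chemical_distance} at scale $R$ centered at $v$. By those lemmas, for $p\geq p^{\mathrm{Tail}}_{2\kappa}$ (chosen as the maximum of the thresholds from Lemmas \ref{l:density_cluster_highlysc}--\ref{l:chemical_distance} with parameter $2\kappa$) we have $\PP(\mathcal{G}_R^\complement)\leq C\e^{-2\kappa R}$, and each $\mathcal{G}_R$ is $\F_{Q_{9R}(v)}$-measurable. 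Define the random scale
\[ S_v := 1\vee\sup\{R\in\N:\mathcal{G}_R(v)\text{ fails}\}, \]
which satisfies $\PP(S_v>S)\leq C\e^{-\kappa S}$ by summing the tails over $R>S$. On the event $\{S_v\leq S\}$ we establish two deterministic bounds: for $R\leq 2S$, the chemical-distance estimate on $\mathcal{G}_{S+1}$ (combined with the density estimate to find an exit vertex inside $Q_{S+1}(v)\setminus Q_R(v)$) together with \eqref{e:estgreen8} yield $R^1_{\mathrm{eff}}(u^*,\CC_\infty\setminus Q_R(v))\leq 24(S+1)$; for $R\geq 2S$, the multiscale argument sketched below gives $R^1_{\mathrm{eff}}(u^*,\CC_\infty\setminus Q_R(v))\leq C\log R$. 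Granting these, setting $\T^{\mathrm{Tail},\ep}_v:=C_\ep(\Lambda^+/\Lambda^-)(S_v+1)$ for a suitable $\ep$-dependent $C_\ep$ verifies (i), since the logarithmic term is dominated by the linear one whenever $R\leq T^{1/\ep}$ and $T$ exceeds a constant depending on $\ep$; (ii) follows directly from the exponential tail of $S_v$ after absorbing constants into $\kappa$; and (iii) is obtained by replacing the supremum in the definition of $S_v$ with one over $R\leq T^{1/\ep}$, which coincides with the original on $\{\T^{\mathrm{Tail},\ep}_v\leq T\}$ and is manifestly $\F_{Q_{9T^{1/\ep}}(v)}$-measurable once intersected with $\Ev^{T^{1/\ep},\mathrm{Clust}}_v$.

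The main obstacle is the logarithmic bound $R^1_{\mathrm{eff}}\leq C\log R$ on the good event. The approach is a dyadic annulus decomposition centered at $u^*$: by the series law,
\[ R^1_{\mathrm{eff}}(u^*,\CC_\infty\setminus Q_R(v))\leq \sum_{k:\,S\leq 2^k\leq R}R^1_{\mathrm{eff}}(\partial Q_{2^{k-1}}(u^*)\cap\CC_\infty,\partial Q_{2^{k+1}}(u^*)\cap\CC_\infty)+\text{(contribution of scales}<S), \]
and we claim that each annular term with $2^k\geq S$ is $O(1)$. Any edge-cut separating the inner cluster-boundary of the annulus $Q_{2^{k+1}}(u^*)\setminus Q_{2^{k-1}}(u^*)$ from the outer one must, by the isoperimetric bound of Lemma \ref{l:isoperimetry} applied to the inner-side component, contain $\Omega(2^k)$ edges (the vertex-boundary bound $|\partial^+A|\geq|A|^{1/2}$ transfers to the edge boundary since every outer-boundary vertex contributes at least one edge to the cut). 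Menger's theorem then provides $\Omega(2^k)$ edge-disjoint paths between the two cluster-boundaries, each of graph length $O(2^k)$ by the chemical-distance estimate. Parallel-series combination gives $R^1_{\mathrm{eff}}\leq O(2^k)/\Omega(2^k)=O(1)$ per annulus, and summing over the $O(\log R)$ relevant scales yields the desired bound, with the contribution from scales $<S$ already absorbed into the $R\leq 2S$ bound. The most delicate point is the min-cut estimate, which requires the inner-side component to have volume $\gtrsim 2^{2k}$ so that Lemma \ref{l:isoperimetry} applies; this is ensured by combining the density estimate at scale $2^{k+1}$ with the chemical-distance estimate, both of which are built into $\mathcal{G}_{2^{k+1}}$.
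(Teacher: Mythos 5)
The heart of your proposal—the claim that on the good event the unit-conductance effective resistance satisfies $R^1_{\mathrm{eff}}(u^*,\CC_\infty\setminus Q_R(v))\le C(S_v+\log R)$—is not established by the argument you sketch, and each of its three ingredients has a genuine problem. First, the "series law" decomposition over dyadic annuli is not a valid \emph{upper} bound for effective resistance: identifying the boundary spheres (which is what effective resistance between two vertex sets means) is a shorting operation and by Rayleigh monotonicity only yields \emph{lower} bounds on the resistance of the original network. A small example shows the inequality you use can fail badly: if the flow enters an annulus only through a poorly connected part of the inner sphere, the set-to-set resistance of the annulus can be tiny while the true contribution is huge. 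To get an upper bound one must construct an actual unit flow from $u^*$ to the complement, and gluing per-annulus flows requires matching the current distributions on the intermediate spheres—precisely the nontrivial part of the known $\log R$ resistance bounds on supercritical clusters. Second, Menger's theorem gives $m$ edge-disjoint paths but says nothing about their individual lengths; the chemical-distance estimate of Lemma \ref{l:chemical_distance} controls geodesics, not the lengths of a maximal edge-disjoint family (this particular point is repairable: splitting the current equally gives energy at most $(\text{total edges in the annulus})/m^2\le C4^k/m^2$). Third, the min-cut lower bound $m\ge c2^k$ does not follow from Lemma \ref{l:isoperimetry} as you apply it: the inner-side component of a cut inside an annulus centered at $u^*$ contains neither the center vertex required by that lemma nor, in general, volume $\gtrsim 2^{2k}$ (the cut may hug the inner sphere), and the density estimate does not force the inner side to be large. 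A correct route is to apply the isoperimetric bound to the \emph{filled-in} set consisting of $\CC_\infty\cap Q_{2^{k-1}}(u^*)$ together with the inner side, which does contain $u^*$ and has volume $\gtrsim 2^{2k}$ by Lemma \ref{l:density_cluster_highlysc}, while its outer vertex boundary is contained in the endpoints of the cut.

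Two further points would still need attention even after repairing the resistance bound. Your verification of the measurability claim fails because the events of Lemmas \ref{l:isoperimetry} and \ref{l:chemical_distance} are not local (the sets $A$ and the connecting paths may leave any fixed box), so truncating the supremum defining $S_v$ does not make $\{\T^{\mathrm{Tail},\ep}_v\le T\}\cap\Ev^{T^{1/\ep},\mathrm{Clust}}_v$ measurable with respect to $\F_{Q_{9T^{1/\ep}}(v)}$; the paper avoids this by defining $\T^{\mathrm{Tail},\ep}_v$ directly through the Green functions $G^\A_{Q_R(v)}$ (a genuinely local quantity once intersected with $\Ev^{\mathrm{Clust}}$) and using the three percolation lemmas only to bound probabilities. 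Also, taking $\T^{\mathrm{Tail},\ep}_v=C_\ep(\Lambda^+/\Lambda^-)(S_v+1)$ with $C_\ep\sim 1/\ep$ forces the rate in \eqref{e:tail_bound_green1} to degrade by the factor $C_\ep$, so compensating it pushes the $\ep$-dependence into the percolation threshold, contrary to the statement in which $p^{\mathrm{Tail}}_\kappa$ depends on $\kappa$ only; the $1/\ep$ must sit only in front of the lower-order $\log$ term (absorbed into the constant $C$), as in the paper, where the linear-in-$T$ part of the bound carries an absolute constant. For comparison, the paper's proof of Lemma \ref{l:tail_bound_green} bypasses effective-resistance upper bounds entirely: it runs the Benjamini--Kozma superlevel-set argument on the weighted Green function itself, using the chemical distance to show a level set of volume $\sim\tau^2$ lies at height at least $G(u,u)-\tau/(4\pi\Lambda^+)$ and the isoperimetric iteration to show the remaining drop is only $O(\log\tau)$, which is where your missing flow construction is replaced by a purely monotone argument.
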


Note that (by Rayleigh monotonicity) we have $\oA\le\Lambda^+$. So this lemma will allow us to control $2\pi\oA G_{Q_R(v)}(u^*,u'^*)$ by $\T^{\mathrm{Tail},\ep}_v$.

\begin{proof}
For any $\tau\in\N_\ep:=\{n^{\ep}\colon n\in\N\}$ consider the event
\[\Ev^{\tau,\mathrm{Tail},\ep}_v=\left\{\max_{R\le \tau^{1/\ep}}\max_{u,u'\in Q_R(w)}G^\A_{Q_R(w)}(u^*,u'^*)\le \frac{\tau}{2\pi\Lambda^+}\right\}.\]
This event only depends on $\CC_\infty\cap Q_{\tau^{1/\ep}}(v)$, and so $\Ev^{\tau,\mathrm{Tail},\ep}_v\cap \Ev^{\tau^{1/\ep},\mathrm{Clust}}_v$ is $\F_{Q_{9\tau^{1/\ep}}(v)}$-measurable. We can now define
\[\T^{\mathrm{Tail},\ep}_v=\inf\left\{\tau\in\N_\ep\colon \Ev^{\tau,\mathrm{Tail},\ep}_v\right\}.\]
From Lemma \ref{l:local_approx_cluster} we know that
\[\PP(\Ev^{\tau^{1/\ep},\mathrm{Clust}}_v)\ge 1-C\e^{-\tau^{1/\ep}/C}.\]
Now, if we also knew that
\begin{equation}\label{e:tail_bound_green3}
  \PP(\Ev^{\tau,\mathrm{Tail},\ep}_v)\ge 1-C\e^{-\kappa\tau\Lambda^-/(2\Lambda^+)},
\end{equation}
then we would have
\[\PP\left(\Ev^{\tau,\mathrm{Tail},\ep}_v\cap \Ev^{\tau^{1/\ep},\mathrm{Clust}}_v\right)\ge 1-C\e^{-T^{1/\ep}/C}-C\e^{-3\kappa\tau\Lambda^-/(4\Lambda^+)}\ge1-C\e^{-\kappa\tau\Lambda^-/\Lambda^+}\]
for a constant $C$ depending on ${\Lambda^+}/{\Lambda^-}$, $\ep$, from which \eqref{e:tail_bound_green1} easily follows.

So it remains to prove \eqref{e:tail_bound_green3}. Note first that
\[\max_{u,u'\in Q_R(v)}G^\A_{Q_R(v)}(u^*,u'^*)\le \max_{u\in \CC\infty\cap Q_R(v)}G^\A_{Q_R(v)}(u,u),\]
and so \eqref{e:tail_bound_green3} follows from a union bound if we show
\begin{equation}\label{e:tail_bound_green4}
  \PP\left(G^\A_{Q_R(v)}(u,u)>\frac{\tau}{2\pi\Lambda^+}\right)\le C\e^{-\kappa\tau\Lambda^-/(4\Lambda^+)}\quad\forall R\le\tau^{1/\ep},u\in \CC_\infty \cap Q_R(v).
\end{equation}

Our proof strategy for \eqref{e:tail_bound_green4} follows closely the approach in \cite{BK05}, where the super-levelsets of $G^\A_{Q_R(v)}(\cdot,u)$ are studied and their size is related to their isoperimetry. We use this exact strategy on larger scales (once the level sets have grown to size $\ge cT^2$). On smaller scales we use another argument, namely that the difference of the Green $G^\A_{Q_R(v)}(\cdot,u)$ on two different points is bounded by their chemical distance. In this manner we can efficiently find a large super-level set at height close to $v$.

To make this precise, fix for the moment some $u\in\CC_\infty\cap Q_R(v)$. As in \cite{BK05} and as in the argument that showed \eqref{e:estgreen8}, it is convenient to use the viewpoint of electrical network theory. That is, we consider $G^\A_{Q_R(v)}(\cdot,u)$ equivalently as the voltage distribution that arises when a unit current flows from $u$ through the electrical network given by the conductances $\A$. Then the voltage drop along each edge is at most ${1}/{\Lambda^-}$. Moreover, for each $A\subset\CC_\infty$ with $u\in A\subset Q_R(v)$ the total current flowing out of $A$ is equal to 1.
Let $n=|\CC_\infty\cap Q_R(v)|$. For $k\in\N$ let
\[\theta(k):=\sup_{\substack{B\subset \CC_\infty\\B\text{ connected}\\|B|=k}}\min_{\bar u\in B}G^\A_{Q_R(v)}(\bar u,u)\] and let $A_k$ be the optimizer (with ties broken in some deterministic manner). By the maximum principle, $G^\A_{Q_R(v)}(\bar u,u)\le \theta(k)$ for all $\bar u\in\CC_\infty\setminus A_k$.

Trivially, $\theta(k)$ is non-increasing in $k$, and $\theta(k)=0$ for $k\ge n+1$. Our goal will be to bound the decrements of $\theta(k)$, assuming some good events, which we define next.

Let
\begin{align*}
\Ev^{\tau,(I)}_u&=\left\{|\CC_\infty\cap Q_{\tau\Lambda^-/(100\pi\Lambda^+)}(u)|\ge\frac{\tau^2(\Lambda^-)^2}{20000\pi^2(\Lambda^+)^2}\right\},\\
\Ev^{\tau,(II)}_u&=\left\{\max_{u'\in \CC_\infty\cap Q_{\tau\Lambda^-/(100\pi\Lambda^+)}(u)}\dist_{\CC_\infty}(u,u')\le \frac{\tau\Lambda^-}{4\pi\Lambda^+}\right\},\\
\Ev^{\tau,(III)}_u&=\left\{\exists A\subset\CC_\infty\colon u\in A,|A|\ge \frac{\tau^2(\Lambda^-)^2}{20000\pi^2(\Lambda^+)^2},|\partial^+A|\le |A|^{1/2}\right\}^\complement.
\end{align*}
According to Lemma \ref{l:density_cluster_highlysc}, Lemma \ref{l:isoperimetry} and Lemma \ref{l:chemical_distance} we can make the probability of each of these three events larger than $1-C\e^{-\kappa\tau\Lambda^-/(4\Lambda^+)}$, if we choose $p$ close enough to 1 (depending on $\kappa$ only).

On the event $\Ev^{\tau,(I)}_u\cap \Ev^{\tau,(II)}_u$ there are at least 
${\tau^2(\Lambda^-)^2}/{(20000\pi^2(\Lambda^+)^2})$ points in $\CC_\infty$
at graph distance at most ${\tau\Lambda^-}/{(4\pi\Lambda^+)}$ from $u$. Call the set of these points $A$. As the voltage drop along each edge in $\CC_\infty$ is at most ${1}/{\Lambda^-}$, the voltage difference between $u$ and any point in $A$ is at most $({1}/{\Lambda^-})\cdot({\tau\Lambda^-}/{4\pi\Lambda^+})=
{\tau}/{(4\pi\Lambda^+)}$. In other words,
\[\min_{\bar u\in A_k}G^\A_{Q_R(v)}(\bar u,u)\ge G^\A_{Q_R(v)}(u,u)-\frac{\tau}{4\pi\Lambda^+}.\]
As $A$ is also clearly connected, we have shown that on $\Ev^{\tau,(I)}_v\cap \Ev^{\tau,(II)}_v$ we have
\begin{equation}\label{e:tail_bound_green5}
\theta\left(\left\lceil\frac{\tau^2(\Lambda^-)^2}{10000\pi^2(\Lambda^+)^2}\right\rceil\right)\ge G^\A_{Q_R(v)}(u,u)-\frac{\tau}{4\pi\Lambda^+}.
\end{equation}

On the other hand, assume for the moment that $\Ev^{\tau,(III)}_u$ holds. We can now directly reuse the argument from \cite{BK05} on large scales. Namely, let $k\ge{\tau^2(\Lambda^-)^2}/{(20000\pi^2(\Lambda^+)^2)}$. If $\theta(k)>0$ then $A_k\subset Q_R(u)$, so the current flowing out of $A_k$ is exactly 1. On the other hand, on $\Ev^{\tau,(III)}_u$ there are at least $\sqrt{k}$ vertices in $\partial^+A_k$. So by the pigeon-hole principle for at least ${\sqrt{k}}/{2}$ of the vertices in $\partial^+A_k$ the total current flowing from $A$ into each of them is at most ${2}/{\sqrt{k}}$. Let $\tilde A_k$ be the set of these vertices. For each vertex in $\tilde A_k$ the voltage drop from $A_k$ can be at most ${2}/{(\sqrt{k}\Lambda^-)}$. This means that
\[\min_{\bar u\in A_\cup \tilde A_k}G^\A_{Q_R(v)}(\bar u,u)\ge \theta(k)-\frac{2}{\sqrt{k}\Lambda^-}\]
and as $A_k\cup\tilde A_k$ is clearly connected and has at least $k+{\sqrt{k}}/{2}$ elements, we have shown that on $\Ev^{\tau,(III)}_u$, if $k\ge{\tau^2(\Lambda^-)^2}/{(20000\pi^2(\Lambda^+)^2)}$ and $\theta(k)>0$ then
\begin{equation}\label{e:tail_bound_green6}
\theta\left(\left\lceil k+\frac{\sqrt{k}}{2}\right\rceil\right)\ge\theta(k)-\frac{2}{\sqrt{k}}.
\end{equation}
The same estimate also holds trivially if $\theta(k)=0$ so we can remove that condition.
We can now iterate \eqref{e:tail_bound_green6} $2\sqrt{k}$ times to obtain
that for such $k$,
$\theta(2k)\ge \theta(k)-{C}/{\Lambda^-}$. We also know that $\theta(n+1)=0$, and so we see that on $\Ev^{\tau,(III)}_u$ we have
\begin{equation}\label{e:tail_bound_green7}
\begin{split}
\theta\left(\left\lceil\frac{\tau^2(\Lambda^-)^2}{20000\pi^2(\Lambda^+)^2}\right\rceil\right)&\le C\frac{\log n}{\Lambda^-}
\le C\frac{\log R^2}{\Lambda^-}
\le C\frac{\log \tau}{\ep\Lambda^-},
\end{split} 
\end{equation}
where $C$ is an absolute constant.

Now we are almost done. Combining \eqref{e:tail_bound_green5} and \eqref{e:tail_bound_green7}, we showed that on the event $\Ev^{\tau,(I)}_u\cap \Ev^{\tau,(II)}_u\cap \Ev^{\tau,(III)}_u$ we have
\[G^\A_{Q_R(v)}(u,u)\le \frac{\tau}{4\pi\Lambda^+}+C\frac{\log \tau}{\ep\Lambda^-}.\]
For $\tau$ large enough the right-hand side here is bounded by ${\tau}/{2\pi\Lambda^+}$, and so we have shown \eqref{e:tail_bound_green4} when $\tau\ge C'$ for some constant $C'$ (which depends on $\Lambda^+/{(\ep\Lambda^-)}$, but not on $\kappa$ or $p$). But this is enough, as for $\tau< C'$ the estimate \eqref{e:tail_bound_green4} trivially holds if we choose the constant there large enough.
\end{proof}

We can now turn to showing Assumptions \ref{a:logupp} and\ref{a:sparseT}. In this proof, we will have to make the $p$-dependence of various quantities explicit. 

\begin{proof}[Proof of Theorem \ref{t:percolation_cluster}, second part]$ $

\emph{Step 1: Preliminaries}\\
In principle we would like to take $\T_v=\T^{\mathrm{Tail},\ep}_v$. However there are several other rare events that we need to account for: On the one hand, if $\Rscale^{\mathrm{Green}''}_v\ge T^{1/\ep}$, then $\T^{\mathrm{Tail},\ep}_v$ no longer controls $G^\A_{Q_{\Rscale^{\mathrm{Green}''}_v}}(v,v)$. On the other hand, we also need control over $\Rscale^{\mathrm{Dist}}_v$, which $\T^{\mathrm{Tail},\ep}_v$ does not provide. So the actual definition of $\T_v$ is a little involved.

The exponent $s$ in Lemma \ref{l:green_asympt_halfspace} is 
uniformly in $p$ bounded away from $1/2$. So there is $s_*>0$ such that $s\ge s_*$ for any $p\ge3/4$, say. We fix such an $s_*$, and choose $\ep={s_*}/{2}$. We also take $\kappa={3\Lambda^+}/{\Lambda^-}$, and define $p_{\Lambda^+/\Lambda^-}=p^{\mathrm{Tail}}_\kappa\vee\frac34$. This means that for $p\ge p_{\Lambda^+/\Lambda^-}$ the decay rate in \eqref{e:tail_bound_green1} is $\e^{-3T}$.

We now define 
\begin{align*}
\tilde\T_v&=2\pi\Lambda^+\max_{u,u'\in\CC_\infty\cap Q_{\Rscale^{\mathrm{Green}''}_v}(v)}G_{Q_{\Rscale^{\mathrm{Green}''}_v}}(u^*,u'^*)\vee(\Rscale^{\mathrm{Dist}}_v)^{\ep},\\
\T_v&=\tilde\T_v+C',\\
\Ev^{T,T'}_v&=\left\{\T^{T,\mathrm{Tail},\ep}_v\le T\right\}\cap\Ev^{T^{1/\ep},\mathrm{Clust}}_v\cap\Ev^{T^{1/\ep},T'^{1/\ep},\mathrm{Green}''}_v\cap\left\{\Rscale^{\mathrm{Dist}}_v\le T^{1/\ep}\right\},
\end{align*}
where the objects here are as in Lemma \ref{l:local_approx_cluster}, Lemma \ref{l:dist_to_cluster}, Lemma \ref{l:green_asympt_halfspace} and Lemma \ref{l:tail_bound_green}, and $C'$ is a constant that depends on ${\Lambda^+}/{\Lambda^-}$ only and will be fixed later.

The point of these definitions is that we want $\T_v$ to control the Green's function on lengthscale $\Rscale^{\mathrm{Green}''}_v$. As on lengthscales larger than that we have the estimates from Lemma \ref{l:green_asympt_halfspace}, we will be able to control the Green's function on all scales.
Furthermore $\Ev^{T,T'}_v$ is supposed to provide a local event that approximates $\{\T_v\le T\}$. Of the two terms to approximate in the definition of $\tilde\T_v$, the tricky one is the first one. There we use that if $\Rscale^{\mathrm{Green}''}_v\le T^{1/\ep}$ then the term is controlled by $\T^{T,\mathrm{Tail},\ep}_v$ (which is local once we intersect with $\Ev^{T^{1/\ep},\mathrm{Clust}}_v$), while if $\Rscale^{\mathrm{Green}''}_v> T^{1/\ep}$ we use that $\Rscale^{\mathrm{Green}''}_v$ itself is asymptotically local.

\emph{Step 2: Verification of \ref{a:logupp}}\\
We want to show that with $\T_v$ as defined above we have \ref{a:logupp}. The argument will be based on the maximum principle (similar to the argument that lead to \ref{a:logbd}).

In view of \eqref{e:estgreen} we have to prove that
\begin{align}
2\pi\oA G^\A_{V_N(w)}(v^*,v^*)&\le \log N+\tilde\T_v+C\label{e:estgreenT1}\\
2\pi\oA \left(G^\A_{V_N(w)}(v^*,v^*)-G^\A_{V_N(w)}(u^*,v^*)\right)&\le \log_+|u-v|+\tilde\T_u+\tilde\T_v+C\label{e:estgreenT2}
\end{align}
hold true for all $u,v\in V_N(w)$ (then \ref{a:logupp} follows once we take the constant $C'$ sufficiently large). We will begin by showing a stronger version of \eqref{e:estgreenT1}, which we will also use in the proof of \eqref{e:estgreenT2}. Thus, we claim that
\begin{equation}\label{e:estgreenT3}
2\pi\oA G^\A_{V_N(w)}(v^*,v^*)\le \log \left(1+\frac{\dist(v,\partial^+ V_N(w))}{\Rscale^{\mathrm{Green}''}_v}\right)+\tilde\T_v+C.
\end{equation}
To see this, we use the maximum principle in a similar manner as for \ref{a:logbd}. We can assume without loss of generality that the side of $V_N(w)$ closest to $v$ is the bottom side. The maximum principle on $V_N(w)$ implies that 
\[G^\A_{V_N(w)}(\cdot,v^*)\le G^\A_{Q^{\vec e_1}(w)}(\cdot,v^*)\]
on $\CC_\infty\cap V_N(w)$. We know that for $u\in \CC_\infty\cap\partial Q_{\Rscale^{\mathrm{Green}''}_v}(v)$ we have $|u^*-v|=\Rscale^{\mathrm{Green}''}_v$ and $|u^*-v_{w,\vec e_1}|\le \Rscale^{\mathrm{Green}''}_v+2\dist(v,\partial^+ V_N(w))$, and so Lemma \ref{l:green_asympt_halfspace} implies that
\begin{equation}\label{e:estgreenT4}
G^\A_{V_N(w)}(u^*,v^*)\le \frac{4}{\oA}+\frac{1}{2\pi\oA}\log\left(1+\frac{2\dist(v,\partial^+ V_N(w))}{\Rscale^{\mathrm{Green}''}_v}\right)\le \frac{C}{\oA}+\frac{1}{2\pi\oA}\log \left(1+\frac{\dist(v,\partial^+ V_N(w))}{\Rscale^{\mathrm{Green}''}_v}\right)
\end{equation}
for $u^*\in \CC_\infty\cap Q^{\vec e_1}(w)\cap \partial Q_{\Rscale^{\mathrm{Green}''}_v}(v)$.

We can now apply the maximum principle once again, namely on $\CC_\infty\cap Q^{\vec e_1}(w)\cap \partial Q_{\Rscale^{\mathrm{Green}''}_v}(v)$ with comparison function $G^\A_{Q^{\vec e_1}(w)\cap \partial Q_R(v)}$ and conclude from \eqref{e:estgreenT4} that
\[G^\A_{V_N(w)}(v^*,v^*)\le \frac{C}{\oA}+\frac{1}{2\pi\oA}\log \left(1+\frac{\dist(v,\partial^+ V_N(w))}{\Rscale^{\mathrm{Green}''}_v}\right)+G_{Q_{\Rscale^{\mathrm{Green}''}_v}}(v^*,v^*).\]
Because $\oA\le\Lambda^+$, this implies \eqref{e:estgreenT3} (if $C'$ in the definition of $\T_v$ is taken sufficiently large). 

Having showed \eqref{e:estgreenT3} (and in particular also \eqref{e:estgreenT1}), we now turn to \eqref{e:estgreenT2}. The argument here is similar. We first note that if $\Rscale^{\mathrm{Green}''}_v>\dist(v,\partial^+ V_N(w))$, then \eqref{e:estgreenT2} follows from \eqref{e:estgreenT3} and the trivial lower bound $G^\A_{V_N(w)}(u^*,v^*)\ge0$. So we can assume that $\Rscale^{\mathrm{Green}''}_v\le\dist(v,\partial^+ V_N(w))$. In that case we need a nontrivial lower bound for $G^\A_{V_N(w)}(u^*,v^*)$.

We claim that
\begin{equation}\label{e:estgreenT5}
G^\A_{V_N(w)}(u^*,v^*)\ge\frac{1}{2\pi\oA}\log \left(\frac{\dist(v,\partial^+ V_N(w))}{|u^*-v|\vee\Rscale^{\mathrm{Green}''}_v}\right)-\frac{C}{\oA}
\end{equation}
To see this, we use once again the maximum principle. Note that by Lemma \ref{l:green_asympt_halfspace} we have $G^\A_{Q^{\vec e_1}(w)}(\cdot,v^*)\le \frac{C}{\oA}$ on $\CC_\infty\setminus Q_{\Rscale^{\mathrm{Green}''}_v}(v)\supset \CC_\infty\cap\partial Q_N(w)$, and so the maximum principle on $\CC_\infty\cap V_N(w)$ implies that
\begin{equation}\label{e:estgreenT6}
G^\A_{V_N(w)}(\cdot,v^*)\ge G^\A_{Q^{\vec e_1}(w)}(\cdot,v^*)-\frac{C}{\oA}
\end{equation}
on $\CC_\infty\cap V_N(w)$. If $|u^*-v|\ge\Rscale^{\mathrm{Green}''}_v$ this directly implies \eqref{e:estgreenT5}, while if $|u^*-v|\le\Rscale^{\mathrm{Green}''}_v$ we obtain from \eqref{e:estgreenT6} and Lemma \ref{l:green_asympt_halfspace} that 
\[G^\A_{V_N(w)}(\cdot,v^*)\ge \frac{1}{2\pi\oA}\log \left(\frac{\dist(v,\partial^+ V_N(w))}{\Rscale^{\mathrm{Green}''}_v}\right)-\frac{C}{\oA}\]
 on $\CC_\infty\cap\partial Q_{\Rscale^{\mathrm{Green}''}_v}(w)$, from which \eqref{e:estgreenT5} follows by an application of the maximum principle on $\CC_\infty\cap Q_{\Rscale^{\mathrm{Green}''}_v}(w)$.
 
We can now combine \eqref{e:estgreenT3} and \eqref{e:estgreenT5} into
\begin{equation}\label{e:estgreenT7}
G^\A_{V_N(w)}(v^*,v^*)-G^\A_{V_N(w)}(u^*,v^*)\le \frac{1}{2\pi\oA}\log\left(\frac{|u^*-v|\vee \Rscale^{\mathrm{Green}''}_v}{\Rscale^{\mathrm{Green}''}_v}\right)+\tilde\T_v+\frac{C}{\oA}.
\end{equation}
This already looks similar to \eqref{e:estgreenT2}. In fact, if $|u^*-v|\le\Rscale^{\mathrm{Green}''}_v$ it directly implies \eqref{e:estgreenT2}, so we can assume the opposite. Then we still to make sure that $|u-v|$ is not much smaller that $|u^*-v|$. For that purpose we estimate that $|u^*-u|=\Rscale^{\mathrm{Dist}}_u\le(\tilde\T_u)^{1/\ep}$
and therefore 
\[\log|u^*-v|\le \log\left(|u-v|+(\tilde\T_u)^{1/\ep}\right)\le\log|u-v|+\frac{1}{\ep}\log \tilde\T_u.\]
Inserting this into \eqref{e:estgreenT7}, we finally obtain \eqref{e:estgreenT2}.

\emph{Step 3: Verification of \ref{a:sparseT}}\\
It remains to show that our choice of $\T_v$ also satisfies \ref{a:sparseT}. As $\T$ and $\tilde T$ only differ by a constant, it suffices to check that $\tilde T$ satisfies \ref{a:sparseT}. This is similar to the proof of \ref{a:sparseR}, so we will be brief here.

We claim that
\begin{align}
\PP(\Ev^{T,T'}_v)&\ge 1-C\e^{-3T},\label{e:estgreenT8}\\
\PP(\tilde\T_v> T,\Ev^{T,T'}_v)&\le C\e^{-3T'}.\label{e:estgreenT9}
\end{align}
Indeed, \ref{l:tail_bound_green} and our choice of $\kappa$ imply that
\[\PP(\T^{\mathrm{Tail},\ep}_v\le T,\Ev^{T^{1/\ep},\mathrm{Clust}}_v)\ge 1-C\e^{-\kappa T\Lambda^-/\Lambda^+}.\]
Moreover, Lemma \ref{l:green_asympt_halfspace} implies that
\[\PP(\Ev^{T^{1/\ep},T'^{1/\ep},\mathrm{Green}''}_v)\ge 1-C\e^{-T^2/C},\]
and Lemma \ref{l:dist_to_cluster} implies a similar estimate for $\left\{\Rscale^{\mathrm{Dist}}_v\le T^{1/\ep}\right\}\cap \Ev^{T^{1/\ep},\mathrm{Clust}}_v$. Combining these results, we find
\[\PP(\Ev^{T,T'}_v)\ge 1-C\e^{-3T}-C\e^{-T^2/C},\]
which implies \eqref{e:estgreenT8} if we choose the constant there sufficiently large (depending on $\frac{\Lambda^+}{\Lambda^-}$ and $p$).

Regarding \eqref{e:estgreenT9}, observe that if $\tilde\T_v>T$ and $\Ev^{T,T'}_v$ occurs, then by definition of $\T^{\mathrm{Tail},\ep}_v$ we must have $\Rscale^{\mathrm{Green}''}_v\ge \T^{1/\ep}$. But $\Ev^{T,T'}_v$ also implies that $\Ev^{T^{1/\ep},T'^{1/\ep},\mathrm{Green}''}_v$ occurs, and according to Lemma \ref{l:green_asympt_halfspace} we have
\[\PP(\Rscale^{\mathrm{Green}''}_v\ge \T^{1/\ep},\Ev^{T^{1/\ep},T'^{1/\ep},\mathrm{Green}''}_v)\le C\e^{-T^2/C}.\]
From this we obtain \eqref{e:estgreenT9}. Moreover, $\Ev^{T,T'}_v$ is $\F_{Q_{10T'^{1/\ep}}(v)}$-measurable.

Now the proof of \ref{a:sparseT} proceeds just like the proof of \ref{a:sparseR}. We choose $T'=\left(\frac{N}{L}\right)^{1/(2\ep)}$. If $\tilde T_v\ge T$ then the local event $Ev^{T,T'}_v$ or a very rare event occurs. The latter events can be controlled by a union bound, and the former events using Hoeffding's inequality and the locality.
\end{proof}

\begin{remark}\label{r:nearcritical}
Lemma \ref{l:tail_bound_green} is the main step in the verification of \ref{a:logupp} and \ref{a:sparseT}. In view of Question \ref{q:nearcritical}, let us explain why this proof cannot work for $p$ close to $1/2$. For convenience, we only consider the (easiest) case where $\Lambda^+=\Lambda^-$, and recall that we denoted the corresponding $\oA$ by $a_p$ in that case.

Roughly speaking, we would need to prove that for a generic $v\in V_N(w)$ we have
\[\PP\left(\frac{1}{2\pi a_p}G^\A_{V_N(w)}(v^*,v^*)\ge\frac{1}{2\pi a_p}(\log N+T)\right)\le C_p\e^{-(2+\ep)T}\]
for some $\ep>0$ or equivalently,
\begin{equation}\label{e:near_crit}
\PP\left(\frac{1}{2\pi a_p}G^\A_{V_N(w)}(v^*,v^*)\ge\frac{\log N}{2\pi a_p}+T\right)\le C_p\e^{-ca_pT}
\end{equation}
for a constant $c>4\pi$. There is a competition here: As $p\searrow1/2$, bad environments such as long pipes (which contribute variance of order of the length of the pipe)
become more likely, so, for fixed $T$, the left-hand side should be increasing. 
On the other hand, $a_p$ goes to 0 as $p\searrow1/2$, so the right-hand side is also increasing as $p\searrow1/2$.

In fact, critical scaling theory and numerical estimates suggest that $a_p\sim\left(p-1/2\right)^t$ as $p\searrow1/2$, where $t\approx\frac43\cdot0.9826\approx1.310$, see \cite{G99}. This means that as $p\searrow 1/2$,
we have a good (albeit non-rigorous) prediction of the behavior of the 
right-hand side of \eqref{e:near_crit}. Unfortunately, it is unclear to us 
even heuristically how the left-hand side of \eqref{e:near_crit} 
behaves as $p\searrow1/2$, and whether a variant of our proof of Lemma \ref{l:tail_bound_green} could be made to work for $p$ close to $1/2$. If one wanted 
to extend our proof, one would need to understand the 
large-deviation behavior of the isoperimetric profile and of 
the volume of balls with respect to the 
chemical distance for $p$ close to $1/2$, and both tasks seem difficult.
In fact, it might well be that our estimate of the effective resistance
by the chemical distance is already too crude, and so new 
ideas might be necessary. 

\end{remark}

\subsubsection*{Acknowledgements} This project has received funding from the European Research Council (ERC) under the European Union's Horizon 2020 research and innovation programme (grant agreement No. 692452). Florian Schweiger is also supported by the Foreign Postdoctoral Fellowship Program of the Israel Academy of
Sciences and Humanities.

\small

\begin{thebibliography}{DMFGW88}

\bibitem[Abe15]{A15}
Y. Abe.
\newblock Effective resistances for supercritical percolation clusters in
  boxes.
\newblock {\em Ann. Inst. Henri Poincar\'{e} Probab. Stat.}, 51(3):935--946,
  2015.

\bibitem[ABR20]{ABR20}
L.-P. Arguin, P. Bourgade, and M. Radziwiłł.
\newblock The {F}yodorov-{H}iary-{K}eating conjecture. {I}, 2020.
\newblock arXiv:2007.00988.

\bibitem[Aco16]{A16}
J.~E. Acosta.
\newblock {\em Convergence in law of the centered maximum of the mollified
  {Gaussian} free field in two dimensions}.
\newblock PhD thesis, University of Minnesota, 2016.

\bibitem[AD18]{AD18}
S. Armstrong and P. Dario.
\newblock Elliptic regularity and quantitative homogenization on percolation
  clusters.
\newblock {\em Comm. Pure Appl. Math.}, 71(9):1717--1849, 2018.

\bibitem[ADS20]{ADS20}
S. Andres, J.-D. Deuschel, and M. Slowik.
\newblock Green kernel asymptotics for two-dimensional random walks under
  random conductances.
\newblock {\em Electron. Commun. Probab.}, 25:Paper No. 58, 14, 2020.

\bibitem[AKM17]{AKM17}
S. Armstrong, T. Kuusi, and J.-C. Mourrat.
\newblock The additive structure of elliptic homogenization.
\newblock {\em Invent. Math.}, 208(3):999--1154, 2017.

\bibitem[AKM19]{AKM19}
S. Armstrong, T. Kuusi, and J.-C. Mourrat.
\newblock {\em Quantitative stochastic homogenization and large-scale
  regularity}, volume 352 of {\em Grundlehren der mathematischen Wissenschaften
  [Fundamental Principles of Mathematical Sciences]}.
\newblock Springer, Cham, 2019.

\bibitem[AP96]{AP96}
P. Antal and A. Pisztora.
\newblock On the chemical distance for supercritical {B}ernoulli percolation.
\newblock {\em Ann. Probab.}, 24(2):1036--1048, 1996.

\bibitem[Bar04]{B04}
M.~T. Barlow.
\newblock Random walks on supercritical percolation clusters.
\newblock {\em Ann. Probab.}, 32(4):3024--3084, 2004.

\bibitem[BB07]{BB07}
N. Berger and M. Biskup.
\newblock Quenched invariance principle for simple random walk on percolation
  clusters.
\newblock {\em Probab. Theory Related Fields}, 137(1-2):83--120, 2007.

\bibitem[BDZ16a]{BDZ16b}
M. Bramson, J. Ding, and O. Zeitouni.
\newblock Convergence in law of the maximum of nonlattice branching random
  walk.
\newblock {\em Ann. Inst. Henri Poincar\'{e} Probab. Stat.}, 52(4):1897--1924,
  2016.

\bibitem[BDZ16b]{BDZ16}
M. Bramson, J. Ding, and O. Zeitouni.
\newblock Convergence in law of the maximum of the two-dimensional discrete
  {G}aussian free field.
\newblock {\em Comm. Pure Appl. Math.}, 69(1):62--123, 2016.

\bibitem[Bis11]{B11}
M. Biskup.
\newblock Recent progress on the random conductance model.
\newblock {\em Probab. Surv.}, 8:294--373, 2011.

\bibitem[Bis20]{B20}
M. Biskup.
\newblock Extrema of the two-dimensional discrete {G}aussian free field.
\newblock In {\em Random graphs, phase transitions, and the {G}aussian free
  field}, volume 304 of {\em Springer Proc. Math. Stat.}, pages 163--407.
  Springer, Cham, 2020.

\bibitem[BK05]{BK05}
I. Benjamini and G. Kozma.
\newblock A resistance bound via an isoperimetric inequality.
\newblock {\em Combinatorica}, 25(6):645--650, 2005.

\bibitem[BK07]{BK07}
M. Biskup and R. Koteck\'{y}.
\newblock Phase coexistence of gradient {G}ibbs states.
\newblock {\em Probab. Theory Related Fields}, 139(1-2):1--39, 2007.

\bibitem[BL16]{BL16}
M. Biskup and O. Louidor.
\newblock Extreme local extrema of two-dimensional discrete {G}aussian free
  field.
\newblock {\em Comm. Math. Phys.}, 345(1):271--304, 2016.

\bibitem[BL18]{BL18}
M. Biskup and O. Louidor.
\newblock Full extremal process, cluster law and freezing for the
  two-dimensional discrete {G}aussian free field.
\newblock {\em Adv. Math.}, 330:589--687, 2018.

\bibitem[BM03]{BM03}
I. Benjamini and E. Mossel.
\newblock On the mixing time of a simple random walk on the super critical
  percolation cluster.
\newblock {\em Probab. Theory Related Fields}, 125(3):408--420, 2003.

\bibitem[BMP22]{BMP21}
P. Bourgade, K. Mody, and M. Pain.
\newblock Optimal local law and central limit theorem for $\beta$-ensembles,
  2022.
\newblock arXiv:2103.06841.

\bibitem[BS11]{BS11}
M. Biskup and H. Spohn.
\newblock Scaling limit for a class of gradient fields with nonconvex
  potentials.
\newblock {\em Ann. Probab.}, 39(1):224--251, 2011.

\bibitem[BZ12]{BZ12}
M. Bramson and O. Zeitouni.
\newblock Tightness of the recentered maximum of the two-dimensional discrete
  {G}aussian free field.
\newblock {\em Comm. Pure Appl. Math.}, 65(1):1--20, 2012.

\bibitem[CFLW21]{CFLW19}
T. Claeys, B. Fahs, G. Lambert, and C. Webb.
\newblock How much can the eigenvalues of a random {H}ermitian matrix
  fluctuate?
\newblock {\em Duke Math. J.}, 170(9):2085--2235, 2021.

\bibitem[CN19]{CN18}
R. Chhaibi and J. Najnudel.
\newblock On the circle, ${GMC}^\gamma = \varprojlim {C}\beta {E}_n$ for
  $\gamma = \sqrt{\frac{2}{\beta}}, $ $( \gamma \leq 1 )$, 2019.
\newblock arXiv:1904.00578.

\bibitem[CP20]{CP20}
F. Comets and S. Popov.
\newblock Two-dimensional {Brownian} random interlacements.
\newblock {\em Potential Anal.}, 53:727--771, 2020.

\bibitem[CPV16]{CPV16}
F. Comets, S. Popov, and M. Vachkovskaia.
\newblock Two-dimensional random interlacements and late points for random
  walks.
\newblock {\em Comm. Math. Phys.}, 343(1):129--164, 2016.

\bibitem[CR22]{CR22}
L. Chiarini and W.~M. Ruszel.
\newblock Stochastic homogenization of {Gaussian} fields on random media, 2022.
\newblock arXiv:2201.12013.

\bibitem[DG21]{DG21}
P. Dario and C. Gu.
\newblock Quantitative homogenization of the parabolic and elliptic {G}reen's
  functions on percolation clusters.
\newblock {\em Ann. Probab.}, 49(2):556--636, 2021.

\bibitem[DMFGW88]{DMFGW88}
A. De~Masi, P.~A. Ferrari, S. Goldstein, and W.~D. Wick.
\newblock An invariance principle for reversible {Markov} processes.
  applications to random motions in random environments.
\newblock {\em J. Stat. Physics}, 55:787--855, 1988.

\bibitem[DP96]{DP96}
J.-D. Deuschel and A. Pisztora.
\newblock Surface order large deviations for high-density percolation.
\newblock {\em Probab. Theory Related Fields}, 104(4):467--482, 1996.

\bibitem[DPRZ04]{DPRZ04}
A. Dembo, Y. Peres, J. Rosen, and O. Zeitouni.
\newblock Cover times for {B}rownian motion and random walks in two dimensions.
\newblock {\em Ann. of Math. (2)}, 160(2):433--464, 2004.

\bibitem[DRZ17]{DRZ17}
J. Ding, R. Roy, and O. Zeitouni.
\newblock Convergence of the centered maximum of log-correlated {G}aussian
  fields.
\newblock {\em Ann. Probab.}, 45(6A):3886--3928, 2017.

\bibitem[DS88]{DS88}
R. Durrett and R. Schonmann.
\newblock Large deviations for the contact process and two dimensional
  percolation.
\newblock {\em Prob. Th. Rel. Fields}, 77:583--603, 1988.

\bibitem[DS11]{DS11}
B. Duplantier and S. Sheffield.
\newblock Liouville quantum gravity and {KPZ}.
\newblock {\em Invent. Math.}, 185(2):333--393, 2011.

\bibitem[DZ14]{DZ14}
J. Ding and O. Zeitouni.
\newblock Extreme values for two-dimensional discrete {G}aussian free field.
\newblock {\em Ann. Probab.}, 42(4):1480--1515, 2014.

\bibitem[FHK12]{FHK12}
Y.~V. Fyodorov, G.~A. Hiary, and J.~P. Keating.
\newblock Freezing transition, characteristic polynomials of random matrices,
  and the riemann zeta function.
\newblock {\em Phys. Rev. Lett.}, 108:170601, Apr 2012.

\bibitem[GM07]{GM07}
O. Garet and R. Marchand.
\newblock Large deviations for the chemical distance in supercritical
  {Bernoulli} percolation.
\newblock {\em Ann. Probab.}, 35:833--866, 2007.

\bibitem[GNO15]{GNO15}
A. Gloria, S. Neukamm, and F. Otto.
\newblock Quantification of ergodicity in stochastic homogenization: optimal
  bounds via spectral gap on {G}lauber dynamics.
\newblock {\em Invent. Math.}, 199(2):455--515, 2015.

\bibitem[Gra99]{G99}
P. Grassberger.
\newblock Conductivity exponent and backbone dimension in 2-d percolation.
\newblock {\em Physica A: Statistical Mechanics and its Applications},
  262(3):251--263, 1999.

\bibitem[Jeg20]{J20}
A. Jego.
\newblock Planar {B}rownian motion and {G}aussian multiplicative chaos.
\newblock {\em Ann. Probab.}, 48(4):1597--1643, 2020.

\bibitem[Kah85]{K85}
J.-P. Kahane.
\newblock Sur le chaos multiplicatif.
\newblock {\em Ann. Sci. Math. Qu\'{e}bec}, 9(2):105--150, 1985.

\bibitem[Kes80]{K80}
H. Kesten.
\newblock The critical probability of bond percolation on the square lattice
  equals $1/2$.
\newblock {\em Comm. Math. Phys.}, 74:41--59, 1980.

\bibitem[Koz85]{Ko85}
S.~M. Kozlov.
\newblock The method of averaging and walks in inhomogeneous environments.
\newblock {\em Russian Math. Surveys}, 40:73--145, 1985.

\bibitem[KPZ88]{KPZ88}
V. Knizhik, A. Polyakov, and A. Zamoldchikov.
\newblock Fractal structure of 2d—quantum gravity.
\newblock {\em Modern Physics Letters A}, 03(08):819--826, 1988.

\bibitem[MP07]{MP07}
P. Mathieu and A. Piatnitski.
\newblock Quenched invariance principles for random walks on percolation
  clusters.
\newblock {\em Proc. R. Soc. Lond. Ser. A Math. Phys. Eng. Sci.},
  463(2085):2287--2307, 2007.

\bibitem[MR04]{MR04}
P. Mathieu and E. Remy.
\newblock Isoperimetry and heat kernel decay on percolation clusters.
\newblock {\em Annals Probab.}, 32:100--128, 2004.

\bibitem[RV14]{RV14}
R. Rhodes and V. Vargas.
\newblock Gaussian multiplicative chaos and applications: a review.
\newblock {\em Probab. Surv.}, 11:315--392, 2014.

\bibitem[Sch20]{S20}
F. Schweiger.
\newblock The maximum of the four-dimensional membrane model.
\newblock {\em Ann. Probab.}, 48(2):714--741, 2020.

\bibitem[She07]{S07}
S. Sheffield.
\newblock Gaussian free fields for mathematicians.
\newblock {\em Probab. Theory Related Fields}, 139(3-4):521--541, 2007.

\bibitem[SS04]{SS04}
V. Sidoravicius and A.-S. Sznitman.
\newblock Quenched invariance principles for walks on clusters of percolation
  or among random conductances.
\newblock {\em Probab. Theory Related Fields}, 129(2):219--244, 2004.

\bibitem[Zei16]{Z16}
O. Zeitouni.
\newblock Branching random walks and {G}aussian fields.
\newblock In {\em Probability and statistical physics in {S}t. {P}etersburg},
  volume~91 of {\em Proc. Sympos. Pure Math.}, pages 437--471. Amer. Math.
  Soc., Providence, RI, 2016.

\end{thebibliography}

\end{document}